\documentclass[a4paper,11pt,reqno]{amsart}


\usepackage{
	amsfonts,
	amsmath,
	amssymb,
	bbold,
	dsfont,
	mathrsfs,
	soul,
	subfig,
	xcolor
}

\usepackage{geometry}
\geometry{hmargin={3cm,3cm}}
\geometry{vmargin={3cm,3cm}}

\usepackage[hidelinks]{hyperref}
\usepackage[latin1]{inputenc}

\usepackage{tikz}
\usetikzlibrary{decorations.pathreplacing}
\usetikzlibrary{shapes.geometric,calc,decorations.markings,math}
\tikzstyle{nodo}=[circle,draw,fill,inner sep=0pt,minimum size=%
1.5mm]
\tikzstyle{infinito}=[circle,inner sep=0pt,minimum size=0mm]


\usetikzlibrary{shapes.geometric,calc,decorations.markings,math}
\tikzstyle{nodo}=[circle,draw,fill,inner sep=0pt,minimum size=0.5*width("k")]
\tikzstyle{infinito}=[circle,inner sep=0pt,minimum size=0mm]
\tikzset{every loop/.style={min distance=10mm,in=300,out=240,looseness=10}}
\tikzset{place/.style={circle,thick,draw=blue!75,fill=blue!20,minimum
		size=6mm}}
\tikzset{place2/.style={circle,thick,draw=red!75,fill=red!20,minimum
		size=6mm}}


\newtheorem{theorem}{Theorem}[section]
\newtheorem{proposition}[theorem]{Proposition}
\newtheorem{lemma}[theorem]{Lemma}

\theoremstyle{remark}
\newtheorem{remark}[theorem]{Remark}
\newtheorem*{remark*}{Remark}

\theoremstyle{definition}
\newtheorem{definition}[theorem]{Definition}


\DeclareMathOperator*{\argmin}{arg\,min}
\DeclareMathOperator*{\argmax}{arg\,max}

\newcommand\N{{\mathbb N}}
\newcommand\R{{\mathbb R}}
\newcommand\Z{{\mathbb Z}}

\newcommand\EE{{\mathcal E}}

\newcommand{\D}{\mathcal{D}}
\newcommand{\Q}{\mathcal{Q}}

\newcommand{\G}{\mathcal{G}}

\newcommand{\Sq}{\mathcal{S}}
\newcommand{\F}{\mathcal{F}}
\newcommand{\Bq}{\mathcal{B}}

\renewcommand\v{\textsc{v}}
\newcommand\w{\textsc{w}}
\newcommand\ttt{\texttt{t}}

\newcommand{\E}{\mathbf{E}}
\newcommand{\VQ}{\mathbf{V}_\Q}
\newcommand{\EQ}{\mathbf{E}_\Q}
\newcommand{\VG}{\mathbf{V}_\G}
\newcommand{\EG}{\mathbf{E}_\G}

\newcommand\f{\frac}
\newcommand\dx{{\,dx}}
\newcommand\dy{{\,dy}}
\newcommand\dt{{\,dt}}
\newcommand\ds{{\,ds}}

\newcommand\eps{\varepsilon}

\newcommand\Hmu{{H_\mu^1}}

\newcommand{\ov}[1]{\overline{#1}}



\title[Symmetry breaking in two--dimensional square grids]{Symmetry breaking in two--dimensional square grids: persistence and failure of the dimensional crossover}

\author[S. Dovetta]{Simone Dovetta}
\address[S. Dovetta]{Università degli Studi di Roma ``La Sapienza", Dipartimento di Scienze di Base ed Applicate per l'Ingegneria, via Antonio Scarpa 14, 00161 - Roma, Italy.}
\email{simone.dovetta@uniroma1.it}

\author[L. Tentarelli]{Lorenzo Tentarelli}
\address[L. Tentarelli]{Politecnico di Torino, Dipartimento di Scienze Matematiche ``G.L. Lagrange'', Corso Duca degli Abruzzi 24, 10129 - Torino, Italy.} 
\email{lorenzo.tentarelli@polito.it}

\date{\today}


\begin{document}



\begin{abstract}
We discuss the model robustness of the infinite two--dimensional square grid with respect to symmetry breakings due to the presence of defects, that is, lacks of finitely or infinitely many edges. Precisely, we study how these topological perturbations of the square grid affect the so--called \emph{dimensional crossover} identified in \cite{ADST19}. Such a phenomenon has two evidences: the coexistence of the one and the two--dimensional Sobolev inequalities and the appearence of a continuum of $L^2$--critical exponents for the ground states at fixed mass of the nonlinear Schr\"odinger equation. From this twofold perspective, we investigate which classes of defects do preserve the dimensional crossover and which classes do not.
\end{abstract}

\maketitle

\vspace{-.5cm}
\noindent {\footnotesize \textul{AMS Subject Classification:} 35R02, 35Q55, 81Q35, 35Q40, 49J40.}

\noindent {\footnotesize \textul{Keywords:} metric graphs, dimensional crossover, two--dimensional grid, defect, Sobolev inequality, nonlinear Schr\"odinger, ground states.}



    \section{Introduction}
    \label{sec:intro}  
    Since their first appearance in the Fifties of the last century \cite{RS}, \emph{metric graphs} have gained some popularity as effective models for the study of the dynamics of a wide range of physical systems with ramified geometries, from organic molecules to quantum physics. For instance, they have proved to approximate with a certain accuracy the behavior and the features of quantum wires, Bose--Einstein condensates in ramified traps and signal propagation in Kerr--type media (see, e.g., \cite{AST17,BK13,N14,P12} and references therein).
    
    As a consequence, plenty of works appeared through the years discussing several aspects and properties of differential equations and operators on graphs. From the very beginning, linear models have been considered extensively (we refer to the seminal paper \cite{KS}, as well as to the monographs \cite{BK13,Mug} for detailed discussions) and they continue to be a lively and rich research field (see for instance \cite{BKKM,BLS,EFK,FMN} and related works). Concurrently, over the last decades nonlinear problems have been gathering a constantly growing attention. In this setting, the main focus has been devoted first to the NonLinear Schr\"odinger Equation (NLSE) on \emph{noncompact} metric graphs with a \emph{finite number of edges} (see, e.g., Figure \ref{fig:grafo}), and specifically on the study of its standing waves (we refer, e.g., to \cite{AST19,BDL20,BDL21,BMP19,DST20bis,DT20,H19,NP20,PSV} for some of the latest results on the topic). Furthermore, due to the renewed interest on relativistic corrections of models from solid state and condensed matter physics (see, e.g., \cite{SBMK18}), some discussions have recently arised also on the standing waves of the nonlinear Dirac equation (see, e.g., \cite{BCT19,BCT19bis}).
    
    \begin{figure}[t]
    \centering
    \includegraphics[width=.65\columnwidth]{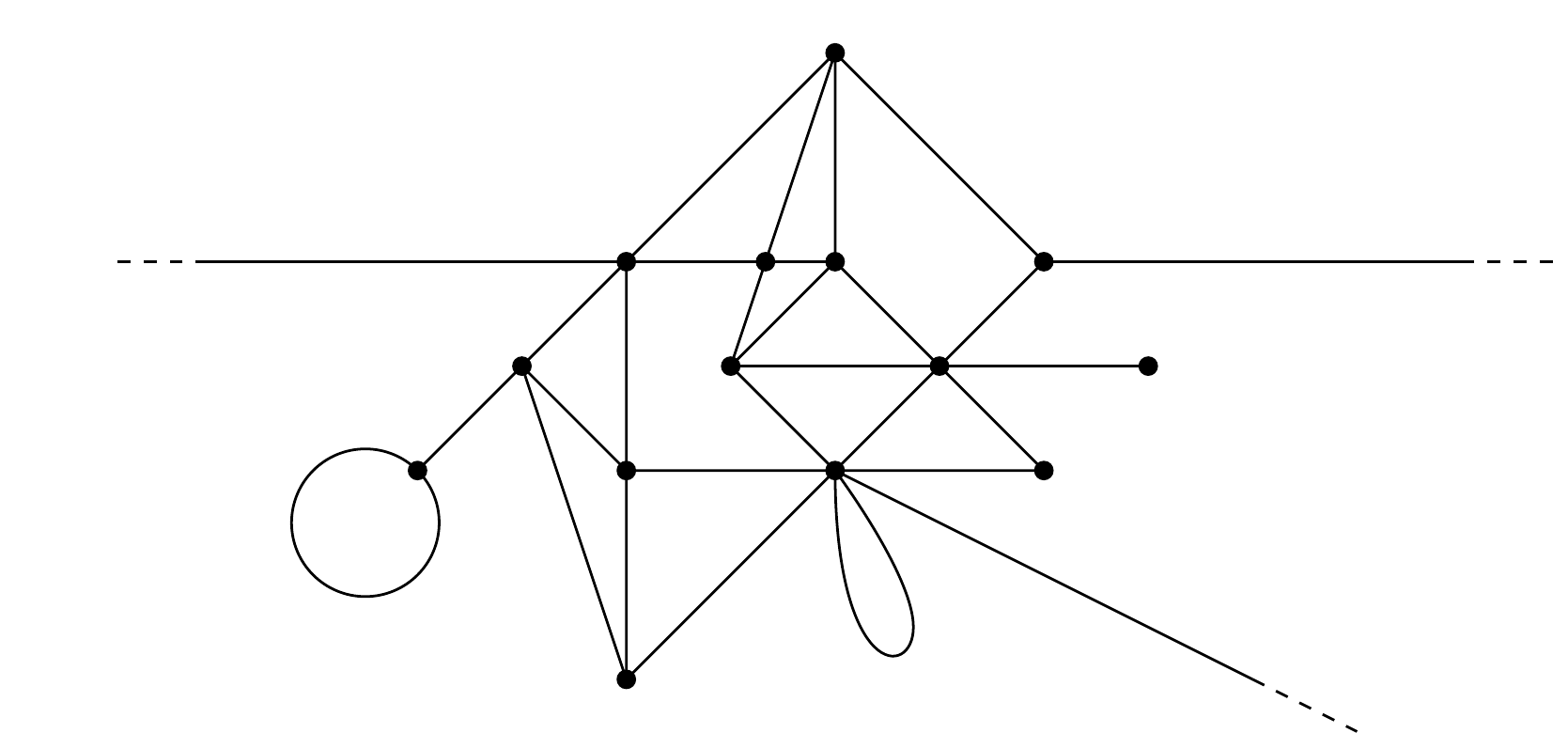}
    \caption{Example of a noncompact metric graph with a finite number of edges (three of which unbounded, denoted with $\infty$ at the end).}
    \label{fig:grafo}
    \end{figure}
    
    \medskip
    Nowadays, a major line of research concerns noncompact metric graphs with \emph{infinitely many bounded edges} and, in particular, {\em periodic graphs}, driven by their applications to the study of new materials such as for instance \emph{carbon nanutubes} and \emph{graphene} (see, e.g., \cite{BK13,KP07}). In the linear framework, the spectral analysis of Schr\"odinger operators on periodic structures is a focal topic both in the metric (see \cite{BB13,BK20,ET,KN} and references therein) and in the discrete setting \cite{FK}, also in connection with the wider subject of periodic elliptic operators (see \cite{K16} for an overview on this point).  
    
    As for nonlinear problems, several works are by now available on $\Z$--periodic graphs (see, e.g., Figure \ref{fig:periodico}), i.e. graphs built of an infinite number of copies of a fixed compact graph glued together along a given direction. The literature on the matter is getting larger and larger and we just mention a selection of the main works \cite{ABZ12,BSTU06,DPS09,D19,PS10,PS16}.
    
    \begin{figure}[t]
    \centering
    \includegraphics[width=.75\columnwidth]{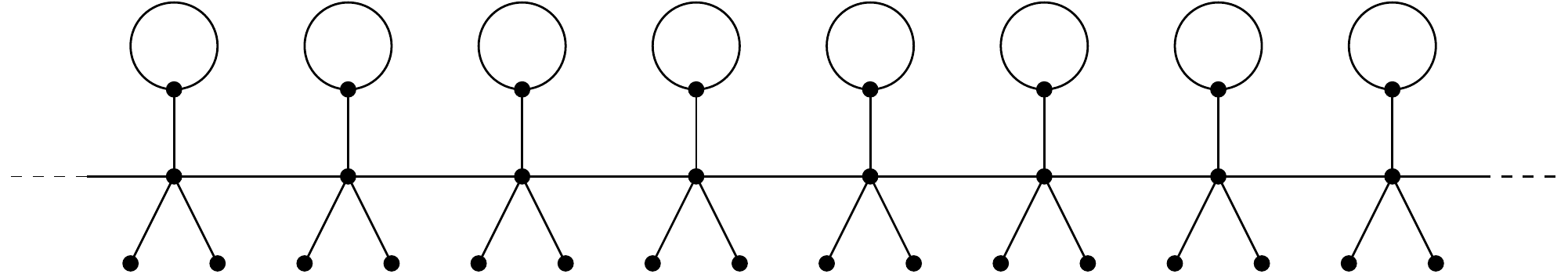}
    \caption{Example of a $\Z$--periodic graph.}
    \label{fig:periodico}
    \end{figure}
    Recently, the discussion of standing waves with prescribed mass for nonlinear Schr\"odinger equations on $\Z^2$--periodic graphs (also known as doubly periodic graphs) has been addressed by \cite{ADST19} in the case of the infinite two--dimensional square \emph{grid} (Figure \ref{fig:Q}). Precisely, \cite{ADST19} provides an extensive analysis of the phenomenon of the \emph{dimensional crossover} which arises both as the coexistence of the one and the two--dimensional Sobolev inequalities and as the presence of a continuum of $L^2$--critical exponents for the ground states of the NLSE.
    Further generalizations to \emph{honeycomb} and $\Z^3$--periodic graphs can be found in \cite{AD18,ADR19}, respectively (see also \cite{P18} for the discussion of a related problem on general periodic graphs, while a class of non--periodic graphs with infinitely many edges has been studied in \cite{DST20}). The rigorous derivation of the NLSE as an effective model for the dynamics of slow modulated wave packets on doubly periodic metric graphs such as the honeycomb grid can be found in \cite{GSU21}.
    
    \medskip
    The aim of our paper is that of investigating how the presence of \emph{defects}, that is a breaking of the symmetric structure of the two--dimensional square grid caused by the removal of finitely or infinitely many edges, may affect both the Sobolev inequalities supported by the resulting graph and the existence of the NLSE ground states. In particular, we aim at classifying which types of defects yield a persistence of the dimensional crossover and which ones, on the contrary, make the dimensional crossover fail. In other words, our goal is to detect under which perturbations the model provided by the undefected grid is robust.
    
    Although the analysis of topological perturbations of periodic structures has a long history in other settings, such as percolation theory (see for instance \cite{MR04,P08} and references therein) where related issues continue to be a hot topic (see the recent paper \cite{HH21}) , to the best of our knowledge this is the first insight on defected grids in the context of metric graphs. Our work will show that the problem displays an extremely wide phenomenology, of which we provide here a detailed though incomplete picture. Several new questions and interesting open problems are raised at the edge of combinatorics, geometry and analysis, whose future investigations we believe may move from the foundations laid in the present paper.  
    
    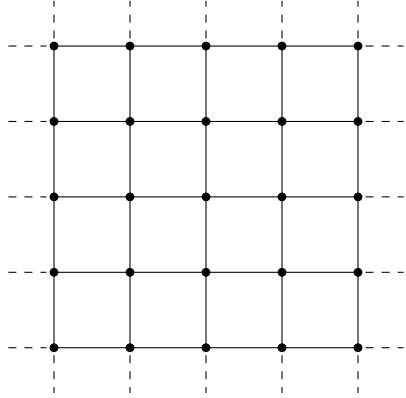
\begin{figure}[t]
		\centering
			\begin{tikzpicture}[xscale= 0.5,yscale=0.5]
			\draw[step=2,thin] (0,0) grid (8,8);
			\foreach \x in {0,2,...,8} \foreach \y in {0,2,...,8} \node at (\x,\y) [nodo] {};
			\foreach \x in {0,2,...,8}
			{\draw[dashed,thin] (\x,8.2)--(\x,9.2) (\x,-0.2)--(\x,-1.2) (-1.2,\x)--(-0.2,\x)  (8.2,\x)--(9.2,\x); }
			\end{tikzpicture}
		\caption{The infinite two--dimensional square grid $\Q$.}
		\label{fig:Q}
	\end{figure}
	


    \subsection*{Organization of the paper}
    
    The paper is organized as follows.

	\begin{itemize} 
	 \item In Section \ref{sec:main} we state the main results of the paper. Precisely:
	 \begin{itemize}
	  \item Section \ref{subsec:grid} recalls briefly the results obtained in \cite{ADST19} on the undefected two--dimensional grid;
	  \item Section \ref{subsec:sett} introduces the main tools and definitions for the setting of the problem;
	  \item Section \ref{subsec:main} states the main results of the paper concerning:
	  \begin{itemize}
	   \item[(i)] the coexistence of the one and the two--dimensional Sobolev inequalities (Theorem \ref{THM:iso=sob}, Theorem \ref{THM:s2d_bound}, Theorem \ref{THM:s2d_P} and Theorem \ref{THM:SobNoP}),
	   \item[(ii)] the NLSE ground states (Theorem \ref{THM:ex_com&Z} and Theorem \ref{thm:noground});
      \end{itemize}
      \item Section \ref{subsec:open} suggests some open problems.
	 \end{itemize}
	 \item In Section \ref{sec:s2d} we present the proofs of the results on Sobolev inequalities. In particular:
	 \begin{itemize}
	  \item Section \ref{subsec:isop} concerns the equivalence with the isoperimetric inequality (Theorem \ref{THM:iso=sob});
	  \item Section \ref{subsec:bound} addresses the case of uniformly bounded defects (Theorem \ref{THM:s2d_bound}); 
	  \item Section \ref{subsec:unbd_s2d} deals with the case of unbounded defects (Theorem \ref{THM:s2d_P} and Theorem \ref{THM:SobNoP}).
     \end{itemize}
	 \item In Section \ref{sec:gs} we give the proofs of the results on the NLSE ground states. In detail:
	 \begin{itemize}
	  \item Section \ref{subsec:proofNLS} investigates the case of finitely many bounded defects and the case of $\Z$--periodic defects (first part of Theorem \ref{THM:ex_com&Z});
	  \item Section \ref{sub:z2} studies the case of $\Z^2$--periodic defects (second part of Theorem \ref{THM:ex_com&Z});
	  \item Section \ref{subsec:nonex} discusses the case of infinitely many defects with no periodic structure (Theorem \ref{thm:noground}).
	 \end{itemize}
	 \item In the Appendix we prove Lemma \ref{lem_dDconn}.
	\end{itemize}
	
    

\subsection*{Acknowledgements}

The authors wish to thank Lorenzo Venturello for helpful discussions and suggestions in the construction of the counterexample in Theorem \ref{THM:SobNoP}. Moreover, the work has been partially supported by the MIUR project ``Dipartimenti di Eccellenza 2018--2022" (CUP E11G18000350001) and by the INdAM GNAMPA project 2020 ``Modelli differenziali alle derivate parziali per fenomeni di interazione". Finally, we wish to thank the anonymous referees for their useful comments concerning the present paper.


	
	\section{Setting and main results}
	\label{sec:main}
	
	In this Section we present all the results we obtained thus far on defected two--dimensional grids, both from the point of view of the Sobolev inequalities and from that of the NLSE ground states.
	
	Since the paper aims at discussing such results in comparison with those on the undefected two--dimensional grid, let us preliminarily revise the main achievements on this model attained in \cite{ADST19}.
    
    

    \subsection{Two--dimensional grid: the dimensional crossover}
    \label{subsec:grid}
    
	To better explain the unusual features of the two--dimensional grid, it is worth recalling some well known results on standard Euclidean spaces.
	
	A characteristic feature of these spaces is that, for every integer $d\geq1$, $\R^d$ supports a unique Sobolev inequality of the form
	\begin{equation}
		\label{intro:SdD}
		\|u\|_{L^{\f d{d-1}}(\R^d)}\lesssim\|\triangledown u\|_{L^1(\R^d)},\qquad   \forall u\in W^{1,1}(\R^d)\,,
	\end{equation}
    which depends on the sole dimension $d$ (see for instance \cite[Chapter 8]{LL01}). Such inequality has various consequences on the analysis of nonlinear PDEs. In particular, if we recall the NonLinear Schr\"odinger energy functional
    \begin{equation}
		\label{eq:ERd}
		E_p(u,\R^d):=\f12\|\nabla u\|_{L^2(\R^d)}^2-\f1p\|u\|_{L^p(\R^d)}^p,\qquad p\geq 2,
	\end{equation}
	then \eqref{intro:SdD} guarantees lower boundedness and coercivity of $E_p(\cdot,\R^d)$ on the space of functions in $H^1(\R^d)$ fulfilling the so--called \emph{mass constraint}
	\[
	\Hmu(\R^d):=\{u\in H^1(\R^d)\,:\,\|u\|_{L^2(\R^d)}^2=\mu\},
	\]
	for every $\mu>0$, provided that $p\leq p_d^*:=\f4d+2$. Indeed, classical arguments show that \eqref{intro:SdD} implies, for every $p\geq2$ if $d=1,2$ and every $p\in\left[2,\f{2d}{d-2}\right)$ if $d\geq3$, the $d$--dimensional Gagliardo--Nirenberg inequality
	\[
		\|u\|_{L^p(\R^d)}^p\lesssim \|u\|_{L^2(\R^d)}^{d+(2-d)\f{p}{2}}\|\triangledown u\|_{L^2(\R^d)}^{\left(\f{p}{2}-1\right)d},\qquad \forall u\in H^1(\R^d),
	\]
	which, combined with \eqref{eq:ERd} and $u\in\Hmu(\R^d)$, gives
	\[
	E_p(u,\R^d)\geq\f12\|\triangledown u\|_{L^2(\R^d)}^2-\f Cp\mu^{\f d2+(2-d)\f p4}\|\triangledown u\|_{L^2(\R^d)}^{\left(\f{p}{2}-1\right)d}.
	\]
	The exponent $p_d^*$ is usually said $L^2$\emph{--critical}, in the sense that, denoting
	\[
	\EE_{p,\R^d}(\mu):=\inf_{v\in\Hmu(\R^d)}E_p(v,\R^d),
	\]
	and defining \emph{ground state of mass} $\mu$ any function $u\in\Hmu(\R^d)$ such that
	\[
	E_p(u,\R^d)=\EE_{p,\R^d}(\mu),
	\]
	the following holds \cite{C03}:
	\begin{itemize}
		\item[(i)] if $p<p_d^*$, then $\EE_{p,\R^d}(\mu)$ is finite and strictly negative and ground states do exist for every $\mu>0$;
		\item[(ii)] if $p>p_d^*$, then $\EE_{p,\R^d}(\mu)=-\infty$ for every $\mu>0$;
		\item[(iii)] if $p=p_d^*$, there exists $\mu_d^*>0$, called \emph{critical mass}, such that:
		\begin{itemize}
		\item when $\mu<\mu_d^*$, $\EE_{p_d^*,\R^d}(\mu)=0$ and the infimum is never attained,
		\item when $\mu>\mu_d^*$, $\EE_{p_d^*,\R^d}(\mu)=-\infty$,
		\item when $\mu=\mu_d^*$, $\EE_{p_d^*,\R^d}(\mu)=0$ and ground states do exist.
		\end{itemize}
	\end{itemize}
	
	It has been shown in \cite{ADST19} that the portrait is sensibly different when one replaces $\R^d$ with the infinite two--dimensional square grid in Figure \ref{fig:Q}, throughout denoted by $\Q$ (for more details see Section \ref{subsubsec:grid} or \cite[Section 2]{ADST19}). A brand new phenomenon appears in this setting, the so--called \emph{dimensional crossover}, deep--seated in the interplay between the microscopic and the macroscopic scales of $\Q$.
	
	The first way to see it is from the point of view of Sobolev inequalities.
	As any metric graph, the two--dimensional grid is a locally one--dimensional metric space. Hence, it is easy to see that it inherits from the dimension one the following inequality
	\begin{equation}
	 \label{eq:1dGN}
	 \|u\|_{L^\infty(\Q)}\lesssim\|u'\|_{L^1(\Q)},\qquad\forall u\in W^{1,1}(\Q),
	\end{equation}
	which is the Sobolev inequality typical of $\R$. However, in this case it is no more a characteristic feature. Indeed, the grid also supports the two--dimensional Sobolev inequality \cite[Theorem 2.2]{ADST19}
	\begin{equation}
	\label{eq:2dGN}
	\|u\|_{L^2(\Q)}\lesssim\|u'\|_{L^1(\Q)},\qquad\forall u\in W^{1,1}(\Q).
	\end{equation}
	This is a very uncommon property, which possesses no analogue in other classes of metric graphs such as graphs with finitely many edges or $\Z$--periodic ones.
	
	For our purposes,  the most relevant consequence of the simultaneous validity of \eqref{eq:1dGN} and \eqref{eq:2dGN} is that one can prove both the one and the two--dimensional Gagliardo--Nirenberg inequalities for every $p\geq2$, i.e.
	\begin{equation}
	\label{eq:GNOld}
	\left\{
	\begin{array}{l}
	 \displaystyle\|u\|_{L^p(\Q)}^p\leq C_1(p,\Q)\|u\|_{L^2(\Q)}^{\f p2+1}\|u'\|_{L^2(\Q)}^{\f p2-1},\\[.4cm]
	 \displaystyle\|u\|_{L^p(\Q)}^{p}\leq C_2(p,\Q)\|u\|_{L^2(\Q)}^{2}\|u'\|_{L^2(\Q)}^{p-2},
	\end{array}
	\right.
	\qquad\forall u\in H^1(\Q),
	\end{equation}
	whose combination \cite[Corollary 2.4]{ADST19} yields, when $4\leq p\leq6$, a new family of Gagliardo--Nirenberg inequalities
	\begin{equation}
	\label{eq:GNNew}
	\|u\|_{L^p(\Q)}^p\leq K(p,\Q)\|u\|_{L^2(\Q)}^{p-2}\|u'\|_{L^2(\Q)}^2,\qquad\forall u\in H^1(\Q).
	\end{equation}
	Note that, in contrast to \eqref{eq:GNOld}, in \eqref{eq:GNNew} the parameter $p$ does not affect the power of $\|u'\|_{L^2(\Q)}$.
	
	This remark leads to the second evidence of the dimensional crossover. Indeed, defining again ground states of mass $\mu$ the functions $u\in\Hmu(\Q)$ such that
	\[
	E_p(u,\Q)=\EE_{p,\Q}(\mu):=\inf_{v\in\Hmu(\Q)}E_p(v,\Q),
	\]
	with
	\[
	 E_p(u,\Q):=\f12\|u'\|_{L^2(\Q)}^2-\f1p\|u\|_{L^p(\Q)}^p\quad\text{and}\quad\Hmu(\Q):=\{u\in H^1(\Q)\,:\,\|u\|_{L^2(\Q)}^2=\mu\},
	\]
	the validity of \eqref{eq:GNNew} allows to prove that \cite[Theorems 1.1--1.2]{ADST19}:
	\begin{itemize}
		\item[(i)] if $p\in(2,4)$, then $\EE_{p,\Q}(\mu)$ is finite and strictly negative and ground states do exist for every $\mu>0$;
		\item[(ii)] if $p\in[4,6]$, then there exists a critical mass $\mu_{p,\Q}>0$ such that
		\begin{equation}
		\label{intro:EQ_4p6}
		\EE_{p,\Q}(\mu)
		\left\{
		\begin{array}{ll}
		=0, & \text{if }\mu\leq\mu_{p,\Q},\\[.2cm]
		<0, & \text{if }\mu>\mu_{p,\Q},
		\end{array}
		\right.
		\end{equation}
		and
		\begin{itemize}
			\item when $p\in(4,6)$, ground states exist if and only if $\mu\geq\mu_{p,\Q}$,
			\item when $p=4$, ground states exist if $\mu>\mu_{4,\Q}$, whilst they do not exist if $\mu<\mu_{4,\Q}$,
			\item when $p=6$, $\EE_{p,\Q}(\mu)=-\infty$ for every $\mu>\mu_{6,\Q}$ and ground states do not exist for any value of $\mu>0$.
		\end{itemize}
	\end{itemize}
	The interplay between different dimensions thus appears as $p_1^*=6$ and $p_2^*=4$ are the critical powers in dimensions $d=1$ and $d=2$, respectively. When $p$ is subcritical both in dimensions one and two, i.e. $p\in(2,4)$, the problem exhibits the typical subcritical features and ground states always exist. On the contrary, when $p$ is supercritical in dimension two, i.e. $p>4$, but subcritical in dimension one, i.e. $p<6$, a critical mass (depending on $p$) arises. The two--dimensional macroscale of $\Q$ dominates for small masses, where ground states do not exist as all functions have positive energy, while for larger masses the one--dimensional microscale prevails and ground states do exist. Finally, when $p$ is supercritical for both the scales, i.e. $p>6$, the problem retraces the classical supercritical behavior as the energy is always unbounded from below.
	
    

\subsection{Setting and definitions}
    \label{subsec:sett}

    Before stating the main results of the paper it is necessary to clarify the setting of the problem. In particular, we must specify what we mean by \emph{defects} and \emph{defected grid}.
    


\subsubsection{Two--dimensional metric grids}
\label{subsubsec:grid}
    
    Let us first recall some basics on two--dimensional grids. For a more general overview on compact and noncompact metric graphs we redirect the reader to \cite{ADST19,AST17,BK13}.

	It is convenient to introduce the \emph{infinite two--dimensional square grid} $\Q=(\VQ,\EQ)$ with edges of length 1 as the graph isometrically embedded in $\R^2$ whose set of vertices $\VQ$ coincides with the integer lattice $\Z^2$ and with an edge between each couple of vertices at unitary distance in $\R^2$. Thus
	\[
	 \begin{array}{rcl}
	 \v\in\VQ & \Rightarrow &\v\cong(i,j)\in\R^2,\text{ for some }i,\,j\in\Z,
    \end{array}
	\]
    and $\EQ=\EQ^h\cup\EQ^v$, with
    \begin{align*}
     \EQ^h\cong & \big\{(i,i+1)\times\{j\}\subset\R^2: i,\,j\in\Z\big\}\subset\R^2\\[.2cm]
     \EQ^v\cong & \big\{\{i\}\times(j,j+1)\subset\R^2: i,\,j\in\Z\big\}\subset\R^2\,.
    \end{align*}
    In view of such embedding, one can write $\Q=\VQ\cup\EQ$, or also $\Q=\overline{\EQ}^{\R^2}$, with $\VQ$ and $\EQ$ meant as subsets of $\R^2$. Similarly, $\Q=\left(\bigcup_{j\in\Z}H_j\right)\cup\left(\bigcup_{i\in\Z}V_i\right)$, where $H_j$ is the horizontal line in $\R^2$ of ordinate $j\in\Z$ and $V_i$ is the vertical line in $\R^2$ of abscissa $i\in\Z$. 
    
    Clearly, one can think of the grid also independently of its embedding in $\R^2$, as the $\Z^2$--periodic planar graph with every vertex of degree 4 and every edge of length 1. Throughout the paper, we will use these different identifications of $\Q$ as interchangeable since they do not give rise to misunderstandings.
	
	Consistently, a function $u:\Q\to\R$ can be defined both as a family of pairs of functions $\{(h_k,v_k)\}_{k\in\Z}$ such that $h_k,v_k:\R\to\R$, where $h_k:=u_{|_{H_k}}$ and $v_k:=u_{|_{V_k}}$, and as a family of functions $(u_e)_{e\in\EQ}$ such that $u_e:[0,1]\to\R$, where either $u_e:=u_{|_{[i_e,i_e+1]\times\{j_e\}}}$ when $e\in\EQ^h$, or $u_e:=u_{|_{\{i_e\}\times[j_e,j_e+1]}}$ when $e\in\EQ^v$.
	
	As a consequence, $L^p(\Q)$ is the space of those functions $u$ such that
	\begin{align*}
	 \|u\|_{L^p(\Q)}^p:= & \sum_{j\in\Z}\|h_j\|_{L^p(\R)}^p+\sum_{i\in\Z}\|v_i\|_{L^p(\R)}^p=  \sum_{e\in\EQ}\|u_e\|_{L^p(0,1)}^p<\infty,\qquad\text{if }1\leq p<\infty,\\[.2cm]
	 \|u\|_{L^\infty(\Q)}:= & \sup_{i,j\in\Z}\big\{\|h_j\|_{L^\infty(\R)},\|v_i\|_{L^\infty(\R)}\big\}=\sup_{e\in\EQ}\|u_e\|_{L^\infty(0,1)}<\infty,
	\end{align*}
	while $H^1(\Q)$ and $W^{1,1}(\Q)$ are the spaces of functions $u$ which are continuous on $\Q$ and such that
	\begin{align*}
	 & \|u\|_{H^1(\Q)}^2:=\|u\|_{L^2(\Q)}^2+\|u'\|_{L^2(\Q)}^2<\infty\\[.2cm]
	 & \|u\|_{W^{1,1}(\Q)}:=\|u\|_{L^1(\Q)}+\|u'\|_{L^1(\Q)}<\infty.
	\end{align*}
	For the sake of simplicity, in the following we will always use the notation
	\[
	 \begin{array}{rcl}
	 \|u\|_{L^p(H_j)},\,\|u\|_{L^p(V_i)} & \text{in place of} & \|h_j\|_{L^p(\R)},\,\|v_i\|_{L^p(\R)},\\[.2cm]
	 \|u\|_{L^p(e)},\,\|u\|_{L^\infty(e)} & \text{in place of} & \|u_e\|_{L^p(0,1)},\,\|u_e\|_{L^\infty(0,1)},\\[.2cm]
	 \int_{H_j}|u(x)|^p\dx,\,\int_{V_i}|u(x)|^p\dx & \text{in place of} & \int_\R|h_j(x)|^p\dx,\,\int_\R|v_i(x)|^p\dx,\\[.2cm]
	 \int_{e}|u(x)|^p\dx & \text{in place of} & \int_0^1|u_e(x)|^p\dx.
	 \end{array}
	\]
	
	\medskip
	We also recall that, for any couple of points $q,\,r\in\Q$, we say that $\gamma\subset\Q$ is a \emph{path connecting $q$ and $r$ in $\Q$} if $\gamma=f([a,b])$, with $f:[a,b]\subseteq\R\to\Q$ such that
	\begin{itemize}
	 \item[(i)] $f$ is continuous;
	 \item[(ii)] $f(a)=q$ and $f(b)=r$;
	 \item[(iii)] for every $x\in\Q\setminus\big(\VQ\cup\{q,r\}\big)$, $\#\{f^{-1}(x)\}=1$
	\end{itemize}
	(see, e.g., Figure \ref{fig:path}). Here and throughout the symbol $\#$ denotes the cardinality of a set. Moreover, we call a path from a point to itself a \emph{cycle} and we say that a path is \emph{simple} if (iii) holds for every $x\in\Q\setminus\{q,r\}$.
	
	\begin{remark}
	 Observe that, by definition, a path possesses an orientation too. Roughly speaking, we could say that $x$ precedes $y$ in $\gamma$ whenever $f^{-1}(x)<f^{-1}(y)$. Clearly, this definition does not apply in general. For instance, if $x$ or $y$ are either vertices with two or more pre--images through $f$ or endpoints of a cycle, then the definition is meaningless. Nevertheless, it is meaningful up to a discrete set and then endows the path with a global orientation that reflects also on vertices with two or more pre--images in the fact that one can detect the order of the passages on the same vertex. In view of these remarks, we will always use expressions like ``the last vertex of the path such that...'', ``the first vertex of the path such that...''. They have to be understood as ``the vertex with the greatest pre--image of the path such that...'', ``the vertex with the smallest pre--image of the path such that...''
	\end{remark}
	\begin{figure}[t]
		\centering
		\includegraphics[width=.6\columnwidth]{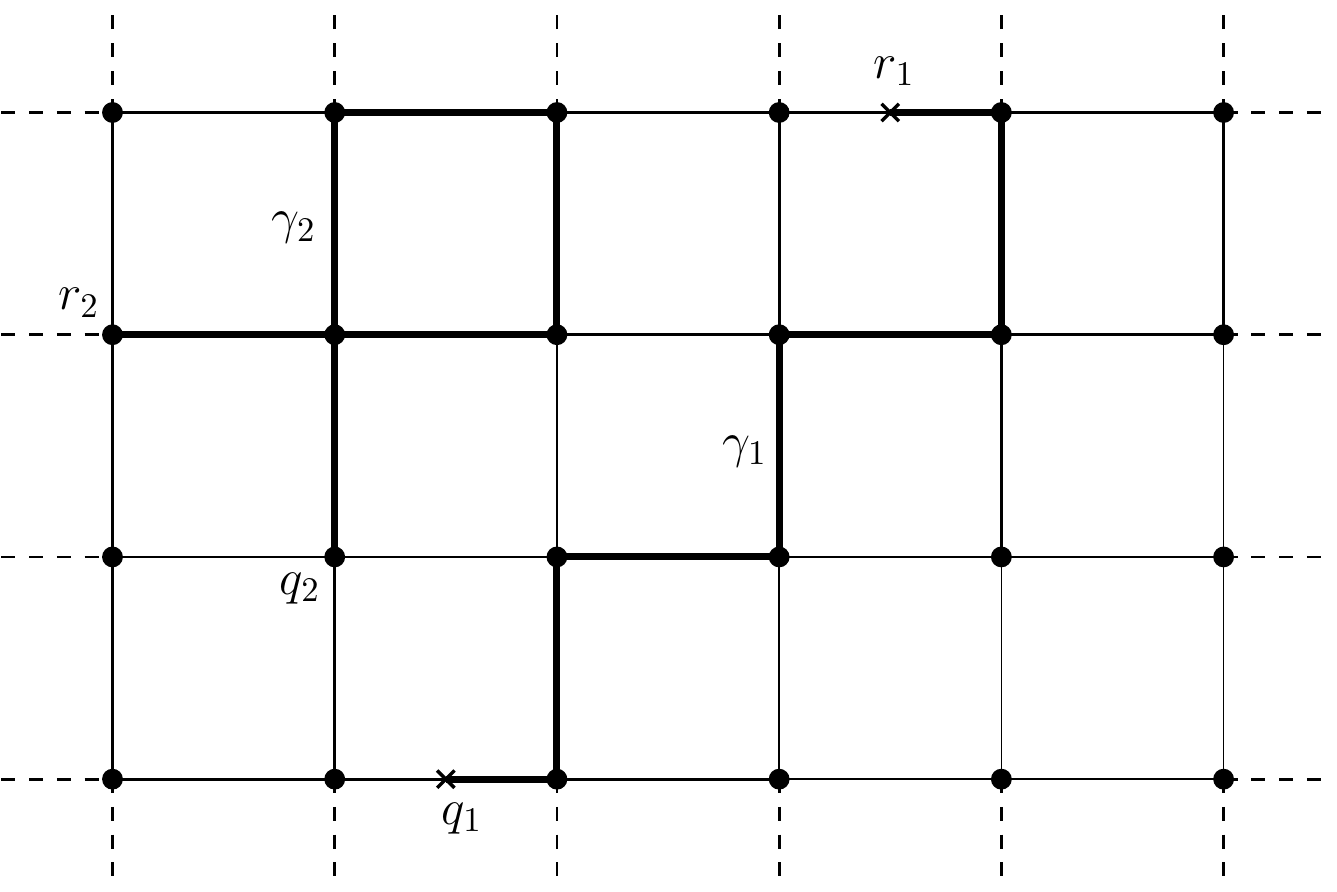}
		 \caption{Two different paths (in bold) on a grid.}
		\label{fig:path}
	\end{figure}
	Given the embedding of $\Q$ in $\R^2$, the \emph{length} of a path $\gamma$ is defined by its 1--dimensional Hausdorff measure and is denoted by $|\gamma|$. Therefore, the \emph{distance} in $\Q$ between two points $q$ and $r$ is given by
    \[
     d_\Q(q,r):=\min\{|\gamma|:\gamma \text{ is a path in }\Q\text{ connecting } q \text{ and } r\},
    \]
    whence the \emph{ball} of center $q$ and radius $R>0$ in $\Q$ is given by
    \[
     B_R(q,\Q):=\{r\in\Q:d_\Q(q,r)<R\}.
    \]
	We also say that a path $\gamma$ is \emph{infinite} whenever $|\gamma|=+\infty$.
	
	\begin{remark}
	 Throughout the paper we use the notation $|\Omega|$ to denote the  1--dimensional Hausdorff measure of any subset $\Omega$ of $\Q$.
	\end{remark}
	
	One thing we often do in the following is to look at the restriction of a function $u$ on the grid to a given path $\gamma$. Since by definition $\gamma$ can be identified with an interval $[a,b]$ on the real line, whenever we need to evaluate $u$ along $\gamma$ we will say for instance ``consider $u(x)$ for $x\in[a,b]$..." to indicate ``consider $u(f(x))$ for $x\in[a,b]$...", omitting to write explicitly the map $f$ for which $\gamma=f([a,b])$.

	Clearly, all the tools introduced in this section can be adapted to the case of a proper subgraph  $\Omega$ of $\Q$. In particular, one can define $\Omega$ simply by its set of edges, i.e. $\Omega=\overline{\E_\Omega}^{\R^2}$.



\subsubsection{Defected grids}
\label{subsubsec:defected}
We introduce the notion of defected grid (Figure \ref{fig:defected}).
	
	\begin{definition}[Defected grid]
		\label{def:G}
		A connected metric graph $\G=(\VG,\EG)$ is said a \emph{defected grid} if:
		\begin{itemize}
			\item[(i)] $\G\subset\Q$, i.e. $\VG\subset \VQ$ and $\EG\subset \EQ$;
			\item[(ii)] $\v\in \VQ\setminus \VG$ if and only if $e\in \EQ\setminus \EG$ \emph{for every} $e\succ\v$;
			\item[(iii)] for every $\v\in \VG$ it holds
			\[
			\liminf_{n\to+\infty}\f{|B_n(\v,\G)|}{|B_n(\v,\Q)|}>0.
			\]
		\end{itemize}
	\end{definition}
	
	\begin{figure}[t]
	\centering
       \subfloat[][three compact defects (with bold boundaries).]
       {\includegraphics[width=.48\columnwidth]{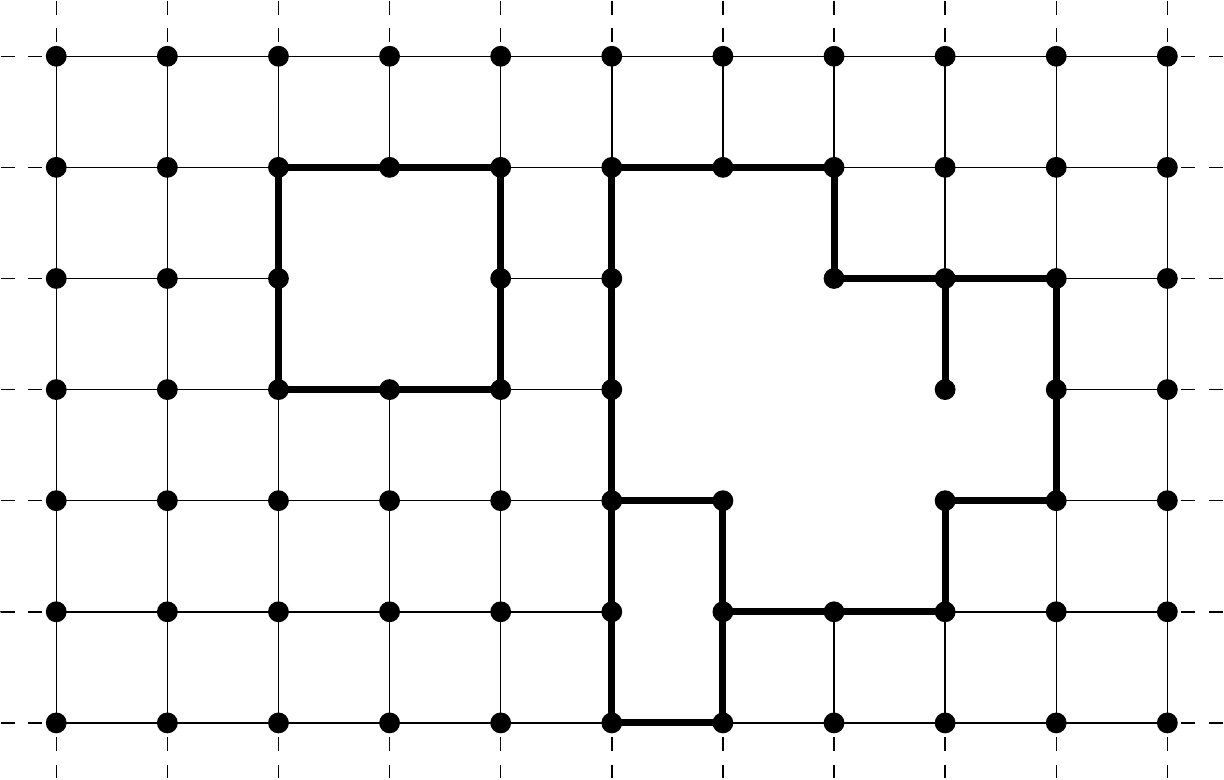}}
       \hspace{.4cm}
       \subfloat[][two noncompact defects (with bold boundaries).]
       {\includegraphics[width=.48\columnwidth]{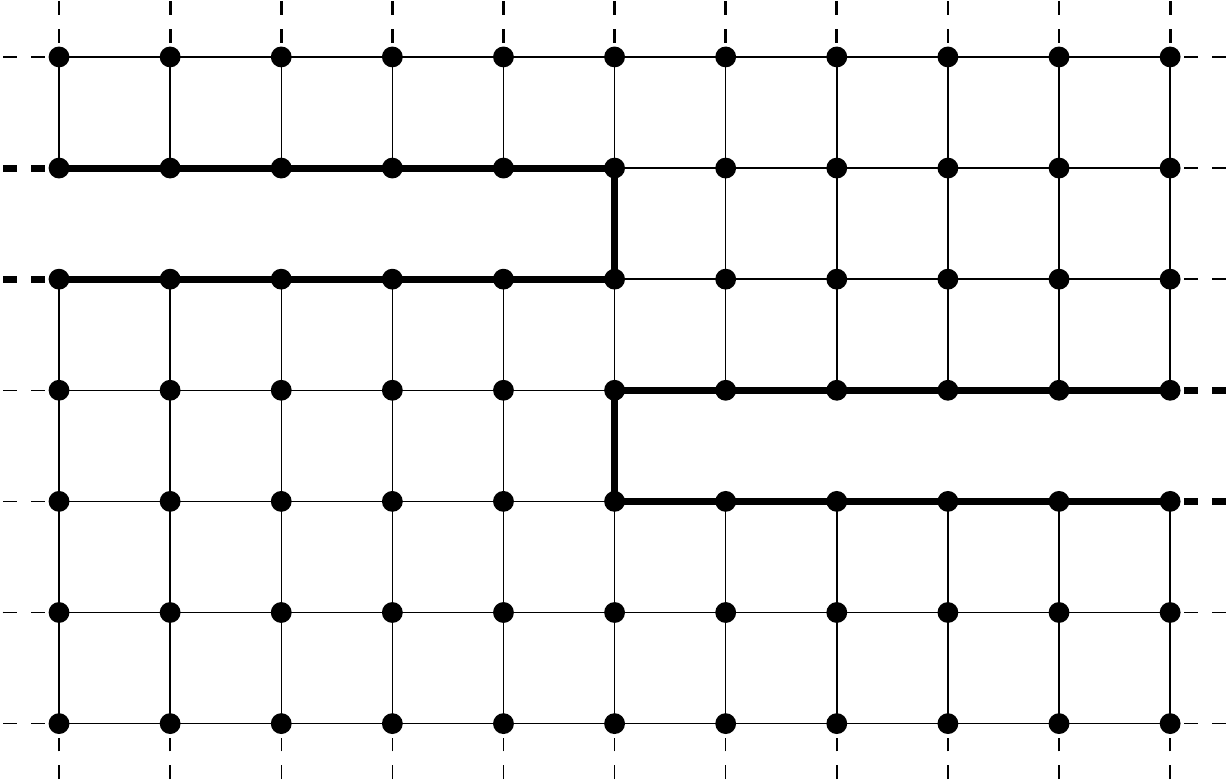}}
       \caption{Examples of defected grids.}
       \label{fig:defected}
    \end{figure}
	
	Some comments are in order. We first highlight that assuming $\G$ to be connected excludes from our discussion graphs as the one in Figure \ref{fig:no_defect}\textsc{(a)}, where most of the questions we are interested in become trivial. 
	
	Assumption (i) means that $\G$ is a subgraph of $\Q$, while assumption (ii) provides that the vertices at the endpoints of any edge of $\EG$ belong to $\VG$. In view of the previous section and of the embedding of $\Q$ in $\R^2$, one easily sees that condition (ii) could be dropped as redundant. However, we leave it for the sake of completeness, since the definition of defected grids does not actually require the embedding in $\R^2$.
	
	Assumption (iii) essentially says that, up to a factor, the area of metric balls with large radius in $\G$ grows as in the undefected grid $\Q$. Even if at first sight it may seem unnatural, it gets rid of several classes of graphs that would be meaningless to think of as perturbations of $\Q$. Indeed, such an assumption is not fulfilled by graphs with a finite number of vertices and edges (both compact ones as in Figure \ref{fig:no_defect}\textsc{(b)} and noncompact ones as in Figure \ref{fig:no_defect}\textsc{(c)}), as well as by $\Z$--periodic graphs (e.g., Figure \ref{fig:no_defect}\textsc{(d)}--\textsc{(e)}) and other graphs with infinitely many edges (e.g., Figure \ref{fig:no_defect}\textsc{(f)}). As pointed out in the Introduction, graphs like these have been already addressed over the last years with specific tools and techniques, and it is known that they do not share any of the two--dimensional features of the unperturbed grid discussed in Section \ref{subsec:grid}.
	
	However, we stress that it is an open question whether any graph that violates Definition \ref{def:G}(iii) does not exhibit any of the features in Section \ref{subsec:grid} (see Section \ref{subsec:open} below). 
	
	\begin{figure}[t]
	\centering
       \subfloat[][Unconnected graph.]
       {\includegraphics[width=.45\columnwidth]{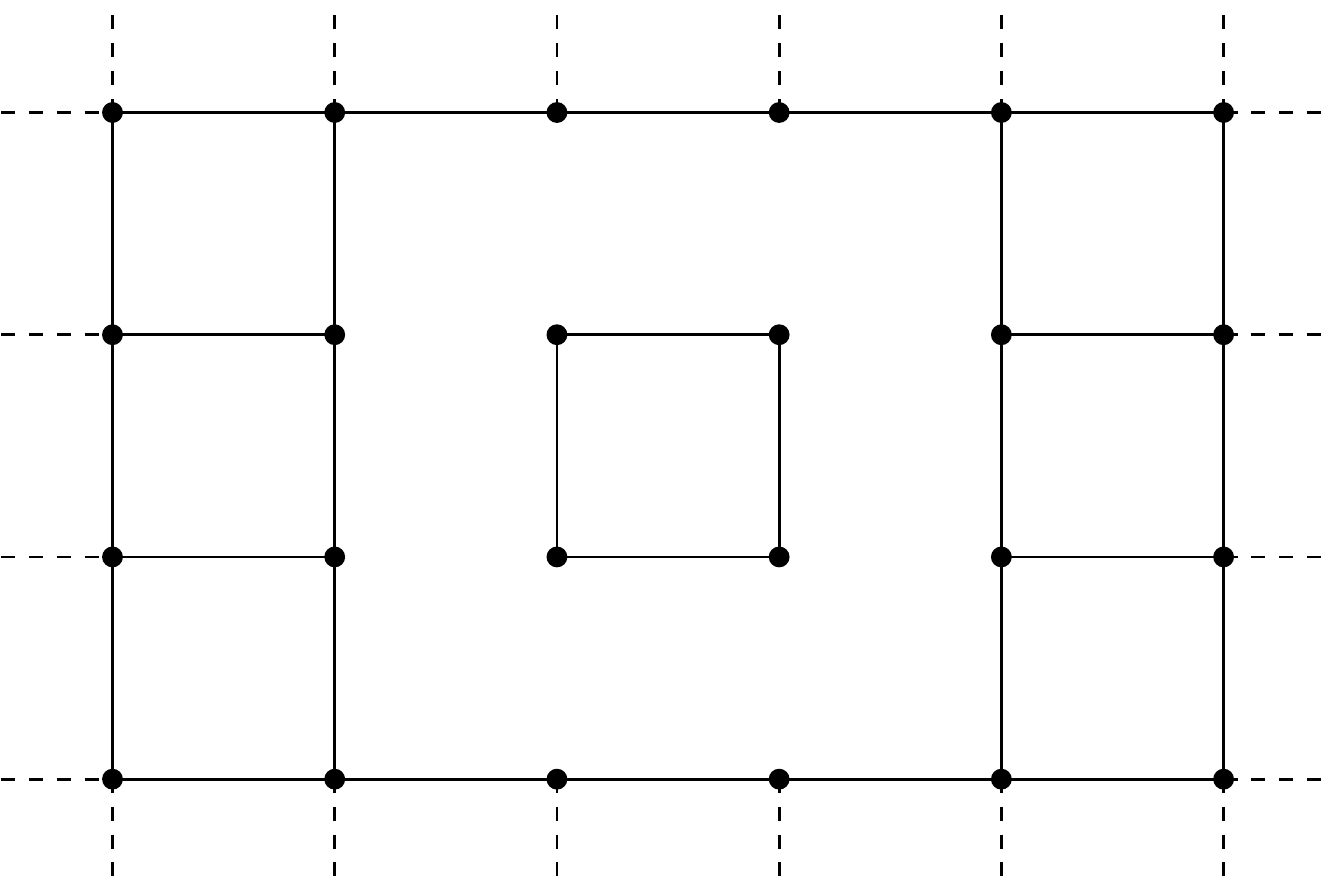}}
       \hspace{2cm}
       \subfloat[][Compact graph.]
       {\includegraphics[width=.28\columnwidth]{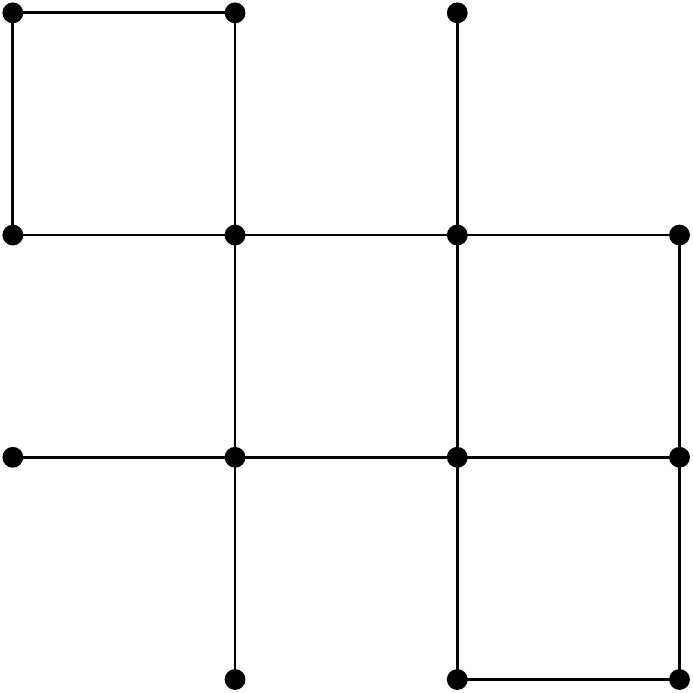}}
       \\
       \subfloat[][Noncompact graph with half--lines (denoted with $\infty$ at the end).]
       {\includegraphics[width=.5\columnwidth]{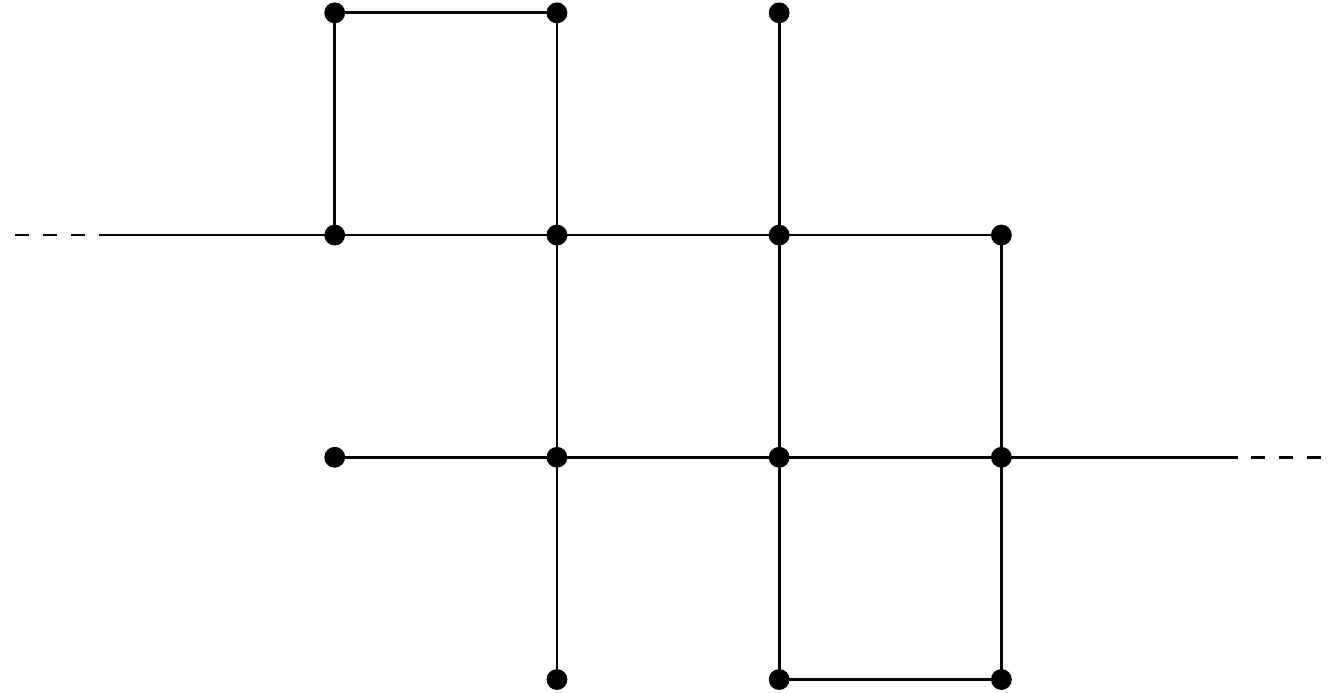}}
       \hspace{.6cm}
       \subfloat[][Ladder--type graph.]
       {\includegraphics[width=.45\columnwidth]{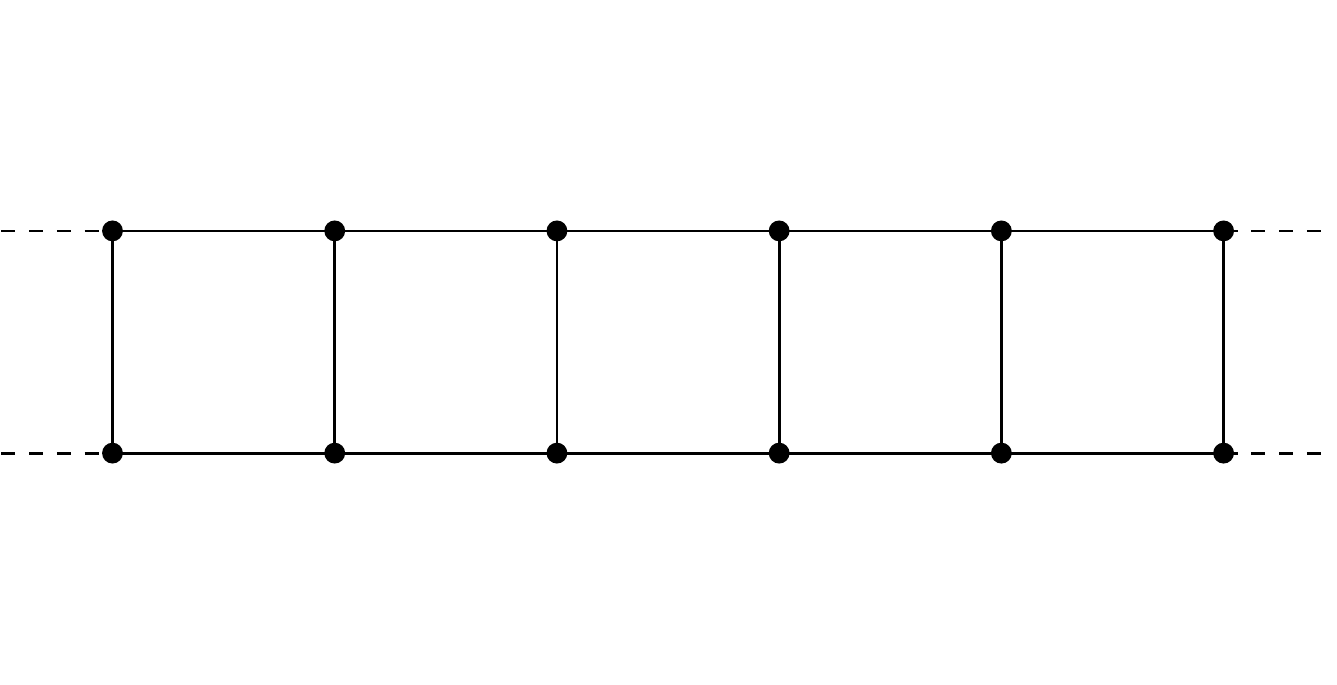}}
       \\
       \subfloat[][Multi--ladder--type graph.]
       {\includegraphics[width=.45\columnwidth]{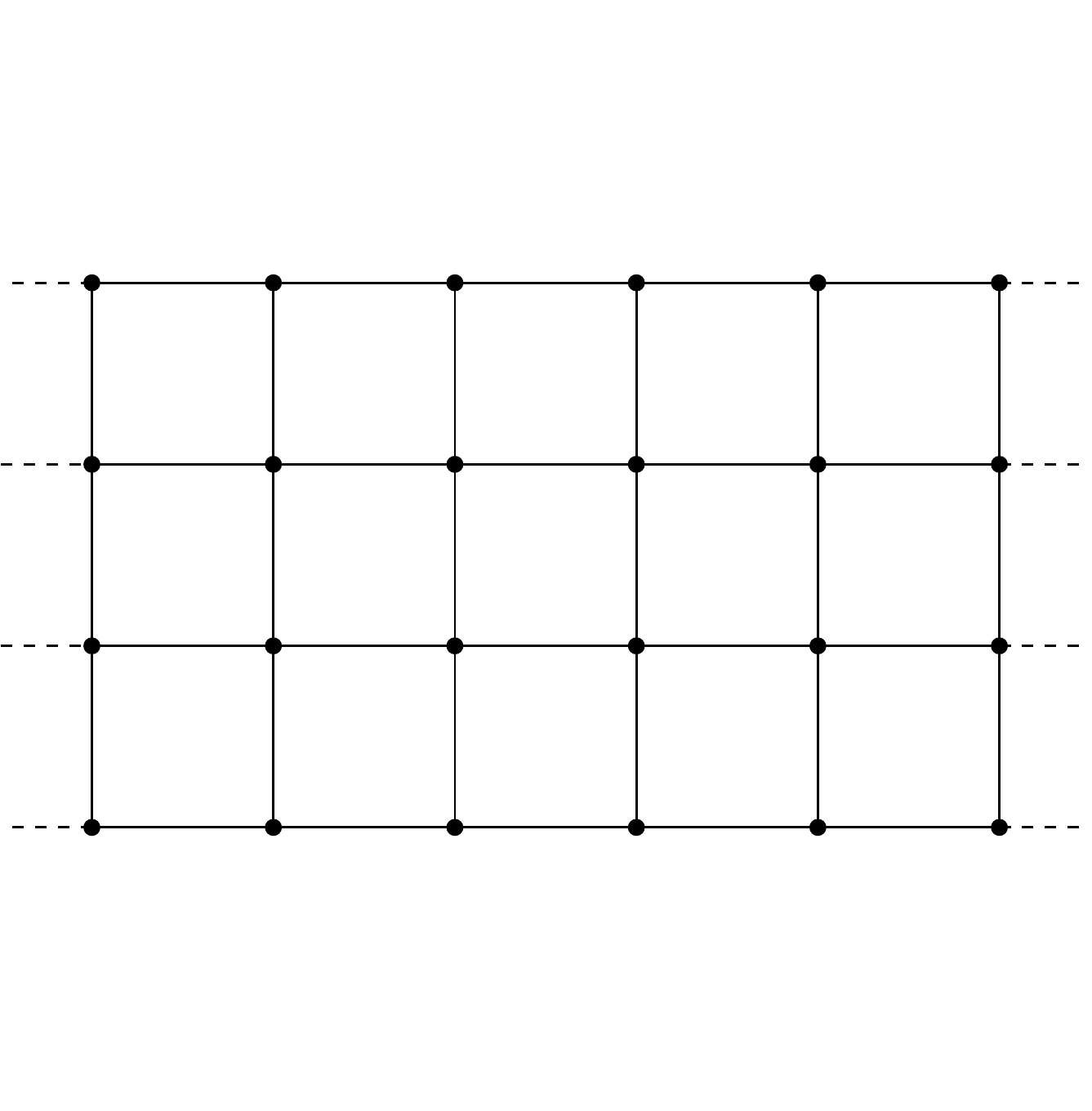}}
       \hspace{.6cm}
       \subfloat[][Cross--ladder--type graph.]
       {\includegraphics[width=.45\columnwidth]{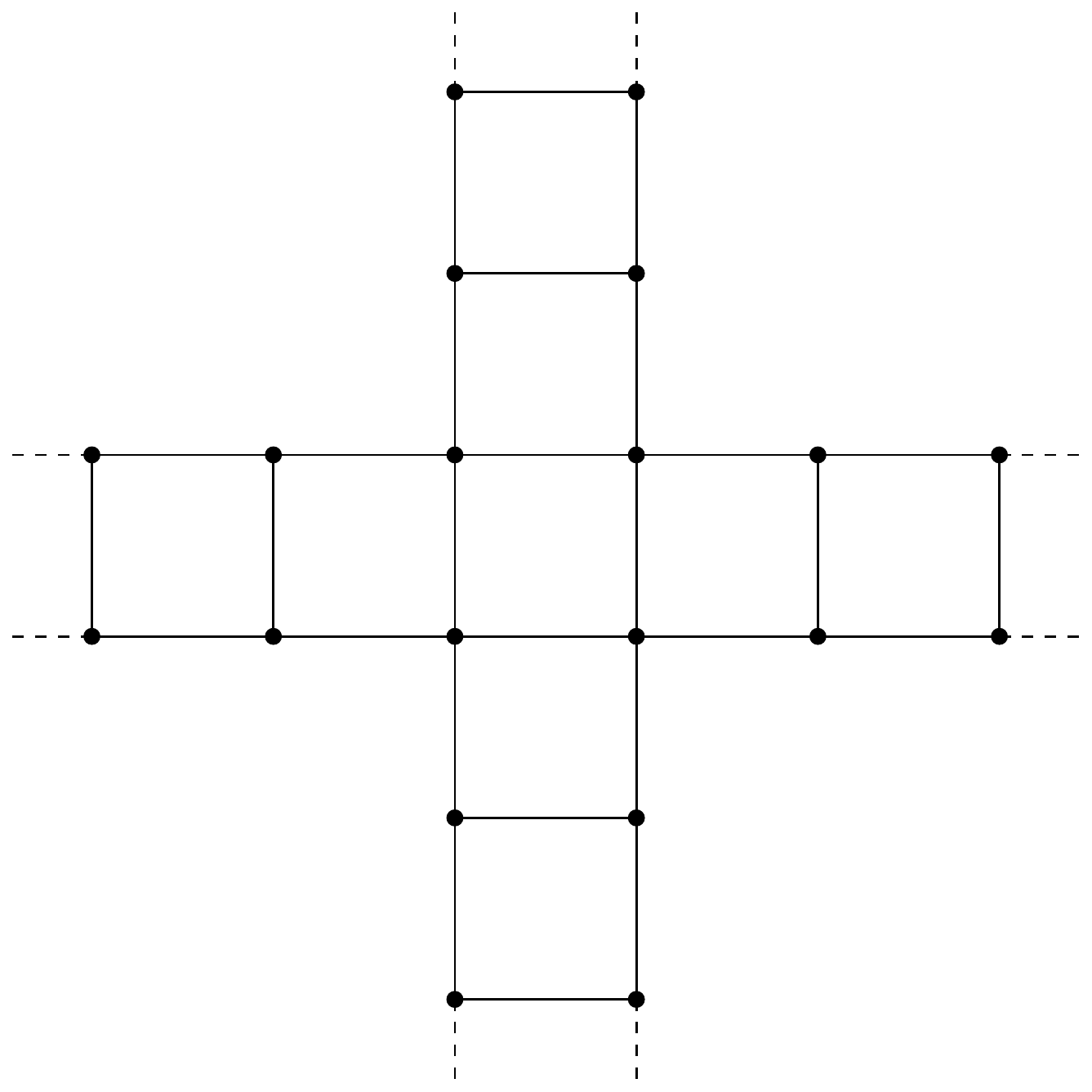}}
       \caption{Examples of graphs that do not satisfy Definition \ref{def:G}.}
       \label{fig:no_defect}
    \end{figure}
	
	\medskip
	We now need to specify what \emph{defects} and \emph{boundaries} of defects are. Preliminarily, we introduce the following notation.
	
	\begin{definition}[Cell]
	 We call a \emph{cell}, and denote it by $\Sq$, any cycle in $\Q$ of length 4. Moreover, given an edge $e\in \EQ$, we denote by $\Sq_e'$, $\Sq_e''\subset\Q$ the unique two cells that contain the edge $e$ (Figure \ref{fig:cells}).
	\end{definition}
	
	\begin{figure}[t]
	\centering
       \subfloat[][Horizontal edge.]
       {\includegraphics[width=.4\columnwidth]{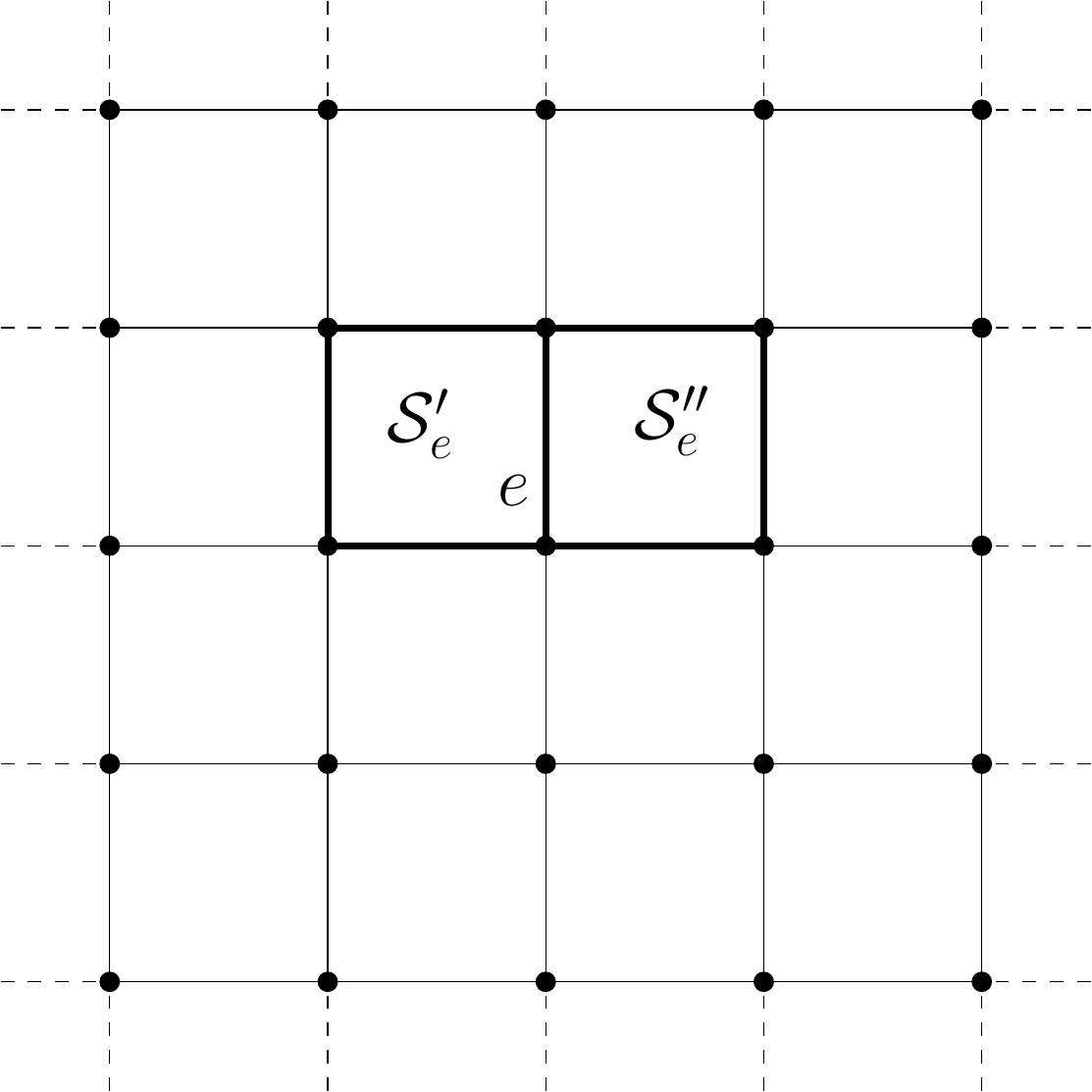}}
       \hspace{1cm}
       \subfloat[][Vertical edge.]
       {\includegraphics[width=.4\columnwidth]{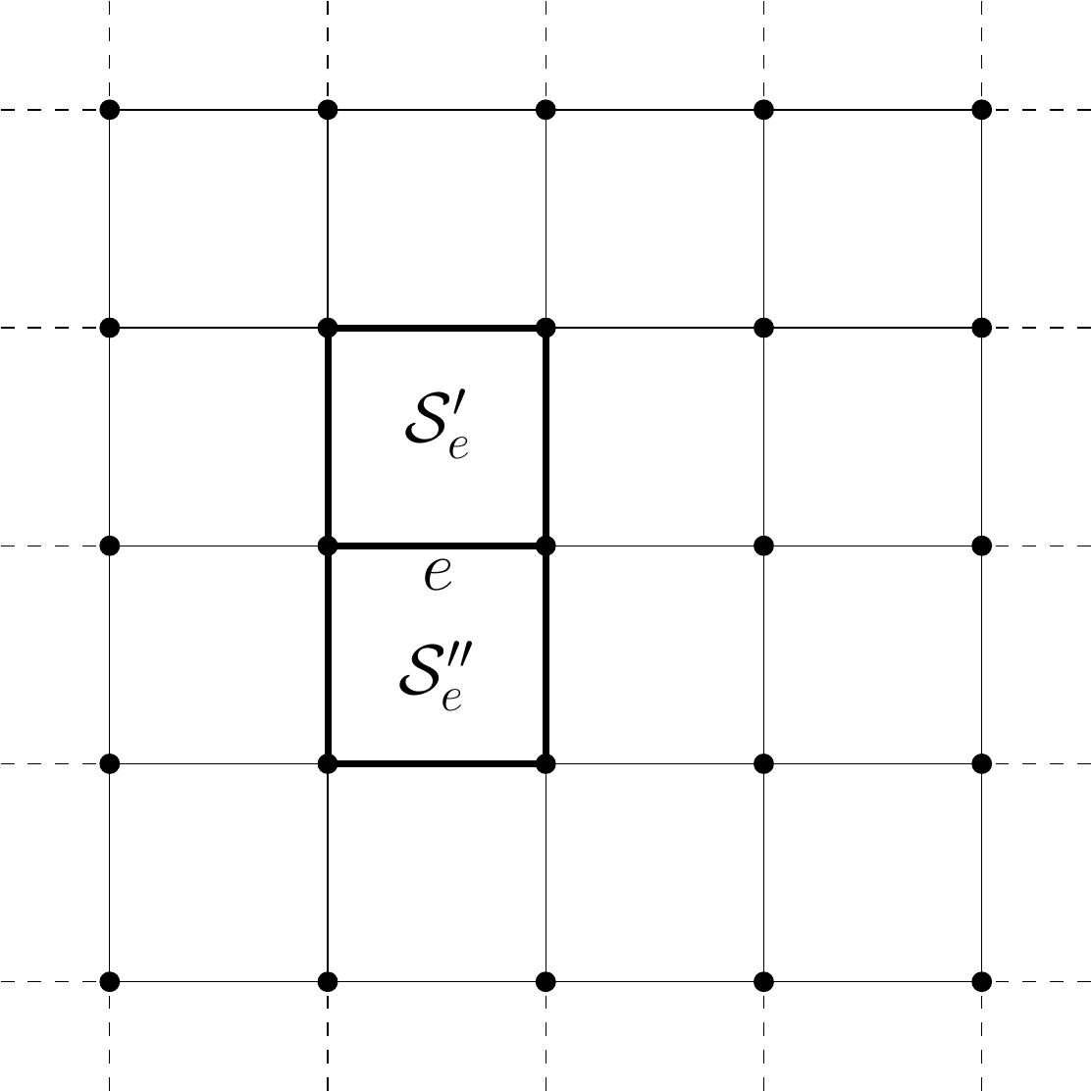}}
       \caption{Examples of cells $\Sq_e'$ and $\Sq_e''$.}
       \label{fig:cells}
    \end{figure}
	
	\begin{definition}(Defect)
		\label{def:D and dD}
		Let $\G$ be a defected grid. We say that a non empty set of edges $\D\subset \EQ\setminus\EG$ is a \emph{defect} of $\G$ if either
		\begin{enumerate}
		 \item[(a)] $\D=\{e\}$ and $(\Sq_e'\cup\Sq_e'')\cap(\EQ\setminus\EG)=\{e\}$; or
		 \item[(b)] for any pair of edges $e,f\in\D$, there exists a finite sequence of cells $\Sq_0,\dots,\Sq_N\subset\Q$ such that
		\begin{itemize}
			\item[\emph{i.}] $e\in\Sq_0$ and $f\in\Sq_N$,
			\item[\emph{ii.}] for every $j=0,\,\dots,\,N-1$ there exists an edge $e_j\in\D$ such that $\Sq_j\cap\Sq_{j+1}=\{e_j\}$.
		\end{itemize}
		\end{enumerate}
		We call \emph{boundary} of $\D$ the graph $\partial\D\subset\G$ given by 
		\[
		\partial\D:=\left(\bigcup_{e\in\D}\Sq_e'\cup\Sq_e''\right)\cap\G.
		\]
	\end{definition}
	
	\begin{figure}[th]
	\centering
       \subfloat[][Two defects consisting of a single edge each.]
       {\includegraphics[width=.47\columnwidth]{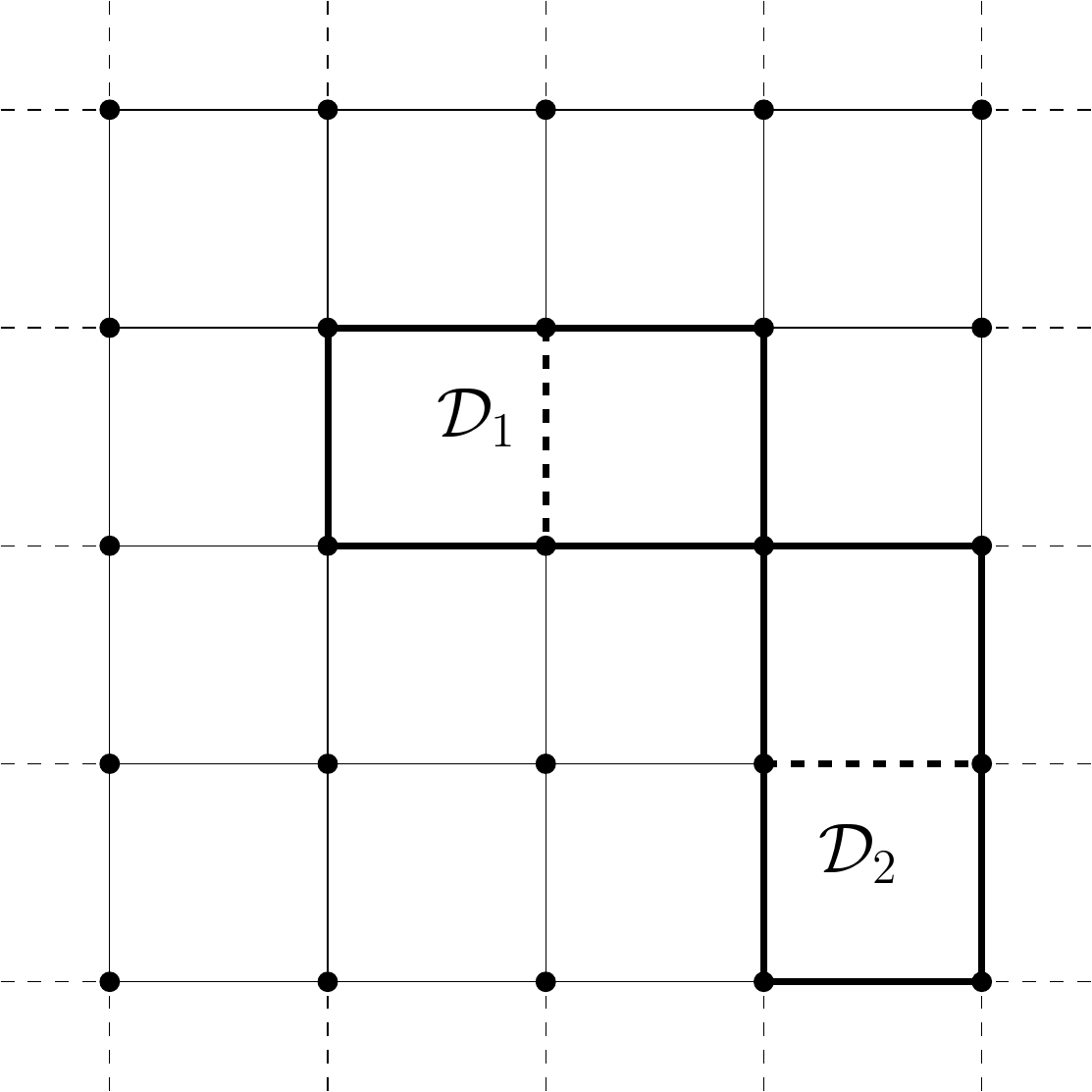}}
       \hspace{.3cm}
       \subfloat[][One defect consisting of four edges.]
       {\includegraphics[width=.47\columnwidth]{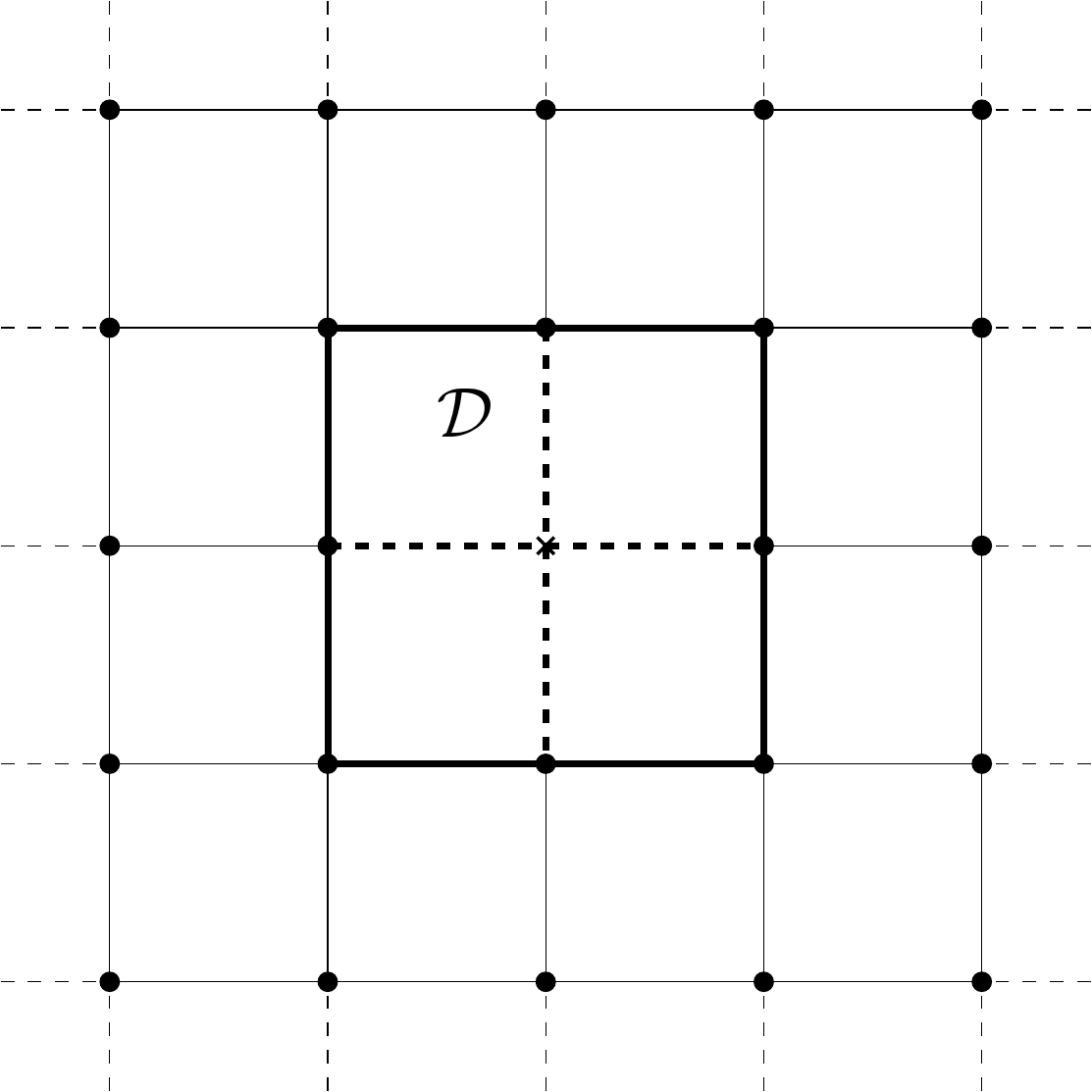}}
       \\
       \subfloat[][Two adjacent defects consisting of four edges each.]
       {\includegraphics[width=.67\columnwidth]{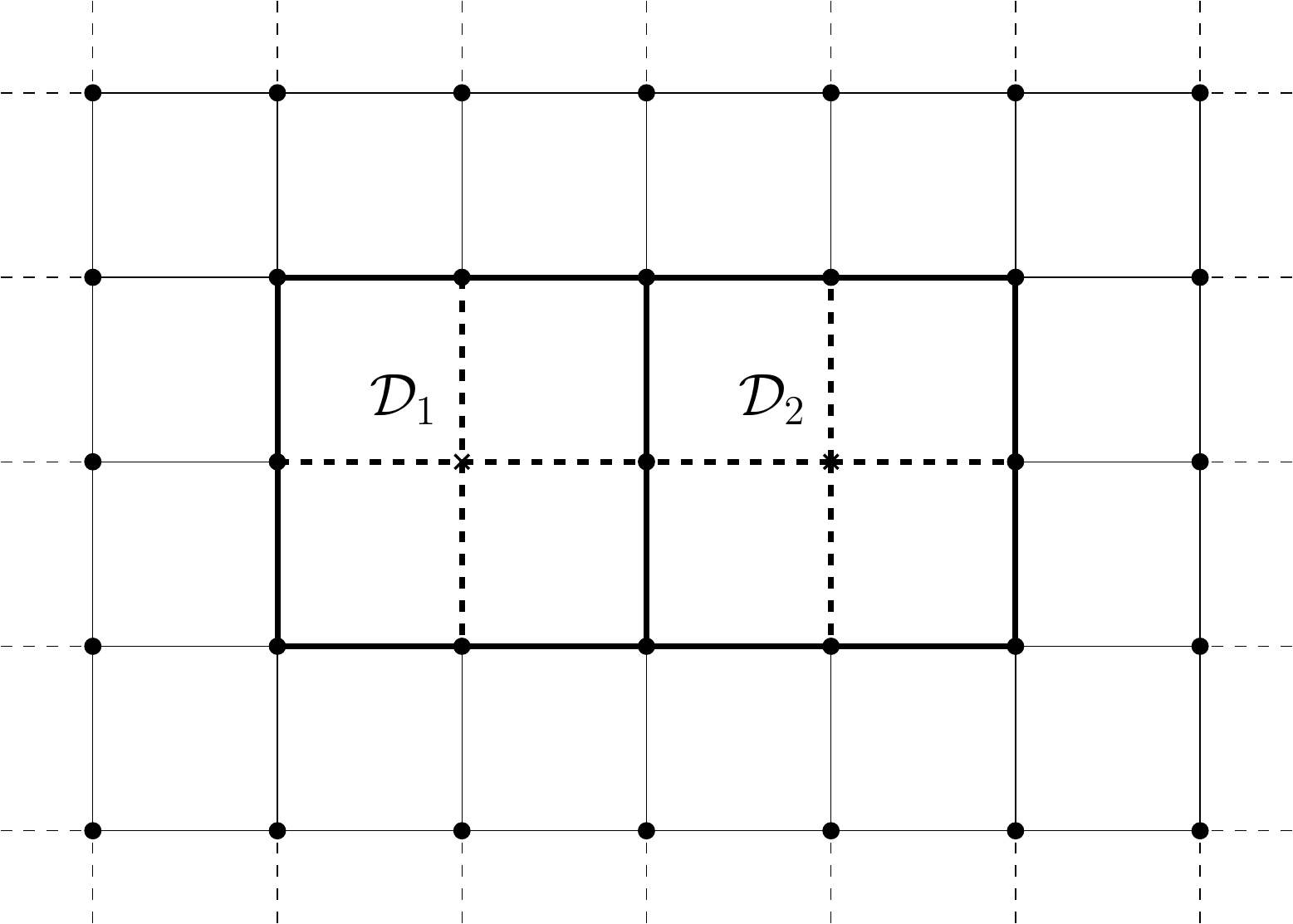}}
       \caption{Examples of defects: boundaries of defects are bold, ``removed'' edges are bold and dashed (crosses represent ``removed'' vertices).}
       \label{fig:defects}
    \end{figure}
    
    Again, some comments are in order. First, Definition \ref{def:D and dD}(a) refers to defects consisting of a single edge (e.g., Figure \ref{fig:defects}\textsc{(a)}), while Definition \ref{def:D and dD}(b) refers to defects with multiple edges (e.g., Figures \ref{fig:defects}\textsc{(b)}--\textsc{(c)}). Moreover, condition \emph{ii.} in Definition \ref{def:D and dD}(b) allows to distinguish between defects with adjacent boundaries as those depicted in Figure \ref{fig:defects}\textsc{(c)}. Indeed, if in the picture one considers any sequence of cells joining an edge of $\D_1$ with an edge of $\D_2$ such that each couple of consecutive cells shares one edge, then there exists at least a couple of consecutive cells whose intersection does not belong to $\D_1\cup\D_2$, thus implying that $\D_1\cup\D_2$ cannot be considered as a single defect. 
    
    Note also that boundaries of the defects belong to the defected grid, while defects do not.
    
	The following lemma states a rather intuitive topological property of the boundaries of the defects, whose proof is a bit technical and has been postponed to the Appendix.
	
	\begin{lemma}
		\label{lem_dDconn}
		Let $\G$ be a defected grid. If $\D$ is a defect of $\G$, then $\partial\D$ is connected.
	\end{lemma}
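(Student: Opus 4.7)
The proof splits according to the two cases of Definition \ref{def:D and dD}, handling (a) by direct verification and (b) by a chain--of--cells argument that produces explicit paths in $\partial\D$ between arbitrary points.

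In case (a) we have $\D=\{e\}$ with no further edges of $\Sq_e'\cup\Sq_e''$ missing. Since $\Sq_e'$ and $\Sq_e''$ are $4$--cycles sharing only the edge $e$, their union comprises $7$ edges, so removing $e$ leaves $6$ edges forming a closed cycle (the hexagon around $e$). This cycle is precisely $\partial\D$ and is manifestly connected.

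For case (b) I would prove path--connectedness. Fix $p,q\in\partial\D$; each lies on an edge contained in some cell $\Sq_{e_p}'\cup\Sq_{e_p}''$ or $\Sq_{e_q}'\cup\Sq_{e_q}''$ with $e_p,e_q\in\D$. Applying Definition \ref{def:D and dD}(b) to the pair $(e_p,e_q)$ yields a finite chain $\Sq_0,\dots,\Sq_N$ of cells with $e_p\in\Sq_0$, $e_q\in\Sq_N$, and $\Sq_j\cap\Sq_{j+1}=\{e_j\}$, $e_j\in\D$. After possibly prepending or appending the ``other'' cell of $e_p$ or $e_q$, I may assume $p$ lies on an edge of $\Sq_0$ and $q$ on an edge of $\Sq_N$. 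The strategy is then to establish a \emph{local step}: for each consecutive pair $\Sq_j,\Sq_{j+1}$ in the chain, any $\G$--edge of $\Sq_j$ can be connected by a path in $\partial\D$ to any $\G$--edge of $\Sq_{j+1}$, possibly via further cells of $\ov{\D}$ adjacent to $\Sq_j\cup\Sq_{j+1}$. A finite concatenation of such local paths over $j=0,\dots,N-1$ then produces the desired path from $p$ to $q$.

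The local step relies on the observation that $(\Sq_j\cup\Sq_{j+1})\setminus\{e_j\}$ is a $6$--edge cycle, the hexagon surrounding the defect edge $e_j$. If no further edge of this hexagon lies in $\EQ\setminus\EG$, the hexagon itself is contained in $\partial\D$ and immediately yields the required path, exactly as in case (a). Otherwise, each additional missing edge is either itself an edge of $\D$---in which case Definition \ref{def:D and dD}(b) is invoked again to detour through the two cells adjacent to it, both of which belong to $\ov{\D}$ and thus contribute their non--missing edges to $\partial\D$---or else belongs to a distinct defect, in which case Definition \ref{def:G}(ii)--(iii) ensures that the endpoints of the missing edge still belong to $\VG$ and are met by $\G$--edges of adjacent cells already in $\ov{\D}$. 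The main obstacle I anticipate is the combinatorial bookkeeping needed to control iterated detours, since each detour may itself meet further missing edges; I would handle this by induction on the number of missing edges in the hexagon, exploiting the finiteness of the chain provided by Definition \ref{def:D and dD}(b) to ensure that the process terminates.
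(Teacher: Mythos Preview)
Your treatment of case~(a) is correct and matches the paper. The setup for case~(b)---picking a chain of cells via Definition~\ref{def:D and dD}(b) and reducing to a local connectivity step across consecutive cells---is also the natural opening, and the paper begins the same way.

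The gap is in your termination argument for the iterated detours. You propose to induct on ``the number of missing edges in the hexagon'', but this quantity does not decrease along the process: when you detour around a missing edge $f$ of the hexagon $(\Sq_j\cup\Sq_{j+1})\setminus\{e_j\}$ into the adjacent cell $\Sq_f$, you are now facing a \emph{new} hexagon (around $f$, or more generally the boundary of the enlarged region) which may carry just as many missing edges as the original, or more. Invoking ``the finiteness of the chain provided by Definition~\ref{def:D and dD}(b)'' does not help either, because that chain is fixed between the original pair $(e_p,e_q)$, whereas your detours wander into cells that need not belong to it. For unbounded defects~$\D$ there is no a~priori reason the cascade of detours stops; what actually forces termination is a \emph{global} input---the connectedness of $\G$---which your local scheme never invokes. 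Also, your sub-case ``or else belongs to a distinct defect'' cannot occur: any removed edge sharing a cell with $e_j\in\D$ is automatically in $\D$ by maximality.

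The paper closes exactly this gap by switching from a local to a global argument. Given a path~$\gamma$ in $\partial\D\cup\D$ between two boundary vertices, it decomposes $\gamma$ into alternating $\partial\D$-- and $\D$--pieces and reduces (by induction on the number of $\D$--pieces) to the case of a single $\D$--segment $\gamma_{\v_1,\v_2}$. It then uses the \emph{connectedness of~$\G$} to produce some path $\gamma'\subset\G$ between the endpoints, forms the simple closed curve $\Gamma\subset\gamma\cup\gamma'$ containing $\gamma_{\v_1,\v_2}$, and works inside the \emph{finite} region $\Omega$ that $\Gamma$ bounds in $\Q$. An iterative cell-absorption process within $\Omega$ (expanding the subregion of non-$\D$ cells until its inner boundary lies entirely in $\partial\D$) then terminates in finitely many steps precisely because $|\Omega|<\infty$. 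This Jordan-curve/finite-region device is the missing idea; your local induction can be made to work only after something of this kind supplies a finite arena in which to run it.
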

	
	\medskip
	Finally, we need to recall the notion of \emph{area} and \emph{perimeter} of a subset of $\G$ (see, e.g., Figure \ref{fig-area}). The corresponding definition in $\Q$ are analogous.
	
	\begin{definition}(Area and perimeter)
	 \label{def-AP}
	 Let $\G$ be a defected grid and $\Omega\subset\G$. Then:
	 \begin{itemize}
	  \item the \emph{area} of $\Omega$ is given by
	 \[
      A_\G(\Omega):=\int_\G\mathds{1}_{\Omega}(x)\dx,
      \qquad\text{where}\qquad
      \mathds{1}_\Omega(x):=\begin{cases}
	  1 & \text{if }x\in\Omega\\
	  0 & \text{if }x\in\G\setminus\Omega;
	  \end{cases}
	 \]
	 \item the \emph{perimeter} of $\Omega$ is given by
	 \begin{equation}
	  \label{eq:P}
	  P_\G(\Omega):=\sum_{x\in\partial\Omega}p(x),
    \end{equation}
	where:
	\begin{itemize}
	 \item $\partial\Omega$ is the \emph{topological boundary} of $\Omega$ in $\G$, i.e. the set of those points $x\in\G$ such that every open neighbourhood $U$ of $x$ satisfies $U\cap\left(\G\setminus\Omega\right)\neq\emptyset$,
	 \item $p(x)$ is the number of edges $e\in\G$ such that $x\in e$ and $e\cap\left(\G\setminus\Omega\right)\neq\emptyset$. 
	\end{itemize}
	
    \end{itemize}
 
	\end{definition}
	
	\begin{remark}
		\label{rem-haus}
	 One can easily check that $A_\G(\Omega)=|\Omega|$.
	\end{remark}

	\begin{remark}
		In the context of discrete graphs, i.e. when edges have no associated length, several notions of boundary of a set of vertices have been developed over the years (we redirect the interested reader to \cite{B94} and references therein for a detailed discussion of the topic). Among these, the \emph{edge boundary} of a given subset of vertices is defined as the set of all edges in the graph with one vertex in the considered subset and the other in its complement. We note that the perimeter $P_\G(\Omega)$ as given by \eqref{eq:P} coincides with the sum of the cardinality of the edge boundary of $\Omega\cap\VG$ with $\sum_{x\in\partial\Omega\setminus\VG}p(x)$. Furthermore, as $1\leq p(x)\leq 4$ for every $x\in\partial\Omega$, there results
		\begin{equation}
			\label{eq: P leq H}
			\mathcal{H}^0(\partial\Omega)\geq\f14 P_\G(\Omega),
		\end{equation}
		where $\mathcal{H}^0$ denotes the 0--dimensional Hausdorff measure.
	\end{remark}
	
	\begin{figure}[t]
	\centering
       \subfloat[][$A_\G(\Omega)=41/2$, as $\Omega$ contains 19 edges and 3 half--edges.]
       {\includegraphics[width=.49\columnwidth]{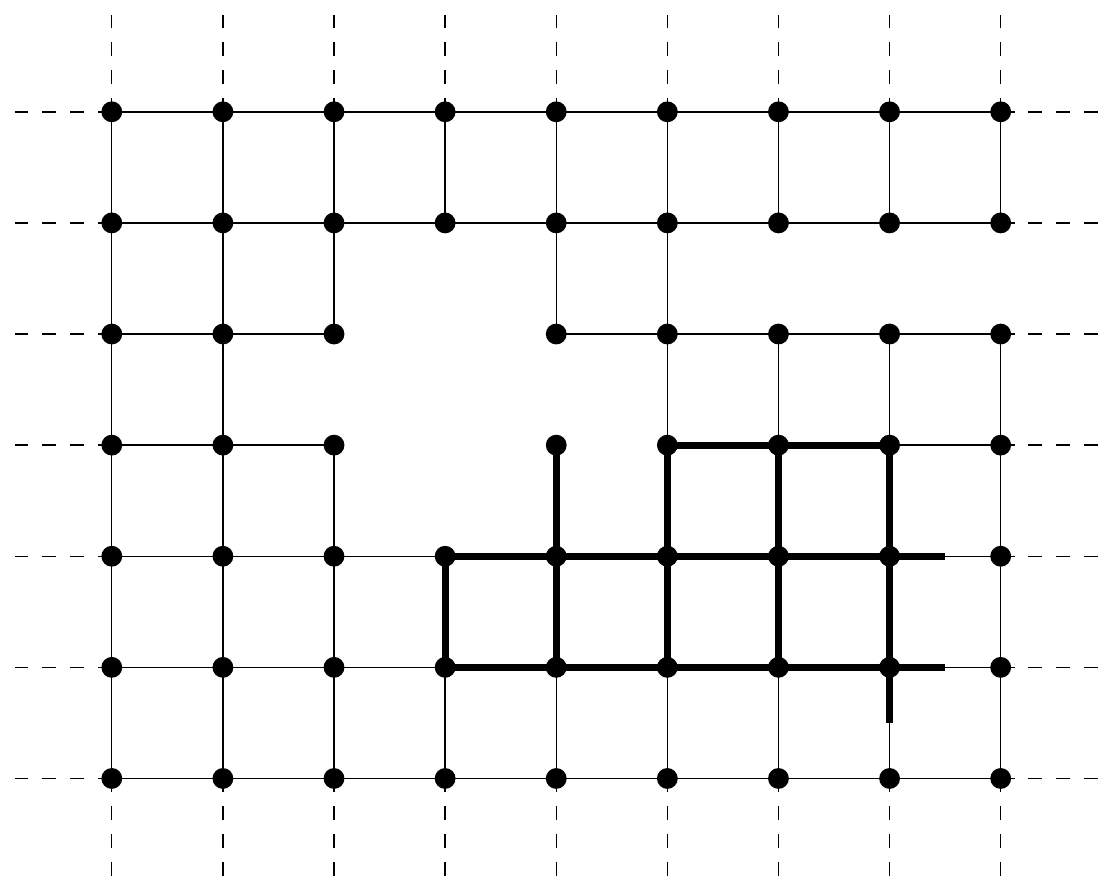}}
       \hspace{.1cm}
       \subfloat[][$P_\G(\Omega)=13$, i.e. the number of the crossed edges. Note that the points of the boundary (circled) are 11.]
       {\includegraphics[width=.49\columnwidth]{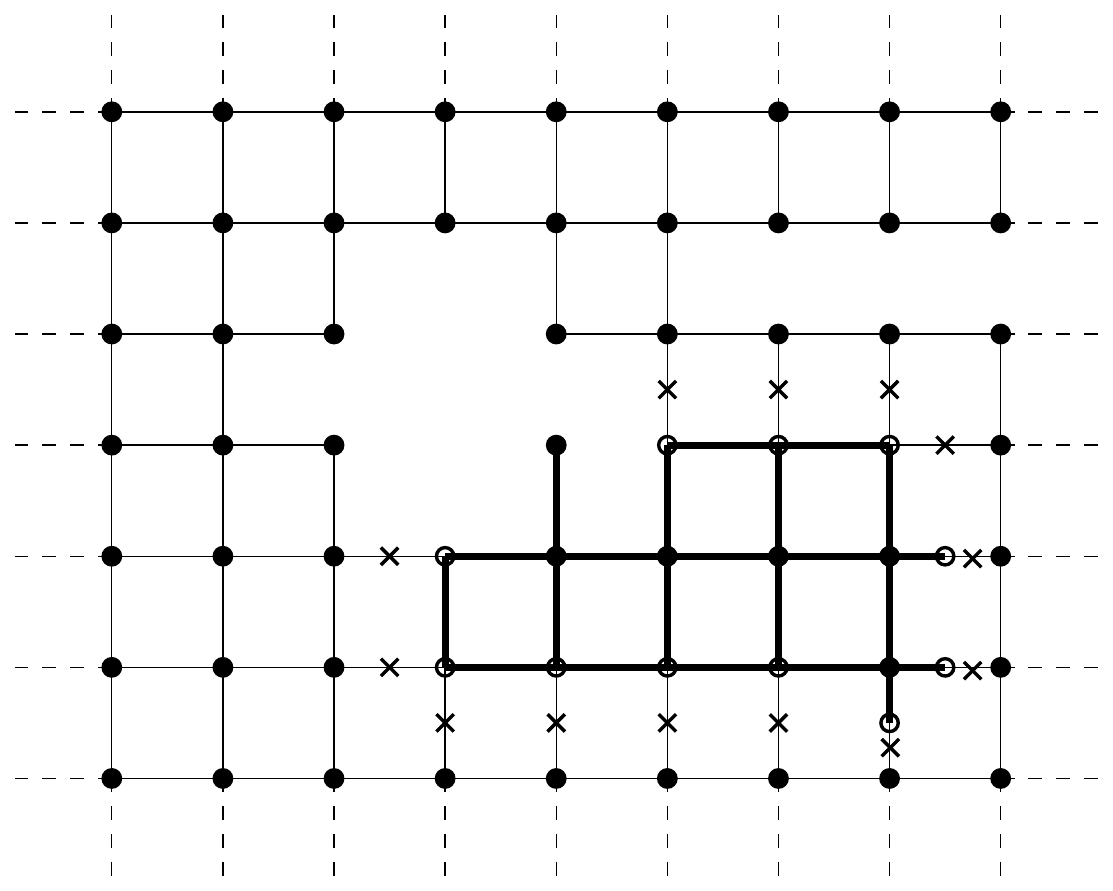}}
       \caption{Area and perimeter of a subset $\Omega$ (in bold) of a defected grid.}
       \label{fig-area}
    \end{figure}
	


\subsubsection{Some classes of defected grids}
\label{subsubsec:subclasses}
	
	Concluding this section, we introduce some special classes of defected grids that will be carefully investigated in the paper.
	In particular, we focus on two main classes and highlight some subclasses. The former class concerns bounded defects.
    
	\begin{definition}(Uniformly bounded defects)
		\label{def:unbdD}
		Let $\G$ be a defected grid and denote by
        \[
        (\D_k)_{k\in I},\qquad\text{for some}\: I\subseteq\N,
        \]
        the family of the defects of $\G$. We say that $\G$ has \emph{uniformly bounded defects} if 
		\begin{equation}
        \label{eq:bound_dD}
		\sup_{k\in I}\left|\D_k\right|<+\infty\,,
		\end{equation}
		where $|\D_k|$ denotes the 1--dimensional Hausdorff measure of $\D_k$.
	\end{definition}
	\begin{remark}
		\label{rem:D e dD finiti}
		Note that, by definition, any defect with finite measure is contained in a compact subset of $\Q$. As a consequence
		\[
		|\D_k|<+\infty\qquad\Leftrightarrow\qquad|\partial\D_k|<+\infty.
		\]
	\end{remark}
	
	\noindent For our purposes, the most interesting subclasses of defected grids given by Definition \ref{def:unbdD} are the following.
	
	\begin{definition}[Compactly defected -- Figure \ref{fig:ex_dam}\textsc{(a)}]
		\label{def:compD}
		Let $\G$ be a grid with uniformly bounded defects. We say that $\G$ is \emph{compactly defected} if, in addition, $I$ is finite.
	\end{definition}
	
	\begin{remark}
		\label{rem:compD}
		Definition \ref{def:compD} is equivalent to say that there exists a compact subset $K\subset\Q$ such that $\bigcup_{k\in I}\D_k\subset K$ (see again Figure \ref{fig:ex_dam}(A)).
	\end{remark}
	
	\begin{definition}[$\Z$--periodic defects -- Figure \ref{fig:ex_dam}\textsc{(b)}]
		\label{def:ZperD}
		Let $\G$ be a grid with uniformly bounded defects. We say that $\G$ has {\em$\Z$--periodic} defects if there exists a unique (up to sign--inversion) vector $\vec{v}$ of $\R^2$ such that 
		\begin{itemize}
			\item[(i)] $\G$ is invariant by integer translations along $\vec{v}$, i.e.
			\[
			\G\cong\G +k\vec{v}\quad\text{in}\:\R^2,\qquad\text{for every}\: k\in\Z;
			\]
			\item[(ii)] there exists $(x_0,y_0)\in\R^2$ and $r>0$ so that
			\[
			\bigcup_{k\in I}\D_k\subset\Q\cap\left\{(x,y)\in\R^2\,:\,\left|(x-x_0,y-y_0)\cdot \vec{v}^\perp\right|\leq r\right\}\,.
			\]
		\end{itemize}
	\end{definition}
	
	\begin{definition}[$\Z^2$--periodic defects -- Figure \ref{fig:ex_dam}\textsc{(c)}]
		\label{def:Z2perD}
		Let $\G$ be a grid with uniformly bounded defects. We say that $\G$ has {\em$\Z^2$--periodic} defects if there exist two linearly independent vectors $\vec{v}_1,\vec{v}_2$ of $\R^2$ such that $\G$ is invariant by translations along integer combinations of $\vec{v}_1$ and $\vec{v}_2$, i.e.
		\[
		 \G\cong\G+k_1\vec{v}_1+k_2\vec{v}_2\quad\text{in}\:\R^2,\qquad\text{for every}\: (k_1,k_2)\in\Z^2.
		\]
	\end{definition}
	
	\begin{remark}
	 In Definitions \ref{def:ZperD}--\ref{def:Z2perD}, for the sake of simplicity, we used the embedding in $\R^2$ and defined periodicity as invariance by suitable integer translations. However, there are several ways to define periodicity of a graph in a more abstract setting, see for instance \cite{BK13, D19, P18}.
	\end{remark}
	
	\begin{remark}
	\label{rem:Dper}
		Following \cite[Chapter 4]{BK13}, if $\G$ is a defected grid with $\Z$--periodic defects, then there exists a subset $W\subset\Q$, with $|W|=+\infty$, such that $\G=\bigcup_{k\in\Z}\widetilde{W}_k$, where $\widetilde{W}_k:=\widetilde{W}+k\vec{v}$ for every $k\in\Z$, $\widetilde{W}:=W\cap\G$ and $\vec{v}$ is as in Definition \ref{def:ZperD} (Figure \ref{fig:ex_dam}\textsc{(b)}). Furthermore, Definition \ref{def:ZperD}(ii) implies that $\left|W\cap\left(\bigcup_{k\in I}\D_k\right)\right|<+\infty$. Similarly, if $\G$ is a defected grid with $\Z^2$--periodic defects, then there exists a subset $W\subset\Q$, with $|W|<+\infty$, such that $\G=\bigcup_{(k_1,k_2)\in\Z^2}\widetilde{W}_{(k_1,k_2)}$, where $\widetilde{W}_{(k_1,k_2)}:=\widetilde{W}+k_1\vec{v}_1+k_2\vec{v}_2$ for every $(k_1,k_2)\in\Z^2$, $\widetilde{W}:=W\cap\G$ and $\vec{v}_1,\vec{v}_2$ are as in Definition \ref{def:Z2perD} (Figure \ref{fig:ex_dam}\textsc{(c)}). In both cases, we call $\widetilde{W}$ a \emph{fundamental domain} of $\G$.
	\end{remark}
	
	\begin{figure}[t]
	\centering
		\subfloat[][Defects contained in a compact subset $K\subset\Q$ (dotted region).]
		{\includegraphics[width=.47\columnwidth]{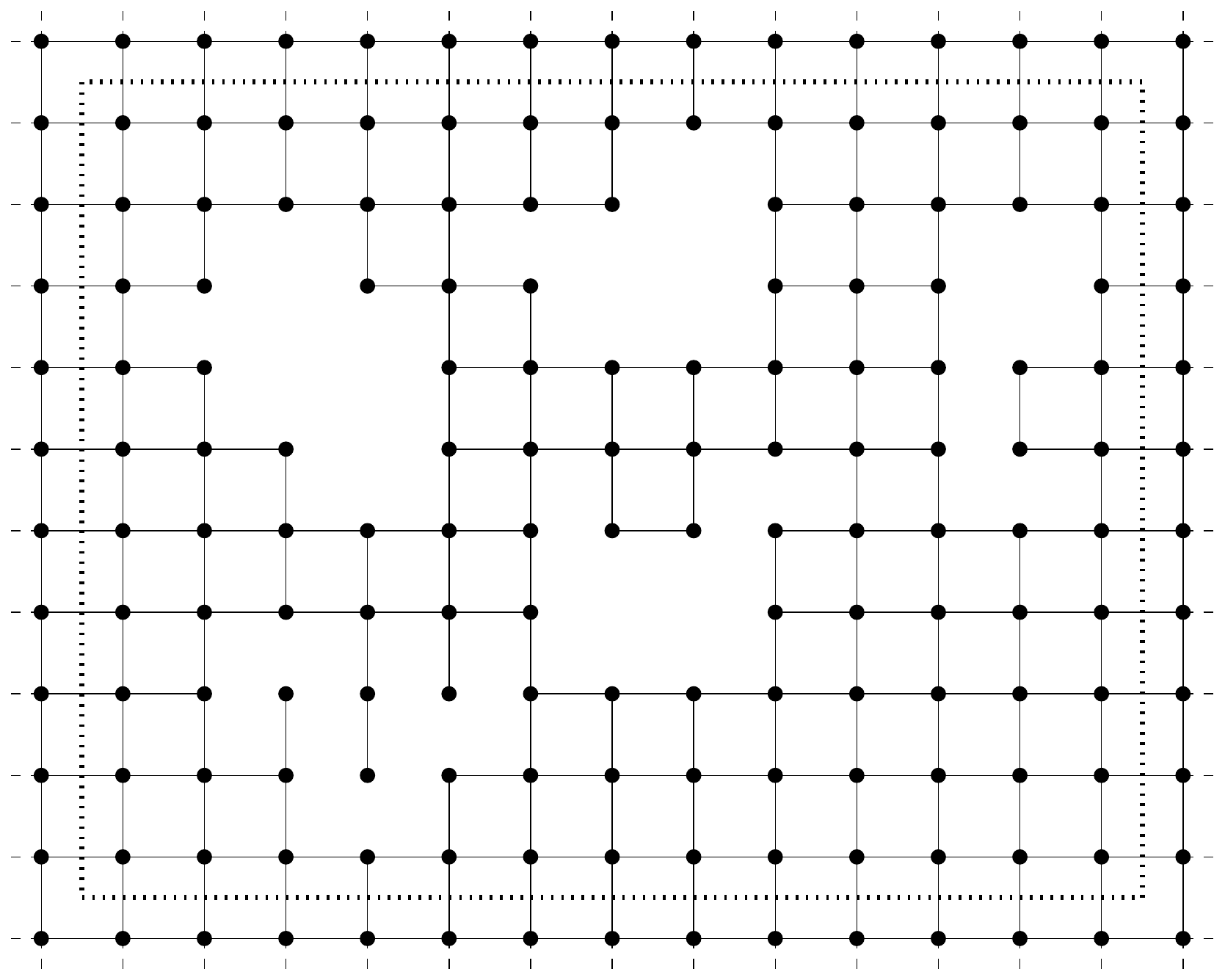}}
		\hspace{.2cm}
		\subfloat[][$\Z$--periodic defects: the vector $\vec{v}$ and the fundamental domain $W$ (dotted region).]
		{\includegraphics[width=.5\columnwidth]{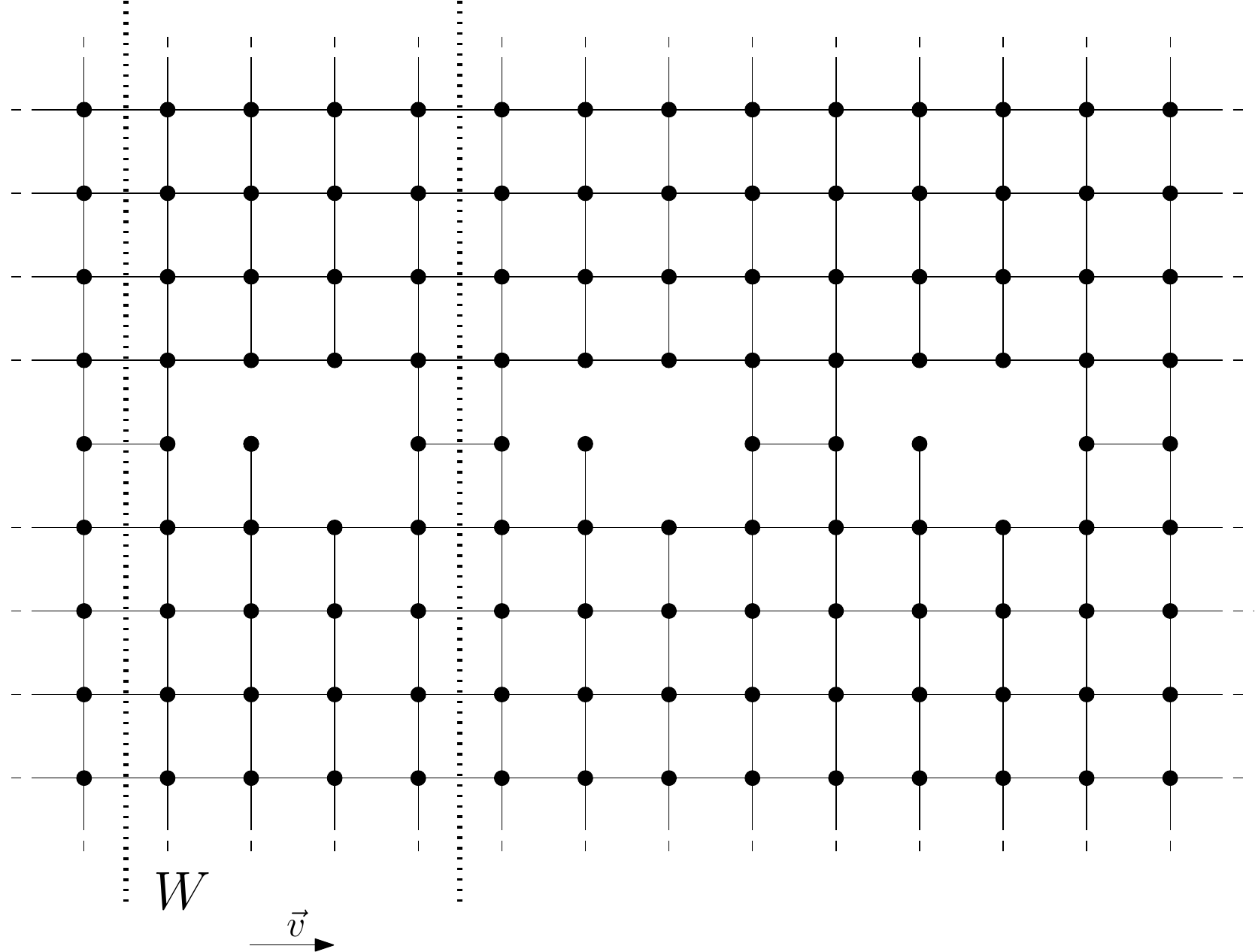}}
		\\
		\subfloat[][$\Z^2$--periodic defects: the vectors $\vec{v}_1,\,\vec{v}_2$ and the fundamental domain $W$ (dotted region).]
		{\includegraphics[width=.52\columnwidth]{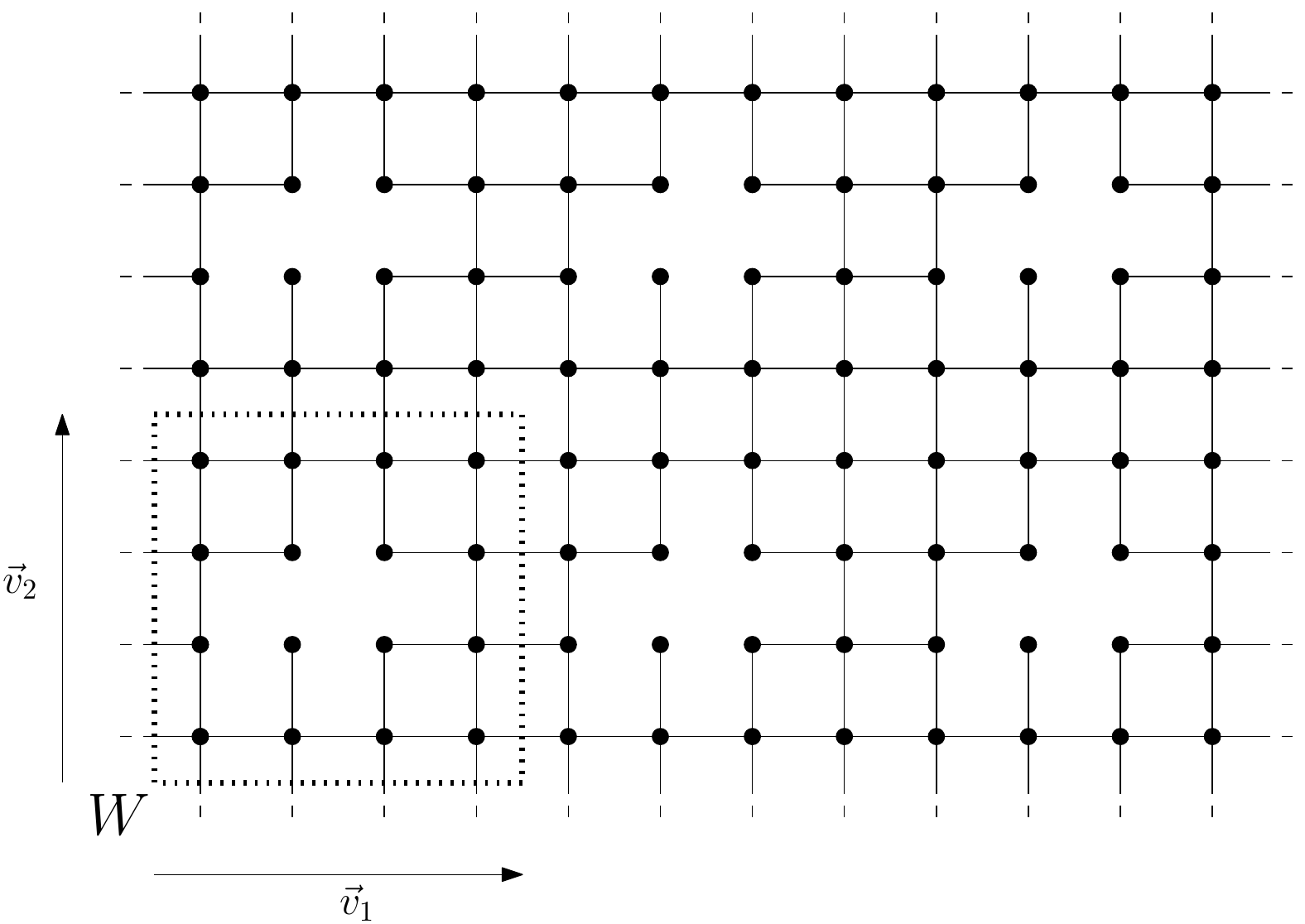}}
		\caption{Examples of defected grids as in Definitions \ref{def:compD}--\ref{def:ZperD}--\ref{def:Z2perD}.}
		\label{fig:ex_dam}
	\end{figure}
	
	The second class of grids we consider concerns defects that are not uniformly bounded.
	
	\begin{definition}[Unbounded defects]
		\label{def:nonbdD}
		Let $\G$ be a defected grid and denote by
        \[
        (\D_k)_{k\in I},\qquad\text{for some}\: I\subseteq\N,
        \]
        the family of the defects of $\G$. We say that $\G$ has \emph{unbounded defects} if
        \begin{itemize}
         \item[(i)] there exists $k\in I$ such that $|\D_k|=+\infty$;
         \item[(ii)] $\displaystyle \sup\{|\D_k|:k\in I, |\D_k|<+\infty\}<+\infty$.
        \end{itemize}
		We denote by $I_\infty$ the set of indices associated with unbounded defects (i.e. $k\in I_\infty$ if and only if $|\D_k|=+\infty$) and by $I_b$ the set of indices associated with bounded defects (i.e. $k\in I_b$ if and only if $|\D_k|<+\infty$). Similarly, we set $(\D_j^\infty)_{j\in I_\infty}$ the family of the unbounded defects of $\G$ and $(\D_k^b)_{k\in I_{b}}$ the family of the bounded defects of $\G$.
	\end{definition}
	
	\begin{remark}
	 According to the previous definition, the family of all the defects of a grid $\G$ with unbounded defects is given by the union of two disjoint families
		\[
		(\D_k^b)_{k\in I_{b}}\cup(\D_j^\infty)_{j\in I_\infty}.
		\]
		Furthermore, it is plainly seen that
		\[
		\sup_{k\in I_{b}}\big|\partial\D_k^b\big|<+\infty\quad\qquad\text{and}\quad\qquad\left|\partial\D_j^\infty\right|=+\infty\,,\quad\forall j\in I_\infty\,.
		\]
    \end{remark}
    
    Among all grids satisfying Definition \ref{def:nonbdD}, we here identify a specific subclass which will be further investigated in the following. Preliminarily, given a vertex $\v\in\VG$, we define
    \[
     P_\v:=\{\gamma_\v\subset\G: \gamma_\v\text{ is a simple path starting at $\v$ with }|\gamma_\v|=+\infty\},
    \]
    namely, the set of all simple paths of infinite length starting at $\v$. Note that $P_\v\neq\emptyset$, for every $\v\in\VG$, by Definition \ref{def:G}.

    \begin{definition}[Assumption (P)]
     \label{def:assP}
     Let $\G$ be a defected grid with unbounded defects. In addition, for every fixed $j\in I_\infty$, let
     
     \begin{itemize}
      \item $ V_j:=\big\{\v\in \VG\cap\partial\D_j^\infty\,:\,\deg(\v)\leq3\big\}$,

      \medskip
      \item $\Gamma_j=(\gamma_\v)_{\v\in V_j}$ be a sequence of paths such that $\gamma_\v\in P_\v$, for every $\v\in V_j$,
    
	  \medskip
	  \item $P_j$ be the family of all possible sequences $\Gamma_j$,

      \medskip
	  \item $\F_j:V_j\times P_j\to[0,+\infty]$ be the function
      \[
       \F_j\left(\v,\Gamma_j\right):=\#\{\w\in V_j\,:\,\gamma_\v\cap\gamma_\w\neq\emptyset\},
      \]
      where $\gamma_\v$ is the unique path in $\Gamma_j$ starting at the vertex $\v$.
     \end{itemize}
     We say that $\G$ \emph{satisfies assumption (P)} if
     \[
       \sup_{j\in I_\infty}\inf_{\Gamma_j\in P_j}\sup_{\v\in V_j} \F_j(\v,\Gamma_j)<+\infty\,.
     \]
    \end{definition}
		
	Roughly speaking, the previous definition states that a defected grid with unbounded defects satisfies assumption (P) if: for every unbounded defect (i.e., $\D_j^\infty$) there exists a ``configuration'' (i.e., $\Gamma_j$) of infinite simple paths starting at the vertices on the boundary of the defect (i.e., $\partial\D_j^\infty$) of degree smaller than or equal to 3, such that a path (i.e., $\gamma_\v$) starting at a fixed vertex (i.e., $\v$) intersects with a number of paths (i.e., $\gamma_\w$) starting at all the other vertices (i.e., $\w$) that is bounded from above by a constant that is independent both of the chosen vertex and of the chosen defect. Note that assumption (P) does not prevent the possible presence of either finitely many paths sharing infinitely many edges (that is, $|\gamma_\v\cap\gamma_\w|=+\infty$ for a finite number of vertices $\w\neq\v$ still yields $\F_j(\v,\Gamma_j)<+\infty$), or infinitely many groups of mutually intersecting paths, provided that the number of paths in each group is uniformly bounded from above (e.g., if there are infinitely many couples of vertices $(\v_i,\w_i)_{i=1}^\infty$ so that $\gamma_{\v_i}\cap\gamma_{\w_i}\neq\emptyset$, but $\gamma_\v\cap\gamma_\w=\emptyset$ if $(\v,\w)\neq(\v_i,\w_i)$ for every $i$, then $\F_j(\v,\Gamma_j)=2$ for every $\v$ and $j$). On the contrary, it guarantees that no edge (or vertex) belongs to infinitely many paths. From a geometrical point of view, one may interpret assumption (P) as a condition that prevents the formation of ``bottlenecks'', i.e. regions of arbitrarily large area confined by curves with uniformly bounded perimeter.
	
	Examples of defected grids satisfying assumption (P) are depicted in Figures \ref{fig:defected}\textsc{(b)} and \ref{fig-area}. In both cases, it is easy to construct a suitable configuration of paths (i.e., the ones consisting of the sole vertical half--lines yield no superposition). Conversely, examples of defected grids that satisfy Definition \ref{def:nonbdD} but violates assumption (P) are given in Figure \ref{fig:noP}.  This time, one can see that, independently of the chosen configuration of paths, there are mandatory passages forcing large superpositions. In particular, in Figure \ref{fig:noP}\textsc{(a)} it is clear that any path starting at a given vertex of the boundary of the unique defect intersects with infinitely many paths starting at other vertices. Similarly, in Figure \ref{fig:noP}\textsc{(b)} any possible path configuration yields a diverging number of superpositions for those vertices on the portions of the boundaries of the two defects that face each other. Note that both the situations correspond to the presence of bottlenecks. In Figure \ref{fig:noP}\textsc{(a)}, if one looks at the graph as an infinite spiral built of consecutive squares, then any unbounded region given by the graph itself except for a finite number of squares in the spiral has infinite area but finite perimeter. The same holds in Figure \ref{fig:noP}\textsc{(b)} taking into account the region enclosed between the two defects.
	
		\begin{remark}
		\label{rem: I<inft_noP}
		An interesting and non--trivial consequence of assumption (P) is that it forces the unbounded defects of $\G$ to be finitely many (see Proposition \ref{prop:fin_ubdD} below). The converse is clearly false, as it is shown, for instance, by the counterexample of Figure \ref{fig:noP}\textsc{(b)}.
	\end{remark}

	\begin{figure}[t]
	\centering
        \subfloat[][Spiraling defect (one defect).]
		{\includegraphics[width=.4\columnwidth]{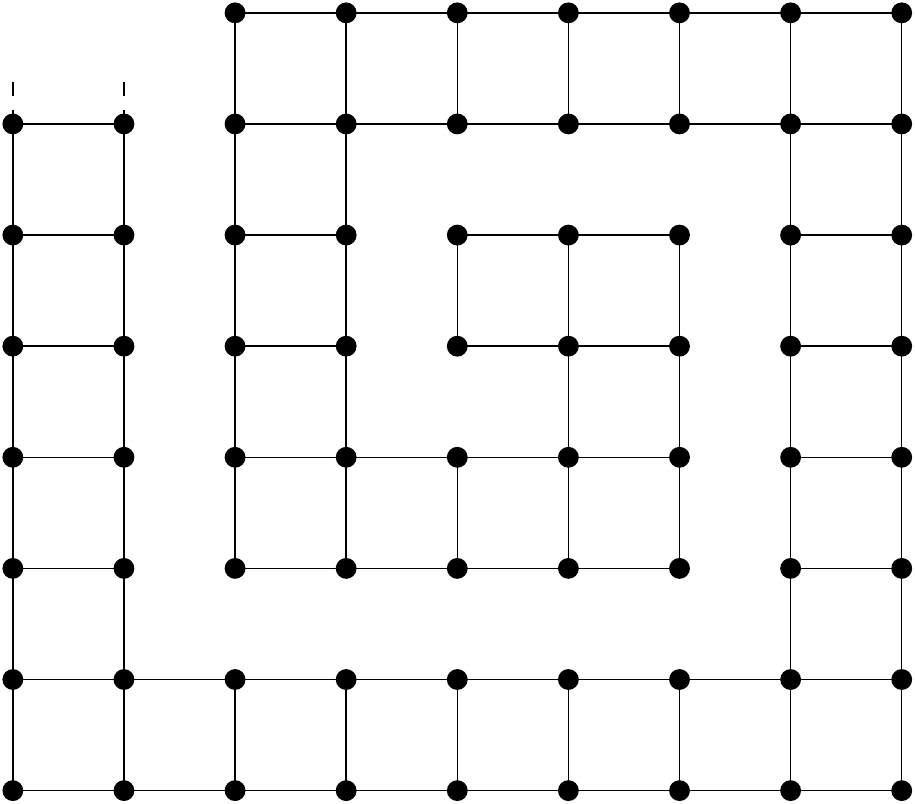}} \qquad
		\subfloat[][Parallel defects (two defects).]
		{\includegraphics[width=.53\columnwidth]{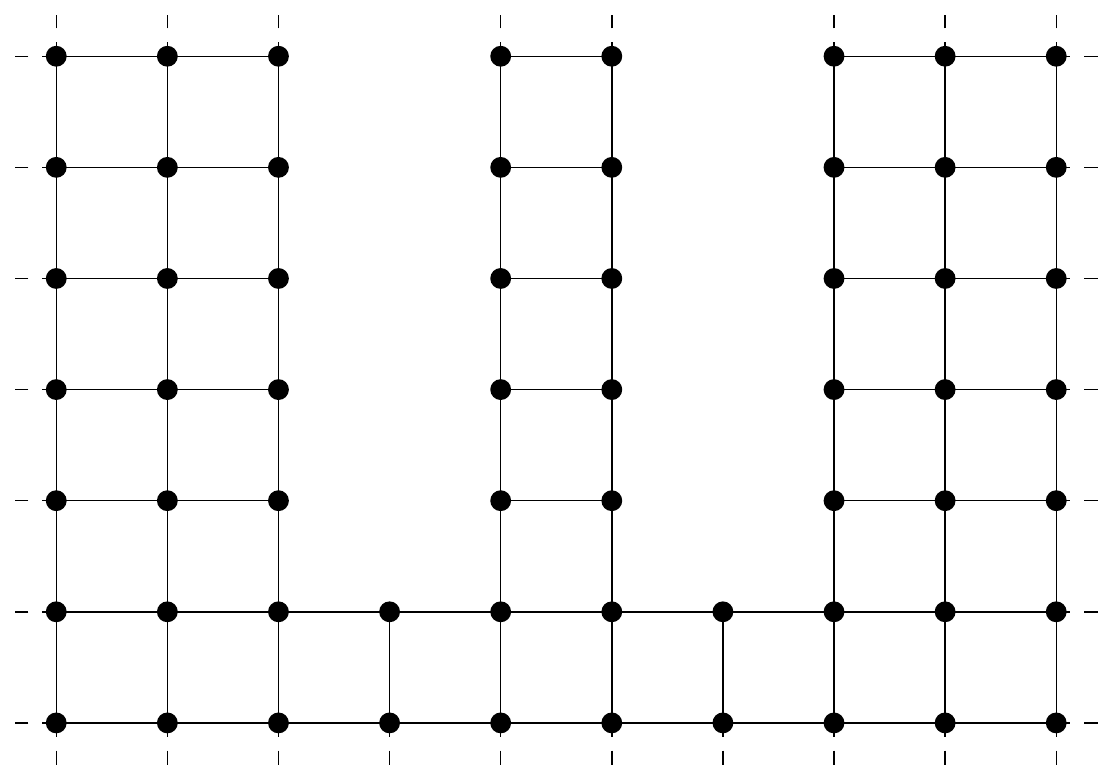}}
		\caption{Examples of defected grids satisfying Definition \ref{def:nonbdD}, but violating assumption (P).}
		\label{fig:noP}
    \end{figure}
    
    \medskip
    Finally, we mention that Definitions \ref{def:unbdD} and \ref{def:nonbdD} do not represent the whole possible phenomenology of defected grids. Indeed, one can consider grids whose bounded defects are not equibounded (e.g., Figure \ref{fig-nounbound}), i.e. grids that satisfy neither Definition \ref{def:unbdD} nor Definition \ref{def:nonbdD}(ii). As mentioned below, we do not address this type of defected grids in the present paper and we postpone their analysis to future investigations.

    

    \subsection{Main results}
    \label{subsec:main}
	
	After this long but necessary introduction we can state the main results of the paper.
	
    

    \subsubsection{Sobolev inequalities}
    \label{subsec:sobineq}

	We aim to detect which types of defects preserve the coexistence of the one and the two--dimensional Sobolev inequalities. One can easily check that the one--dimensional Sobolev inequality 
	\begin{equation}
	 \label{eq:s1d}
	 \|u\|_{L^\infty(\G)}\leq \Gamma_\G\|u'\|_{L^1(\G)},\qquad\forall u\in W^{1,1}(\G),
	\end{equation}
	with $\Gamma_\G$ a positive constant depending only on $\G$, holds on any defected grid like on any other noncompact metric graph, as it is rooted in the local one dimensional nature of metric graphs (see, e.g., \cite{ADST19}). As a consequence, our work focuses on the sole two--dimensional Sobolev inequality
	\begin{equation}
		\label{eq:s2d}
		\|u\|_{L^2(\G)}\leq S_\G\|u'\|_{L^1(\G)}, \qquad\forall u\in W^{1,1}(\G)\,,
	\end{equation}
	where $S_\G$ is again a positive constant depending only on $\G$. The validity of this inequality is a first marker of the persistence of a two--dimensional macroscale in the defected grid.  
	
	\medskip
	In Euclidean spaces, it is well--known that Sobolev inequalities are equivalent to the isoperimetric ones (see for instance \cite[pp.487--488]{FF60} and the review \cite{O78}) and the interplay between these two families of inequalities in general metric spaces has been the object of extensive investigations in the last decade of the previous century (see the comprehensive discussions \cite{BH97,HK00} and references therein). It is not surprising that the same equivalence holds true in the context of defected grids.
	\begin{theorem}
		\label{THM:iso=sob}
		Let $\G$ be a defected grid. Then $\G$ supports \eqref{eq:s2d} if and only if there exists $C_\G>0$ (depending only on $\G$) such that
		\begin{equation}
		\label{intro:iso G}
		\sqrt{A_\G(\Omega)}\leq C_\G P_\G(\Omega)
		\end{equation}
		for every bounded $\Omega\subset\G$, where $A_\G(\Omega), P_\G(\Omega)$ are the area and the perimeter of $\Omega$ as in Definition \ref{def-AP}.
	\end{theorem}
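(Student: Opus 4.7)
My plan is to adapt the classical equivalence between Sobolev and isoperimetric inequalities (in the spirit of Federer--Fleming and Maz'ya) to the defected grid setting, treating the two implications separately.

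\medskip
\noindent\textbf{Direction $(\Rightarrow)$.} Given a bounded $\Omega\subset\G$, I would test \eqref{eq:s2d} against a natural Lipschitz approximation of the indicator $\mathds{1}_\Omega$. For $\eps>0$ small, set
\[
u_\eps(x):=\max\left\{1-\frac{d_\G(x,\Omega)}{\eps},\,0\right\},
\]
so that $u_\eps\equiv 1$ on $\Omega$, $u_\eps\equiv 0$ outside the $\eps$--neighbourhood of $\Omega$, it is continuous on $\G$ and $(1/\eps)$--Lipschitz; by the boundedness of $\Omega$ it is compactly supported, hence $u_\eps\in W^{1,1}(\G)$. Dominated convergence yields $\|u_\eps\|_{L^2(\G)}^2\to A_\G(\Omega)$ as $\eps\to 0$. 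As for the gradient, on every edge $e\in\EG$ the derivative $u_\eps'$ vanishes except on short segments of length $\eps$ abutting each point of $\partial\Omega$ from the side of $\G\setminus\Omega$, where $|u_\eps'|\equiv 1/\eps$; a careful count shows that the total number of such segments equals $\sum_{x\in\partial\Omega}p(x)=P_\G(\Omega)$, hence $\|u_\eps'\|_{L^1(\G)}=P_\G(\Omega)+o(1)$. Plugging into \eqref{eq:s2d} and letting $\eps\to 0$ gives \eqref{intro:iso G} with $C_\G=S_\G$.

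\medskip
\noindent\textbf{Direction $(\Leftarrow)$.} I would combine a co-area inequality with a layer-cake argument on the distribution function. Since $\|u\|_{L^2(\G)}=\||u|\|_{L^2(\G)}$ and $\||u|'\|_{L^1(\G)}\le\|u'\|_{L^1(\G)}$, one may assume $u\ge 0$. Applying the Banach indicatrix theorem edge by edge and summing gives
\[
\|u'\|_{L^1(\G)}=\int_0^\infty\sum_{e\in\EG}\#\bigl\{x\in e:u(x)=t\bigr\}\,dt.
\]
Because $u\in W^{1,1}(\G)\subset C(\G)$ and the set of levels of $u$ at vertices is countable, for a.e.\ $t>0$ the set $\{u=t\}$ consists of interior edge points only, where $p(x)=1$, and continuity forces $\partial\{u>t\}\subset\{u=t\}$. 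Therefore
\[
\|u'\|_{L^1(\G)}\geq \int_0^\infty P_\G(\{u>t\})\,dt.
\]
Applying \eqref{intro:iso G} to the superlevel sets (which have finite area by Markov's inequality) and setting $h(t):=\sqrt{A_\G(\{u>t\})}$, one obtains $\int_0^\infty h(t)\,dt\le C_\G\|u'\|_{L^1(\G)}$. Finally, the layer-cake identity
\[
\|u\|_{L^2(\G)}^2=2\int_0^\infty t\,h(t)^2\,dt
\]
together with the elementary bound $t\,h(t)\le\int_0^t h(s)\,ds\le\int_0^\infty h(s)\,ds$, valid since $h$ is non-increasing, yields
\[
\|u\|_{L^2(\G)}^2\le 2\left(\int_0^\infty h(t)\,dt\right)^2\le 2\,C_\G^2\,\|u'\|_{L^1(\G)}^2,
\]
which is \eqref{eq:s2d} with $S_\G\le\sqrt{2}\,C_\G$.

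\medskip
\noindent\textbf{Main obstacle.} The delicate point in both directions is the bookkeeping at the vertices of $\G$. In the forward direction, the identity $\|u_\eps'\|_{L^1(\G)}=P_\G(\Omega)+o(1)$ relies on the fact that the definition of $p(x)$ counts exactly the edge-ends along which $u_\eps$ transitions from $1$ to $0$, which requires distinguishing carefully among edges entirely inside $\Omega$, entirely outside it, or crossed by $\partial\Omega$. In the reverse direction, instead, one has to restrict to a.e.\ $t$ in order to avoid the (measure-zero) set of levels on which a vertex lies, and one must justify that the superlevel sets $\{u>t\}$ are admissible for \eqref{intro:iso G}; if ``bounded'' in the statement is read as ``of finite area'' this is immediate from $u\in L^1(\G)$, otherwise a truncation and monotone convergence argument on the balls $B_R(\v,\G)$ is needed to extend \eqref{intro:iso G} to sets of finite area.
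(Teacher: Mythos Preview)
Your proposal is correct and follows essentially the same route as the paper's proof: test \eqref{eq:s2d} on Lipschitz mollifications of $\mathds{1}_\Omega$ for the forward direction, and combine coarea, the isoperimetric inequality on superlevel sets, and a layer--cake argument for the converse. Two minor differences are worth noting: in the converse direction the paper bounds $\mathcal{H}^0(\{u=t\})\ge\frac14 P_\G(\{u\ge t\})$ via $p(x)\le 4$, whereas your observation that for a.e.\ $t$ the level set avoids vertices (so $p(x)=1$) yields the sharper $\mathcal{H}^0(\{u=t\})\ge P_\G(\{u>t\})$; and where you raise the issue of $\{u>t\}$ being merely of finite area rather than bounded, the paper sidesteps this by first reducing to $u\in C_0^\infty(\G)$ by density, so that all superlevel sets are compactly supported.
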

	
	\begin{remark}
	 Note that \eqref{intro:iso G} is clearly valid also on the undefected grid and this is consistent with the fact that the undefected grid supports the two--dimensional Sobolev inequality \eqref{eq:2dGN}.
	\end{remark}

	Although it is the first result of the paper, Theorem \ref{THM:iso=sob} is nothing more than the statement of a general principle in the specific setting we are considering. However, we reported it here explicitly both because it is of actual help for the proofs of some of our next theorems, and because it provides a first condition which is equivalent to the validity of the two--dimensional Sobolev inequality. We also mention that isoperimetric problems on grids and more general planar graphs have been widely considered in the discrete setting (see for instance \cite{BE18,BP,BL91,BL91bis,KL18,WW77}), and investigations on related issues started recently also in the metric framework \cite{KN,Nic20}.  
	
	Despite its great generality, the equivalence between Sobolev and isoperimetric inequalities does not provide any insight on the features of the classes of defects that preserve the validity of \eqref{eq:s2d} and of those that break it. Even from a more operative standpoint, it does not suggest an useful criterion to check whether these inequalities hold true on a given graph, as in general to prove or disprove the isoperimetric inequality computationally is known to be NP--hard \cite{BE18}.
	
	To exploit the nature of the defects, in the following we establish two geometric sufficient conditions on $\G$ ensuring \eqref{eq:s2d}. From a technical point of view, the proof of both theorems relies on a suitable extension argument: given a function on $\G$, one constructs a proper extension to $\Q$ ``filling the holes" caused by the defects. To be of some help, such a procedure has to be performed so that the Sobolev inequality on $\Q$ applied to the extended function yields the desired inequality on the original function on $\G$. Despite the fact that they appear sensibly different, both geometric conditions guarantee that this general strategy works.
	
	\begin{theorem}
		\label{THM:s2d_bound}
		If $\G$ is a grid with uniformly bounded defects (Definition \ref{def:unbdD}), then it supports \eqref{eq:s2d}.
	\end{theorem}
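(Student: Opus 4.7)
The plan is to deduce \eqref{eq:s2d} on $\G$ from the already established two--dimensional Sobolev inequality \eqref{eq:2dGN} on the undefected grid $\Q$, via an extension argument. Given $u\in W^{1,1}(\G)$, I would construct an extension $\tilde u\in W^{1,1}(\Q)$ with $\tilde u|_\G=u$ and
\[
\|\tilde u'\|_{L^1(\Q)}\,\leq\,C\,\|u'\|_{L^1(\G)}
\]
for some constant $C$ depending only on $\G$. Since trivially $\|u\|_{L^2(\G)}\leq\|\tilde u\|_{L^2(\Q)}$, applying \eqref{eq:2dGN} to $\tilde u$ then yields \eqref{eq:s2d} with $S_\G:=S_\Q\cdot C$.

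To build $\tilde u$, I would treat each defect $\D_k$ independently. First pick a base vertex $\v_0^k\in\VG\cap\partial\D_k$, which exists since $\partial\D_k$ is nonempty and connected by Lemma \ref{lem_dDconn}. Then set $\tilde u(\w):=u(\v_0^k)$ at every removed vertex $\w$ in the interior of $\D_k$, and extend $\tilde u$ by linear interpolation on each removed edge $e\in\D_k$. The resulting function is continuous on $\Q$: at vertices in $\VG\cap\partial\D_k$ both definitions coincide with $u$, while at interior removed vertices the assigned value $u(\v_0^k)$ matches the endpoints of the adjacent linear pieces. On each removed edge $e=(\v_a,\v_b)$ the contribution to the derivative is
\[
\int_e|\tilde u'|\dx \,=\, |\tilde u(\v_a)-\tilde u(\v_b)|,
\]
which is either $0$ (if both endpoints are interior), $|u(\w)-u(\v_0^k)|$ (one interior, one in $\VG\cap\partial\D_k$), or $|u(\v_a)-u(\v_b)|$ (both in $\VG\cap\partial\D_k$). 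In all nontrivial cases the quantity has the form $|u(\w_1)-u(\w_2)|$ for two vertices $\w_1,\w_2\in\VG\cap\partial\D_k$, hence is bounded by $\int_\gamma|u'|\dx$ for any path $\gamma\subset\partial\D_k$ joining them. Since $|\partial\D_k|$ is uniformly bounded by a universal constant $M$ (by \eqref{eq:bound_dD} combined with Remark \ref{rem:D e dD finiti}), each such difference is at most $\int_{\partial\D_k}|u'|\dx$.

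Summing first over the (at most $M$) removed edges within a single defect and then over all $k\in I$ I would obtain
\[
\sum_{k\in I}\sum_{e\in\D_k}\int_e|\tilde u'|\dx \,\leq\, M\sum_{k\in I}\int_{\partial\D_k}|u'|\dx \,\leq\, MN\,\|u'\|_{L^1(\G)},
\]
where $N$ is a universal upper bound on the number of distinct defects whose boundary a given edge of $\G$ can belong to; such $N$ is finite since every edge of $\G$ lies in at most two cells of $\Q$ and each cell contains only four edges, so at most finitely many defects can contribute edges to the pair $\Sq_e'\cup\Sq_e''$. Adding the contribution $\|u'\|_{L^1(\G)}$ from the unchanged part gives $C=1+MN$, completing the argument. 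The main obstacle I foresee is precisely the multiplicity bookkeeping in the last step: one must verify that boundaries of distinct defects cannot overlap too much, which rests on condition \emph{ii.} of Definition \ref{def:D and dD}(b) distinguishing defects whose boundaries are merely adjacent, and on Lemma \ref{lem_dDconn} ensuring that the paths used to estimate $|u(\w_1)-u(\w_2)|$ may indeed be taken entirely inside $\partial\D_k$.
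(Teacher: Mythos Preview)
Your proof is correct and follows the same overall strategy as the paper's: extend $u$ from $\G$ to all of $\Q$, control $\|\tilde u'\|_{L^1(\Q)}$ by $\|u'\|_{L^1(\G)}$, and invoke \eqref{eq:2dGN}. The extensions differ in detail. The paper fixes a point $t_k\in\partial\D_k$ and, on the half of each removed edge nearest a boundary vertex $\v\in U_k$, places a rescaled copy of $u$ along a shortest path $\gamma_\v\subset\partial\D_k$ from $\v$ to $t_k$, filling the rest of $\D_k$ with the constant $u(t_k)$. Your piecewise--linear extension (constant $u(\v_0^k)$ at interior removed vertices, linear on each removed edge) is more elementary and avoids the rescaling; it also yields the cleaner observation that the edge multiplicity $N$ in $\sum_k\int_{\partial\D_k}|u'|\leq N\|u'\|_{L^1(\G)}$ is a \emph{universal} constant (each $f\in\EG$ lies in exactly two cells, and $f\in\partial\D_k$ forces $\D_k$ to contain one of the six remaining edges of $\Sq_f'\cup\Sq_f''$, so $N\leq 6$), whereas the paper's analogous bound \eqref{eq:bound_ol} on $N_k$ appeals again to the uniform defect size. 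One small omission: to apply \eqref{eq:2dGN} you need $\tilde u\in W^{1,1}(\Q)$, hence $\tilde u\in L^1(\Q)$, which you do not verify. This is routine: on each removed edge $|\tilde u|$ is bounded by a vertex value of $u$ on $\partial\D_k$, so $\int_{\D_k}|\tilde u|\leq M\|u\|_{L^\infty(\partial\D_k)}\leq M\|u\|_{W^{1,1}(\partial\D_k)}$ (using Lemma~\ref{lem_dDconn} and $|\partial\D_k|\geq1$), and the same multiplicity argument sums this over $k$.
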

	
	In other words, Theorem \ref{THM:s2d_bound} states that the class of perturbations of the undefected grid given by Definition \ref{def:unbdD} does not affect the two--dimensional Sobolev inequality. The main advantage of the assumption of uniform boundedness on the defects is that it provides a precise geometric idea of the types of defected grids that one is taking into account. Moreover, the family of grids fulfilling this condition is rather wide and varied, as it contains for instance compactly defected grids, grids with $\Z$ and $\Z^2$--periodic defects (see Definitions \ref{def:compD}, \ref{def:ZperD} and \ref{def:Z2perD} and, e.g., Figures \ref{fig:defected}\textsc{(a)} and \ref{fig:ex_dam}). However, it is a much stronger requirement than \eqref{intro:iso G}. The main drawback is that it rules out grids with unbounded defects, for which a finer analysis is needed.
	
	\begin{theorem}
		\label{THM:s2d_P}
		If $\G$ is a grid with unbounded defects (Definition \ref{def:nonbdD}) that satisfies assumption (P) as in Definition \ref{def:assP}, then it supports \eqref{eq:s2d}.
	\end{theorem}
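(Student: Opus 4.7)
The strategy is an extension argument in the spirit of Theorem \ref{THM:s2d_bound}: given $u\in W^{1,1}(\G)$, we construct a function $\tilde{u}\in W^{1,1}(\Q)$ that coincides with $u$ on $\G$, and then apply the two--dimensional Sobolev inequality \eqref{eq:2dGN} on the undefected grid $\Q$. Since $\tilde{u}=u$ on $\G$, one immediately has $\|u\|_{L^2(\G)}\leq\|\tilde{u}\|_{L^2(\Q)}$, so the whole task reduces to controlling $\|\tilde{u}'\|_{L^1(\Q)}$ by a constant multiple of $\|u'\|_{L^1(\G)}$. The novelty compared with Theorem \ref{THM:s2d_bound} is that, as $\G$ has unbounded defects, such a control cannot be obtained by a purely local procedure around each missing piece: the non--local information encoded by assumption (P) has to be exploited.

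Concretely, define $\tilde{u}:\Q\to\R$ by $\tilde{u}(x):=u(x)$ for every $x\in\G$, $\tilde{u}(\v):=0$ for every $\v\in\VQ\setminus\VG$, and extend $\tilde{u}$ linearly on every missing edge $e\in\EQ\setminus\EG$ between the values already prescribed at its endpoints. The resulting function is continuous on $\Q$, and by the linearity on each missing edge
\[
\|\tilde{u}'\|_{L^1(\Q)}=\|u'\|_{L^1(\G)}+\sum_{e\in\EQ\setminus\EG}|\tilde{u}(\v_1^e)-\tilde{u}(\v_2^e)|,
\]
where $\v_1^e,\v_2^e$ denote the endpoints of $e$. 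Thus the problem reduces to bounding the latter sum. Grouping edges by defect and noting that every endpoint in $\VQ\setminus\VG$ contributes zero, we are led to estimate, for each defect, the quantity $\sum_{\v}|u(\v)|$ extended over those boundary vertices of the defect that belong to $\VG$ and have degree at most $3$ in $\G$, each appearing with a multiplicity bounded by $4$ (the maximum number of missing edges incident to a grid vertex).

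To treat unbounded defects, fix $j\in I_\infty$ and choose a configuration $\Gamma_j=(\gamma_\v)_{\v\in V_j}$ almost realizing the infimum in assumption (P). For each $\v\in V_j$ the restriction $u|_{\gamma_\v}$ lies in $W^{1,1}(\gamma_\v)$ and, because $u\in L^1(\G)\supset L^1(\gamma_\v)$, its limit along $\gamma_\v$ is zero, whence $|u(\v)|\leq\int_{\gamma_\v}|u'|\ds$. Summing and exchanging the order of summation and integration gives
\[
\sum_{\v\in V_j}|u(\v)|\leq\int_\G|u'(x)|\,\#\{\v\in V_j\,:\,x\in\gamma_\v\}\dx.
\]
The key observation is that if $x$ belongs to $k$ distinct paths of $\Gamma_j$, then $\F_j(\v,\Gamma_j)\geq k$ for each of those $k$ vertices, so the counting function is pointwise bounded by $\sup_{\v\in V_j}\F_j(\v,\Gamma_j)$, and assumption (P) guarantees a uniform bound over $j$. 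Summing over the finitely many $j\in I_\infty$ (finiteness being Proposition \ref{prop:fin_ubdD}) controls the unbounded--defect contribution by $C_1\|u'\|_{L^1(\G)}$. The contribution from bounded defects is handled by the same scheme, replacing each $\gamma_\v$ with a short path contained in $\partial\D_k^b$ (which is connected by Lemma \ref{lem_dDconn} and has length uniformly bounded by hypothesis): the uniform bound on $|\partial\D_k^b|$ yields the analogous inequality $\sum_{e\in\D_k^b}|\tilde{u}(\v_1^e)-\tilde{u}(\v_2^e)|\leq C_2\int_{\partial\D_k^b}|u'|\ds$, and the pairwise disjointness of the boundaries of distinct bounded defects allows summation without overlap.

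The main obstacle is precisely the path--counting step above: turning the algebraic condition in assumption (P) into the pointwise bound $\#\{\v\in V_j:x\in\gamma_\v\}\leq\sup_{\v\in V_j}\F_j(\v,\Gamma_j)$ is what makes the construction work globally despite the presence of unbounded topological perturbations. Once this is in place, combining everything with the Sobolev inequality \eqref{eq:2dGN} on $\Q$ applied to $\tilde{u}$ produces $\|u\|_{L^2(\G)}\leq S_\G\|u'\|_{L^1(\G)}$ for a constant $S_\G>0$ depending only on $\G$, which is \eqref{eq:s2d}.
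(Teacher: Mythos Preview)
Your treatment of the \emph{unbounded} defects is correct and in fact simpler than the paper's. The paper builds an extension $v_\lambda$ by placing on each edge of $\D_j^\infty$ a rescaled copy of $u$ along $\gamma_\v$ (on the first half) followed by a linear ramp to zero (on the second half), with an auxiliary parameter $\lambda\to+\infty$ to kill a tail term. You instead interpolate linearly to $0$ and reduce everything to bounding $\sum_{\v\in V_j}|u(\v)|$. Your counting observation---if $x$ lies on $k$ of the paths $\gamma_\v$ then those $k$ paths pairwise intersect at $x$, so $\F_j(\v,\Gamma_j)\geq k$ for each of the $k$ starting vertices---gives the pointwise multiplicity bound directly, and together with $\#I_\infty<\infty$ (Proposition~\ref{prop:fin_ubdD}) this part is complete.

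There is, however, a genuine gap in the treatment of the \emph{bounded} defects. The inequality you claim,
\[
\sum_{e\in\D_k^b}\big|\tilde{u}(\v_1^e)-\tilde{u}(\v_2^e)\big|\leq C_2\int_{\partial\D_k^b}|u'|\ds,
\]
fails whenever $\D_k^b$ has an interior removed vertex $\v_0\in\VQ\setminus\VG$ (e.g.\ when all four edges at $\v_0$ are missing). For an edge $e\in\D_k^b$ joining $\v_0$ to some $\v_1\in\VG$, your extension sets $\tilde{u}(\v_0)=0$, so the term is $|u(\v_1)|$, not a difference of two values of $u$; and $|u(\v_1)|$ cannot be controlled by $\int_{\partial\D_k^b}|u'|$ (take $u$ nearly constant on $\partial\D_k^b$). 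Replacing the infinite $\gamma_\v$ by a \emph{finite} path in $\partial\D_k^b$ only yields bounds of the form $|u(\v_1)-u(\w)|\leq\int|u'|$, never $|u(\v_1)|$ alone. The remedy is to modify the extension on bounded defects: set $\tilde{u}(\v_0):=u(t_k)$ at interior removed vertices for a fixed reference point $t_k\in\partial\D_k^b$ (exactly as in the paper's proof of Theorem~\ref{THM:s2d_bound}); then every term becomes a genuine difference $|u(\v_1)-u(t_k)|$ and your inequality holds with $C_2\leq|\D_k^b|$, uniformly bounded by Definition~\ref{def:nonbdD}(ii).

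A smaller slip: the boundaries $\partial\D_k^b$ are \emph{not} pairwise disjoint in general (cf.\ Figure~\ref{fig:defects}\textsc{(c)}). What is true, and suffices, is that each edge of $\G$ lies in only a uniformly bounded number of such boundaries (this is \eqref{eq:bound_ol} in the paper's proof of Theorem~\ref{THM:s2d_bound}), so $\sum_{k\in I_b}\int_{\partial\D_k^b}|u'|\leq C\|u'\|_{L^1(\G)}$ still holds.
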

	
	\begin{figure}[t]
	\centering
    \includegraphics[width=.75\columnwidth]{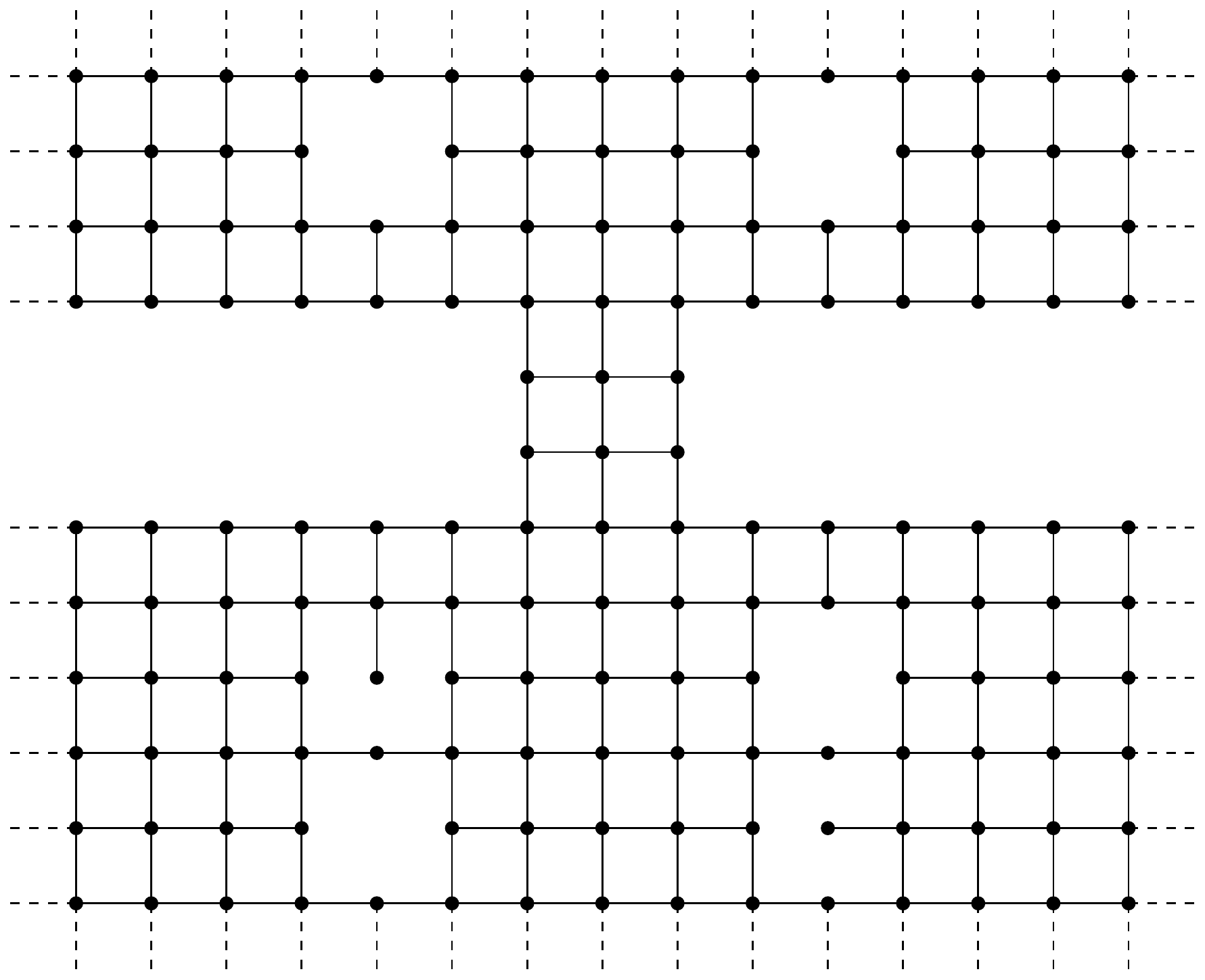}
    \caption{Another example of a defected grid satisfying assumption (P).}
    \label{fig:anotherP}
    \end{figure}
	
	The most remarkable point in Theorem \ref{THM:s2d_P} is that it identifies a topological condition on the defects, and more precisely on the topology of the sole unbounded defects, which is sufficient for the validity of the two--dimensional Sobolev inequality. In this regard, the role of topology is peculiar of unbounded defects, as Theorem \ref{THM:s2d_bound} above shows that nothing similar takes place when we are considering uniformly bounded defects only. Again, the family of defected grids fulfilling assumption (P) is quite variegate, as it includes for instance the graphs in Figures \ref{fig:defected}\textsc{(b)}, \ref{fig-area} and \ref{fig:anotherP}. Conversely, such assumption dismisses topologies as those depicted in Figure \ref{fig:noP}, where (as explained right below Definition \ref{def:assP}) one can easily show that \eqref{intro:iso G} is not satisfied due to the presence of bottlenecks, thus forbidding \eqref{eq:s2d} in view of Theorem \ref{THM:iso=sob}.
	
	Though it proves to be useful, at first glance the rigorous statement of assumption (P) as in Definition \ref{def:assP} may appear a bit mysterious. In particular, it is not straightforward to guess why such an assumption should be connected to the validity of the two--dimensional Sobolev inequality. Let us try to shed some light on this point with the following simplified computation. Let $\D$ be an unbounded defect of a grid $\G$ and let $\partial\D$ be its boundary. Let also $u\in W^{1,1}(\G)$. For every edge $e\in\partial\D$, one has
	\[
	|u(x)|\leq\int_{\Gamma_e}|u'|\qquad\forall x\in e\,,
	\]
	where $\Gamma_e\subset\G$ is any simple path of infinite length whose first edge is $e$. This is nothing more than the one--dimensional Sobolev inequality applied to the restriction of $u$ to the path $\Gamma_e$. Squaring both sides of the previous inequality and integrating over $e$ yields
	\begin{equation}
	\label{easyP_1}
	\|u\|_{L^2(e)}^2\leq\left(\int_{\Gamma_e}|u'|\right)^2.
	\end{equation}
	Namely, the one--dimensional nature of the grid ensures that one can always control the $L^2$--norm of $u$ on a given edge $e$ with the $L^1$--norm of its derivative $u'$ on a suitable path of infinite length starting at that edge. Of course, different edges $e$ require different paths $\Gamma_e$. 
		
	Suppose now that it is possible to construct a family of paths of infinite length in such a way that there is at least one path starting at each vertex of $\partial\D$ and that there is no edge in the grid belonging to more than $M$ of these paths, for some constant $M>0$. Since by definition every edge $e\in\partial\D$ shares at least one vertex of $\partial\D$, this means that there is a family of paths $\Gamma_e$ (indexed by the edges of $\partial\D$) such that at most $M$ of them share any given edge of $\G$. Hence, summing over $e\in\partial\D$ in \eqref{easyP_1} leads to
	\[
	\|u\|_{L^2(\partial\D)}^2=\sum_{e\in\partial\D}\|u\|_{L^2(e)}^2\leq\sum_{e\in\partial\D}\left(\int_{\Gamma_e}|u'|\right)^2\leq M\|u'\|_{L^1(\G)}^2,
	\]
	where the last inequality relies on the fact that in the last sum the contribution of the $L^1$--norm of $u'$ on each path $\Gamma_e$ appears at most $M$ times. The previous estimate shows that one can control the $L^2$--norm of $u$ on the whole boundary of the chosen unbounded defect $\partial\D$ with the square of the $L^1$--norm of  $u'$ on the whole grid. In general, this is a phenomenon one does not expect to arise when the two--dimensional Sobolev inequality does not hold. Indeed, to violate the two--dimensional Sobolev inequality one would like to exploit the essentially one--dimensional nature of the boundary of the defects, constructing functions with arbitrarily large mass close to the defects but uniformly bounded $L^1$--norm of the derivative. 
	
	Clearly, the above calculation is not even close to a proof. However, it seems to suggest that the validity of the two--dimensional Sobolev inequality is somehow connected to the possibility of running arbitrarily far away from any unbounded defect without being forced to visit too often any given region of the grid. This is indeed the idea behind assumption (P), which is nothing but a precise mathematical formulation of such a geometric idea. 
	
	Of course, assumption (P) is stronger than the simplified hypothesis used in the previous illustrative computation, which was dealing with a single unbounded defect only. In particular, as already anticipated in Remark \ref{rem: I<inft_noP} and proved in Proposition \ref{prop:fin_ubdD} below, the kind of uniformity required by assumption (P) is rather deep, as it implies that the number of unbounded defects in the grid is finite (and this is crucial to complete the proof of Theorem \ref{THM:s2d_P}). On the one hand, this naturally raises the question whether one can exhibit a grid with infinitely many unbounded defects that hosts the two--dimensional Sobolev inequality (see also Section \ref{subsec:open}). On the other hand, one could wonder if requiring a finite number of unbounded defects is a sufficient, and way easier than assumption (P), hypothesis for the two--dimensional Sobolev inequality. However, Figure \ref{fig:noP} highlights that, even assuming a priori a finite number of unbounded defects, one has no chance to get the result in general, unless further information on the way the defects are displaced is available.
	
	\begin{figure}[t]
		\centering
		\includegraphics[width=0.7\textwidth]{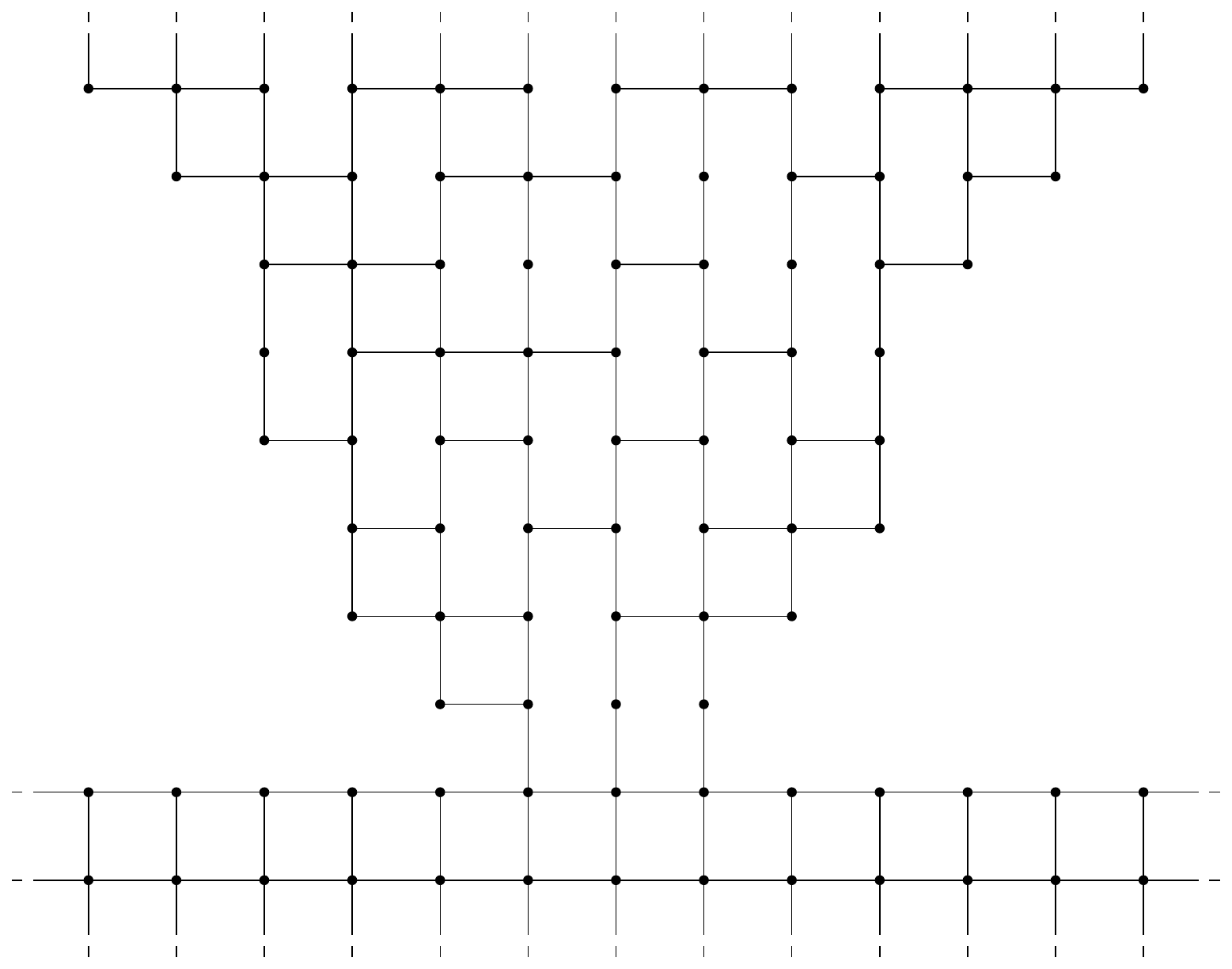}
		\caption{Example of a grid with two unbounded defects and infinitely many bounded ones. The uniformly bounded defects involve the removal of horizontal edges only. It is immediate to see that assumption (P) is fulfilled, as one can simply take the family of all vertical paths starting at the vertices of the boundaries of the two unbounded defects.}
		\label{fig:easyP}
	\end{figure}
	
	Finally, we underline once more that assumption (P) provides a sufficient condition for \eqref{eq:s2d} that involves the unbounded defects only. This may be particularly relevant to rapidly understand whether a grid with few unbounded defects but infinitely many uniformly bounded ones supports the two--dimensional Sobolev inequality. In fact, it is not difficult to think of examples where the validity of assumption (P) is almost evident, whereas it may not be that easy for instance to investigate the isoperimetric inequality (see, e.g., Figure \ref{fig:easyP}).
	
	\medskip
	Even though it starts to unravel the role of the topology of defects, assumption (P) is not sharp, as it can be proved not to be equivalent to \eqref{intro:iso G}, and thus to \eqref{eq:s2d}.
	
	\begin{theorem}
	 \label{THM:SobNoP}
	 There exist grids with unbounded defects that do not satisfy assumption (P) but support \eqref{eq:s2d}.
	\end{theorem}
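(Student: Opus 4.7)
The plan is to exhibit an explicit counterexample, and to verify the two--dimensional Sobolev inequality for it via the equivalence with the isoperimetric inequality \eqref{intro:iso G} provided by Theorem \ref{THM:iso=sob}. Thus the task reduces to constructing a defected grid $\G$ enjoying three features at once: it possesses unbounded defects in the sense of Definition \ref{def:nonbdD}, it violates assumption (P) of Definition \ref{def:assP}, and it satisfies \eqref{intro:iso G}.

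A natural way to force the failure of (P) while avoiding any delicate analysis of the path superposition function $\F_j$ is to choose $\G$ with \emph{infinitely many} unbounded defects, since Proposition \ref{prop:fin_ubdD} (announced in Remark \ref{rem: I<inft_noP}) forces $I_\infty$ to be finite whenever (P) holds. The remaining task is to distribute such defects in $\Q$ so that no bottleneck is produced. Concretely, the plan is to place the unbounded defects at rapidly increasing mutual distances, and each of them along a single direction (say, as semi--infinite families of parallel horizontal or vertical removed edges) so that the defects do not face each other in a confining way. The spacing must be chosen large enough that any bounded $\Omega\subset\G$ meets only finitely many defects, and each portion of an unbounded defect intercepted by $\Omega$ is paid off by a proportional amount of perimeter.

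For this construction the verification of \eqref{intro:iso G} proceeds along the lines of the extension argument already used in the proofs of Theorems \ref{THM:s2d_bound} and \ref{THM:s2d_P}. Given any bounded $\Omega\subset\G$, one builds $\widetilde\Omega\subset\Q$ by filling in the edges of $\Q\setminus\G$ lying in a suitable neighborhood of $\Omega$, and checks that there exist universal constants (independent of $\Omega$) for which $A_\Q(\widetilde\Omega)\le C\,A_\G(\Omega)$ and $P_\Q(\widetilde\Omega)\le C\,P_\G(\Omega)$. Applying the isoperimetric inequality on $\Q$ to $\widetilde\Omega$ and invoking Theorem \ref{THM:iso=sob} then yields \eqref{eq:s2d} on $\G$.

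The main obstacle is the joint calibration of the construction. On the one hand, the unbounded defects must be numerous and distributed so that the number of defects any bounded $\Omega$ interacts with cannot be bounded a priori (this is what removes us from the regime of Theorem \ref{THM:s2d_P}); on the other hand, the geometry must be sparse enough that the uniform area--perimeter comparison above holds with a single constant $C$, uniformly over \emph{all} bounded $\Omega\subset\G$. The delicate case is that of subsets which enclose large stretches of several defects simultaneously, where one must rule out pseudo--bottlenecks analogous to those of Figure \ref{fig:noP}\textsc{(b)}: here the rapid growth of the inter--defect spacings should guarantee that any attempt to accumulate area by wrapping around defects forces a proportional cost in perimeter, thus preserving \eqref{intro:iso G}.
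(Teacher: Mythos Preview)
Your route differs from the paper's: you aim to force the failure of (P) via Proposition~\ref{prop:fin_ubdD} by exhibiting infinitely many unbounded defects, whereas the paper constructs a grid with a \emph{single} unbounded defect (Figure~\ref{fig:s2d_noP}) and shows directly that the superposition count in Definition~\ref{def:assP} diverges.

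There is a genuine obstacle to your plan. Producing a defected grid with infinitely many unbounded defects that still supports \eqref{intro:iso G} is exactly the question the paper lists as open in Section~\ref{subsec:open}, so your strategy would settle that as a byproduct, which should already be a warning that the ``joint calibration'' you defer is not a routine verification. More concretely, the construction you sketch has two problems. First, if by ``semi--infinite families of parallel removed edges'' you mean removing a half--line of edges (say $\{a\}\times[j,j+1]$ for all $j\ge0$), then by Definition~\ref{def:D and dD} this does \emph{not} produce an unbounded defect: consecutive edges along such a ray belong to cells whose only common edges are non--removed horizontal ones, so each removed edge is its own single--edge defect, the resulting grid has uniformly bounded defects, and assumption~(P) is not even formulated for it. Second, if instead each defect is a semi--infinite \emph{strip} (which does give a genuine unbounded defect), then any two such strips pointing in the same direction enclose an infinite region that can be exited only through a bounded aperture---precisely the bottleneck of Figure~\ref{fig:noP}\textsc{(b)}---and \eqref{intro:iso G} fails. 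You have not supplied a construction that avoids both pitfalls, and your extension recipe $\Omega\mapsto\widetilde\Omega$ with a universal constant $C$ cannot work uniformly once such bottlenecks are present.

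The paper sidesteps all of this. It builds one unbounded defect whose boundary $\partial\D$ is a simple infinite path containing, for each $i\in\N$, a square ``pocket'' of side $i+1$. A direct edge count shows that any family of infinite simple paths emanating from the vertices of such a pocket must reuse vertical edges at least $\sim i$ times on average, so (P) fails. Meanwhile, the specific shape guarantees that the shortest path in $\partial\D$ between any two boundary points is at most a fixed multiple of the shortest path in $\G$, which yields $P_\Q(\Omega)\le C\,P_\G(\Omega)$ for every bounded $\Omega\subset\G$ and hence \eqref{intro:iso G} by comparison with $\Q$.
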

 The counterexample used to prove Theorem \ref{THM:SobNoP} is the one in Figure \ref{fig:s2d_noP}. The underlying geometric motivation is that, while clearly there is no subset of $\G$ that violates \eqref{intro:iso G}, one can construct a sequence of arbitrarily large regions of $\G$ along which the superpositions among paths diverge.
	
	\begin{figure}[t]
    \centering
    \includegraphics[width=1\columnwidth]{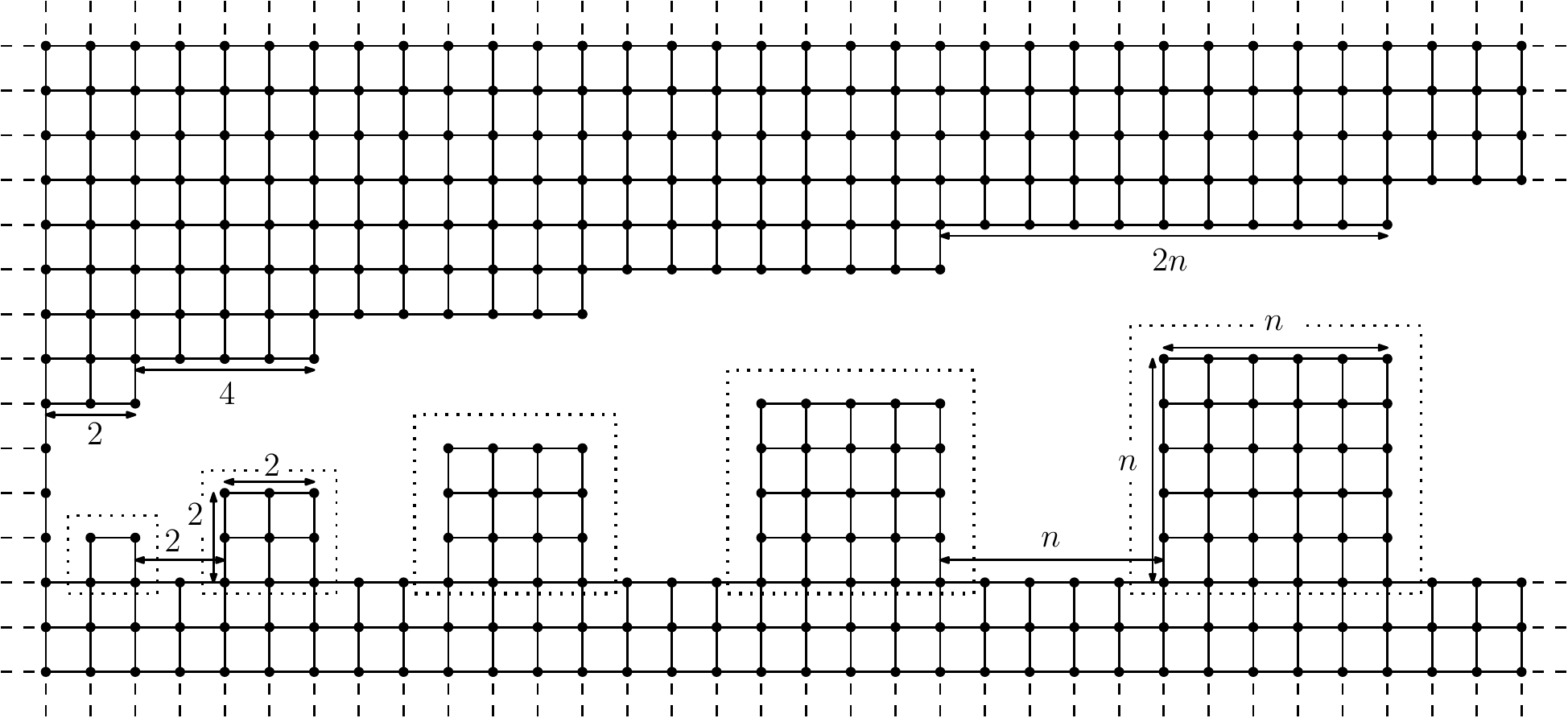}
    \caption{Example of a defected grid supporting the two--dimensional Sobolev inequality but violating assumption (P).}
    \label{fig:s2d_noP}
    \end{figure}

    

    \subsubsection{NLSE ground states}
    \label{subsec:NLSground}
    
    Concerning NLSE ground states, we are interested in detecting which types of defects preserve the phenomenology observed on the undefected grid described in Section \ref{subsec:grid}.
    
    Recall that a \emph{ground state of mass} $\mu>0$ on $\G$ is a function $u\in\Hmu(\G)$ such that
    \[
	E_p(u,\G)=\EE_{p,\G}(\mu):=\inf_{v\in\Hmu(\G)}E_p(v,\G),
	\]
	where
	\begin{equation}
		\label{intro:E}
		E_p(u,\G):=\f12\|u'\|_{L^2(\G)}^2-\f1p\|u\|_{L^p(\G)}^p
	\end{equation}
	and
	\[
	\Hmu(\G):=\{u\in H^1(\G)\,:\,\|u\|_{L^2(\G)}^2=\mu\}.
	\]
	We start with the following existence results. Note that the classes of defected grids managed here have uniformly bounded defects and thus, by Theorem \ref{THM:s2d_bound}, support \eqref{eq:s2d}.
	
	\begin{theorem}
		\label{THM:ex_com&Z}
		Let $\G$ be either a compactly defected grid (Definition \ref{def:compD}) or a grid with $\Z$--periodic defects (Definition \ref{def:ZperD}) or a grid with  $\Z^2$--periodic defects (Definition \ref{def:Z2perD}). It holds that
		\begin{itemize}
			\item[$(i)$] if $p\in(2,4)$, then ground states of mass $\mu$ exist for every $\mu>0$;
			\medskip
			
			\item[$(ii)$] if $p\in[4,6)$, then there exists a critical mass $\mu_{p,\G}$ (depending only on $p$ and $\G$) such that 
			\begin{equation}
				\label{levelTHM1}
				\EE_{p,\G}(\mu)\begin{cases}
				=0, & \text{if }\mu\leq\mu_{p,\G},\\
				<0, & \text{if }\mu>\mu_{p,\G},
				\end{cases}
			\end{equation}
			and
			\medskip
			
			\begin{enumerate}
				\item[$(ii.1)$] if $p\in(4,6)$, then ground states of mass $\mu$ exist if and only if $\mu\geq\mu_{p,\G}$;
				\medskip
				
				\item[$(ii.2)$] if $p=4$, then ground states of mass $\mu$ exist when $\mu>\mu_{4,\G}$, whereas they do not exist when $\mu<\mu_{4,\G}$.
			\end{enumerate}
		\end{itemize}
		Furthermore, if $\G$ is either a compactly defected grid or a grid with $\Z$--periodic defects, then 
		\begin{equation}
		\label{muG<muQ}
		\begin{split}
		\mu_{p,\G}<&\mu_{p,\Q}, \qquad\text{for }p\in(4,6),\\
		\mu_{4,\G}\leq&\mu_{4,\Q}, \qquad\text{for }p=4,
		\end{split}
		\end{equation}
		and
		\begin{equation}
			\label{EG<EQ}
			\EE_{p,\G}(\mu)\begin{cases}
			<\EE_{p,\Q}(\mu) & \text{if }p\in(2,4)\text{ and }\mu>0,\text{ or }p\in[4,6)\text{ and }\mu>\mu_{p,\G}\,,\\
			=\EE_{p,\Q}(\mu) & \text{if }p\in[4,6)\text{ and }\mu\leq\mu_{p,\G}\,.
			\end{cases}
		\end{equation}
	\end{theorem}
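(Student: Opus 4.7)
\medskip\noindent\textbf{Proof plan.} The backbone of the argument is the combined use of the one- and two-dimensional Gagliardo--Nirenberg inequalities \eqref{eq:GNOld}--\eqref{eq:GNNew}, which hold on every $\G$ satisfying the hypotheses (because Theorem \ref{THM:s2d_bound} ensures both \eqref{eq:s1d} and \eqref{eq:s2d}), together with comparison estimates between $\EE_{p,\G}$ and $\EE_{p,\Q}$ and a concentration--compactness scheme exploiting the translation invariance of $\G$ in each of the three subcases. First, I would derive the qualitative portrait of the functional $\EE_{p,\G}(\mu)$: for $p\in(2,4)$ the exponent on $\|u'\|_{L^2(\G)}$ produced by \eqref{eq:GNOld} is strictly less than $2$, so $E_p(\cdot,\G)$ is coercive on $\Hmu(\G)$ and bounded below; strict negativity follows by testing with a soliton-like profile supported on a horizontal half--line of $\G$. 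For $p\in[4,6)$, inequality \eqref{eq:GNNew} immediately gives $E_p(u,\G)\geq\tfrac{1}{2}(1-K\mu^{(p-2)/2})\|u'\|_{L^2(\G)}^2$, hence $\EE_{p,\G}(\mu)=0$ for small $\mu$, while rescaled soliton-type test functions built on an infinite line contained in $\G$ yield $\EE_{p,\G}(\mu)<0$ for large $\mu$. Defining $\mu_{p,\G}$ as the threshold and showing continuity/monotonicity of $\mu\mapsto\EE_{p,\G}(\mu)$ then yields \eqref{levelTHM1} and the dichotomy described in $(ii)$.

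Next, I would establish the comparison inequalities \eqref{muG<muQ}--\eqref{EG<EQ}. For the upper bound $\EE_{p,\G}(\mu)\leq\EE_{p,\Q}(\mu)$, given an approximate minimizer on $\Q$ compactly supported in a large ball, I would translate it far away from the defects: in the compactly defected case any sufficiently large translation lands inside $\G$, and in the $\Z$--periodic case a translation orthogonal to the periodicity vector $\vec v$ works since by Definition \ref{def:ZperD}(ii) the defects lie in a strip. The strict inequality $\EE_{p,\G}(\mu)<\EE_{p,\Q}(\mu)$, whenever the level is strictly negative, would be obtained by a local rearrangement argument: starting from a quasi--minimizer on $\Q$ that concentrates some mass in a cell adjacent to a removed edge, I would redistribute the corresponding portion of mass on the remaining edges of the cell, strictly increasing the $L^p$--norm (because removing an edge effectively compresses mass on fewer supports) while not increasing the Dirichlet energy; equivalently, one can use symmetrization along a path passing through a defected cell. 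Combining this with the level characterization \eqref{levelTHM1} yields \eqref{muG<muQ}, with the loss of strictness at $p=4$ reflecting the borderline behaviour of the $L^2$--critical exponent. The $\Z^2$--periodic case requires no comparison inequality in the statement, since no translation of a function on $\Q$ can avoid the defects.

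The main existence part is then a concentration--compactness argument performed separately in the three geometric regimes. For a minimizing sequence $(u_n)\subset\Hmu(\G)$, the GN inequality provides an $H^1$--bound, so up to subsequences $u_n\rightharpoonup u$ weakly in $H^1(\G)$ and a.e.\ on $\G$. I would then rule out the two bad scenarios: vanishing is excluded because $\EE_{p,\G}(\mu)<0$ (equivalently $\mu>\mu_{p,\G}$ in the critical cases), and splitting/escape at infinity is excluded by the strict inequality $\EE_{p,\G}(\mu)<\EE_{p,\Q}(\mu)$ proved above, since any mass leaking to infinity would live asymptotically on a translate of $\Q$ and contribute at best the level $\EE_{p,\Q}$ to the energy. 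In the $\Z^2$--periodic case, where no such strict inequality is available, I would instead exploit the compactness of the fundamental domain $\widetilde W$ of Remark \ref{rem:Dper}: by translating each $u_n$ by an element of $k_1\vec v_1+k_2\vec v_2$ one can assume that the concentration functional $\sup_{\v\in\VG}\|u_n\|_{L^2(B_1(\v,\G))}$ is attained on a vertex of $\widetilde W$, and then the $H^1$--boundedness together with local compactness gives the needed strong convergence on any compact subset. A Brezis--Lieb type splitting, applied iteratively, upgrades weak convergence to strong convergence in $L^p(\G)\cap L^2(\G)$ and produces a minimizer. The non--existence clauses $(ii.1)$--$(ii.2)$ for $\mu<\mu_{p,\G}$ are obtained by noting that the equality case in \eqref{eq:GNNew} would force $\|u'\|_{L^2(\G)}=0$, which is impossible for a nontrivial $H^1$ function on a noncompact graph.

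The hardest technical step is clearly the verification of the strict inequality $\EE_{p,\G}(\mu)<\EE_{p,\Q}(\mu)$ in the $\Z$--periodic case: because the periodicity prevents the minimizer from being localized far from the defected strip, one must construct a rearrangement that lives genuinely near the defect and whose energy gain can be quantified uniformly in the approximating sequence. A secondary delicate point is the compactness of minimizing sequences in the $\Z^2$--periodic setting, since in that case one cannot play translations against defects and must rely purely on the local compactness of the quotient graph $\G/\Z^2$; care is needed to control the $H^1$--norm of $u_n$ on a ``large enough'' fundamental region so that the mass is not dispersed at infinity in the transverse directions.
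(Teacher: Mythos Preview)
Your overall architecture is close to the paper's: use \eqref{eq:s2d} via Theorem \ref{THM:s2d_bound} to get the interpolated Gagliardo--Nirenberg inequality \eqref{eq:gnint}, read off \eqref{levelTHM1} and the nonexistence clauses from it, and then run a dichotomy on minimizing sequences (the paper's Lemma \ref{lem dichotomy} is exactly the Brezis--Lieb splitting you sketch). In the $\Z^2$--periodic case your idea of translating so that the maximum lands in a fixed fundamental domain is also what the paper does.

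The substantive gap is your mechanism for the strict inequality $\EE_{p,\G}(\mu)<\EE_{p,\Q}(\mu)$. Your proposed ``local rearrangement'' that redistributes the mass of a quasi--minimizer on $\Q$ onto the remaining edges of a defected cell, ``strictly increasing the $L^p$--norm while not increasing the Dirichlet energy'', is not a step one can carry out as stated: on a metric graph there is no monotone symmetrization that simultaneously controls both quantities in the direction you need, and a generic quasi--minimizer need not concentrate any definite amount of mass near a given cell. The paper avoids this entirely by a different idea: it proves (Proposition \ref{prop:E>0}, via the ODE Lemmas \ref{lem:IVP}--\ref{lem:IVP_E>0}) that any ground state $u$ on $\Q$ satisfies $E_p(u,e)>0$ on every edge outside a compact set $K$. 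One then translates $u$ on $\Q$ so that $K$ misses the defects (possible precisely in the compactly defected and $\Z$--periodic cases), restricts to $\G$, and renormalises the mass; since the energy contribution of the removed edges is strictly positive, \eqref{E_norm} gives $\EE_{p,\G}(\mu)<\EE_{p,\Q}(\mu)$ directly. This is the step that makes Lemma \ref{lem crit ex} applicable and yields \eqref{muG<muQ}--\eqref{EG<EQ}; your rearrangement sketch does not supply it.

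Two smaller points. First, your test function ``supported on a horizontal half--line of $\G$'' is fine for the compactly defected and $\Z$--periodic cases but need not exist in a $\Z^2$--periodically defected grid; the paper instead restricts $e^{-\eps(|x|+|y|)}$ to $\G$ and estimates $\|v_\eps\|_{L^p(\G)}$ from below along a family of disjoint periodic paths (see \eqref{veps p}) to obtain $\EE_{p,\G}(\mu)<0$ for $p\in(2,4)$. Second, you do not treat the threshold $\mu=\mu_{p,\G}$ for $p\in(4,6)$, where $\EE_{p,\G}=\EE_{p,\Q}=0$ and neither of your compactness criteria applies; the paper handles this by taking a maximizing sequence for the sharp constant in \eqref{eq:gnint}, showing it is simultaneously a minimizing sequence for $E_p$, and excluding vanishing via the strict inequality $\mu_{p,\G}<\mu_{p,\Q}$ already established.
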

	
	On the one hand, these results show that, whenever defects are either confined to a bounded region of the grid or share some periodicity, the defected grid inherits the features of the undefected one, as the dimensional crossover is preserved not only from the point of view of the Sobolev inequalities, but also from the standpoint of the NLSE ground states. We highlight that, according to Definition \ref{def:ZperD}(ii) and Remark \ref{rem:Dper}, here grids with $\Z$--periodic defects are such that all the defects are contained in a bounded strip parallel to the direction along which the grid is periodic. The fact that the first part of Theorem \ref{THM:ex_com&Z} applies also to grids with $\Z^2$--periodic defects shows that imposing periodicity along two directions is enough to guarantee existence of grounds states on grids with uniformly bounded defects that do not satisfy neither Definition \ref{def:compD} nor Definition \ref{def:ZperD}(ii).
	
	On the other hand, Theorem \ref{THM:ex_com&Z} establishes a sort of ``energetic convenience" of compactly defected grids and grids with $\Z$--periodic defects with respect to the undefected grid. More precisely, on these grids ground states of a fixed mass $\mu$ attain strictly lower energy levels with respect to ground states of the same mass on the undefected grid (see \eqref{EG<EQ}). Furthermore, in the critical range of exponents $p\in(4,6)$, ground states exist for a strictly larger interval of masses with respect to the undefected case (see \eqref{muG<muQ}). At the moment we are not able to prove this energetic convenience also for grids with $\Z^2$--periodic defects, although we can exhibit some intuition in this direction (see the next section). 
	
	Periodicity assumptions as in Theorems \ref{THM:ex_com&Z} may seem quite a strict requirement. Moreover, one could guess that it is at least possible to remove condition (ii) in Definition \ref{def:ZperD} and hope to recover the previous results in the case of grids with $\Z$--periodic defects not necessarily confined in a strip parallel to the direction of periodicity. However, this is not the case in general.
	
	\begin{theorem}
	\label{thm:noground}
	 There exist grids with uniformly bounded defects that
	 \begin{itemize}
	 	\item[(i)] satisfy Definition \ref{def:ZperD}(ii) but do not satisfy Definition \ref{def:ZperD}(i), or
	 	\item[(ii)] satisfy Definition \ref{def:ZperD}(i) but satisfy neither Definition \ref{def:ZperD}(ii) nor Definition \ref{def:Z2perD},
	 \end{itemize} and that do not admit ground states of mass $\mu$, for any $\mu>0$ and any $p\in(2,6)$. 
	\end{theorem}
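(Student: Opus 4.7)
My plan is to produce two explicit counterexamples, one for each of~(i) and~(ii), and in both cases to identify $\EE_{p,\G}(\mu)$ with the energy level $\EE_{p,\widetilde\G}(\mu)$ of an auxiliary grid $\widetilde\G$ covered by Theorem~\ref{THM:ex_com&Z}, while showing that this common level cannot be attained on $\G$ because minimizing sequences have to escape along the infinitely many copies of the local configuration of $\widetilde\G$ that $\G$ contains.

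For~(i) I would take $\G$ with single--edge defects along $\{y=0\}$ placed at horizontal positions $\{2^k\}_{k\in\N}$: the defects sit in the strip $\{0\le y\le 1\}$, so Definition~\ref{def:ZperD}(ii) holds, but the non--arithmetic spacing prevents Definition~\ref{def:ZperD}(i). For~(ii) I would let $\G$ be $\Z$--periodic with $\vec v=(1,0)$, obtained by removing from each horizontal line $\{y=k^2\}$, $k\in\N$, one horizontal edge per unit cell: Definition~\ref{def:ZperD}(i) is clear, Definition~\ref{def:ZperD}(ii) fails because defects occur at arbitrarily large heights, and Definition~\ref{def:Z2perD} fails because the set $\{k^2\}$ is not arithmetic. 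Both choices have uniformly bounded defects, so by Theorem~\ref{THM:s2d_bound} they support \eqref{eq:s2d}, and $\EE_{p,\G}(\mu)$ is finite and bounded below for every $p\in(2,6)$.

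For each $\G$ I would introduce an auxiliary grid $\widetilde\G$: in~(i), the compactly defected grid with just the defect $\D_1$; in~(ii), the grid with $\Z$--periodic defects contained in the strip $\{0\le y\le 1\}$ obtained by keeping only the $\{y=0\}$ defects of $\G$. By Theorem~\ref{THM:ex_com&Z}, $\EE_{p,\widetilde\G}(\mu)$ is attained by a ground state $\tilde u$ (with exponential--type decay at infinity) and satisfies $\EE_{p,\widetilde\G}(\mu)<\EE_{p,\Q}(\mu)$ in the relevant regime. The key identity is $\EE_{p,\G}(\mu)=\EE_{p,\widetilde\G}(\mu)$: the bound $\le$ is obtained by translating a smooth cutoff of $\tilde u$ to a region of $\G$ centred at a defect $\D_k$ (resp.\ a defect strip $\{y=k^2\}$) for $k$ large enough that the support meets only that one defect (resp.\ strip), and by renormalizing the $L^2$--mass; as $k\to\infty$ the energy of the competitor tends to $\EE_{p,\widetilde\G}(\mu)$. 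The reverse bound $\ge$ comes from a concentration--compactness argument on minimizing sequences in $\Hmu(\G)$, showing that, due to the divergence of the distances between defects of $\G$, the mass of an almost--minimizer must essentially concentrate around a single defect (resp.\ defect strip), and the localized profile is, up to translation, an almost--minimizer for $\widetilde\G$.

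Non--attainment would then follow by contradiction. If a minimizer $u_\star\in\Hmu(\G)$ with $E_p(u_\star,\G)=\EE_{p,\G}(\mu)=\EE_{p,\widetilde\G}(\mu)$ existed, the localization argument above would force $u_\star$ to coincide, up to translation and a negligible error, with a ground state of $\widetilde\G$. Translating $u_\star$ to consecutive distant defects of $\G$ would then produce a minimizing sequence with supports drifting to infinity and weak limit zero, contradicting the strong $L^p$--compactness implicit in the assumption of attainment; equivalently, the strict positivity of the $\widetilde\G$--ground state on the extra defect edges of $\G$ (by strong maximum principle) makes it impossible to restrict it to $\G$ without breaking either the mass constraint or the minimality. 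The main obstacle I foresee is the reverse inequality $\EE_{p,\G}(\mu)\ge\EE_{p,\widetilde\G}(\mu)$: it requires a quantitative localization estimate ruling out energy gains from simultaneous concentration near several far--apart defects, to be obtained by combining the decay of ground states of $\widetilde\G$, the two--dimensional Sobolev inequality \eqref{eq:s2d} on $\G$, and the divergence of inter--defect distances.
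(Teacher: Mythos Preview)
Your proposal has a genuine gap: the explicit grids you choose almost certainly \emph{do} admit ground states, so the contradiction never gets off the ground. In your example~(i) the defects sit at $x=1,2,4,8,\dots$, so near the origin there is a cluster $\{1,2,4\}$ of defects at mutual distance $\le 3$ that is \emph{never repeated} anywhere else in the grid. By the mechanism of Proposition~\ref{prop:E>0} together with the restriction argument of \eqref{E_norm}, a function concentrated around this cluster has strictly lower energy than any function that sees only one isolated defect; hence $\EE_{p,\G}(\mu)<\EE_{p,\widetilde\G}(\mu)$ for your one--defect auxiliary grid, and the equality your whole strategy rests on fails. More to the point, this unique ``richest'' region is exactly the kind of compact trap that makes Lemma~\ref{lem dichotomy} produce a genuine ground state rather than loss of compactness. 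The same objection applies to your example~(ii): the two adjacent defect lines $y=0$ and $y=1$ form a distinguished configuration unmatched by the increasingly spaced lines $y=4,9,16,\dots$.

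The paper's construction avoids this by building a grid with a \emph{self--similar} defect pattern: for every compact defect configuration and every $N$, the same configuration reappears somewhere else in the grid surrounded by at least $N$ additional removed edges (property \eqref{eq-noperiod}). The non--existence argument is then direct and does not go through any auxiliary $\widetilde\G$ or through the identity $\EE_{p,\G}(\mu)=\EE_{p,\widetilde\G}(\mu)$: assuming a ground state $u$ exists, one invokes Proposition~\ref{prop:E>0} to get a compact set $\overline K$ outside of which $E_p(u,e)>0$ on every edge, truncates $u$ to $(u-\eps)_+$, and \emph{translates} this compactly supported function to a region of $\G$ where the defect pattern around $\overline K$ is reproduced but with strictly more removed edges in the support. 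Dropping the positive--energy contributions on those newly missing edges strictly decreases the energy, contradicting minimality. Your localization/compactness route cannot substitute for this, because the reverse inequality $\EE_{p,\G}(\mu)\ge\EE_{p,\widetilde\G}(\mu)$ you flag as ``the main obstacle'' is in fact false for your grids: removing extra edges where a $\widetilde\G$--ground state has positive energy lowers the level, not raises it.
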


	The counterexamples in the proof of Theorem \ref{thm:noground} are far from being trivial and cannot be easily drawn. The construction is explained in full details in Section \ref{subsec:nonex}. Loosely, the proof of Theorem \ref{thm:noground}(i) involves a non--periodic removal of infinitely many vertical edges in a given horizontal strip that yields a serious loss of compactness at infinity, and Theorem \ref{thm:noground}(ii) follows by periodically repeating along the vertical direction the same configuration of defects.
	
	The main relevance of Theorem \ref{thm:noground} is that it marks a sharp distinction between the two standpoints we used in our discussion. Indeed, by Theorem \ref{THM:s2d_bound} above, the grids in Theorem \ref{thm:noground} support the Sobolev inequality \eqref{eq:s2d}. Hence, there are defected grids which present the same behavior of the undefected grid for what concerns Sobolev inequalities, while exhibit a completely different behavior for what concerns NLSE ground states. From a technical perspective, this is not unexpected, since to observe the dimensional crossover at the level of NLSE ground states one needs not only the simultaneous validity of the one--dimensional  and the two--dimensional Sobolev inequalities, but also a suitable topology preventing the possible loss of compactness for minimizing sequences of the energy.
	


    \subsection{Open problems}
    \label{subsec:open}

	As highlighted in the previous sections, that of the persistence of the dimensional crossover in defected grids is a rich and many--sided issue. Hence, our results do not provide a complete discussion of the topic. On the contrary we hope that, shedding some light on the main and deep--seated features of the problem, they may stimulate further investigations in the coming years. To this aim, we list here some of the open questions we think could be of interest for future research.
	


	\begin{figure}
	\centering
	\begin{tikzpicture}[decoration=brace]
	
	\node  at (-3,0) {$\text{Two--dimensional}$};
	\node at (-3,-0.4) {$\text{Sobolev inequality}$};
	\node  at (-7,-5.3) {$\text{Assumption (P)}$};
	\node  at (3,-5) {$\text{Finite number of}$};
	\node at (3,-5.4) {$\text{unbounded defects}$};
	
	\draw[->, double] (-4.6,-.8)--(-7.1,-4.7) node[pos=0.3,sloped,above] {\text{\small Theorem \ref{THM:SobNoP}}};
	\draw[<-,double] (-4.1,-.8)--(-6.6,-4.7) node[pos=0.7,sloped,below] {\text{\small Theorem \ref{THM:s2d_P}}};
	\node at (-6,-3) {\textbf{\Huge $+$}};
	\draw[->, double] (-5.2,-5.1)--(1,-5.1) node[pos=0.3,sloped,above] {\text{\small Proposition \ref{prop:fin_ubdD}}};
	\draw[<-,double]  (-5.2,-5.4)--(1,-5.4) node[pos=0.7,sloped,below] {\text{\small Remark \ref{rem: I<inft_noP}}};
	\node at (-2,-5.4) {\textbf{\Huge $\times$}};
	\draw[<-,double] (-1.5,-0.8)--(2.6,-4.7) node[pos=0.7, sloped,below] {\text{\small Remark \ref{rem: I<inft_noP}}};
	\node at (0.5,-2.7) {\textbf{\Huge $+$}};
	\draw[->,double] (-1,-0.8)--(3.1,-4.7) node[pos=0.5, sloped,above] {\text{\small $?$}};
	\end{tikzpicture}
	\caption{A scheme of the relations among assumption (P), finite number of unbounded defects and the two--dimensional Sobolev inequality.}
	\label{fig:scheme}
\end{figure}
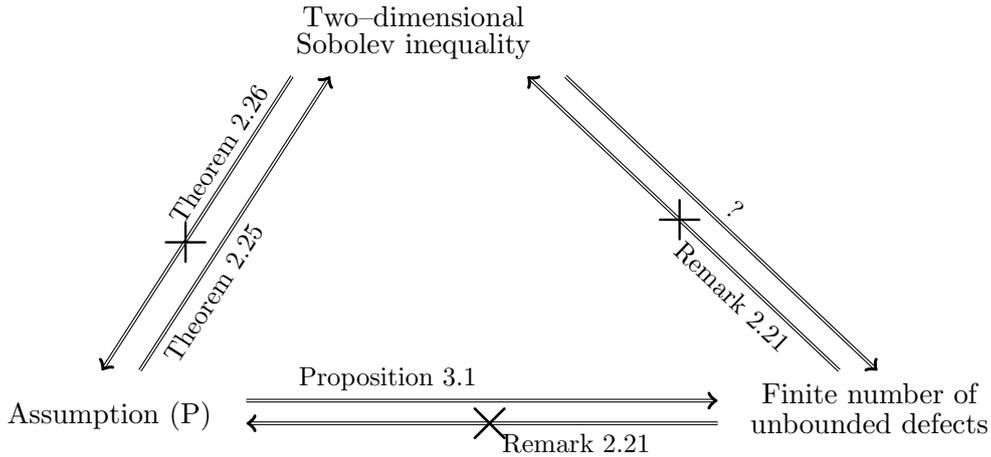

    \subsubsection{Sobolev inequalities}

	Here the picture is completely clear for grids with uniformly bounded defects, while grids with unbounded defects raise some challenging open issues.

	The first one that deserves to be further discussed is the connection among assumption (P), a finite number of unbounded defects and the two--dimensional Sobolev inequality (Figure \ref{fig:scheme}). On the one hand, by Theorem \ref{THM:SobNoP} we know that assumption (P) is not a sharp condition for the validity of \eqref{eq:s2d}. Therefore, it is natural to wonder ``how far'' it is from sharpness. Moreover, given that it is not immediate to check (although easier than \eqref{intro:iso G}), it is also natural to wonder if there exist equivalent conditions with a more direct geometrical explanation. On the other hand, in Remark \ref{rem: I<inft_noP} we explained how assumption (P) implies a finite number of unbounded defects, but also how a finite number of unbounded defects entails neither assumption (P) nor \eqref{eq:s2d}. It is open, on the contrary, if \eqref{eq:s2d} may imply a finite number of unbounded defects or if it is possible to exhibit an explicit example of a grid with infinitely many unbounded defects supporting \eqref{eq:s2d}.

Another point which has not been touched by the paper is the study of those defected grids that do not satisfy Definition \ref{def:nonbdD}(ii), i.e. grids with infinitely many bounded defects which are not uniformly bounded. While it is clear that they cannot support in general the two--dimensional Sobolev inequality (a counterexample is given, for instance, by Figure \ref{fig-nounbound}), it is interesting to investigate the possibility of detecting an extended version of assumption (P) that may apply to this family of defected grids.
\begin{figure}[t]
	\centering
	 \includegraphics[width=.8\columnwidth]{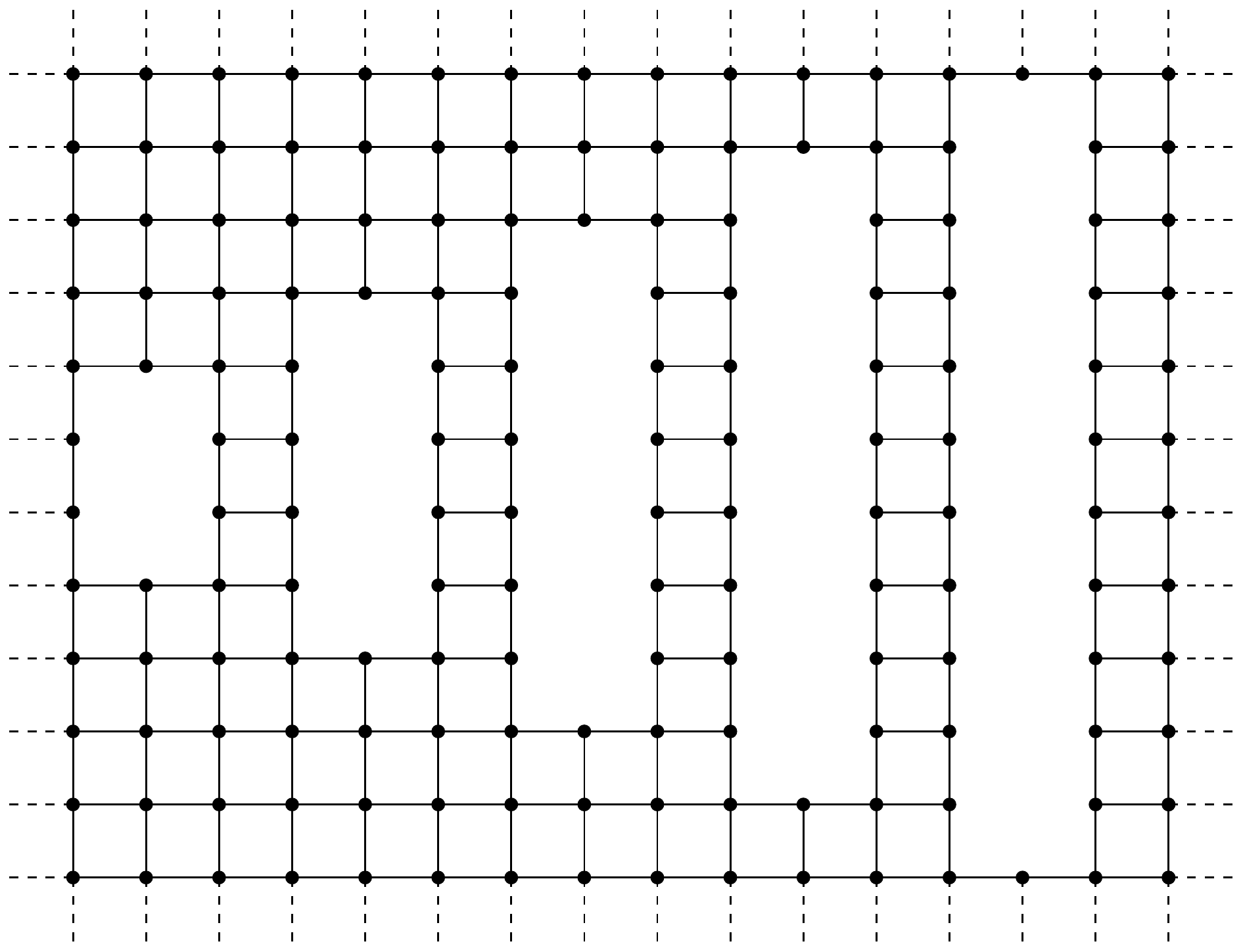}
	\caption{Example of a defected grid with infinitely many bounded defects that are not uniformly bounded: moving rightward, the defects become larger and larger in the vertical direction. As the region between two consecutive defects has always width one, it provides a sequence of subsets of the grid violating the isoperimetric inequality.}
	\label{fig-nounbound}
\end{figure}

A further element that may deserve additional analyses is the role of condition (iii) in Definition \ref{def:G}. As already pointed out, it is for us sort of a ``zero level assumption" that sets the floor of our discussion. However, we do not make use of it explicitly in the proofs of our results. Moreover, it is readily seen that such a condition can be removed as long as we restrict our attention to grids with uniformly bounded defects as in Definition \ref{def:unbdD}. Indeed, one may start by defining a defected grid with uniformly bounded defects as any graph $\G$ that fulfils both conditions (i)--(ii) of Definition \ref{def:G} and \eqref{eq:bound_dD}. If by contradiction one assumes that there exists $\v\in\G$ and $n\to+\infty$ so that 
\begin{equation}
\label{eq:noball}
\lim_{n\to+\infty}\f{|B_n(\v,\G)|}{|B_n(\v,\Q)|}=0\,,
\end{equation}
then by the uniform boundedness of the defects it follows that $B_n(\v,\G)$ intersects a number of different defects proportional to $n^2$. But this implies that the boundary of each of these defects contains at least one edge inside $B_n(\v,\G)$. Since any edge of $\G$ belongs to the boundary of at most two different defects, this means that $B_n(\v,\G)$ contains at least $cn^2$ edges, for some $c>0$ independent of $n$, violating \eqref{eq:noball}. Hence, the exclusive presence of uniformly bounded defects automatically guarantees the validity of condition (iii) in Definition \ref{def:G}. In view of this, it is natural to wonder whether, for grids with unbounded defects, assumption (P) implies Definition \ref{def:G}(iii). More generally, it would be interesting to understand whether the validity of the isoperimetric inequality \eqref{intro:iso G} is enough to ensure condition (iii) in Definition \ref{def:G}. Note that we already know that the converse is false in general, as Figure \ref{fig:noP}\textsc{(b)} provides an example of a defected grid that satisfies Definition \ref{def:G}(iii) but does not support \eqref{intro:iso G}.


    \subsubsection{NLSE ground states}
    \label{sec:openNLS}
	The first open point in this context concerns the energetic convenience of grids with periodic defects with respect to the undefected grid. As we mentioned in the previous section, even though we cannot prove that $\Z^2$-periodic defects present the same energetic convenience of $\Z$--periodic ones, we can construct examples in which this occurs, namely in which \eqref{muG<muQ} and \eqref{EG<EQ} hold. A simple case is depicted in Figure \ref{fig-z2comez1} (more details are provided by Remark \ref{rem:level_Z2per}). It is then interesting to understand whether this can be extended to any grid with $\Z^2$--periodic defects.
	
	Furthermore, as shown by Theorem \ref{thm:noground}, whenever one removes the periodicity (or the compactness) assumption on the set of the defects, the existence of the NLSE ground states is not granted in general. However, at the same time, the construction of a counterexample is far from being immediate. It could be interesting to search for an intermediate condition between uniform boundedness and periodicity of the defects that may guarantee the existence of ground states.
	
	Concerning graphs with unbounded defects, the discussion is completely open. One could wonder, for instance, if assumption (P) yields ground states existence or if some further condition is required. More generally, it appears quite a tough problem to investigate which condition is necessary to combine with the validity of \eqref{eq:s2d} so to have existence of ground states and what happens to ground states on defected grids that do not even support \eqref{eq:s2d}. In this last case one could reasonably expect that NLSE ground states never exist. However, a general proof is still missing and, given the wide and exotic phenomenology that defected grids proved to exhibit, we do not expect it to be straightforward.
	\begin{figure}[t]
		\centering
		\includegraphics[width=.8\columnwidth]{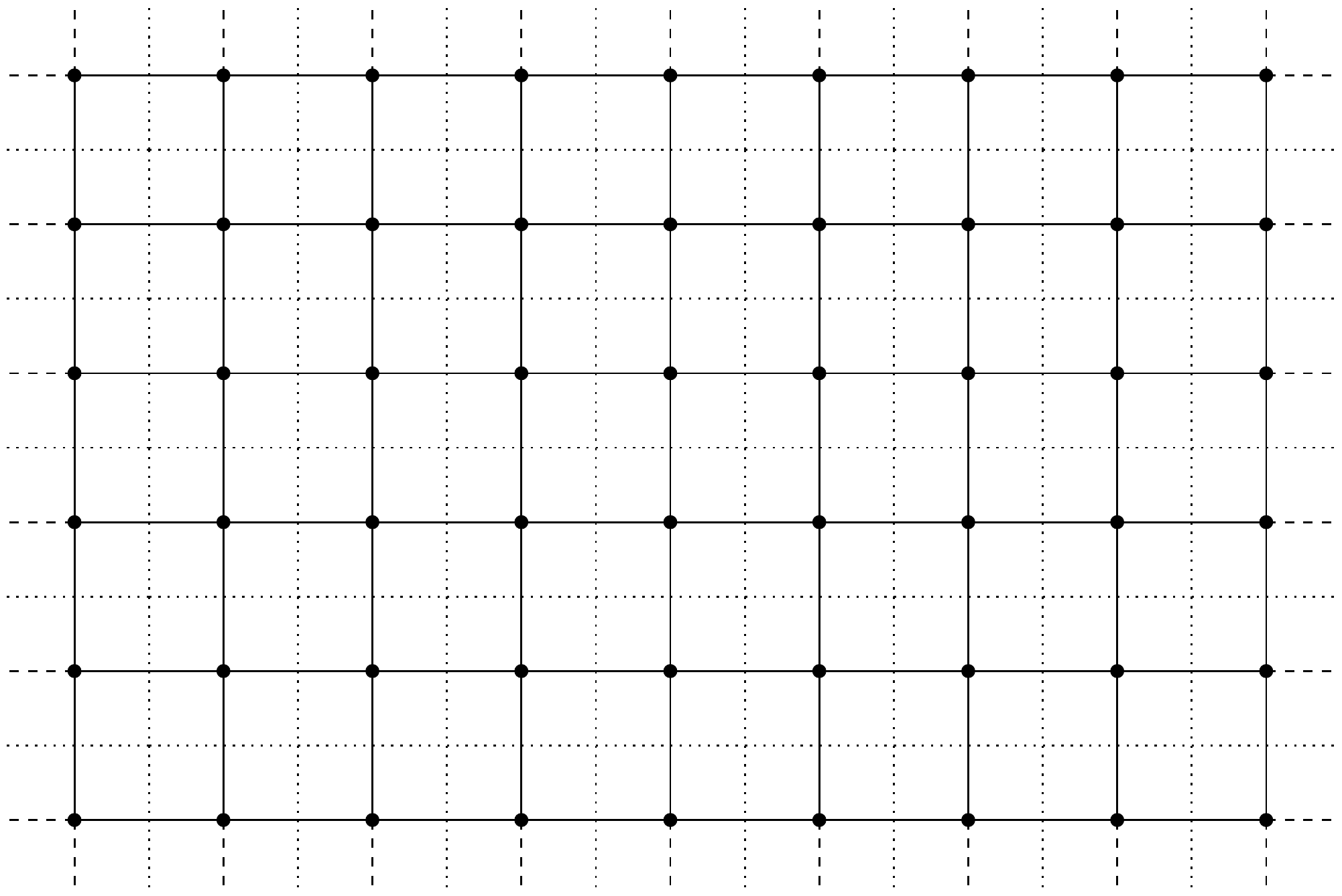}
		\caption{A grid with $\Z^2$--periodic defects that satisfies \eqref{muG<muQ} and \eqref{EG<EQ} (removed edges are dotted).}
		\label{fig-z2comez1}
	\end{figure}



	\section{Two--dimensional Sobolev inequality}
	\label{sec:s2d}
	
	In this section  we present the proofs of the results concerning the validity of the two--dimensional Sobolev inequality. For the sake of simplicity, we organized the section in three parts: the first one addresses the equivalence with the isoperimetric inequality, the second one addresses grids with uniformly bounded defects, the third one addresses grids with unbounded defects.
	
	
	
	\subsection{Equivalence with the isoperimetric inequality}
	\label{subsec:isop}
	
	The aim of this section is to prove the equivalence between the two--dimensional Sobolev inequality \eqref{eq:s2d} and the isoperimetric inequality \eqref{intro:iso G} stated by Theorem \ref{THM:iso=sob}.
	
	\begin{proof}[Proof of Theorem \ref{THM:iso=sob}]
		The proof is completely analogous to that in Euclidean spaces. It is divided in two parts. 
		
		\emph{Part 1: \eqref{eq:s2d} implies \eqref{intro:iso G}.} Let $\Omega$ be a bounded subset of $\G$. If $A_\G(\Omega)=0$ or $P_\G(\Omega)=+\infty$, then \eqref{intro:iso G} is immediate. Thus we assume $A_\G(\Omega)>0$ and $P_\G(\Omega)<+\infty$.
		
		First, denote  $\ov{\Omega}:=\Omega\cup\partial\Omega$ and define $\alpha:=\min\{d_{\G\setminus\Omega}(s,t):s,t\in\partial\Omega,\,s\neq t\}$ and $\beta:=\min\{d_{\G\setminus\Omega}(s,\v):s\in\partial\Omega, \v\text{ is a vertex of }\G\setminus\ov{\Omega}\}$. As $P_\G(\Omega)<+\infty$, the set $\partial\Omega$ is finite and thus we have $\alpha,\,\beta>0$. Now, for every $\eps<\min\{\alpha/2,\beta\}$, set
		\[
		u_\eps(x):=\begin{cases}
		\displaystyle 1 & \text{if }x\in\ov{\Omega}\\
		\displaystyle 1-\f{d_{\G\setminus\Omega}(x,\partial\Omega)}\eps & \text{if }x\in\G\setminus\ov{\Omega}\text{ and }d_{\G\setminus\Omega}(x,\partial\Omega)\leq\eps\\
		\displaystyle 0 & \text{if }x\in\G\setminus\ov{\Omega}\text{ and }d_{\G\setminus\Omega}(x,\partial\Omega)>\eps.
		\end{cases}
		\]
		Clearly $u_\eps\in W^{1,1}(\G)$ and, recalling Definition \ref{def-AP}, 
		\[
		\begin{split}
		\|u_\eps\|_{L^2(\G)}^2=&\int_\Omega dx+\sum_{s\in\partial\Omega}p(s)\int_{0}^{\eps}\left(1-\f{y}{\eps}\right)^2\dy=A_\G(\Omega)+\f\eps 3 P_\G(\Omega)\\
		\|u_\eps'\|_{L^1(\G)}=&\sum_{s\in\partial\Omega}p(s)\int_0^{\eps}\f1\eps\dy=P_\G(\Omega)\,.
		\end{split}
		\]
		As a consequence, in view of \eqref{eq:s2d},
		\[
		\sqrt{A_\G(\Omega)}=\lim_{\eps\to0}\|u_\eps\|_{L^2(\G)}\leq S_\G\lim_{\eps\to0}\|u_\eps'\|_{L^1(\G)}=S_\G P_\G(\Omega)
		\]
		which proves \eqref{intro:iso G}.
		
		\emph{Part 2: \eqref{intro:iso G} implies \eqref{eq:s2d}.} Recall that, without loss of generality, we can always assume $u\geq0$ and by density it is sufficient to prove \eqref{eq:s2d} for nonnegative functions in $C_0^\infty(\G)$. 
		
		Now, let $u\in C_0^\infty(\G)$, $u\geq0$. First, exploiting the coarea formula (see, e.g., \cite[Section 3.2]{F69}), \eqref{eq: P leq H} and \eqref{intro:iso G}, we have
		\begin{multline}
		\label{u'}
		\|u'\|_{L^1(\G)}=\int_0^{+\infty}\mathcal{H}^0\left(\big\{u=t\big\}\right)\dt\\
		\geq\f14\int_0^{+\infty}P_\G\left(\big\{u\geq t\big\}\right)\dt\geq\f{C_\G}4\int_0^{+\infty}\sqrt{A_\G\left(\{u\geq t\}\right)}\dt.
		\end{multline}
		On the other hand, recalling the layer cake representation,
		\begin{equation}
		\label{u^2}
		\|u\|_{L^2(\G)}^2=2\int_{0}^{+\infty}t\,A_\G\left(\big\{u\geq t\big\}\right)\dt.
		\end{equation}
		Furthermore, as $A_\G\left(\big\{u\geq t\big\}\right)$ is a nonincreasing function of $t$,
		\begin{multline*}
		t\,A_\G\left(\big\{u\geq t\big\}\right)\leq\sqrt{A_\G\left(\big\{u\geq t\big\}\right)}\int_0^t\sqrt{A_\G\left(\big\{u\geq s\big\}\right)}\ds\\
		=\f12\f d{dt}\left(\int_0^t\sqrt{A_\G\left(\big\{u\geq s\big\}\right)}\ds\right)^2,\qquad\forall t\in(0,+\infty)
		\end{multline*}
		so that
		\begin{equation}
		\label{eq:A leq A2}
		\int_0^{+\infty}t\,A_\G\left(\big\{u\geq t\big\}\right)\dt\leq C\left(\int_0^{+\infty}\sqrt{A_\G\left(\big\{u\geq t\big\}\right)}\dt\right)^2
		\end{equation}
		for some $C>0$. Then, combining \eqref{u'}, \eqref{u^2} and \eqref{eq:A leq A2}, there results \eqref{eq:s2d}.
	\end{proof} 
	
	
	
	\subsection{Grids with uniformly bounded defects}
	\label{subsec:bound}
	
	The aim of this section is to prove the two--dimensional Sobolev inequality \eqref{eq:s2d} for grids with uniformly bounded defects (Definition \ref{def:unbdD}), as stated in Theorem \ref{THM:s2d_bound}.
	
	\begin{proof}[Proof of Theorem \ref{THM:s2d_bound}]
		Preliminarily, recall that, by Lemma \ref{lem_dDconn}, the boundary $\partial\D_k$ of a defect $\D_k$ is connected. Note also that \eqref{eq:bound_dD} entails both
		\begin{equation}
		\label{eq: bound D}
		\sup_{k\in I}\left|\partial\D_k\right|<+\infty
		\end{equation}
		(in view of Remark \ref{rem:D e dD finiti}) and
		\begin{equation}
		\label{eq:bound_ol}
		\sup_{k\in I} N(k)<+\infty
		\end{equation}
		where $N_k:=\#\{j\in I\,:\,\partial\D_k\cap\partial\D_j\neq\emptyset\}$.
		
		As a first step, for every $k\in I$, fix an edge of $\partial\D_k$ and denote by $t_k$ its middle point. In addition, let
		\begin{equation}
		\label{def:Uk}
		U_k:=\big\{\v\in \VG\cap\partial\D_k\,:\,\deg(\v)\leq3\big\}.
		\end{equation}
		For every $\v\in U_k$, we can choose on any edge $e\in\D_k$ incident at $\v$ a coordinate $x_e$ such that $x_e=0$ corresponds to $\v$ and define
		\begin{equation}
		\label{eq:Lv}
		A_\v:=\bigcup_{\substack{e\in\D_k \\ e\succ\v}}e\cap\left[0,\f12\right]
		\end{equation}
		(namely, the union of the ``first halves'' of every edge in $\D_k$ incident at $\v$). Notice that $\v\in A_\v$. Furthermore, relying on Lemma \ref{lem_dDconn}, for every $\v\in U_k$, let $\gamma_\v\subset\partial\D_k$ be the shortest path in $\partial\D_k$ starting at $\v$ and ending at $t_k$ and let $\lambda_\v:=|\gamma_\v|$.
		
		Consider now a generic function $u\in W^{1,1}(\G)$, $u\geq0$ (as we mentioned before this is not restrictive), and define the function $v:\Q\to\R$ such that
		\[
		v(x):=\begin{cases}
		u(x) & \text{if }x\in\Q\cap\G,\\
		u_{\mid\gamma_\v}(2\lambda_\v x) &\text{if }x\in A_\v,\,\text{for some }\v\in U_k\text{ and some }k\in I,\\
		u(t_k) & \text{if }x\in\D_k\setminus\bigcup_{\v\in U_k}A_\v,\,\text{for some }k\in I.
		\end{cases}
		\]
		By construction, $v$ is continuous on $\Q$ and, for every $p\geq1$,
		\begin{equation}
		\label{eq-efinita?}
		\|v\|_{L^p(\Q)}^p=\|u\|_{L^p(\G)}^p+\sum_{k\in I}a_k u(t_k)^p+\sum_{k\in I}\sum_{\v\in U_k}\left(4-\deg(\v)\right)\int_0^{\f12
		}\left|u_{\mid\gamma_\v}(2\lambda_\v x)\right|^p\,dx\,,
		\end{equation}
		where $a_k:=\left|\D_k\setminus\bigcup_{\v\in U_k}A_\v\right|$. Let us estimate the last two terms in order to prove that $v\in L^p(\Q)$. First, by \eqref{eq:bound_dD},
		\[
		\sup_{k\in I}a_k< +\infty.
		\]
		In addition, for every fixed $j\in I$, as $t_j\in\partial\D_j$, \eqref{eq:bound_ol} entails that the number of possible repetitions of the term $u(t_j)^p$ in the sum $\sum_{k\in I}a_k u(t_k)^p$ is bounded by a constant independent of $j$. Combining these remarks, one sees that there exists $M>0$ such that
		\begin{equation}
		\label{eq:v L2 1}
		\sum_{k\in I}a_k u(t_k)^p\leq M\sum_{k\in I'}u(t_k)^p
		\end{equation}
		where $I'\subset I$ satisfies
		\[
		i,j\in I',\,i\neq j\qquad\Rightarrow\qquad t_i\neq t_j.
		\]
		If $\#I'<+\infty$, then clearly the sum at the right hand side of \eqref{eq:v L2 1} is finite. On the contrary, if $\#I'=+\infty$, then, recalling that for every $k\in I'$ there exists an edge $e_k\in\partial\D_k$ such that $t_k\in e_k$, there results
		\[
		u(t_k)^p\leq\|u\|_{L^\infty(e_k)}^p\leq\|u\|_{W^{1,1}(e_k)}^p.
		\]
		As a consequence,
		\[
		\sum_{k\in I}a_ku(t_k)^p\leq M\sum_{k\in I'}\|u\|_{W^{1,1}(e_k)}^p\leq M \|u\|_{W^{1,1}(\G)}^p<+\infty.
		\]
		Therefore, we are left to estimate the last term of \eqref{eq-efinita?}. To this aim we observe that
		\begin{multline*}
		\sum_{k\in I}\sum_{\v\in U_k}\left(4-\deg(\v)\right)\int_0^{\f12
		}\left|u_{\mid\gamma_\v}(2\lambda_\v x)\right|^p\,dx\\
		\leq \sum_{k\in I}\sum_{\v\in U_k}\f3{2\lambda_\v}\int_{\gamma_\v}|u(x)|^p\,dx
		\leq3\sum_{k\in I}\#U_k\|u\|_{L^p(\partial\D_k)}^p\leq C\sum_{k\in I}\|u\|_{L^p(\partial\D_k)}^p
		\end{multline*}
		(where we used that $\lambda_\v\geq1/2$ and that $\#U_k$ is uniformly bounded by \eqref{eq: bound D}). Then, exploiting again \eqref{eq:bound_ol}, we have that
		\begin{equation}
		\label{eq-auxdopo}
		\sum_{k\in I}\sum_{\v\in U_k}\left(4-\deg(\v)\right)\int_0^{\f12
		}\left|u_{\mid\gamma_\v}(2\lambda_\v x)\right|^p\,dx<+\infty,
		\end{equation}
		and thus $v\in L^p(\Q)$.
		
		On the other hand, relying again on \eqref{eq: bound D} and \eqref{eq:bound_ol} and arguing as before, there results that
		\begin{equation}
		\label{eq:v' L1}
		\begin{split}
		\|v'\|_{L^1(\Q)}=&\|u'\|_{L^1(\G)}+\sum_{k\in I}\sum_{\v\in U_k}\left(4-\deg(\v)\right)\int_0^{\f12}2\lambda_\v|u_{\mid\gamma_\v}'(2\lambda_\v x)|\,dx\\
		\leq&\|u'\|_{L^1(\G)}+3\sum_{k\in I}\#U_k\|u'\|_{L^1(\partial\D_k)}\leq C\|u'\|_{L^1(\G)},
		\end{split}
		\end{equation}
		so that $v\in W^{1,1}(\Q)$. Then, in order to prove \eqref{eq:s2d} it is sufficient to combine \eqref{eq-efinita?}, \eqref{eq:2dGN} and \eqref{eq:v' L1} as follows
		\begin{equation}
		\label{eq:dimsob}
		\|u\|_{L^2(\G)}\leq\|v\|_{L^2(\Q)}\leq\Sq_\Q\|v'\|_{L^1(\Q)}\leq \Sq_\Q C\|u'\|_{L^1(\G)}.
		\end{equation}
	\end{proof}
	
	
	
	\subsection{Grids with unbounded defects}
	\label{subsec:unbd_s2d}
	
	The aims of this section are:
	\begin{itemize}
		\item[--] proving the two--dimensional Sobolev inequality \eqref{eq:s2d} for grids with unbounded defects (Definition \ref{def:nonbdD}) satisfying assumption (P) (Definition \ref{def:assP}), as stated in Theorem \ref{THM:s2d_P};
		\item[--] constructing an example of defected grid that does not satisfy assumption (P), but supports the two--dimensional Sobolev inequality \eqref{eq:s2d}, thus proving Theorem \ref{THM:SobNoP}.
	\end{itemize}
	
	To do this, the first point is showing that the number of unbounded defects of a grid satisfying assumption (P) is finite. 
	
	\begin{proposition}
		\label{prop:fin_ubdD} Let $\G$ be a grid with unbounded defects. If $\#I_\infty=+\infty$, then $\G$ does not satisfy assumption (P).
	\end{proposition}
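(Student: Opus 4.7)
The plan is to proceed by contrapositive. Suppose $\G$ satisfies assumption (P) with a universal constant $M \geq 1$, meaning that for every $j \in I_\infty$ one can fix a configuration $\Gamma_j = (\gamma_\v)_{\v \in V_j}$ of infinite simple paths with $\sup_{\v \in V_j} \F_j(\v, \Gamma_j) \leq M$. The goal is to conclude that $I_\infty$ is finite.

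A first combinatorial observation is that the bound on $\F_j$ implies that no edge of $\G$ lies on more than $M$ paths of $\Gamma_j$. Indeed, if an edge $e$ belonged to $\gamma_{\v_1}, \dots, \gamma_{\v_{M+1}}$, the path $\gamma_{\v_1}$ would intersect each of the remaining $M$ paths, forcing $\F_j(\v_1, \Gamma_j) \geq M+1$. A second preliminary step is to check that for every $j \in I_\infty$ the set $V_j$ is infinite. Since $|\partial \D_j^\infty| = +\infty$, the boundary contains infinitely many edges of $\G$, and hence infinitely many vertices of $\VG$; among these, every vertex $\v$ that is an endpoint of a removed edge of $\D_j^\infty$ automatically satisfies $\deg(\v) \leq 3$ and therefore lies in $V_j$. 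The argument is completed by using Definition \ref{def:G}(ii) together with the connectedness of $\G$ to exclude the degenerate case in which all endpoints of removed edges of $\D_j^\infty$ are themselves removed from $\VG$.

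The core of the proof is the third step, a bottleneck argument that combines planarity of $\G \subset \Q \subset \R^2$ with the growth condition in Definition \ref{def:G}(iii). The idea is that a single unbounded defect, taken in isolation, does not obstruct assumption (P): escape paths can simply fan out through the rest of the grid. The obstruction arises precisely when infinitely many unbounded defects coexist, because planarity forces at least one of them, say $\D_{j_0}^\infty$, to be topologically trapped inside a subregion of $\G$ whose complement is separated from infinity by finitely many other unbounded defects. In such a situation, every infinite escape path $\gamma_\v \in \Gamma_{j_0}$ must leave the trapped region through a finite family of "gate" edges, and by the first step each gate edge is traversed by at most $M$ of the paths. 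This yields a finite upper bound on the total number of paths in $\Gamma_{j_0}$, which contradicts the infinitude of $V_{j_0}$ proved in the second step.

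The main obstacle is to convert this topological intuition into a rigorous bottleneck construction on the metric graph $\G$. Concretely, I would enumerate $I_\infty$ by distance from a fixed reference vertex $\v_0 \in \VG$ and apply a Jordan-curve-type argument in the planar embedding to extract an index $j_0$ and a finite collection of surrounding unbounded defects that together enclose $\partial \D_{j_0}^\infty$ up to a bounded piece of $\G$. The hardest quantitative point is to ensure that the number of gate edges is indeed finite; here Definition \ref{def:G}(iii), which prevents $\G$ from degenerating into a quasi-one-dimensional structure, will be crucial to guarantee that the enclosing defects meet the boundary of the trapped region along a finite set of edges.
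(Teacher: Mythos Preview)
Your proposal is a sketch, and the gap you yourself flag in the third step is the whole difficulty. The assertion that infinitely many unbounded defects force some $\D_{j_0}^\infty$ to be ``trapped'' behind a \emph{finite} family of gate edges is essentially the content of the proposition, and you have not supplied the argument. A Jordan-curve statement alone does not yield this: unbounded defects need not enclose one another (think of infinitely many disjoint vertical rays $\{x_k\}\times[0,+\infty)$ with $x_k\to+\infty$), and even when two defects confine a strip, that strip may still communicate with the rest of the grid through an \emph{infinite} set of edges (e.g.\ the entire line $y=0$). Your plan to invoke Definition~\ref{def:G}(iii) to force finiteness of the gates is also misplaced: that growth condition prevents $\G$ from being globally thin but says nothing about the local geometry between a given pair of defects, and the paper explicitly records (Section~\ref{subsec:open}) that condition~(iii) is never used in any proof.

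The paper's argument is direct rather than by contrapositive, and it avoids any abstract enclosure claim. Working in coordinates, it normalises so that infinitely many $\partial\D_j^\infty$ extend to $y=-\infty$, splits each such boundary into a left and a right side at a fixed interior point, and then builds explicit finite barriers $D_j\cup B_{j,k}$ (and later $D_j\cup T_{j,k}$) through which every infinite simple path starting in the set $R_{j,n_{j,k}}$ must pass. The crux is not topological trapping but an arithmetic comparison: one shows that the ratio $\#R_{j,n_{j,k}}/(d_j+b_{j,k})$, or its analogue with $t_{j,k}$, is unbounded as $j,k$ range, via a case analysis that exploits the infinitude of $I_\infty$ to find pairs of defects whose right boundaries track each other arbitrarily closely. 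Your first two observations (the edge-multiplicity bound from $\F_j\le M$, and $\#V_j=+\infty$) are correct and are implicitly used in the paper too, but they are the easy part; constructing the barrier and controlling its size is where the work lies, and your proposal does not do this.
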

	
	\begin{proof}
		As $\#I_\infty=+\infty$  we can assume, up to a suitable rigid motion of the coordinate axis of $\R^2$, that there are infinitely many indices $j\in I_\infty$ such that
		\begin{equation}
		\label{eq:y_leq0}
		\inf_{\v\in\partial\D_j^\infty}y_\v=-\infty, \qquad\sup_{\v\in\partial\D_j}y_\v\leq0,
		\end{equation}
		where $y_\v$ denotes the ordinate of the vertex $\v=(x_\v,y_\v)\in\Z^2$. As the argument of the proof involves these unbounded defects only, we can also assume without loss of generality that any unbounded defect $\D_j^\infty$ fulfills \eqref{eq:y_leq0}. Finally, we note that it is sufficient to consider the case $I_{b}=\emptyset$. Indeed, if one proves that a grid with infinitely many unbounded defects only does not satisfy assumption (P), then the addition of bounded defects cannot modify this situation. This follows by the fact that the distance between any couple of points in a graph cannot decrease whenever further edges are removed.
		
		Now, for every $j\in I_\infty$, let $y_{j,0}:=\max_{\v\in\partial\D_j^\infty}y_\v$ and let $\ttt_{j,0}\in\partial\D_j^\infty$ be a fixed point in the interior of some edge on the boundary of $\D_j^\infty$ such that $y_{\ttt_{j,0}}=y_{j,0}$. The existence of such a $\ttt_{j,0}$ is guaranteed by the fact that, if there is a vertex in $\partial\D_j^\infty$ with ordinate $y_{j,0}$ and no other vertex with a higher ordinate, then there exists at least one edge in $\partial\D_j^\infty$ with the same ordinate. Moreover, for every $n\in(-\infty,y_{j,0}]\cap\Z$, let
		\[
		\v_{j,n}^L:=\argmin_{\v\in\partial\D_j^\infty\cap\{y=n\}}x_\v\quad\text{and}\quad\v_{j,n}^R:=\argmax_{\v\in\partial\D_j^\infty\cap\{y=n\}}x_\v\,,
		\]
		which by construction are well--defined for all but at most two unbounded defects.
		We say that a vertex $\v\in\partial\D_j^\infty$ such that $y_\v=n$ belongs to the \emph{left side} of $\partial\D_j^\infty$ if there exists a path $\gamma\subset\partial\D_j^\infty$ from $\v$ to $\v_{j,n}^L$ such that $\ttt_{j,0}\notin\gamma$. Similarly, we say that a vertex $\v\in\partial\D_j^\infty$ belongs to the \emph{right side} of $\partial\D_j^\infty$ if there exists a path $\gamma\subset\partial\D_j^\infty$ from $\v$ to $\v_{j,n}^R$ such that $\ttt_{j,0}\notin\gamma$. Note that the definition of $\ttt_{j,0}$ prevents the possibility for a vertex in $\partial\D_j^\infty$ to belong to both sides. In addition, let $\partial\D_j^{\infty,L}, \partial\D_j^{\infty,R}$ denote the set of all vertices on the left and right side of $\partial\D_j^\infty$, respectively.
		
		Finally, for every $n\leq y_{j,0}$, we introduce the sets
		\[
		\begin{split}
		L_{j,n}:=&\Big\{\v\in\partial\D_j^{\infty,L}\,:\,\deg(\v)\leq3\quad\text{and}\quad y_\v\in\left[n,y_{j,0}\right]\Big\}\\
		R_{j,n}:=&\Big\{\v\in\partial\D_j^{\infty,R}\,:\,\deg(\v)\leq3\quad\text{and}\quad y_\v\in\left[n,y_{j,0}\right]\Big\}
		\end{split}
		\]
		and observe that, by construction (in view of Definition \ref{def:D and dD} and Lemma \ref{lem_dDconn}),
		\begin{equation}
		\label{eq:D_geq_n}
		\begin{split}
		\#L_{j,n}\geq&\max\Big\{\left|n-y_{j,0}\right|,\,\Big|x_{\v_{j,n}^L}-x_{\v_{j,y_{j,0}}^L}\Big|\Big\}\\
		\#R_{j,n}\geq&\max\Big\{\left|n-y_{j,0}\right|,\,\Big|x_{\v_{j,n}^R}-x_{\v_{j,y_{j,0}}^R}\Big|\Big\}.
		\end{split}
		\end{equation}
		Now, for the sake of simplicity, we divide the remainder of the proof in two cases.
		
		\emph{Case 1: there exist two indices $j^*,\,j'\in I_\infty$ and a constant $C>0$ such that}
		\begin{equation}
		\label{eq:2side_bd}
		\left|x_{\v_{j^*,n}^R}-x_{\v_{j',n}^L}\right|\leq C,\qquad\forall n\leq\min\Big\{y_{j^*,0},\,y_{j',0}\Big\}.
		\end{equation}
		
		Here, it is not restrictive to set $y_{j^*,0}\geq y_{j',0}$ and $x_{\v_{j^*,n}^R}\leq x_{\v_{j',n}^L}$, for all $n\leq y_{j',0}$ (the other cases can be managed in the same way). Therefore, letting
		\[
		B_n:=\Big\{\v\in \VG\,:\,x_{\v_{j^*,n}^R}\leq x_\v\leq x_{\v_{j',n}^L},\,y_\v=y_{j',0}\quad\text{or}\quad y_\v=n\Big\}\,,
		\]
		\eqref{eq:2side_bd} yields that $\#B_n\leq 2C$ uniformly on $n$. On the other hand, since by construction every simple path of infinite length starting at a given vertex in $L_{j',n}$ must contain at least one point of $B_n$, by \eqref{eq:D_geq_n} there results that for every family of simple paths $(\gamma_\v)_{\v\in L_{j',n}}$ of infinite length such that $\gamma_\v$ starts at $\v\in L_{j',n}$ there exists one vertex of $B_n$ belonging to at least $\f{\left|n-y_{j',0}\right|}{2C}$ of such paths. In addition, as the degree of this vertex of $B_n$ is at most 4, this entails that at least one edge entering this vertex belongs to at least $\f{\left|n-y_{j',0}\right|}{8C}$ of such paths, so that
		\[
		\sup_{j\in I_\infty}\inf_{\Gamma_j\in P_j}\sup_{\v\in V_j}\F_j(\v,\Gamma_j)\geq\lim_{|n|\to+\infty}\f{\left|n-y_{j',0}\right|}{8C}=+\infty\,,
		\]
		which proves that assumption (P) cannot hold.
		
		\emph{Case 2: there do not exist two indices $j^*,\,j'\in I_\infty$ that satisfy condition \eqref{eq:2side_bd}, for some constant $C>0$.}
		
		As a consequence, there exist infinitely many indices $j\in I_\infty$ such that
		\[
		\text{either}\quad\liminf_{|n|\to+\infty}x_{\v_{j,n}^L}=-\infty\quad\text{or}\quad\limsup_{|n|\to+\infty}x_{\v_{j,n}^R}=+\infty.
		\]
		Let us discuss the second case, i.e.,  there exists $I_\infty^R\subseteq I_\infty$ such that $|I_\infty^R|=+\infty$ and 
		\begin{equation}
		\label{eq:step2}
		\limsup_{|n|\to+\infty}x_{\v_{j,n}^R}=+\infty,\qquad\forall j\in I_\infty^R.
		\end{equation}
		Condition \eqref{eq:step2} entails that, for every $j\in I_\infty^R$, there exists a sequence $(n_{j,k})_{k\in\N}\subset\left(-\infty,y_{j,0}\right]\cap\Z$ such that $n_{j,k}\to-\infty$ as $k\to+\infty$, and
		\begin{equation}
		\label{eq:njk}
		x_{\v_{j,n_{j,k}}^R}=\max_{n\in\left[n_{j,k},y_{j,0}\right]}x_{\v_{j,n}^R},\qquad\forall k\in\N.
		\end{equation}
		Again by \eqref{eq:step2} we have $x_{\v_{j,n_{j,k}}^R}\to+\infty$ as $k\to+\infty$. 
		
		Now, for every fixed $j\in I_\infty^R$, if the horizontal path $\gamma=\{(x,y)\in\Q\,:\,x\geq x_{\ttt_{j,0}},\,y=y_{j,0}\}$ contains points of defects different than $\D_j^\infty$, then let $j'\in I_\infty^R$ such that the point of $\gamma$ closest to $\D_j^\infty$ and belonging to another defect belongs to $\D_{j'}^\infty$. If, on the contrary, $\gamma$ contains no point of other defects than $\D_j^\infty$, then we take $j'\in I_\infty^R$ such that $\partial\D_{j'}^\infty$ verifies $y_{j',0}=\max\Big\{y_{i,0}\,:\,i\in I_\infty^R,\,\exists n_i\text{ such that }x_{\v_{i,n}^L}\geq x_{\v_{j,n}^R}\,\forall n<n_i\Big\}$. In the former case we define $D_j$ as the portion of $\gamma$ between the closest points of $\partial\D_j^\infty$ and $\partial\D_{j'}^\infty$. By construction, $D_j\subset\{(x,y)\in\G\,:\,y=y_{j,0}\}$. Moreover, for every $k\in\N$, we set $B_{j,k}:=\Big\{(x,y)\in\G\,:\,x=x_{\v_{j,n_{j,k}}^R},\,n_{j,k}\leq y\leq y_{j,0}\Big\}$. In the latter case we define $D_j$ as the horizontal path in $\G$ joining the closest points of $\partial\D_j^\infty$ and $\partial\D_{j'}^\infty$ at $y=y_{j',0}<y_{j,0}$. Moreover, for every $k\in\N$, we set $B_{j,k}:=\Big\{(x,y)\in\G\,:\,x=x_{\v_{j,n_{j,k}}^R}, n_{j,k}\leq y\leq y_{j',0}\Big\}$ (see Figure \ref{fig:BDT_jk}). Finally, in both cases, we define $d_j:=\#(\VG\cap D_j)$ and $b_{j,k}:=\#(\VG\cap B_{j,k})$. By construction, $b_{j,k}\leq |n_{j,k}|$.
		\begin{figure}[t]
			\centering
			\includegraphics[width=0.8\textwidth]{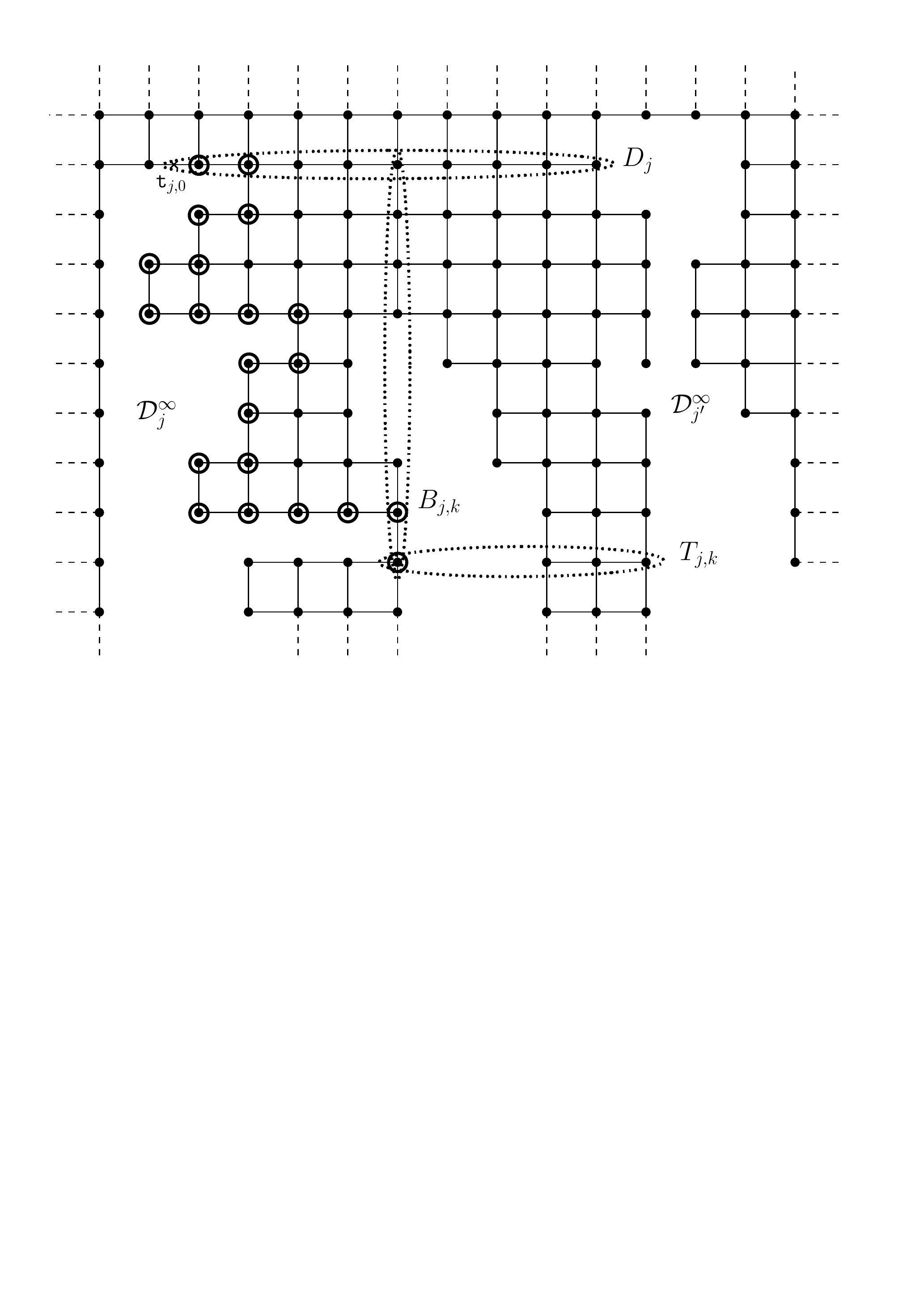}
			\caption{Example of the sets $D_j$, $B_{j,k}$ and $T_{j,k}$ (inside the dotted regions) as in the proof of Proposition \ref{prop:fin_ubdD} (in the case $y_{j,0}\leq y_{j',0}$). The vertices of $R_{j,n_{j,k}}$ are those inside the bold circles. Note that in the picture there is a further unbounded defect between $\D_j^\infty$ and $\D_{j'}^\infty$, but this does not affect the definition of $D_j$, $B_{j,k}$ and $T_{j,k}$.}
			\label{fig:BDT_jk}
		\end{figure}
		
		At this point, one can see that, for every $j\in I_\infty^R$ and $k\in\N$, \eqref{eq:njk} guarantees that any simple path of infinite length in $\G$ starting at any vertex $\v\in R_{j,n_{j,k}}$ (possibly, up to a finite number independent of $k$ and depending only on $j'$) must contain at least one vertex of $D_j\cup B_{j,k}$. Since the degree of each vertex is at most 4, this means that there exists at least one edge of $\G$ that belongs to at least $\f{\#R_{j,n_{j,k}}}{4(d_j+b_{j,k})}$ of such paths. As a consequence,
		\begin{equation}
		\label{eq:bound_P}
		\sup_{j\in I_\infty}\inf_{\Gamma_j\in P_j}\sup_{\v\in V_j} \F_j(\v,\Gamma_j)\geq\sup_{j\in I_\infty^R}\sup_{k\in\N}\f{\#R_{j,n_{j,k}}}{4(d_j+b_{j,k})}.
		\end{equation}
		Now, if there exists $j\in I_\infty^R$ such that $\sup_{k\in\N}b_{j,k}<+\infty$, then \eqref{eq:bound_P} implies that assumption (P) cannot hold, since $\#R_{j,n_{j,k}}\to+\infty$ as $k\to+\infty$ by \eqref{eq:D_geq_n}, whilst $d_j+b_{j,k}$ is uniformly bounded with respect to $k$. If, on the contrary, $b_{j,k}\to+\infty$ as $k\to+\infty$ for every $j\in I_\infty^R$, then $d_j=o(b_{j,k})$ as $k$ is large enough. Hence, if
		\[
		\sup_{j\in I_\infty^R}\sup_{k\in\N}\f{\#R_{j,n_{j,k}}}{b_{j,k}}=+\infty,
		\] 
		then again assumption (P) is false. Suppose by contradiction that there exists $C>0$ independent on $j$ and $k$ such that
		\begin{equation}
		\label{eq:boundRb}
		\#R_{j,n_{j,k}}\leq C b_{j,k},\qquad\forall j\in I_\infty^R,\quad \forall k\in\N.
		\end{equation}
		Consider the set $T_{j,k}:=\Big\{(x,y)\in\G\,:x_{\v_{j,n_{j,k}}^R}\leq x\leq x_{\v_{j',n_{j,k}}^L},\, y=n_{j,k}\Big\}$ (see again Figure \ref{fig:BDT_jk}) and define $t_{j,k}:=\#(\VG\cap T_{j,k})$. Clearly, 
		\begin{equation}
		\label{eq:tjk}
		t_{j,k}\leq x_{\v_{j',n_{j,k}}^L}-x_{\v_{j,n_{j,k}}^R}.
		\end{equation}
		Moreover, observe that every simple path of infinite length in $\G$ starting at a vertex $\v\in R_{j,n_{j,k}}$ must contain at least one vertex of $T_{j,k}\cup D_j$, so that
		\begin{equation}
		\label{eq:bound_Pt}
		\sup_{j\in I_\infty}\inf_{\Gamma_j\in P_j}\sup_{\v\in V_j} \F_j(\v,\Gamma_j)\geq\sup_{j\in I_\infty^R}\sup_{k\in\N}\f{\#R_{j,n_{j,k}}}{4(d_j+t_{j,k})}.
		\end{equation}
		If there exists $j\in I_{\infty}^R$ such that $\sup_{k\in\N}t_{j,k}\leq +\infty$, then \eqref{eq:bound_Pt} implies that assumption (P) does not hold. If, on the contrary, $t_{j,k}\to+\infty$, as $k\to+\infty$, for every $j\in I_\infty^R$, then $d_j=o(t_{j,k})$ as $k$ is large enough. Furthermore, by \eqref{eq:tjk} we have
		\[
		\f{\#R_{j,n_{j,k}}}{t_{j,k}}\geq\f{\#R_{j,n_{j,k}}}{x_{\v_{j',n_{j,k}}^L}-x_{\v_{j,n_{j,k}}^R}}\geq\f{\#R_{j,n_{j,k}}}{x_{\v_{j',n_{j,k}}^R}-x_{\v_{j,n_{j,k}}^R}}\,.
		\]
		Hence, letting $\Delta_{j,k}:=x_{\v_{j',n_{j,k}}^R}-x_{\v_{j,n_{j,k}}^R}$, it is left to prove that
		\begin{equation}
		\label{eq:R/D_infty}
		\sup_{i\in I_\infty^R}\sup_{k\in\N}\f{\#R_{j,n_{j,k}}}{\Delta_{j,k}}=+\infty.
		\end{equation}
		To this aim, observe first that, multiplying and dividing by $|n_{j,k}|$, making use of \eqref{eq:D_geq_n} and \eqref{eq:boundRb} and recalling that $b_{j,k}\leq |n_{j,k}|$, the validity of \eqref{eq:R/D_infty} is granted by the validity of
		\[
		\inf_{j\in I_\infty^R}\inf_{k\in\N}\f{\Delta_{j,k}}{|n_{j,k}|}=0.
		\]
		Hence, assume by contradiction that there exists $\underline{\alpha}>0$ such that
		\begin{equation}
		\label{eq:assurdo}
		\f{\Delta_{j,k}}{|n_{j,k}|}\geq\underline{\alpha},\qquad\forall j\in I_\infty^R,\quad\forall k\in\N,
		\end{equation}
		whence
		\begin{equation}
		\label{eq:alow}
		x_{\v_{j',n_{j,k}}^R}\geq\Delta_{j,k}\geq\underline{\alpha}|n_{j,k}|,\qquad\forall j\in I_\infty^R,\quad\forall k\in\N.
		\end{equation}
		On the other hand, for every $j\in I_\infty^R$, there exists $\overline{k}_j\in\N$ such that
		\begin{equation}
		\label{eq:aup}
		x_{\v_{j',n_{j,k}}^R}\leq \overline{\alpha}|n_{j,k}|,\qquad\forall k\geq\overline{k}_j,
		\end{equation}
		for some $\overline{\alpha}>0$ independent of $j$.
		Indeed, fixing $j\in I_\infty^R$, by \eqref{eq:boundRb} there results that, for every $k, k'\in\N$ such that $n_{j,k}\leq n_{j',k'}\leq0 $,
		\[
		\#R_{j',n_{j',k'}}\leq C b_{j',k'}\leq C|n_{j,k}|,
		\]
		that, coupled with \eqref{eq:D_geq_n} and suitably choosing $n_{j',k'}$, yields
		\[
		\begin{split}
		C|n_{j,k}|\geq\max_{\substack{k'\in\N\,:\, \\ n_{j',k'}\leq n_{j,k}}}\#R_{j',n_{j',k'}}\geq&\max_{\substack{k'\in\N\,:\, \\ n_{j',k'}\leq n_{j,k}}}\left|x_{\v_{j',n_{j',k'}}^R}-x_{\v_{j',y_{j',0}}^R}\right|\\
		\geq&\left|x_{\v_{j',n_{j,k}}^R}-x_{\v_{j',y_{j',0}}^R}\right|= \left|x_{\v_{j',n_{j,k}}^R}\right|\left|1-\f{x_{\v_{j',y_{j',0}}^R}}{x_{\v_{j',n_{j,k}}^R}}\right|.
		\end{split}
		\]
		Since, fixing $j$, by \eqref{eq:njk} we obtain that $\f{x_{\v_{j',y_{j',0}}^R}}{x_{\v_{j',n_{j,k}}^R}}\to0$ as $k\to+\infty$, we find that, for every $j\in I_\infty^R$, there exists $\overline{k}_j\in\N$ such that choosing $\overline{\alpha}=C/2$ there results \eqref{eq:aup}.
		
		Summing up, for every $j\in I_\infty^R$, \eqref{eq:alow}--\eqref{eq:aup} entail that up to subsequence
		\[
		\f{x_{\v_{j',n_{j,k}}^R}}{|n_{j,k}|}\to \alpha_j\qquad\text{as }k\to+\infty,
		\]
		with $\alpha_j\in\left[\underline{\alpha},\overline{\alpha}\right]$ only dependng on $j$. Thus, passing again to a suitable subsequence of $j\in I_\infty^R$, we have $\alpha_j\to\alpha$, as $j\to+\infty$, for some $\alpha\in\left[\underline{\alpha},\overline{\alpha}\right]$. Furthermore, since $j\to+\infty$, for $j$ sufficiently large there exists another index $j_{-}$ such that $x_{\v_{j_{-}',n}^R}\leq x_{\v_{j,n}^R}$ for every $n$ large enough. Also, we can take $j_{-}\to+\infty$ as $j\to+\infty$. Therefore,
		\[
		\f{\Delta_{j,k}}{|n_{j,k}|}\leq\f{x_{\v_{j',n_{j,k}}^R}-x_{\v_{j_{-}',n_{j,k}}^R}}{|n_{j,k}|}=\alpha_j-\alpha_{j_{-}}+o(1)
		\]
		in the limit for $k\to+\infty$. As $\alpha_j-\alpha_{j_{-}}\to0$ when $j\to+\infty$, this contradicts \eqref{eq:assurdo} and proves that assumption (P) cannot be fulfilled whenever \eqref{eq:step2} holds. 
		
		Finally, one should address the other case, in which there exists $I_\infty^L\subseteq I_\infty$ such that $|I_\infty^L|=+\infty$ and 
		\[
		\liminf_{|n|\to+\infty}x_{\v_{j,n}^L}=-\infty,\qquad\forall j\in I_\infty^R.
		\]
		However, this is completely analogous to the previous one.
	\end{proof}
	\begin{remark}
		We point out two further remarks on Proposition \ref{prop:fin_ubdD}. The former is that when estimating the superpositions in the previous proof we avoided using integer parts not to weigh down the notation. The latter is that here condition (ii) of Definition \ref{def:nonbdD} plays no role. As a consequence, the result of Proposition \ref{prop:fin_ubdD} holds whenever the unbounded defects are infinitely many, independently of the bounded ones being uniformly bounded or not.
	\end{remark}
	We can now prove Theorem \ref{THM:s2d_P}. The argument is similar to that of the proof of Theorem \ref{THM:s2d_bound}. 
	\begin{proof}[Proof of Theorem \ref{THM:s2d_P}]
		Note that, as $\G$ satisfies assumption (P), for every $j\in I_\infty$ there exists $\Gamma_j\in P_j$ such that
		\begin{equation}
		\label{eq:GjleqC}
		\sup_{\v\in V_j} \F_j(\v,\Gamma_j)\leq C,
		\end{equation}
		for some $C>0$ independent of $j$. Moreover, by Proposition \ref{prop:fin_ubdD}, $\#I_\infty=N<+\infty$.
		
		Now, for every $k\in I_{b}$, let $U_k$ and $t_k$ be as in the proof of Theorem \ref{THM:s2d_bound} (see \eqref{def:Uk} and the line before). Furthermore, for every $\v\in U_k$, let $A_\v$ be as in \eqref{eq:Lv}, $\gamma_\v$ be the shortest simple path in $\partial\D_k^b$ starting at $\v$ and ending at $t_k$ and set $\lambda_\v:=|\gamma_\v|$. Similarly, choosing on any edge $e\in \D_j^\infty$ incident at a vertex $\v$ a coordinate $x_e$ such that $x_e=0$ corresponds to $\v$, for every $j\in I_\infty$ and every $\v\in V_j$ define
		\[
		\begin{split}
		A_\v^\infty:=&\bigcup_{\substack{e\in\D_j^\infty \\ e\succ\v}}e\cap\left[0,\f12\right],\\
		B_\v^\infty:=&\bigcup_{\substack{e\in\D_j^\infty \\ e\succ\v}}e\setminus A_\v.
		\end{split}
		\]
		Hence, $\v\in A_\v^\infty$ and each edge $e\in\D_j^\infty$ with one endpoint in $V_j$ is such that half of it belongs to $A_\v^\infty$ and the other half is in $B_\v^\infty$.
		
		Let then $u\in W^{1,1}(\G)$, $u\geq0$ (as we mentioned, this is not restrictive), fix a parameter $\lambda>1$ and define the function $v_\lambda:\Q\to\R$ such that
		\[
		v_\lambda(x):=\begin{cases}
		u(x) & \text{if }x\in\Q\cap\G\\
		u_{\mid\gamma_\v}(2\lambda_\v x) &\text{if }x\in A_\v,\,\text{for some }\v\in U_k\text{ and some }k\in I_{b}\\
		u(t_k) & \text{if }x\in\D^b_k\setminus\bigcup_{\v\in U_k}A_\v,\,\text{for some }k\in I_{b}\\
		u_{\mid\gamma_\v}(2\lambda x) &\text{if }x\in A_\v^\infty,\,\text{for some }\v\in V_j\text{ and some }j\in I_\infty\\
		2u_{\mid\gamma_\v}(\lambda)(1-x) &\text{if }x\in B_\v^\infty,\,\text{for some }\v\in V_j\text{ and some }j\in I_\infty\\
		0 & \text{if }x\in\D_j^\infty\setminus\bigcup_{\v\in V_j}\left(A_\v^\infty\cup B_\v^\infty\right),\text{ for some }j\in I_\infty\,.
		\end{cases}
		\]
		By construction, $v_\lambda$ is continuous on $\Q$ and, for every $p\geq1$,
		\[
		\begin{split}
		\|v_\lambda\|_{L^p(\Q)}^p=&\|u\|_{L^p(\G)}^p+\sum_{k\in I_{b}}a_k u(t_k)^p+\sum_{k\in I_{b}}\sum_{\v\in U_k}\left(4-\deg(\v)\right)\int_0^{\f12
		}\left|u_{\mid\gamma_\v}(2\lambda_\v x)\right|^p\,dx\\
		+&\sum_{j\in I_\infty}\sum_{\v\in V_j}\left(4-\deg(\v)\right)\int_{0}^{\f12}|u_{\mid\gamma_\v}(2\lambda x)|^p\,dx+\sum_{j\in I_\infty}\sum_{\v\in V_j}\left(4-\deg(\v)\right)\f{u_{\mid\gamma_\v}^p(\lambda)}{2(p+1)},
		\end{split}
		\]
		with $a_k:=\left|\D^b_k\setminus\bigcup_{\v\in V_k}A_\v\right|$. Recalling the calculations in the proof of Theorem \ref{THM:s2d_bound} (see \eqref{eq-efinita?}--\eqref{eq-auxdopo}), in order to show that $v_\lambda\in L^p(\Q)$ it is sufficient to prove that
		\[
		\sum_{j\in I_\infty}\sum_{\v\in V_j}\left(4-\deg(\v)\right)\int_{0}^{\f12}|u_{\mid\gamma_\v}(2\lambda x)|^p\,dx+\sum_{j\in I_\infty}\sum_{\v\in V_j}\left(4-\deg(\v)\right)\f{u_{\mid\gamma_\v}^p(\lambda)}{2(p+1)}<+\infty.
		\]
		On the one hand, by \eqref{eq:GjleqC} and $\#I_\infty=N$, there results
		\[
		\begin{split}
		\sum_{j\in I_\infty}\sum_{\v\in V_j}\left(4-\deg(\v)\right)&\int_{0}^{\f12}|u_{\mid\gamma_\v}(2\lambda x)|^p\,dx\leq \f{3N}{2\lambda}\sup_{j\in I_\infty}\sum_{\v\in V_j}\int_{\gamma_\v\cap[0,\lambda]}|u(x)|^p\,dx\\
		\leq&\f{3NC}{2\lambda}\sup_{j\in I_\infty}\int_{\Gamma_j\cap \EG}|u(x)|^p\,dx\leq\f{3NC}{2\lambda}\|u\|_{L^p(\G)}^p\,,
		\end{split}
		\]
		(observe that $\Gamma_j\cap \EG$ is the set of all edges of $\G$ visited by at least one path in $\Gamma_j$). On the other hand, for every $\v\in V_j$ there exists an edge $e_\v\in\gamma_\v$ such that $u_{\mid\gamma_\v}(\lambda)$ is attained somewhere in $e_\v$. By \eqref{eq:GjleqC}, the number of couples $\v,\,\w\in V_j$ with $\v\neq\w$ and so that it is not possible to choose $e_\v\neq e_\w$ is bounded uniformly with respect to $j$ and $\lambda$. Hence, there exists $M>0$, independent of $j$ and $\lambda$, such that
		\[
		\sum_{\v\in V_j}u_{\mid\gamma_\v}^p(\lambda)\leq M \sum_{\v\in V_j'}u_{\mid\gamma_\v}^p(\lambda)\,,
		\]
		where a vertex $\v\in V_j$ belongs to $V_j'$ if and only if $e_\v\neq e_\w$, for any $\w\in V_j'$, $\w\neq\v$. As a consequence,
		\[
		\begin{split}
		\sum_{j\in I_\infty}\sum_{\v\in V_j}\left(4-\deg(\v)\right)\f{u_{\mid\gamma_\v}^p(\lambda)}{2(p+1)}\leq&\f{3NM}{2(p+1)}\sup_{j\in I_\infty}\sum_{\v\in V_j'}u_{\mid\gamma_\v}^p(\lambda)\\
		\leq&\f{3NM}{2(p+1)}\sup_{j\in I_\infty}\sum_{\v\in V_j'}\|u\|_{W^{1,1}(e_\v)}^p\leq \f{3NM}{2(p+1)}\|u\|_{W^{1,1}(\G)}^p,
		\end{split}
		\]
		and thus $v_\lambda\in L^p(\Q)$.
		
		We are then left to prove that $v_\lambda'\in L^1(\Q)$. Note that
		\begin{multline}
		\label{eq:vlambda'}
		\|v_\lambda'\|_{L^1(\Q)}=\|u'\|_{L^1(\G)}+\sum_{k\in I}\sum_{\v\in V_k}\left(4-\deg(\v)\right)\int_0^{\f12}2\lambda_\v|u_{\mid\gamma_\v}'(2\lambda_\v x)|\,dx\\
		+\sum_{j\in I_\infty}\sum_{\v\in V_j}\left(4-\deg(\v)\right)\int_0^{\f12}2\lambda|u_{\mid\gamma_\v}'(2\lambda x)|\,dx+2\sum_{j\in I_\infty}\sum_{\v\in V_j}\left(4-\deg(\v)\right)u_{\mid\gamma_\v}(\lambda).
		\end{multline}
		As the second term on the right hand side can be managed again as in the proof of Theorem \ref{THM:s2d_bound} (see \eqref{eq:v' L1}), it is sufficient to estimate the last two terms. On the one hand, using \eqref{eq:GjleqC} and arguing as before, there results
		\begin{multline}
		\label{eq:v'infty1}
		\sum_{j\in I_\infty}\sum_{\v\in V_j}\left(4-\deg(\v)\right)\int_0^{\f12}2\lambda|u_{\mid\gamma_\v}'(2\lambda x)|\,dx\\
		\leq 3NC\sup_{j\in I_\infty}\int_{\Gamma_j\cap \EG}|u'|\,dx\leq 3NC\|u'\|_{L^1(\G)},
		\end{multline}
		for some $C$ independent of $\lambda$. On the other hand, let $B_R:=\{(x,y)\in\G\,:\,|x|\leq R,\,|y|\leq R\}$. It is clear that, for every $R>0$, there exists $\lambda_R>1$ such that one can choose as before $e_\v\notin B_R$, for every $\v\in V_j$, every $j\in I_\infty$ and every $\lambda>\lambda_R$. Since $\|u\|_{W^{1,1}}(\G\setminus B_R)\to0$ as $R\to+\infty$, taking $R$ and $\lambda$ sufficiently large, we obtain
		\begin{multline}
		\label{eq:v'infty1bis}
		2\sum_{j\in I_\infty}\sum_{\v\in V_j}\left(4-\deg(\v)\right)u_{\mid\gamma_\v}(\lambda)\leq 6 N \sup_{j\in I_\infty}\sum_{\v\in V_j}u_{\mid\gamma_\v}(\lambda)\\
		\leq 6NM\sup_{j\in I_\infty} \sum_{\v\in V_j'}\|u\|_{W^{1,1}(e_\v)}\leq 6NM\|u\|_{W^{1,1}(\G\setminus B_R)}\leq 6NM\|u'\|_{L^1(\G)},
		\end{multline}
		which proves that $v_\lambda'\in L^1(\G)$ for any large $\lambda$.
		
		To prove \eqref{eq:s2d} one can then argue as in \eqref{eq:dimsob} exploiting \eqref{eq:vlambda'}, \eqref{eq:v' L1}, \eqref{eq:v'infty1} and \eqref{eq:v'infty1bis} for a large enough choice of the parameter $\lambda$.
	\end{proof}
	
	Finally, we present the proof of Theorem \ref{THM:SobNoP}, which is based on the construction of an explicit counterexample.
	
	\begin{proof}[Proof of Theorem \ref{THM:SobNoP}]
		To prove the statement it is enough to exhibit a defected grid that violates assumption (P), but supports \eqref{eq:s2d}. As we mentioned in Section \ref{subsec:sobineq}, a suitable grid is the one depicted in Figure \ref{fig:s2d_noP}. Let us divide the proof of this fact in three steps.
		
		\emph{Step (i): description of the defected grid $\G$ in Figure \ref{fig:s2d_noP}}. First, let $\Gamma_0:=\bigcup_{i=0}^3\{0\}\times[i,i+1]$, that is the vertical path of length $4$ from $(0,0)$ to $(0,4)$ in $\Q$.
		
		In addition, for every $i\in\N$, let $\Gamma_i^-:=H_i^-\cup V_i\cup H_i^+$, where
		\[
		\begin{split}
		H_i^-:=&\bigcup_{j=0}^i\left[i(i+1)+j,i(i+1)+j+1\right]\times\{0\}\\
		V_i:=&\left(\bigcup_{j=0}^i\{(i+1)^2\}\times[j,j+1]\right)\cup\left(\bigcup_{j=0}^i\{(i+1)(i+2)\}\times[j,j+1]\right)\\
		H_i^+:=&\bigcup_{j=0}^i\left[(i+1)^2+j,(i+1)^2+j+1\right]\times\{i+1\}\,.
		\end{split}
		\]
		One can check that $\Gamma_i^-$ is connected and that, up to the lower horizontal edge, $V_i\cup H_i^+$ is the boundary of a square with edge of length $i+1$. Set then $\Gamma_-:=\bigcup_{i\in\N}\Gamma_i^-$. By construction, it is connected, $(0,0)\in\Gamma_-$ and $x\geq0$ for every $(x,y)\in\Gamma_-$.
		
		On the other hand, let
		\[
		\Gamma_i^+:=\left(\bigcup_{j=0}^{2i+1}[i(i+1)+j,i(i+1)+j+1]\times\{4+i\}\right)\cup\Big(\{(i+1)(i+2)\}\times[4+i,5+i]\Big)
		\] 
		and set $\Gamma_+:=\bigcup_{i\in\N}\Gamma_i^+$. By construction, it is connected, $(0,4)\in\Gamma_+$ and $x\geq0,\,y\geq4$, for every $(x,y)\in\Gamma_+$. Furthermore, for every $x\geq0$, letting $y_-, y_+\geq0$ be such that $(x,y_-)\in\Gamma_-$ and $(x,y_+)\in\Gamma_+$, there results $y_+>y_-$. 
		
		Finally, set $\Gamma:=\Gamma_0\cup\Gamma_-\cup\Gamma_+$. As a consequence, $\Gamma$ is a simple path of infinite length dividing $\Q$ in two separate regions. Between these regions, let $\D$ be the one containing the point $(1,2)$ and define $\G=\Q\setminus\D$. Note that, by construction, $\partial\D=\Gamma$. 
		
		\emph{Step (ii): $\G$ supports \eqref{eq:s2d}.} In view of Theorem \ref{THM:iso=sob}, it is sufficient to prove that $\G$ supports \eqref{intro:iso G}.
		
		First, we observe that this is immediate for every bounded $\Omega\subset\G\setminus\partial\D$, given the validity of the isoperimetric inequality on $\Q$ and the fact that $A_\G(\Omega)=A_\Q(\Omega)$ and $P_\G(\Omega)=P_\Q(\Omega)$, for every $\Omega\subset\G\setminus\partial\D$.
		
		On the other hand, note that, for every couple of distinct points $s,t\in\partial\D$, the ratio between the length of the shortest path from $s$ to $t$ in $\partial\D$ and the length of the shortest path from $s$ to $t$ in $\G$ is bounded from above. Indeed, the two paths coincide for every $s,t\in\Gamma_0\cup\Gamma_+$, whereas the length of the shortest path in $\partial\D$ is at most three times the length of the shortest path in $\G$ whenever at least one between $s$ and $t$ belongs to $\Gamma_-$. Hence, there exists a universal constant $C>0$ such that $P_\Q(\Omega)\leq C P_\G(\Omega)$, for every bounded $\Omega\subset\G$ such that $\Omega\cap\partial\D\neq\emptyset$. Given that $A_\G(\Omega)=A_\Q(\Omega)$, the isoperimetric inequality on $\Q$ gives 
		\[
		\sqrt{A(\Omega)}\leq C_\Q P_\Q(\Omega)\leq C C_\Q P_\G(\Omega)
		\]
		for every bounded $\Omega\subset\G$ such that $\Omega\cap\partial\D\neq\emptyset$.
		
		\emph{Step (iii): $\G$ does not satisfy assumption (P).} Note first that, for every $i\in\N$, any simple path of infinite length starting at a vertex in $V_i\cup H_i^+$ must contain at least one vertex of the horizontal path $B_i:=\bigcup_{j=0}^i\left[(i+1)^2+j,(i+1)^2+j+1\right]\times\{0\}$. Furthermore, to reach $B_i$, any path starting from a vertex $\v=(x_\v,y_\v)\in V_i\cup H_i^+$ must run through at least $y_\v$ vertical edges contained in the region bounded by $V_i\cup H_i^+\cup B_i$. Note also that there are $i+2$ vertices in $H_i^+$, all satisfying $y_\v=i+1$, and there are exactly 2 vertices in $V_i$ such that $y_\v=j$, for every $j\in\{1,\,\dots,\,i\}$.
		
		Now, the total number of vertical edges that are necessary for any given family $\Gamma$ of simple paths of infinite length, each of which starts at a different vertex in $V_i\cup H_i^+$ and contains at least one vertex of $B_i$, is greater than or equal to $(i+1)(i+2)+2\sum_{j=1}^i j=2(i+1)^2$. Conversely, the region in $\G$ bounded by $V_i\cup H_i^+\cup B_i$ contains $(i+1)(i+2)$ distinct vertical edges. Therefore, the total number of repetitions of vertical edges must be at least $2(i+1)^2-(i+1)(i+2)=i(i+1)$. Since the total number of vertices in $V_i\cup H_i^+$ is $3i+2$, it follows that the mean number of repetitions of vertical edges per path is at least $\f{i(i+1)}{3i+2}$. Denoting by $P$ the set of all possible families $\Gamma$, we thus obtain
		\[
		\inf_{\Gamma\in P}\sup_{\substack{\v\in \partial\D \\ \deg(\v)\leq3}} \F(\v,\Gamma)\geq\inf_{\Gamma\in P}\sup_{\v \in V_i\cup H_i^+}\F(\v,\Gamma)\geq \f{i(i+1)}{3i+2}\to+\infty\qquad\text{as }i\to+\infty,
		\]
		which concludes the proof.
	\end{proof}
	
		
		\section{NLSE ground states}
		\label{sec:gs}
		
		In this section we present the proofs of the results concerning the NLSE ground states. For the sake of simplicity, we organized the section in four subsections consisting of
		\begin{itemize}
			\item[(i)] some preliminary results,
			\item[(ii)] the discussion of the ground states on compactly defected grids and grids with $\Z$--periodic defects (i.e. the proof of the corresponding part of Theorem \ref{THM:ex_com&Z}),
			\item[(iii)] the discussion of the ground states on grids with $\Z^2$--periodic defects (i.e. the proof of the corresponding part of Theorem \ref{THM:ex_com&Z}),
			\item[(iv)] the construction of grids with uniformly bounded defects that support no ground state (i.e. the proof of Theorem \ref{thm:noground}).
		\end{itemize}
		
		
		
		\subsection{Preliminary results} 
		\label{subsec:gs_on_Q}
		
		Here we develop some tools necessary to prove Theorems \ref{THM:ex_com&Z}--\ref{thm:noground}. More precisely, first we establish an a priori estimate that must be satisfied by any ground state of mass $\mu$, then we show some consequences of the simultaneous validity of the one and the two--dimensional Sobolev inequalities and finally we report a dichotomy result for the minimizing sequences of the energy functional.
		
		
		
		\subsubsection{A priori estimates on the ground states}
		
		Let us begin with recalling some well known features of ground states, that hold for every $p>2$ and every $\mu>0$.
		
		First, whenever $u=(u_e)_{e\in\EG}$ is a ground state of mass $\mu$, by the Lagrange multipliers Theorem, there exists $\lambda\in\R$ such that
		\begin{equation}
		\label{eq-critical}
		\int_\G u'v'\dx-\int_\G|u|^{p-2}uv\dx=-\lambda\int_\G uv\dx,\qquad\forall v\in H^1(\G).
		\end{equation}
		As a consequence, classical arguments (see, e.g., \cite{AST15,T16}) show that $u$ satisfies
		\begin{gather}
		\label{eq-NLSgrafo}\displaystyle u_e''+|u_e|^{p-2}u_e=\lambda u_e,\qquad\forall e\in\EG,\\[.2cm]
		\displaystyle u_e\in H^2(e)\cap C^2(e),\qquad\forall e\in\EG,\\[.2cm]
		\label{eq-sign}\displaystyle u>0,\quad\text{up to a sign changing},\\[.2cm]
		\label{eq-kirch}\displaystyle \sum_{e\succ\v}\f{du_e}{dx_e}(\v)=0,\qquad\forall\v\in\VG,
		\end{gather}
		where $\f{du_e}{dx_e}(\v)$ denotes the derivative of $u_e$ at $\v$ with the edge $e$ parametrized in such a way that $\v$ is represented by $x_e=0$. On the other hand, as one can check that the bottom of the spectrum of the operator $-\Delta_\G:L^2(\G)\to L^2(\G)$ defined by
		\[
		\begin{array}{c}
		\displaystyle (-\Delta_\G u)_{\mid e}:=-u_e'',\\[.2cm]
		\displaystyle \mathrm{dom}(-\Delta_\G):=\{u\in H^1(\G):u_e\in H^2(e),\forall e \in\EG, \text{ and } u \text{ satisfies \eqref{eq-kirch} },\forall\v\in\VG\},
		\end{array}
		\]
		is zero, there results that
		\begin{equation}
		\label{eq:infnonpos}
		\EE_{p,\G}(\mu)\leq0,\qquad\forall p>2,\quad\forall\mu>0.
		\end{equation}
		As a consequence, combining with \eqref{eq-critical} and $p>2$, if $u$ is a ground state of mass $\mu$, then
		\begin{equation}
		\label{eq-lambdapos}
		\lambda=\lambda(u)=\f{\|u\|_{L^p(\G)}^p-\|u'\|_{L^2(\G)}^2}\mu=-\f{2\EE_{p,\G}(\mu)}\mu+\left(1-\f2p\right)\f{\|u\|_{L^p(\G)}^p}\mu>0.
		\end{equation}
		
		\begin{remark}
			The properties above hold not only on defected grids, but also on the undefected one $\Q$.
		\end{remark}
		
		Now, we can state the main result of the section. In the following we denote by $E_p(u,e)$ the energy functional \eqref{intro:E} restricted to the edge $e\in\EG$. 
		
		\begin{proposition}
			\label{prop:E>0}
			Let $p>2$. If $\G$ is a defected grid and $u\in\Hmu(\G)$ is a ground state of mass $\mu$, then there exists a compact set $K\subset \G$ such that $E_p(u,e)>0$, for every edge $e\in\G\setminus K$. The same holds if $\G$ is replaced by the undefected grid $\Q$.
		\end{proposition}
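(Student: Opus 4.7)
The plan is to combine the $H^1$--summability of the edgewise contributions of $u$, the strict positivity of the Lagrange multiplier $\lambda(u)>0$ granted by \eqref{eq-lambdapos}, and the structure of \eqref{eq-NLSgrafo}, in order to reduce the claim to an edgewise Poincar\'e--type inequality. Let $u$ be a ground state of mass $\mu$ on $\G$ with Lagrange multiplier $\lambda=\lambda(u)>0$. Since $\|u\|_{H^1(\G)}^2=\sum_{e\in\EG}\|u\|_{H^1(e)}^2<+\infty$, for every $\eta>0$ only finitely many edges $e\in\EG$ satisfy $\|u\|_{H^1(e)}\geq\eta$; collect them in a finite subgraph $K_\eta$. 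A bootstrap based on \eqref{eq-NLSgrafo} (which provides $\|u''\|_{L^2(e)}\leq C\|u\|_{L^2(e)}$ thanks to the edgewise Sobolev embedding $H^1(e)\hookrightarrow L^\infty(e)$ and the consequent bound $\|u\|_{L^\infty(\G)}\leq C(\mu,p)$) yields $\|u\|_{H^2(e)}\leq C\|u\|_{H^1(e)}$, and then $H^2(e)\hookrightarrow W^{1,\infty}(e)$ gives $\|u\|_{L^\infty(e)}+\|u'\|_{L^\infty(e)}\leq C'\|u\|_{H^1(e)}$ for every $e\in\EG$, so both $u$ and $u'$ are uniformly small on every $e\not\subset K_\eta$ as soon as $\eta$ is small.

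For $\eta$ so small that $(C'\eta)^{p-2}<\lambda/2$, on every $e\not\subset K_\eta$ the equation \eqref{eq-NLSgrafo} reads $u''=(\lambda-|u|^{p-2})u$, a small perturbation of the linear ODE $u''=\lambda u$ on $e\cong[0,1]$. Every nontrivial solution of the latter is of the form $u(x)=A\cosh(\sqrt\lambda(x-x_0))+B\sinh(\sqrt\lambda(x-x_0))$, and a direct minimisation of the Rayleigh quotient over the two--parameter family $(A,B)$ gives the Poincar\'e--type lower bound
\[
\|u'\|_{L^2(e)}^2\geq c(\lambda)\,\|u\|_{L^2(e)}^2,\qquad c(\lambda):=\lambda\,\frac{\sinh\sqrt\lambda-\sqrt\lambda}{\sinh\sqrt\lambda+\sqrt\lambda}>0.
\]
A standard perturbation argument propagates this bound, with $c(\lambda)/2$ say, to all solutions of the full \eqref{eq-NLSgrafo} on $e\not\subset K_\eta$ provided $\eta$ is small enough.

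Combined with the elementary estimate $\|u\|_{L^p(e)}^p\leq\|u\|_{L^\infty(e)}^{p-2}\|u\|_{L^2(e)}^2$ this yields
\[
E_p(u,e)\geq\Bigl(\tfrac{c(\lambda)}{4}-\tfrac{\|u\|_{L^\infty(e)}^{p-2}}{p}\Bigr)\|u\|_{L^2(e)}^2,
\]
and a further reduction of $\eta$ makes the bracket strictly positive. The non--triviality $u\not\equiv 0$ on $e$ (needed to rule out $E_p(u,e)=0$) follows from uniqueness for the Cauchy problem associated to the ODE \eqref{eq-NLSgrafo} combined with $\|u\|_{L^2(\G)}^2=\mu>0$ and the connectedness of $\G$; the same reasoning transfers verbatim to the undefected grid $\Q$. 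The main analytical obstacle is the perturbation step: the Poincar\'e constant $c(\lambda)$ must not degenerate when passing from the linear to the nonlinear equation, uniformly over all far edges. I would handle this by a contradiction--compactness argument: were it to fail, a sequence of far edges would yield solutions of \eqref{eq-NLSgrafo}, appropriately rescaled, converging in $C^1([0,1])$ to a nontrivial limit $u_\infty$ satisfying $u_\infty''=\lambda u_\infty$ with $\|u_\infty'\|_{L^2}=0$, forcing $u_\infty$ constant and then (since $\lambda>0$) $u_\infty\equiv 0$, a contradiction.
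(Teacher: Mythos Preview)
Your proposal is essentially correct and follows a genuinely different route from the paper. Both arguments first establish that $\|u\|_{L^\infty(e)}$ and $\|u'\|_{L^\infty(e)}$ are small on edges $e$ far from a fixed compact set, using the edgewise $H^1$--summability together with the ODE \eqref{eq-NLSgrafo}; the paper does this slightly differently (bounding $|b|=|u_e'(0)|$ via the integrated ODE rather than via an $H^2$--bootstrap), but the outcome is the same. The divergence comes after that. The paper proceeds through Lemmas \ref{lem:IVP} and \ref{lem:IVP_E>0}: it exploits the first integral obtained by multiplying \eqref{IVP 01} by $u'$, writes $E_p(u,[0,1])$ in terms of $\|u\|_{L^2}^2$, $\|u\|_{L^p}^p$ and the boundary data $(a,b)$, bounds $\|u\|_{L^2}^2$ from below via explicit comparison with the linear equation $v''=(\lambda-\delta)v$, and then checks by hand that the discriminant of the resulting quadratic in $\ell=b/a$ is negative. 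Your route replaces all of this by the single observation that nontrivial solutions of $v''=\lambda v$ on $[0,1]$ satisfy $\|v'\|_{L^2}^2\geq c(\lambda)\|v\|_{L^2}^2$ with $c(\lambda)>0$, and then pushes this to the perturbed equation by compactness. Your argument is softer and avoids the explicit discriminant computation; the paper's argument is more constructive and gives quantitative control of the threshold in terms of $\lambda$.

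One detail to fix in your compactness step: the contradiction hypothesis is that along a sequence of far edges the Rayleigh quotient drops below $c(\lambda)/2$, not that it tends to zero. After rescaling and passing to the $C^1$--limit you obtain a nontrivial solution $u_\infty$ of $u_\infty''=\lambda u_\infty$ with $\|u_\infty'\|_{L^2}^2\leq\tfrac{c(\lambda)}{2}\|u_\infty\|_{L^2}^2$, which already contradicts the definition of $c(\lambda)$; you do not obtain $\|u_\infty'\|_{L^2}=0$, so the final sentence about $u_\infty$ being constant is incorrect as written (and unnecessary). Also, the non--triviality of $u$ on every edge is immediate from \eqref{eq-sign}, so the Cauchy--uniqueness detour can be dropped.
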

		
		The proof of Proposition \ref{prop:E>0} relies on the next two auxiliary lemmas.

		\begin{lemma}
			\label{lem:IVP}
			Let $p>2$, $\lambda,a>0$ and $b\in\R$. If $u\in H^1(0,1)$ is a positive solution of the Cauchy problem
			\begin{equation}
			\label{IVP 01}
			\begin{cases}
			u''+|u|^{p-2}u=\lambda u & \text{on }(0,1)\\
			u(0)=a & \\
			u'(0)=b &
			\end{cases}
			\end{equation}
			then
			\begin{equation}
			\label{l eps u l}
			u(x)\leq A_0e^{-\sqrt{\lambda}\,x}+B_0 e^{\sqrt{\lambda}\,x},\qquad\forall x\in(0,1),
			\end{equation}
			where
			\begin{equation}
			\label{A B eps}
			A_0:=\f12\left(a-\f b{\sqrt{\lambda}}\right)\quad\text{and}\quad B_0:=\f12\left(a+\f b{\sqrt{\lambda}}\right).
			\end{equation}
			If, in addition, $a$ and $|b|$ are sufficiently small, then $\|u\|_{L^\infty(0,1)}\ll\lambda^{\f1{p-2}}$ and
			\begin{equation}
			\label{l eps u l bis}
			u(x)\geq A_\delta e^{-\sqrt{\lambda-\delta}\,x}+B_\delta e^{\sqrt{\lambda-\delta}\,x},\qquad\forall x\in(0,1),
			\end{equation}
			where
			\[
			A_\delta:=\f12\left(a-\f b{\sqrt{\lambda-\delta}}\right)\quad\text{and}\quad B_\delta:=\f12\left(a+\f b{\sqrt{\lambda-\delta}}\right)
			\]
			and $\delta:=\|u\|_{L^\infty(0,1)}^{p-2}$.
		\end{lemma}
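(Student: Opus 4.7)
The plan is to treat the nonlinear problem \eqref{IVP 01} as a perturbation of the linear ODE $v'' = \lambda v$ and to exploit the variation of parameters formula. Since $u>0$, the equation is equivalent to $u'' - \lambda u = -u^{p-1}$, and Duhamel's representation relative to the fundamental pair $e^{\pm\sqrt{\lambda}\,x}$ yields
\[
u(x) = A_0 e^{-\sqrt{\lambda}\,x} + B_0 e^{\sqrt{\lambda}\,x} - \int_0^x \f{\sinh\bigl(\sqrt{\lambda}\,(x-t)\bigr)}{\sqrt{\lambda}}\,u(t)^{p-1}\dt,
\]
where the coefficients $A_0, B_0$ are precisely those in \eqref{A B eps}, being uniquely determined by $u(0)=a$ and $u'(0)=b$. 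Because $u\ge 0$ and $\sinh(\sqrt{\lambda}\,(x-t))\ge 0$ for $0\le t\le x$, the integral term is nonnegative and \eqref{l eps u l} follows immediately.

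To obtain the smallness of $\uinfty$, I would estimate crudely from the upper bound: for $x\in(0,1)$,
\[
u(x) \le \bigl(|A_0| + |B_0|\bigr) e^{\sqrt{\lambda}} \le \left(a + \f{|b|}{\sqrt{\lambda}}\right) e^{\sqrt{\lambda}},
\]
so that, with $\lambda$ and $p$ fixed, choosing $a$ and $|b|$ sufficiently small makes $\|u\|_{L^\infty(0,1)}$ arbitrarily small compared to $\lambda^{1/(p-2)}$. In particular, one can ensure $\delta = \|u\|_{L^\infty(0,1)}^{p-2} < \lambda$, which is exactly what is needed so that $\sqrt{\lambda - \delta}$ be real and strictly positive.

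For the lower bound I would then rewrite the ODE after subtracting $\delta u$ from both sides: $u'' - (\lambda-\delta) u = -(u^{p-2}-\delta)\,u$. The crucial observation is that the right--hand side is nonnegative, since $u^{p-2} \le \uinfty^{p-2} = \delta$ by the very definition of $\delta$, and $u\ge 0$. Applying variation of parameters now with respect to the fundamental pair $e^{\pm\sqrt{\lambda-\delta}\,x}$ gives
\[
u(x) = A_\delta e^{-\sqrt{\lambda-\delta}\,x} + B_\delta e^{\sqrt{\lambda-\delta}\,x} + \int_0^x \f{\sinh\bigl(\sqrt{\lambda-\delta}\,(x-t)\bigr)}{\sqrt{\lambda-\delta}}\,\bigl(\delta - u(t)^{p-2}\bigr)\,u(t)\dt,
\]
and the nonnegativity of the integrand delivers \eqref{l eps u l bis} at once.

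The only genuinely delicate point is the quantitative smallness threshold on $a,|b|$ required for $\delta<\lambda$, so that the representation above with $\sqrt{\lambda-\delta}$ is legitimate; but this is a direct, non--circular consequence of the a priori upper bound established in the first step, and I do not foresee any real obstacle.
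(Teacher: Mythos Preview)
Your argument is correct. The paper proceeds differently: it proves \eqref{l eps u l} by a direct comparison principle, introducing the solution $v$ of the linear Cauchy problem $v''=\lambda v$, $v(0)=a$, $v'(0)=b$, setting $w:=v-u$, and showing from $w''>\lambda w$, $w(0)=w'(0)=0$ that $w\ge 0$ on $[0,1]$ via a continuity argument; the lower bound \eqref{l eps u l bis} is then obtained by the symmetric comparison with the linear equation at parameter $\lambda-\delta$. Your route via Duhamel's formula is more compact: the exact integral representation makes the sign of the discrepancy term transparent in one line, both for the upper and the lower bound, and it delivers the explicit $L^\infty$ control on $u$ without appealing to any phase--plane picture. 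The paper's approach, on the other hand, is closer in spirit to a maximum principle and avoids writing down any Green kernel; it is perhaps more robust if one later wants qualitative information (monotonicity, location of the maximum) rather than just the two--sided bound. Either way, the content is the same comparison of $u$ with the two linear solutions at coefficients $\lambda$ and $\lambda-\delta$.
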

		\begin{figure}[t]
			\centering
			\includegraphics[width=0.5\textwidth]{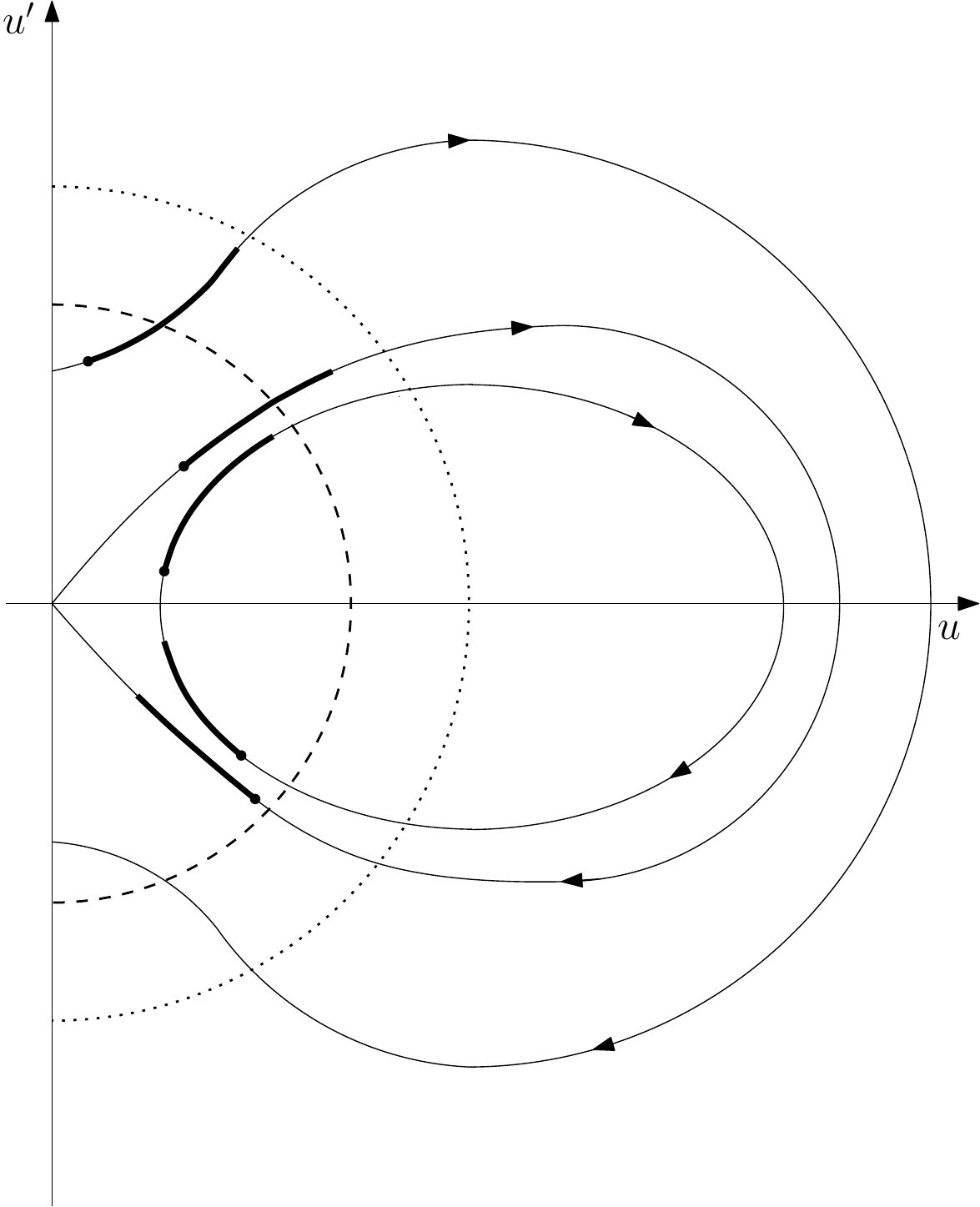}
			\caption{A phase plane description of the situation considered in the second part of the proof of Lemma \ref{lem:IVP}. Taking initial data $(a,b)$ inside the region enclosed by the dashed line and the $u'$ axis, the corresponding positive solution of \eqref{IVP 01} is given by a portion of a corresponding orbit (bold lines in the picture). Since both $\lambda$ and the domain on which the problem is set are fixed a priori, these bold curves will not exit a region enclosed by a suitable dotted line and the $u'$ axis. When the dashed line approaches the origin of the phase plane, i.e. $a,b$ are sufficiently small, the dotted line does the same, i.e. the positive solution of \eqref{IVP 01} is small uniformly on $[0,1]$.}
			\label{fig:phaseplane}
		\end{figure}
		\begin{proof}
			We divide the proof in two parts.
			
			\emph{Part (i): proof of \eqref{l eps u l}.} By \eqref{IVP 01} and $u>0$, we have that $u''=\lambda u-|u|^{p-2}u<\lambda u$. Let then $v\in H^1(0,1)$ be the solution of
			\begin{equation}
			\label{lin sup}
			\begin{cases}
			v''=\lambda v & \text{on }(0,1)\\
			v(0)=a &\\
			v'(0)=b &
			\end{cases}
			\end{equation}
			and set $w(x):=v(x)-u(x)$, for every $x\in [0,1]$. Subtracting \eqref{IVP 01} from \eqref{lin sup}, there results that $w$ satisfies
			\[
			\begin{cases}
			w''>\lambda w & \text{on }(0,1)\\
			w(0)=0 & \\
			w'(0)=0. &
			\end{cases}
			\]
			Since $w''(0)>\lambda w(0)=0$ and $w'(0)=0$, there exists $\eps\in(0,1)$ such that $w'(x)>0$ for every $x\in(0,\eps)$. By $w(0)=0$, it follows also that $w(x)>0$ for every $x\in(0,\eps)$. In particular, $w(\eps)>0$ so that $w''(\eps)>0$ and the argument can be iterated. As one can easily check by a contradiction argument that the iteration cannot get stuck before $x=1$, we obtain $w\geq0$, that is $v\geq u$, on $[0,1]$. Hence, solving explicitly \eqref{lin sup}, one finds
			\[
			v(x)=\f12\left[\left(a-\f b{\sqrt{\lambda}}\right)e^{-\sqrt{\lambda}\,x}+\left(a+\f b{\sqrt{\lambda}} e^{\sqrt{\lambda}\,x}\right)\right],
			\]
			which proves \eqref{l eps u l}.
			
			\emph{Part (ii): proof of \eqref{l eps u l bis}.} Combining \eqref{l eps u l} and \eqref{A B eps}, one sees that (see Figure \ref{fig:phaseplane})
			\[
			\|u\|_{L^\infty(0,1)}\to0,\qquad\text{as}\quad a,\,|b|\to0\,.
			\]
			As a consequence, since $\lambda$ is fixed, if $a$ and $|b|$ are sufficiently small, then $\|u\|_{L^\infty(0,1)}\ll\lambda^{\f1{p-2}}$. Setting $\delta:=\|u\|_{L^\infty(0,1)}^{p-2}$, \eqref{IVP 01} implies that $u''>(\lambda-\delta)u$ on $(0,1)$ and, hence, arguing as in Part (i) (but with reverse inequalities), one obtains \eqref{l eps u l bis}.
		\end{proof}
		
		\begin{remark}
			Note that the second part of the previous proof also shows that, whenever $a$ and $|b|$ are small, the supremum of $u$ cannot be attained in $(0,1)$ (see again Figure \ref{fig:phaseplane}). Indeed, if $u(x_0)=\sup_{x\in[0,1]}u(x)$ with $x_0\in(0,1)$, then $u''(x_0)\leq0$ so that, by \eqref{IVP 01}, $\|u\|_{L^\infty(0,1)}^{p-2}\geq\lambda$.
		\end{remark}
		
		\begin{lemma}
			\label{lem:IVP_E>0}
			Let $p>2$, $\lambda,a>0$, $b\in\R$ and let $u\in H^1(0,1)$ be the positive solution of the Cauchy problem \eqref{IVP 01}. If $a,|b|$ are sufficiently small, then $E_p(u,[0,1])>0$.
		\end{lemma}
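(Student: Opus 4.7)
Proof plan: The idea is to combine the exponential a priori bound of Lemma \ref{lem:IVP} with a scaling argument that reduces the small--data regime of \eqref{IVP 01} to a small perturbation of the linear problem $v''=\lambda v$, for which the Dirichlet--type functional $\frac12\int_0^1(v')^2\,dx$ is strictly positive.

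I would set $\eps:=\sqrt{a^2+b^2}$ and introduce the rescaled function $\tilde u(x):=u(x)/\eps$, which satisfies $\tilde u''+\eps^{p-2}\tilde u^{p-1}=\lambda\tilde u$ on $(0,1)$, with initial data $\tilde u(0)=\tilde a:=a/\eps$, $\tilde u'(0)=\tilde b:=b/\eps$, and the normalization $\tilde a^2+\tilde b^2=1$, $\tilde a>0$. The energy rewrites as
\[
E_p(u,[0,1])=\eps^2\left(\frac12\int_0^1(\tilde u')^2\,dx-\frac{\eps^{p-2}}{p}\int_0^1\tilde u^p\,dx\right),
\]
so the task reduces to showing that the bracket stays bounded below by a strictly positive constant as $\eps\to 0$, uniformly in $(\tilde a,\tilde b)$ on the relevant portion of the unit circle.

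I would then argue by contradiction. Assume sequences $(a_n,b_n)$ exist with $\eps_n\to 0$ and $E_p(u_n,[0,1])\le 0$ for every $n$. Applying Lemma \ref{lem:IVP} in rescaled form yields a uniform bound $\|\tilde u_n\|_{L^\infty(0,1)}\le C(\lambda)$, since the rescaled exponential coefficients $A_0^{(n)}/\eps_n=(\tilde a_n-\tilde b_n/\sqrt\lambda)/2$ and $B_0^{(n)}/\eps_n=(\tilde a_n+\tilde b_n/\sqrt\lambda)/2$ stay bounded along the unit circle. Extracting a subsequence so that $(\tilde a_n,\tilde b_n)\to(\tilde a_*,\tilde b_*)$ with $\tilde a_*^2+\tilde b_*^2=1$, and invoking standard continuous dependence of ODE solutions on both initial data and the right--hand side, one obtains $\tilde u_n\to v$ in $C^1([0,1])$, where $v$ is the linear solution of $v''=\lambda v$ with $v(0)=\tilde a_*$, $v'(0)=\tilde b_*$. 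Passing to the limit in the displayed identity, $\lim_n E_p(u_n,[0,1])/\eps_n^2=\frac12\int_0^1(v')^2\,dx$. Since $\lambda>0$ and $(\tilde a_*,\tilde b_*)\ne(0,0)$, $v$ is a nontrivial combination of $e^{\pm\sqrt\lambda x}$, so $v'\not\equiv 0$ and this limit is strictly positive, contradicting $E_p(u_n)\le 0$.

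The main delicate step is the $C^1$ convergence $\tilde u_n\to v$, as this is what allows the transfer of the Dirichlet norms to the linear limit. In the end, this is not really an obstacle: the uniform $L^\infty$ bound from Lemma \ref{lem:IVP} localizes the problem to a region of phase space on which the nonlinearity is smooth, so the classical ODE continuity theorem with parameter yields $C^1$ (in fact $C^2$) convergence on the compact interval. The only other ingredient is the observation that $\int_0^1(v')^2=0$ would force $v$ to be constant, which under $v''=\lambda v$ with $\lambda>0$ reduces to $v\equiv 0$, a case excluded by the normalization $\tilde a_*^2+\tilde b_*^2=1$.
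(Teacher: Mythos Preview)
Your proof is correct and takes a genuinely different route from the paper's.

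The paper proceeds by explicit computation: multiplying \eqref{IVP 01} by $u'$ and integrating twice yields a closed formula for $E_p(u,[0,1])$ in terms of $\|u\|_{L^2}^2$, $\|u\|_{L^p}^p$ and the data $a,b$. It then plugs in the lower exponential bound \eqref{l eps u l bis} from Lemma \ref{lem:IVP} to estimate $\|u\|_{L^2}^2$ from below, controls $\|u\|_{L^p}^p$ by $\delta\|u\|_{L^2}^2$ with $\delta=\|u\|_{L^\infty}^{p-2}$, and reduces the claim to the positivity of an explicit quadratic $G_\lambda(\ell,\delta)$ in the variable $\ell=b/a$. The final step is a hands--on discriminant computation, equivalent to verifying that $(e^y-1)^2>y^2e^y$ for all $y>0$.

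Your scaling--and--compactness argument bypasses all of this algebra. Normalizing the initial data to the unit circle and letting $\eps\to0$ collapses the problem to the linear equation $v''=\lambda v$, where the desired positivity is simply $\tfrac12\int_0^1(v')^2>0$ for $v\not\equiv0$. The uniform $L^\infty$ bound on $\tilde u$ from the upper estimate \eqref{l eps u l} (divided by $\eps$) is exactly what makes the nonlinearity a smooth perturbation and legitimizes the $C^1$ convergence via Arzel\`a--Ascoli and continuous dependence. The paper's approach is more quantitative (it produces, in principle, an explicit lower bound on the energy for given $a,b,\lambda$), while yours is shorter, more conceptual, and avoids the somewhat opaque discriminant verification.
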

		
		\begin{proof}
			First, we note that multiplying the first line of \eqref{IVP 01} by $u'$ and integrating over $[0,x]$ leads to
			\[
			\f12|u'(x)|^2+\f1p|u(x)|^p-\f\lambda2|u(x)|^2=\f{b^2}2+\f{a^p}p-\f{\lambda a^2}2.
			\]
			A further integration over $[0,1]$ yields
			\[
			\f12\|u'\|_{L^2(0,1)}^2+\f1p\|u\|_{L^p(0,1)}^p-\f\lambda2\|u\|_{L^2(0,1)}^2=\f{b^2}2+\f{a^p}p-\f{\lambda a^2}2
			\]
			and, plugging into the energy functional, there results
			\begin{equation}
			\label{e en}
			E_p(u,[0,1])=\f\lambda2\|u\|_{L^2(0,1)}^2-\f2p\|u\|_{L^p(0,1)}^p+\f{b^2}2+\f{a^p}p-\f{\lambda a^2}2\,.
			\end{equation}
			Moreover, setting $\ell:=\f ba$, \eqref{e en} reads
			\begin{equation}
			\label{e en l}
			E_p(u,[0,1])=\f\lambda2\|u\|_{L^2(0,1)}^2-\f2p\|u\|_{L^p(0,1)}^p+\f{\ell^2-\lambda}2 a^2+\f{a^p}p\,.
			\end{equation}
			Now, by \eqref{l eps u l bis}, one sees that
			\begin{equation}
			\label{l eps L2}
			\begin{split}
			\|u\|_{L^2(0,1)}^2\geq&\int_0^1\left(A_\delta e^{-\sqrt{\lambda-\delta}\,x}+B_\delta e^{\sqrt{\lambda-\delta}\,x}\right)^2\,dx\\[.2cm]
			=&\,\f {a^2}4\left(1-\f\ell{\sqrt{\lambda-\delta}}\right)^2\f{1-e^{-2\sqrt{\lambda-\delta}}}{2\sqrt{\lambda-\delta}}\\[.2cm]
			+&\,\f {a^2}4\left(1+\f\ell{\sqrt{\lambda-\delta}}\right)^2\f{e^{2\sqrt{\lambda-\delta}}-1}{2\sqrt{\lambda-\delta}}
			+\f{a^2}2\left(1-\f{\ell^2}{\lambda-\delta}\right)\\[.2cm]
			=&\,\ell^2 a^2\left[\f{e^{4\sqrt{\lambda-\delta}}-1}{8(\lambda-\delta)e^{2\sqrt{\lambda-\delta}}\sqrt{\lambda-\delta}}-\f1{2(\lambda-\delta)}\right]
			+\,\ell a^2\f{(e^{2\sqrt{\lambda-\delta}}-1)^2}{4(\lambda-\delta)e^{2\sqrt{\lambda-\delta}}}\\[.2cm]
			+&\, a^2\left(\f{e^{4\sqrt{\lambda-\delta}}-1}{8e^{2\sqrt{\lambda-\delta}}\sqrt{\lambda-\delta}}+\f12\right)
			\end{split}
			\end{equation}
			where $\delta:=\|u\|_{L^\infty(0,1)}^{p-2}\ll\lambda$. Since $\|u\|_{L^p(0,1)}^p\leq \delta \|u\|_{L^2(0,1)}^2$, by \eqref{e en l} and \eqref{l eps L2} there results
			\[
			E_{p}(u,[0,1])=\f{p\lambda-2\delta}{2p}\|u\|_{L^2(0,1)}^2+\f{\ell^2-\lambda}2 a^2+o(a^2)\geq a^2G_\lambda(\ell,\delta)+\f{a^p}{p}
			\]
			with
			\begin{multline*}
			G_\lambda(\ell,\delta):=\ell^2\left[\f{p\lambda-2\delta}{\lambda-\delta}\f{e^{4\sqrt{\lambda-\delta}}-1}{16pe^{2\sqrt{\lambda-\delta}}\sqrt{\lambda-\delta}}-\f{p\lambda-2\delta}{4p(\lambda-\delta)}+\f12\right]\\[.2cm]
			+\ell \f{(p\lambda-2\delta)(e^{2\sqrt{\lambda-\delta}}-1)^2}{8p(\lambda-\delta)e^{2\sqrt{\lambda-\delta}}}+\left(\f{(p\lambda-2\delta)e^{4\sqrt{\lambda-\delta}}-1}{16pe^{2\sqrt{\lambda-\delta}}\sqrt{\lambda-\delta}}+\f{p\lambda-2\delta}{4p}-\f\lambda2\right).
			\end{multline*}
			As a consequence, recalling that $\delta\to0$ as $a,\,|b|\to0$ by Lemma \ref{lem:IVP}, to conclude the proof we are left to show that, for any fixed $\lambda>0$,
			\[
			G_\lambda(\ell,\delta)>0,\qquad\forall\ell\in\R,
			\]
			provided that $a$ and $|b|$ are sufficiently small. To this aim, define $F_\lambda:[0,+\infty)\to\R$ as
			\[
			F_\lambda(\ell):=\left(\f{e^{4\sqrt{\lambda}}-1}{16e^{2\sqrt{\lambda}}\sqrt{\lambda}}+\f14\right)\ell^2+\f{(e^{2\sqrt{\lambda}}-1)^2}{8e^{2\sqrt{\lambda}}}\ell+\lambda\left(\f{e^{4\sqrt{\lambda}}-1}{16e^{2\sqrt{\lambda}}\sqrt{\lambda}}-\f14\right).
			\]
			Note that $F_\lambda$ is a parabola with strictly positive coefficient of the quadratic term $\ell^2$. Hence, as $G_\lambda(\ell,0)=F_\lambda(\ell)$, if $F_\lambda(\ell)\neq0$ for every $\ell\in\R$, then $G_\lambda(\ell,\delta)>0$ for every $\ell\in\R$, given $a$ and $|b|$ small enough.
			
			It remains to prove that, for every $\lambda>0$,
			\begin{equation}
			\label{vert>0}
			4\lambda\left(\f{e^{4\sqrt{\lambda}}-1}{16e^{2\sqrt{\lambda}}\sqrt{\lambda}}-\f14\right)\left(\f{e^{4\sqrt{\lambda}}-1}{16e^{2\sqrt{\lambda}}\sqrt{\lambda}}+\f14\right)- \left[\f{(e^{2\sqrt{\lambda}}-1)^2}{8e^{2\sqrt{\lambda}}}\right]^2>0\,.
			\end{equation}
			Setting $y:=2\sqrt{\lambda}$, the left--hand side of the above expression reads
			\begin{equation}
			\label{vertice y}
			\begin{split}
			\f{y^2}{16}&\left(\f{e^{2y}-1}{2ye^y}-1\right)\left(\f{e^{2y}-1}{2ye^y}+1\right)-\f{(e^y-1)^4}{64e^{2y}}\\
			=&\f{y^2}{16}\left(\f{(e^{2y}-1)^2}{4y^2e^{2y}}-1\right)-\f{(e^y-1)^4}{64e^{2y}}=\f{(e^y-1)^2(e^y+1)^2}{64e^{2y}}-\f{y^2}{16}-\f{(e^y-1)^4}{64e^{2y}}\\
			=&\f{(e^y-1)^2}{64e^{2y}}\left[(e^y+1)^2-(e^y-1)^2\right]-\f{y^2}{16}=\f{y^2}{16}\left[\f{(e^y-1)^2}{y^2e^y}-1\right]\,.
			\end{split}
			\end{equation}
			On the one hand,
			\begin{equation}
			\label{lim y to 0}
			\lim_{y\to0^+}\f{y^2}{16}\left[\f{(e^y-1)^2}{y^2e^y}-1\right]=0.
			\end{equation}
			On the other hand,
			\begin{equation}
			\label{der y}
			\begin{split}
			\f d{dy}\left[\f{(e^y-1)^2}{y^2e^y}-1\right]=&\f{2(e^y-1)y^2e^{2y}-(2y+y^2)e^y(e^y-1)^2}{y^4e^{2y}}\\
			=&\f{(e^y-1)\left[2y^2e^y-(e^y-1)(2y+y^2)\right]}{y^4e^{y}}\\
			=&\f{e^y-1}{y^3e^{y}}\left[y(e^y+1)+2(1-e^y)\right].
			\end{split}
			\end{equation}
			Now, $\f{e^y-1}{y^3e^{3y}}>0$ for every $y>0$. In addition, $y(e^y+1)+2(1-e^y)=0$ at $y=0$ and 
			\[
			\f d{dy}\left[y(e^y+1)+2(1-e^y)\right]=e^y+1+ye^y-2e^y=1+(y-1)e^y>0
			\] 
			for every $y>0$, since $1+(y-1)e^y=0$ at $y=0$ and $\f d{dy}\left[1+(y-1)e^y\right]=ye^y>0$. Thus, combining with \eqref{der y}, \eqref{lim y to 0} and \eqref{vertice y}, one finally obtains \eqref{vert>0}.
		\end{proof}
		
		We are now in position to prove Proposition \ref{prop:E>0}.
		
		\begin{proof}[Proof of Proposition \ref{prop:E>0}]
			We limit ourselves to the case of a defected grid $\G$, as the proof for the undefected grid $\Q$ is completely analogous.
			
			Let then $u\in\Hmu(\G)$ be a ground state of mass $\mu$ and assume, without loss of generality (see \eqref{eq-sign}), that $u>0$. Let $\overline{x}\in\G$ be a point where $u$ attains its maximum.
			
			First, we note that, for every $\eps>0$, there exists $R_\eps>0$ such that
			\begin{equation}
			\label{eq-linfsmall}
			\|u\|_{L^\infty(\G\setminus B_{R_\eps}(\overline{x},\G))}\leq\eps,
			\end{equation}
			where $B_{R_\eps}(\overline{x},\G)$ denotes the ball in $\G$ of radius $R_\eps$ centered at $\overline{x}$. Indeed, as $u\in H^1(\G)$ and every edge of $\G$ is of the same length, there exists $C>0$, independent of $u$ and $e$, such that
			\[
			\|u\|_{L^\infty(e)}^2\leq C\|u\|_{H^1(e)}^2,\qquad\forall e\in\EG.
			\]
			Therefore, since, for every $R>0$
			\[
			\|u\|_{H^1(\G)}^2=\sum_{e\in B_{R}(\overline{x},\G)}\|u\|_{H^1(e)}^2+\sum_{e\in\G\setminus B_{R}(\overline{x},\G)} \|u\|_{H^1(e)}^2
			\]
			and since
			\[\sum_{e\in B_{R}(\overline{x},\G)}\|u\|_{H^1(e)}^2=\|u\|_{H^1(B_{R}(\overline{x},\G))}^2\to\|u\|_{H^1(\G)}^2,\qquad\text{as}\quad R\to+\infty,
			\]
			we have
			\[
			\|u\|_{L^\infty(\G\setminus B_{R}(\overline{x},\G))}^2\leq C\sum_{e\in\G\setminus B_{R}(\overline{x},\G)} \|u\|_{H^1(e)}^2\to0,\qquad\text{as}\quad R\to+\infty.
			\]
			
			On the other hand, by \eqref{eq-NLSgrafo}, $u$ has to satisfy \eqref{IVP 01} on each edge $e$, for some suitable $a>0,\,b\in\R$ and a Lagrange multiplier $\lambda$ obtained as in \eqref{eq-lambdapos}.
			
			Now, fix $\eps>0$ small. If we take $R_\eps$ in such a way that \eqref{eq-linfsmall} is satisfied and consider any edge $e\in\G\setminus B_{R_\eps}(\overline{x},\G)$, there results
			\begin{equation}
			\label{eq: 0<lu-u^p<leps} 
			0\leq\lambda u-|u|^{p-2}u\leq\lambda\eps\qquad\text{on}\quad e.
			\end{equation}
			Thus \eqref{IVP 01} entails
			\[
			u'(x)=b+\int_0^x u''\,dy=b+\int_0^x(\lambda u-|u|^{p-2}u)\,dy,\qquad\forall x\in e,
			\] 
			whence, further integrating on $[0,1]$,
			\[
			u(1)=a+b+\int_0^1\int_0^x(\lambda u-|u|^{p-2}u)\,dy\,dx\,.
			\]
			Coupling with \eqref{eq: 0<lu-u^p<leps} and recalling that $a=u(0)$ and $u(0),\,u(1)\leq\|u\|_{L^\infty(\G)}\leq\eps$, one obtains
			\[
			|b|\leq|u(1)|+|a|+\left|\int_0^1\int_0^x(\lambda u-|u|^{p-2}u)\,dy\,dx\right|\leq \left(2+\f{\lambda}2\right)\eps
			\]
			for every edge $e\in\G\setminus B_{R_\eps}(\overline{x},\G)$. As a consequence, whenever $\eps$ is sufficiently small, $u_e$ satisfies the assumptions of Lemma \ref{lem:IVP_E>0} on every $e\in\G\setminus B_{R_\eps}(\overline{x},\G)$. Hence, setting $K:=B_{R_\eps}(\overline{x},\G)$, the proof is complete.
		\end{proof}
		
		\begin{remark}
			Note that in the first part of the proof of Proposition \ref{prop:E>0} the notation $e\in B_{R_\eps}(\overline{x},\G)$, as well as $e\in \G\setminus B_{R_\eps}(\overline{x},\G)$, is not rigorous since $B_{R_\eps}(\overline{x},\G),\,\G\setminus B_{R_\eps}(\overline{x},\G)$ may not be subgraphs. However, we kept such notation not to weigh down the text as it does not give rise to misunderstandings.
		\end{remark}
		
		
		
		\subsubsection{Consequences of the Sobolev inequalities}
		\label{subsubsec:Sobolev}
		
		Now, we highlight some consequences of the simultaneous validity of the one and the two--dimensional Sobolev inequality \eqref{eq:s1d} and \eqref{eq:s2d}. Note that, by Definitions \ref{def:compD}, \ref{def:ZperD} and \ref{def:Z2perD} and Theorem \ref{THM:s2d_bound}, the following properties clearly hold under the assumptions of Theorem \ref{THM:ex_com&Z}.
		
		First, as we recalled in Section \ref{subsec:sobineq}, any defected grid supports the one--dimensional Sobolev inequality \eqref{eq:s1d}. Arguing as in \cite[Proof of Theorem 2.1]{ADST19}, one obtains the one--dimensional Gagliardo--Nirenberg inequalities for every $u\in H^1(\G)$, i.e.
		\begin{gather}
		\label{eq:gn1d} \|u\|_{L^p(\G)}^p\leq C_1(p,\G)\|u\|_{L^2(\G)}^{\f p2+1}\|u'\|_{L^2(\G)}^{\f p2-1},\qquad p\in[2,+\infty),\\[.2cm]
		\|u\|_{L^\infty(\G)}\leq C_\infty(\G)\|u\|_{L^2(\G)}^{\f12}\|u'\|_{L^2(\G)}^{\f12}.
		\end{gather}
		On the other hand, arguing as in \cite[Proof of Theorem 2.3]{ADST19}, the validity of \eqref{eq:s2d} yields the two--dimensional Gagliardo--Nirenberg inequality
		\begin{equation}
		\label{eq:gn2d}
		\|u\|_{L^p(\G)}^p\leq C_2(p,\G)\|u\|_{L^2(\G)}^2\|u'\|_{L^2(\G)}^{p-2},\qquad\forall u\in H^1(\G),\quad p\in[2,+\infty).
		\end{equation}
		Finally, arguing as in \cite[Proof of Corollary 2.4]{ADST19}, the simultaneous validity of \eqref{eq:gn1d}--\eqref{eq:gn2d} entails
		\begin{equation}
		\label{eq:gnint}
		\|u\|_{L^p(\G)}^p\leq K(p,\G)\|u\|_{L^2(\G)}^{p-2}\|u'\|_{L^2(\G)}^{2},\qquad\forall u\in H^1(\G),\quad p\in[4,6].
		\end{equation}
		
		\begin{remark}
			In each of the previous expressions, by $C_1(p,\G)$, $C_\infty(\G)$, $C_2(p,\G)$, $K(p,\G)>0$ we denote the sharpest constants for which the corresponding inequalities hold.
		\end{remark}
		
		Let us show how \eqref{eq:gn1d} and \eqref{eq:gnint} immediately entail some of the claims of Theorem \ref{THM:ex_com&Z}. On the one hand, \eqref{eq:gn1d} implies
		\begin{equation}
		\label{eq:coerc}
		E_p(u,\G)\geq\f12\|u'\|_{L^2(\G)}^2-\f{C_1(p,\G)}p\mu^{\f p4+\f12}\|u'\|_{L^2(\G)}^{\f p2-1},\qquad\forall u\in\Hmu(\G),
		\end{equation}
		which provides lower boundedness and coercivity of $E_p(\cdot,\G)$ on $\Hmu(\G)$ for every $p\in(2,6)$ and every $\mu>0$. On the other hand, for every $p\in[4,6)$, plugging \eqref{eq:gnint} into \eqref{intro:E} yields
		\[
		E_p(u,\G)\geq\f12\|u'\|_{L^2(\G)}^2\left(1-\f{2K(p,\G)}p\mu^{\f {p-2}{2}}\right),\qquad\forall u\in\Hmu(\G),
		\]
		so that, setting
		\begin{equation}
		\label{mucrit}
		\mu_{p,\G}:=\left(\f p{2K(p,\G)}\right)^{\f2{p-2}}\,,
		\end{equation}
		there results
		\begin{equation}
		\label{eq-enpos}
		E_p(u,\G)\geq0,\qquad\forall u\in\Hmu(\G),\quad\forall\mu\leq\mu_{p,\G}.
		\end{equation}
		Combining with \eqref{eq:infnonpos}, one obtains
		\begin{equation}
		\label{eq:infzero}
		\EE_{p,\G}(\mu)=0,\qquad\forall\mu\leq\mu_{p,\G},
		\end{equation}
		thus proving the first line of \eqref{levelTHM1}. In addition, as \eqref{eq-enpos} is strict whenever $\mu<\mu_{p,\G}$, also the nonexistence claims of  items (ii.1)--(ii.2) in Theorem \ref{THM:ex_com&Z} are proved.
		
		Finally, let $\mu>\mu_{p,\G}$ and let $(u_n)_n\subset \Hmu(\G)$ be a maximizing sequence for \eqref{eq:gnint}, i.e. 
		\[
		\f{\|u_n\|_{L^p(\G)}^p}{\|u_n'\|_{L^2(\G)}^2}\longrightarrow K(p,\G)\mu^{\f {p-2}{2}}\qquad\text{as}\quad n\to+\infty.
		\]
		Then we have
		\[
		\EE_{p,\G}(\mu)\leq\liminf_{n\to+\infty}E_p(u_n,\G)=\f12\|u_n'\|_{L^2(\G)}^2\left(1-\left(\f\mu{\mu_{p,\G}}\right)^{\f{p-2}{2}}+o(1)\right)<0,
		\]
		which proves the second line of \eqref{levelTHM1}.

		
		
		\subsubsection{Dichotomy alternative for minimizing sequences}
		
		We recall a dichotomy alternative for minimizing sequences of $E_p(\cdot,\G)$ in $\Hmu(\G)$, which is independent of the specific structure of the defects of $\G$. The proof is analogous to that of \cite[Lemma 3.2]{ADST19}. We report it here for the sake of completeness.
		
		\begin{lemma}
			\label{lem dichotomy}
			Let $\G$ be a defected grid, $\mu>0$ and $p\in(2,6)$. Let $(u_n)_n\subset\Hmu(\G)$ be a minimizing sequence for $E_p(\cdot,\G)$. If $u_n\rightharpoonup u$ in $H^1(\G)$ as $n\to+\infty$, then
			\begin{itemize}
				\item[$(i)$] either $u_n\to0$ in $L^\infty_{\text{loc}}(\G)$, or
				
				\item[$(ii)$] $u$ is a ground state of mass $\mu$.
			\end{itemize}
		\end{lemma}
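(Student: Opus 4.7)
The proof is a concentration--compactness scheme in the spirit of \cite[Lemma 3.2]{ADST19}. The Gagliardo--Nirenberg inequality \eqref{eq:gn1d} and the coercivity estimate \eqref{eq:coerc} (available since $p<6$) ensure that $(u_n)_n$ is bounded in $H^1(\G)$. Since every edge has length $1$, the one--dimensional Sobolev embedding applied edge by edge, combined with continuity at the vertices, upgrades weak $H^1$ convergence to uniform convergence on every compact subgraph, so $u_n\to u$ in $L^\infty_{\mathrm{loc}}(\G)$. In particular $u\equiv 0$ is precisely alternative (i), and it then remains to show that whenever $u\not\equiv 0$ the weak limit $u$ is a ground state of mass $\mu$.

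I would next invoke the Brezis--Lieb lemma (proved edge by edge and summed over the edges of $\G$) to split simultaneously
\begin{equation*}
\mu=\|u\|_{L^2(\G)}^2+\|u_n-u\|_{L^2(\G)}^2+o(1),\qquad E_p(u_n,\G)=E_p(u,\G)+E_p(u_n-u,\G)+o(1),
\end{equation*}
and set $m:=\|u\|_{L^2(\G)}^2\in(0,\mu]$. Once the identity $m=\mu$ has been established, weak lower semicontinuity of the kinetic term together with the splitting gives $E_p(u,\G)\leq\liminf_n E_p(u_n,\G)=\EE_{p,\G}(\mu)$, so that $u\in\Hmu(\G)$ attains the infimum and we are in alternative (ii).

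The heart of the argument, and its main obstacle, is excluding a genuine splitting $0<m<\mu$. The plan is to combine the natural bound $E_p(u,\G)\geq\EE_{p,\G}(m)$ (since $u$ has mass $m$), the asymptotic bound $\liminf_n E_p(u_n-u,\G)\geq\EE_{p,\G}(\mu-m)$ (obtained after a vanishing amplitude renormalization of $u_n-u$ to mass exactly $\mu-m$), and the strict subadditivity
\begin{equation*}
\EE_{p,\G}(\mu)<\EE_{p,\G}(m)+\EE_{p,\G}(\mu-m),\qquad 0<m<\mu,
\end{equation*}
which altogether contradict the energy identity above. Strict subadditivity itself rests on an amplitude--scaling observation: for any $w\in H^1(\G)$ the quotient $E_p(\alpha w,\G)/\alpha^2$ is a strictly decreasing function of $\alpha>0$, whence testing with near--minimizers shows that $\nu\mapsto\EE_{p,\G}(\nu)/\nu$ is strictly decreasing throughout the range where $\EE_{p,\G}<0$, and strict subadditivity in that range follows. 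The residual subcase $\EE_{p,\G}(\mu)=0$, arising only for $p\in[4,6)$ and $\mu\leq\mu_{p,\G}$, reduces directly to alternative (i): indeed \eqref{eq-enpos} is strict whenever the underlying function is nontrivial, so the assumption $u\not\equiv 0$ is incompatible with $\liminf_n E_p(u_n,\G)=0$ and forces $u\equiv 0$.
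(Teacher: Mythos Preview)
Your overall concentration--compactness scheme is the same as the paper's, and the treatment of the endpoints $m=0$ and $m=\mu$ is fine. The difference lies in how you exclude $0<m<\mu$, and there the proposal has two weaknesses.

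First, the separate handling of $\EE_{p,\G}(\mu)=0$ via \eqref{eq-enpos} is not legitimate here: the lemma is stated for an arbitrary defected grid, with no hypothesis that \eqref{eq:s2d} (and hence \eqref{eq:gnint}) holds, so you cannot invoke \eqref{eq-enpos}. Second, your route through strict subadditivity of the map $\nu\mapsto\EE_{p,\G}(\nu)$ is underjustified: from ``$E_p(\alpha w)/\alpha^2$ strictly decreasing for each fixed $w$'' one only gets, after taking the infimum over $w$, that $\nu\mapsto\EE_{p,\G}(\nu)/\nu$ is nonincreasing. Upgrading this to strict monotonicity (hence strict subadditivity) requires the additional observation that near--minimizers at a level $\nu$ with $\EE_{p,\G}(\nu)<0$ have $\|v\|_{L^p}$ bounded away from zero, which you do not mention.

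The paper sidesteps both issues with a single stroke: instead of comparing $E_p(u)$ with $\EE_{p,\G}(m)$ and then invoking strict subadditivity, it rescales both pieces directly to mass $\mu$. Since $u$ is a \emph{specific} nonzero function, one gets the genuinely strict inequality
\[
\EE_{p,\G}(\mu)\leq E_p\Big(\sqrt{\tfrac{\mu}{m}}\,u\Big)<\tfrac{\mu}{m}\,E_p(u),
\]
while for $u_n-u$ only the nonstrict bound $\liminf_n E_p(u_n-u)\geq\tfrac{\mu-m}{\mu}\,\EE_{p,\G}(\mu)$ is needed. Summing yields $\EE_{p,\G}(\mu)>\EE_{p,\G}(\mu)$ regardless of the sign of $\EE_{p,\G}(\mu)$, so no case split and no Sobolev inequality beyond \eqref{eq:gn1d} are required.
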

		
		\begin{proof}
			Let $(u_n)_n\subset\Hmu(\G)$ be a minimizing sequence for $E_p(\cdot,\G)$ such that $u_n\rightharpoonup u$ in $H^1(\G)$ as $n\to+\infty$. As a consequence, $u_n\to u$ in $L^\infty_{\text{loc}}(\G)$. Furthermore, by weak lower semicontinuity,
			\begin{equation}
			\label{wls}
			\|u'\|_{L^2(\G)}\leq\liminf_{n\to+\infty}\|u_n'\|_{L^2(\G)}\qquad\text{and}\qquad \|u\|_{L^2(\G)}\leq\liminf_{n\to+\infty}\|u_n\|_{L^2(\G)}.
			\end{equation}
			Set $m:=\|u\|_{L^2(\G)}^2$. If $m=0$, then $u\equiv0$ on $\G$ and $(i)$ holds. On the contrary, if $m=\mu$, then $u\in\Hmu(\G)$ and $u_n\to u$ in $L^2(\G)$, so that, in turn, $u_n\to u$ in $L^p(\G)$. Hence, coupling with \eqref{wls}, we get $E_p(u,\G)\leq\liminf_{n\to+\infty}E_p(u_n,\G)=\EE_{p,\G}(\mu)$, which proves $(ii)$.
			
			Therefore, it is left to exclude the case $m\in(0,\mu)$. Assume by contradiction that $m\in(0,\mu)$. By the Brezis--Lieb Lemma \cite{BL83},
			\begin{equation}
			\label{eq-bl}
			E_p(u_n,\G)=E_p(u,\G)+E_p(u_n-u,\G)+o(1)\qquad\text{as}\quad n\to+\infty.
			\end{equation}
			In addition, by $u_n\rightharpoonup u$ in $L^2(\G)$,
			\[
			\|u_n-u\|_{L^2(\G)}^2=\|u_n\|_{L^2(\G)}^2+\|u\|_{L^2(\G)}^2-2\langle u_n,u\rangle_{L^2(\G)}=\mu-m+o(1)\qquad\text{as}\quad n\to+\infty.
			\]
			On the one hand, since $p>2$ and $\|u_n-u\|_{L^2(\G)}^2<\mu$,
			\[
			\begin{split}
			\EE_{p,\G}(\mu)\leq& E_p\left(\f{\sqrt{\mu}}{\|u_n-u\|_{L^2(\G)}}\,(u_n-u),\G\right)\\
			=&\f12\f\mu{\|u_n-u\|_{L^2(\G)}^2}\|u_n'-u'\|_{L^2(\G)}^2-\f1p\f{\mu^{\f p2}}{\|u_n-u\|_{L^2(\G)}^p}\|u_n-u\|_{L^p(\G)}^p\\
			\leq&\f\mu{\|u_n-u\|_{L^2(\G)}^2}E_p\left(u_n-u,\G\right)
			\end{split}
			\]
			so that, rearranging terms and passing to the liminf as $n\to+\infty$,
			\begin{equation}
			\label{dic1}
			\liminf_{n\to+\infty}E_p(u_n-u,\G)\geq\f{\mu-m}\mu\,\EE_{p,\G}(\mu).
			\end{equation}
			On the other hand, similar calculations yield
			\[
			\EE_{p,\G}(\mu)\leq E_p\left(\sqrt{\f\mu m}\,u,\G\right)<\f\mu m\, E_p(u,\G),
			\]
			that is 
			\begin{equation}
			\label{dic2}
			E_p(u,\G)>\f m\mu\, \EE_{p,\G}(\mu).
			\end{equation}
			Finally, combining \eqref{dic1}--\eqref{dic2} with \eqref{eq-bl}, there results
			\[
			\begin{split}
			\EE_{p,\G}(\mu)=\liminf_{n\to+\infty}E_p(u_n,\G)=&\liminf_{n\to+\infty}E_p(u_n-u,\G)+E_p(u,\G)\\
			>&\f{\mu-m}\mu\EE_{p,\G}(\mu)+\f m\mu\EE_{p,\G}(\mu)=\EE_{p,\G}(\mu),
			\end{split}
			\] 
			which is a contradiction.
		\end{proof}
		
		
		\subsection{Compactly defected grids and grids with $\Z$--periodic defects}
		\label{subsec:proofNLS}
		
		Here we present the proof of the part of Theorem \ref{THM:ex_com&Z} devoted to compactly defected grids and grids with $\Z$--periodic defects. In particular, in view of Section \ref{subsubsec:Sobolev}, it is left to prove
		\begin{itemize}
			\item[(a)] item (i),
			\item[(b)] the existence part of items (ii.1) and (ii.2),
			\item[(c)] estimates \eqref{muG<muQ} and \eqref{EG<EQ}.
		\end{itemize}
		To this aim, we state the following existence criterion.
		
		\begin{lemma}
			\label{lem crit ex}
			Let $\mu>0$, $p\in(2,6)$ and $\G$ be either a compactly defected grid or a grid with $\Z$--periodic defects. If $\EE_{p,\G}(\mu)<\EE_{p,\Q}(\mu)$, then there exists a ground state of mass $\mu$.
		\end{lemma}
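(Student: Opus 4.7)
The plan is to apply a standard concentration--compactness scheme. Take a minimizing sequence $(u_n)\subset\Hmu(\G)$ for $E_p(\cdot,\G)$; by the coercivity estimate \eqref{eq:coerc} it is bounded in $H^1(\G)$, so up to a subsequence $u_n\rightharpoonup u$ in $H^1(\G)$. Lemma \ref{lem dichotomy} gives two options: either $u$ is already a ground state of mass $\mu$ (and we are done), or $u_n\to0$ in $L^\infty_{\mathrm{loc}}(\G)$. The whole task reduces to ruling out this second option by means of the strict inequality $\EE_{p,\G}(\mu)<\EE_{p,\Q}(\mu)$.

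In the compactly defected case this is achieved by a direct cutoff argument. Fix a bounded set $K\subset\Q$ containing all the defects and $R>0$ so large that $K\subset B_R(o,\G)$ for some fixed vertex $o$. Let $\phi_R:\G\to[0,1]$ be a Lipschitz cutoff which vanishes on $B_R(o,\G)$, equals $1$ on $\G\setminus B_{2R}(o,\G)$ and satisfies $|\phi_R'|\leq C/R$. Setting $v_n:=\phi_R u_n$, the uniform decay of $u_n$ on $B_{2R}(o,\G)$ yields $\|v_n\|_{L^2(\G)}^2\to\mu$, $\|v_n\|_{L^p(\G)}^p=\|u_n\|_{L^p(\G)}^p+o(1)$ and $\|v_n'\|_{L^2(\G)}^2\leq\|u_n'\|_{L^2(\G)}^2+o(1)$. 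Since $v_n$ is supported in $\G\setminus B_R(o,\G)$, which is a defect-free subset of $\Q$, each $v_n$ extends by zero to an element of $H^1(\Q)$; after renormalizing the $L^2$-norm to $\mu$, the resulting $w_n\in\Hmu(\Q)$ satisfy $E_p(w_n,\Q)\leq E_p(u_n,\G)+o(1)$, hence $\EE_{p,\Q}(\mu)\leq\EE_{p,\G}(\mu)$, contradicting the hypothesis.

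In the $\Z$--periodic case no ball of $\G$ contains all the defects, so the previous argument has to be combined with the translational invariance of $\G$ along the periodicity vector $\vec{v}$ (under which $E_p$ and the $L^2$-mass are preserved). With $\widetilde{W}_k:=\widetilde{W}+k\vec{v}$ denoting the periodic tiling, I would run a Lions-type dichotomy on the local masses. If the mass vanishes uniformly, i.e.\ $\sup_{\v\in\VG}\|u_n\|_{L^2(B_1(\v,\G))}^2\to0$, then a standard local interpolation built from \eqref{eq:gnint} and the one-dimensional $L^\infty$ estimate forces $\|u_n\|_{L^p(\G)}^p\to0$, giving $\liminf_n E_p(u_n,\G)\geq0>\EE_{p,\Q}(\mu)>\EE_{p,\G}(\mu)$, a contradiction. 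Otherwise there exist vertices $\v_n\in\VG$ along which the local $L^2$-mass is bounded below; decomposing $\v_n=k_n\vec{v}+\w_n$ with $k_n\in\Z$ and $\w_n$ in the fundamental domain $\widetilde{W}$, the translated sequence $\tau_{-k_n\vec{v}}u_n$ is still minimizing, and either $\w_n$ stays bounded, producing a nonzero weak limit and hence a ground state via Lemma \ref{lem dichotomy}, or its component perpendicular to $\vec{v}$ diverges.

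The divergence of $\w_n$ perpendicular to $\vec{v}$ is the main obstacle: here the concentration mass leaves the defect strip guaranteed by Definition \ref{def:ZperD}(ii), so in the relevant region $\G$ coincides with $\Q$, and I would reproduce the compactly defected cutoff argument adapted to the translated and perpendicularly-shifted situation. The technical care required is in designing the cutoff around the diverging centre so that it captures essentially all the mass of the translated sequence while preserving the energy up to an $o(1)$ error, yielding once more $\EE_{p,\Q}(\mu)\leq\EE_{p,\G}(\mu)$ and completing the contradiction.
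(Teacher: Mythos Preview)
Your compactly defected case is fine: the Lipschitz cutoff is a legitimate variant of what the paper does (there the paper subtracts $\eps_n:=\|u_n\|_{L^\infty(K')}$ and takes the positive part, which kills $u_n$ exactly on $K'$ and gives the same $o(1)$ energy control). One cosmetic point: in your vanishing chain you write $0>\EE_{p,\Q}(\mu)$, but $\EE_{p,\Q}(\mu)$ can be $0$ (e.g.\ $p\in[4,6)$, $\mu\le\mu_{p,\Q}$); the contradiction still goes through with $0\ge\EE_{p,\Q}(\mu)>\EE_{p,\G}(\mu)$.

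The $\Z$--periodic case is where your argument diverges from the paper and where there is a genuine gap. The paper avoids any Lions dichotomy by a single, well--chosen translation: since $\G$ is invariant along $\vec v$, one may translate each $u_n$ so that the maximum of $|u_n|$ \emph{over the union of all defect boundaries} is attained inside $\widetilde W_0$. Because $\widetilde W_0$ meets only finitely many defects, that maximum lies in a fixed compact $K\subset\widetilde W_0$. If the weak limit vanishes, then $u_n\to0$ in $L^\infty(K)$, hence (by the very choice of translation) $u_n\to0$ uniformly on \emph{every} defect boundary, and one is back exactly in the compactly defected situation: truncate at the level $\|u_n\|_{L^\infty(\partial\D)}$ and extend by zero to $\Q$.

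Your route instead follows the point of $L^2$--concentration. In your subcase where $\w_n$ escapes perpendicularly to $\vec v$, you assert that a cutoff around $\w_n$ ``captures essentially all the mass''. This is not justified: you only know $\|u_n\|_{L^2(B_1(\w_n))}^2\ge\delta$, not that the complementary mass is $o(1)$. A nontrivial fraction of the mass could sit near (or on) the defect strip, and then your cutoff around $\w_n$ produces a function on $\Q$ with mass strictly less than $\mu$ and an energy comparison that no longer yields $\EE_{p,\Q}(\mu)\le\EE_{p,\G}(\mu)$ after renormalisation. To close this you would need a full dichotomy/splitting analysis (and then deal with the interaction terms), which is substantially heavier than the one--line translation trick the paper uses. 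The cleanest fix is simply to translate along $\vec v$ to pin down the maximum of $|u_n|$ on $\bigcup_k\partial\D_k$, not the $L^2$--concentration point.
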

		
		\begin{proof}
			We divide the proof in two cases according to the two classes of defected grids.
			
			\emph{Case (i): $\G$ is a compactly defected grid.} Let $(u_n)_n\subset\Hmu(\G)$ be a minimizing sequence for $E_p(\cdot,\G)$. By \eqref{eq:coerc}, $(u_n)_n$ is bounded in $H^1(\G)$, so that $u_n\rightharpoonup u$ in $H^1(\G)$ and $u_n\to u$ in $L^\infty_{\text{loc}}(\G)$ as $n\to+\infty$. By Lemma \ref{lem dichotomy}, either $u\equiv0$ on $\G$ or $u$ is a ground state of mass $\mu$. Hence, to conclude it suffices to show that $u\not\equiv0$.
			
			In view of Remark \ref{rem:compD}, let $K$ be a compact subset of $\Q$ containing the union of all the defects of $\G$, and set $K':=K\cap\G$. Assume by contradiction that $u\equiv0$ on $\G$. As a consequence, $\eps_n:=\|u_n\|_{L^\infty(K')}\to0$ as $n\to+\infty$. Therefore, setting $v_n:=\kappa_n(u_n-\eps_n)_+$, where $\kappa_n>0$ is chosen so that $\|v_n\|_{L^2(\G)}^2=\mu$, we have that $v_n\in\Hmu(\G)$, $v_n\equiv0$ on $K'$ and
			\begin{equation}
			\label{eq-ragionamento}
			\begin{split}
			E_p(v_n,\G)=&\f{\kappa_n^2}2\int_\G\left|\left(u_n-\eps_n\right)_+'\right|^2\,dx-\f{\kappa_n^p}p\int_\G\left|(u_n-\eps_n)_+\right|^p\,dx\\
			\leq&\f12\|u_n'\|_{L^2(\G)}^2-\f1p\|u_n\|_{L^p(\G)}^p+o(1)=\EE_{p,\G}(\mu)+o(1)\qquad\text{as}\quad n\to+\infty
			\end{split}
			\end{equation}
			(where we also used that $\kappa_n\to1$ as $n\to+\infty$).
			Then, $(v_n)_n$ is a minimizing sequence for $E_p(\cdot,\G)$ in $\Hmu(\G)$. Since $v_n$ is identically zero on $K'$, we can think of it as a function supported on the undefected grid $\Q$ which vanishes on the whole $K$. Therefore, with a little abuse of notation, $v_n$ also belongs to $\Hmu(\Q)$ and thus
			\[
			\EE_{p,\G}(\mu)=\lim_{n\to+\infty}E_p(u_n,\G)=\lim_{n\to+\infty}E_p(v_n,\G)=\lim_{n\to+\infty}E_p(v_n,\Q)\geq\EE_{p,\Q}(\mu),
			\]
			contradicting the hypothesis that $\EE_{p,\G}(\mu)<\EE_{p,\Q}(\mu)$.
			
			\emph{Case (ii): $\G$ has $\Z$--periodic defects.} By Remark \ref{rem:Dper}, there exists $W\subset\Q$ such that $\G=\bigcup_{k\in\Z}\widetilde{W}_k$, where $\widetilde{W}_k:=\widetilde{W}+k\vec{v}$, for every $k\in\Z$, with $\widetilde{W}:=W\cap\G$. Moreover, recall that each $\widetilde{W}_k$ intersects finitely many defects of $\G$. 
			
			Since $\G$ is invariant by integer translations along $\vec{v}$, letting $(u_n)_n\subset\Hmu(\G)$ be a minimizing sequence for $E_p(\cdot,\G)$, it is not restrictive to assume that each $u_n$ is such that the maximum value of $|u_n|$ on the union of the boundaries of all the defects of $\G$ is attained somewhere in $\widetilde{W}_0$. As $\widetilde{W}_0$ intersects finitely many defects of $\G$, let $K\subset\widetilde{W}_0$ be a compact subset containing the union of the boundaries of all the defects in $\widetilde{W}_0$. Then, for every $n$, the maximum of $|u_n|$ on the union of the boundaries of all the defects of $\G$ is attained in $K$.
			
			Now, arguing as in Case (i), we have that $u_n\rightharpoonup u$ in $H^1(\G)$ and $u_n\to u$ in $L_{\text{loc}}^\infty(\G)$ as $n\to+\infty$. Assuming by contradiction $u\equiv0$ implies that $u_n\to0$ in $L^\infty(K)$, that is $u_n\to0$ uniformly on the union of the boundaries of all the defects of $\G$. Then, arguing again as in Case (i), one can check that $u\equiv0$ entails that $\EE_{p,\G}(\mu)\geq\EE_{p,\Q}(\mu)$, which contradicts the hypotheses of the lemma. As a consequence, by Lemma \ref{lem dichotomy}, one concludes that $u$ is a ground state of mass $\mu$.
		\end{proof}
		
		\begin{proof}[Proof of Theorem \ref{THM:ex_com&Z}: compactly defected grids and grids with $\Z$--periodic defects]
			Recall that it is sufficient to prove items (a), (b) and (c) listed at the beginning of Section \ref{subsec:proofNLS}.
			
			Preliminarily, recall that if $\G$ is compactly defected, then Remark \ref{rem:compD} guarantees the existence of a compact set $\overline{K}\subset\Q$ such that $\bigcup_{k\in I}\D_k\subset \overline{K}$. On the other hand, if $\G$ has $\Z$--periodic defects, then let $\overline{K}$ be the strip in Definition \ref{def:ZperD}(ii) such that $\bigcup_{k\in I}\D_k\subseteq\overline{K}$. In what follows, $\overline{K}$ will denote the former or the latter set as everything holds in both cases.
			
			First, we focus on the regimes where the problem on $\Q$ admits a ground state, i.e. (see Section \ref{subsec:grid} or \cite[Theorems 1.1--1.2]{ADST19}) $p\in(2,4)$ with $\mu>0$, $p\in(4,6)$ with $\mu\geq\mu_\Q$, $p=4$ with $\mu>\mu_\Q$. Let $u\in\Hmu(\Q)$ be a ground state of mass $\mu$ on $\Q$. By Proposition \ref{prop:E>0}, there exists a compact set $K\subset\Q$ such that $E_{p}(u,\Q\setminus K)>0$. Moreover, exploiting the periodicity of $\Q$, it is possible to choose $u$ (properly translating it on the grid) so to satisfy $K\cap\overline{K}=\emptyset$. 
			
			Set now $m:=\|u\|_{L^2\left(\bigcup_{k\in I}\D_k\right)}^2$, $\kappa:=\sqrt{\f{\mu}{\mu-m}}$ and the function $v:\G\to\R$ given by $v:=\kappa\, u_{\mid\Q\setminus\bigcup_{k\in I}\D_k}$. By construction $v\in\Hmu(\G)$. Moreover, since $K\cap\overline{K}=\emptyset$ and $\bigcup_{k\in I}\D_k\subseteq\overline{K}$, then $E_p\left(u,\bigcup_{k\in I}\D_k\right)>0$, so that 
			\[
			E_p\left(u,\G\right)<E_p\left(u,\G\right)+E_p\left(u,\bigcup_{k\in I}\D_k\right)=E_p(u,\Q)=\EE_{p,\Q}(\mu)\,.
			\]
			Henceforth, making use of $\kappa>1$ and $\EE_{p,\Q}(\mu)\leq0$ (see \eqref{intro:EQ_4p6}),
			\begin{equation}
			\label{E_norm}
			\EE_{p,\G}(\mu)\leq E_p(v,\G)=\f{\kappa^2}2\int_{\G}|u'|^2\,dx-\f{\kappa^p}p\int_{\G}|u|^p\,dx<\kappa^2 E_p(u,\G)<\EE_{p,\Q}(\mu).
			\end{equation}
			Thus, by Lemma \ref{lem crit ex}, the previous inequality entails that if $\EE_{p,\Q}(\mu)$ is attained, then $\EE_{p,\G}(\mu)$ is attained too. 
			
			On the one hand, this proves (a). On the other hand, we get that there exist ground states of mass $\mu$ on $\G$ when $p\in(4,6)$ and $\mu\geq\mu_{p,\Q}$. Furthermore, again by \eqref{E_norm}, $\EE_{p,\G}(\mu_{p,\Q})<\EE_{p,\Q}(\mu_{p,\Q})=0$, so that, combining with \eqref{eq:infzero}, we get $\mu_{p,\G}<\mu_{p,\Q}$. Similarly, we have that there exist ground states on $\G$ when $p=4$ and $\mu>\mu_{4,\Q}$. Since also $\EE_{4,\G}(\mu)<\EE_{4,\Q}(\mu)$, for every $\mu>\mu_{4,\Q}$, exploiting once more \eqref{E_norm}, one obtains $\mu_{4,\G}\leq\mu_{4,\Q}$, which completes the proof of \eqref{muG<muQ}. 
			
			In addition, when either $p\in(4,6)$ and $\mu\in(\mu_{p,\G},\mu_{p,\Q})$, or $p=4$ and $(\mu_{4,\G},\mu_{4,\Q}]$, provided $\mu_{4,\G}<\mu_{4,\Q}$, by \eqref{intro:EQ_4p6} and \eqref{levelTHM1} there results $\EE_{p,\G}(\mu)<0=\EE_{p,\Q}(\mu)$. Hence, \eqref{EG<EQ} is established (concluding the proof of (c)) and, also in this case, Lemma \ref{lem crit ex} entails the existence of ground states of mass $\mu$ on $\G$.
			
			This completes the proof of (ii.2). To get (ii.1) (and thus (b)), it remains to show that, when $p\in(4,6)$, the ground states exist also at $\mu=\mu_{p,\G}$. In this case, $\EE_{p,\G}(\mu_{p,\G})=0=\EE_{p,\Q}(\mu_{p,\G})$ and Lemma \ref{lem crit ex} does not apply. Let then $(u_n)_n\in H_{\mu_{p,\G}}^1(\G)$ be a maximizing sequence for the Gagliardo--Nirenberg inequality \eqref{eq:gnint}, that is
			\begin{equation}
			\label{eq:maxGN}
			\f{\|u_n\|_{L^p(\G)}^p}{\mu_{p,\G}^{\f p2-1}\|u_n'\|_{L^2(\G)}^2}\to K(p,\G)\qquad\text{as }n\to+\infty.
			\end{equation}
			If $\G$ is compacty defected, then trivially the maximum of $|u_n|$ on $\bigcup_{k\in I}\partial\D_k$ is attained in a fixed compact subset of $\G$, independent of $n$. On the other hand, arguing as in the proof of Lemma \ref{lem crit ex}, if $\G$ has $\Z$--periodic defects, then we can assume without loss of generality that the maximum of $|u_n|$ on $\bigcup_{k\in I}\partial\D_k$ is attained in a fixed compact subset of $\widetilde{W}_0$, independent of $n$. 
			
			Now, by \eqref{eq:gn1d}
			\begin{equation*}
			\f{\|u_n\|_{L^p(\G)}^p}{\mu_{p,\G}^{\f p2-1}\|u_n'\|_{L^2(\G)}^2}\leq\f{\mu_{p,\G}^{\f p4+\f12}\|u_n'\|_{L^2(\G)}^{\f p2-1}}{\mu_{p,\G}^{\f p2-1}\|u_n'\|_{L^2(\G)}^2}=\f{\mu_{p,\G}^{\f{(6-p)}4}}{\|u_n'\|_{L^2(\G)}^{\f{(6-p)}2}},
			\end{equation*}
			so that, coupling with \eqref{eq:maxGN}, we have that $(u_n)_n$ is bounded in $H^1(\G)$. Hence, $u_n\rightharpoonup u$ in $H^1(\G)$ and $u_n\to u$ in $L_{\text{loc}}^\infty(\G)$ as $n\to+\infty$. Moreover, again by the boundedness of $(u_n)_n$ in $H^1(\G)$ and by \eqref{mucrit} and \eqref{eq:maxGN}
			\[
			E_p(u_n,\G)=\f12\|u_n'\|_{L^2(\G)}^2\left(1-\f{2K(p,\G)}p\mu_{p,\G}^{\f p2-1}+o(1)\right)=o(1)\qquad\text{as}\quad n\to+\infty.
			\]
			Thus, $(u_n)_n\subset\Hmu(\G)$ is a minimizing sequence for $E_p(\cdot,\G)$ at $\mu=\mu_{p,\G}$ and, hence, by Lemma \ref{lem dichotomy}, either $u\equiv0$ on $\G$ or $u\in H_{\mu_{p,\G}}^1(\G)$ is the required ground state.
			
			Assume by contradiction that $u\equiv0$ on $\G$. Exploiting \eqref{eq:gnint} at $p=4$ gives
			\begin{multline*}
			\f{\|u_n\|_{L^p(\G)}^p}{\mu_{p,\G}^{\f p2-1}\|u_n'\|_{L^2(\G)}^2}\leq\f{\|u\|_{L^\infty(\G)}^{p-4}\|u_n\|_{L^4(\G)}^4}{\mu_{p,\G}^{\f p2-1}\|u_n'\|_{L^2(\G)}^2}\\[.2cm]
			\leq\|u_n\|_{L^\infty(\G)}^{p-4}\f{K(4,\G)\mu_{p,\G}\|u_n'\|_{L^2(\G)}^2}{\mu_{p,\G}^{\f p2-1}\|u_n'\|_{L^2(\G)}^2}=\|u_n\|_{L^\infty(\G)}^{p-4}\f{K(4,\G)}{\mu_{p,\G}^{\f p2-2}},
			\end{multline*}
			which by \eqref{eq:maxGN} ensures that $\|u_n\|_{L^\infty(\G)}$ is bounded away from 0 uniformly on $n$. Hence, $u_n\not\to 0$ in $L^\infty(\G)$. On the contrary, we already know that $u_n\to0$ uniformly on $\bigcup_{k\in I}\partial\D_k$. Hence, setting $v_n:=\kappa_n\left(u_n-\|u\|_{L^\infty\left(\bigcup_{k\in I}\partial\D_k\right)}\right)_+$, with $\kappa_n>0$ so that $\|v_n\|_{L^2(\G)}^2=\mu_{p,\G}$, and retracing the proof of Lemma \ref{lem crit ex}, we have that $v_n$ is a bounded minimizing sequence of $E_p(\cdot,\G)$ in $\Hmu(\G)$ and $v_n\equiv0$ on the boundary of every defect of $\G$. Therefore, we can think again of $v_n$ as functions supported on $\Q$ and vanishing in $\Q\setminus\G$. Furthermore, using the periodicity of $\Q$, we can suitably translate $v_n$ on $\Q$ so that each $v_n$ attains its $L^\infty$ norm in the same compact subset of $\Q$, independent of $n$. Now, the boundedness of $v_n$ in $H^1(\Q)$ guarantees that $v_n\rightharpoonup v$ in $H^1(\Q)$, while by construction $v_n\to v$ in $L^\infty(\Q)$ as $n\to+\infty$. On the other hand, $v\not\equiv0$ on $\Q$, since $\|v_n\|_{L^\infty(\Q)}=\|u_n\|_{L^\infty(\G)}+o(1)$ and $u_n\not\to 0$ in $L^\infty(\G)$, so that by weak lower semicontinuity, $v\in\Hmu(\G)$ for some $0<\mu\leq\mu_{p,\G}<\mu_{p,\Q}$. Coupling with \eqref{intro:EQ_4p6}, \eqref{levelTHM1}, \eqref{muG<muQ}, using the fact that $v_n\equiv0$ on $\Q\setminus\G$ and arguing as in \eqref{eq-ragionamento}, there results
			\[
			0<E_p(v,\Q)\leq\liminf_{n\to+\infty}E_p(v_n,\Q)\leq\liminf_{n\to+\infty}E_p(u_n,\G)=\EE_{p,\G}(\mu_{p,\G})=0,
			\]
			which is a contradiction. Therefore, $u\not\equiv0$ on $\G$, whence $u\in H_{\mu_{p,\G}}^1(\G)$ is a ground state of mass $\mu$.
		\end{proof}
		
		\subsection{Grids with $\Z^2$--periodic defects} 
		\label{sub:z2}
		
		Here we give the proof of the part of Theorem \ref{THM:ex_com&Z} devoted to grids with $\Z^2$--periodic defects. Again, in view of Section \ref{subsubsec:Sobolev} it is left to prove
		\begin{itemize}
			\item[(a)] the existence part of item (i),
			\item[(b)] the existence parts of items (ii.1) and (ii.2).
		\end{itemize}
		
		We preliminarily introduce a level criterion for the existence of ground states, whose proof is analogous to that of \cite[Proposition 3.3]{ADST19}. 
		
		\begin{lemma}
			\label{lem:Z2 crit ex}
			Let $\mu>0$, $p\in(2,6)$ and $\G$ be a grid with $\Z^2$--periodic defects. If $\EE_{p,\G}(\mu)<0$, then there exists a ground state of mass $\mu$.
		\end{lemma}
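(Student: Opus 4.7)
The strategy parallels the one of \cite[Proposition 3.3]{ADST19} and hinges on the full $\Z^2$--invariance of $\G$. First I would take a minimizing sequence $(u_n)_n \subset \Hmu(\G)$ for $E_p(\cdot,\G)$; the coercivity estimate \eqref{eq:coerc}, which is available since by Theorem \ref{THM:s2d_bound} the grid $\G$ supports the two--dimensional Sobolev inequality, ensures that $(u_n)_n$ is bounded in $H^1(\G)$. Hence, up to subsequence, $u_n \rightharpoonup u$ in $H^1(\G)$ and $u_n \to u$ in $L^\infty_{\mathrm{loc}}(\G)$. By Lemma \ref{lem dichotomy}, either $u$ is a ground state of mass $\mu$ (and we are done) or $u_n \to 0$ in $L^\infty_{\mathrm{loc}}(\G)$, so the whole task amounts to excluding the latter alternative under the assumption $\EE_{p,\G}(\mu) < 0$.

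The key ingredient is a translation normalization that leverages $\Z^2$--periodicity. Let $\widetilde{W}$ be a fundamental domain of $\G$ as in Remark \ref{rem:Dper}, so that $|\widetilde{W}| < +\infty$. For every $n$, pick $x_n \in \G$ with $|u_n(x_n)| \geq \|u_n\|_{L^\infty(\G)} - 1/n$; using the invariance $\G \cong \G + k_1 \vec{v}_1 + k_2 \vec{v}_2$ for every $(k_1,k_2) \in \Z^2$, one can translate $u_n$ by a suitable element of $\Z \vec{v}_1 + \Z \vec{v}_2$ so that $x_n$ falls inside $\widetilde{W}$, without altering mass or energy. Up to such a replacement, the $L^\infty$--norm of every $u_n$ is attained (up to $1/n$) inside the bounded set $\widetilde{W}$.

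At this point, if by contradiction the dichotomy gave $u \equiv 0$, then $u_n \to 0$ in $L^\infty(\widetilde{W})$, and by the above normalization $\|u_n\|_{L^\infty(\G)} \to 0$. Interpolating,
\[
\|u_n\|_{L^p(\G)}^p \leq \|u_n\|_{L^\infty(\G)}^{p-2}\|u_n\|_{L^2(\G)}^2 = \mu\,\|u_n\|_{L^\infty(\G)}^{p-2} \to 0,
\]
so that $E_p(u_n,\G) \geq -\f1p\|u_n\|_{L^p(\G)}^p \to 0$, and therefore $\EE_{p,\G}(\mu) \geq 0$, contradicting the hypothesis. Thus $u \not\equiv 0$ and the proof is complete. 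The delicate point, and the main conceptual difference with respect to the $\Z$--periodic case, is that here no asymptotic defect--free region is available: one cannot realize a modified minimizing sequence as a function on $\Q$ and compare $\EE_{p,\G}(\mu)$ with $\EE_{p,\Q}(\mu)$ as in Lemma \ref{lem crit ex}. The stronger hypothesis $\EE_{p,\G}(\mu) < 0$ is precisely what lets the translation normalization by $\Z^2$ do its job and directly rules out the vanishing alternative.
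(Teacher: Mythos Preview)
Your proof is correct and follows essentially the same route as the paper's: translate by the $\Z^2$--action so that the $L^\infty$--norm is (nearly) attained in a fixed fundamental domain, then use the dichotomy Lemma \ref{lem dichotomy} together with the interpolation $\|u_n\|_{L^p}^p\leq\|u_n\|_{L^\infty}^{p-2}\mu$ to rule out vanishing under $\EE_{p,\G}(\mu)<0$. One minor remark: the coercivity \eqref{eq:coerc} comes from the one--dimensional Gagliardo--Nirenberg inequality \eqref{eq:gn1d} (valid on any noncompact metric graph), so invoking Theorem \ref{THM:s2d_bound} there is unnecessary; also, the translation should logically precede the extraction of the weakly convergent subsequence, but this is a harmless reordering.
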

		
		\begin{proof}
			Let $(u_n)_n\subset\Hmu(\G)$ be a minimizing sequence for $E_p$. By Remark \ref{rem:Dper}, the periodicity of $\G$ allows to further assume, without loss of generality, that each $u_n$ attains its $L^\infty$ norm somewhere in $\widetilde{W}_{(0,0)}$. On the other hand, by \eqref{eq:coerc}, $(u_n)_n$ is bounded in $H^1(\G)$, so that $u_n\rightharpoonup u$ in $H^1(\G)$ and $u_n\to u$ in $L_{\text{loc}}^\infty(\G)$ as $n\to+\infty$. By Lemma \ref{lem dichotomy}, either $u\equiv0$ on $\G$ or $u\in\Hmu(\G)$ is a ground state of mass $\mu$.
			
			Assume by contradiction that $u\equiv0$ on $\G$. Since $\|u_n\|_{L^\infty(\G)}=\|u_n\|_{L^\infty(\widetilde{W}_{(0,0)})}$ for every $n$, then $u_n\to0$ in $L^\infty(\G)$, whence $u_n\to 0$ in $L^p(\G)$ for every $p>2$. Thus, 
			\[
			\EE_{p,\G}(\mu)=\lim_{n\to+\infty}E_p(u_n,\G)\geq\liminf_{n\to+\infty}\f12\|u_n'\|_{L^2(\G)}^2\geq0,
			\]
			which contradicts the hypothesis $\EE_{p,\G}(\mu)<0$.
		\end{proof}
		\begin{proof}[Proof of Theorem \ref{THM:ex_com&Z}: grids with $\Z^2$--periodic defects]
			Recall that it is sufficient to prove items (a) and (b) listed before. In addition, by Lemma \ref{lem:Z2 crit ex}, to prove existence of ground states of mass $\mu$ it is enough to show that $\EE_{p,\G}(\mu)$ is strictly negative.
			
			As a consequence, \eqref{levelTHM1} immediately guarantees existence of ground states when $p\in[4,6)$ and $\mu>\mu_{p,\G}$. On the other hand, the case $p\in(4,6)$ with $\mu=\mu_{p,\G}$ can be managed exactly as already done in the previous section for compactly defected grids and grids with $\Z$--periodic defects. The argument proceeds in the same way, with the unique arrangement of taking $(u_n)_n\subset H_{\mu_{p,\G}}^1(\G)$ fulfilling \eqref{eq:maxGN} so that each $u_n$ attains its $L^\infty$ norm in $\widetilde{W}_{(0,0)}$. Thus (b) is proved.
			
			It remains to prove (a). Relying again on Lemma \ref{lem:Z2 crit ex}, this can be done by exhibiting explicit functions in $H_{\mu}^1(\G)$ such that $E_p(u,\G)<0$. 
			
			To this aim, for any fixed $\eps>0$, let $\varphi_\eps:\R^2\to\R$ be the function defined by $\varphi_\eps(x,y):=\kappa_\eps e^{-\eps(|x|+|y|)}$ with
			\begin{equation}
			\label{kappa eps}
			\kappa_\eps:=\left(\f{\eps\mu}2\f{1-e^{-2\eps}}{1+e^{-2\eps}}\right)^{\f12}.
			\end{equation}
			Let then $u_\eps:\Q\to\R$, $v_\eps:\G\to\R$ be the restrictions of $\varphi_\eps$ to $\Q$ and $\G$, respectively. Direct computations show that
			\[
			\int_\Q|u_\eps|^2\,dx=\mu\,,\qquad\int_\Q|u_\eps'|^2\,dx=\eps^2\mu, 
			\]
			so that
			\begin{equation}
			\label{veps'}
			\int_\G|v_\eps'|^2\,dx\leq\eps^2\mu.
			\end{equation}
			Recalling the definition of $\widetilde{W}_{(k_1,k_2)}$, for any $(k_1,k_2)\in\Z^2$, given in Remark \ref{rem:Dper}, suppose now, without loss of generality, that the vertex of $\G$ that is identified with the origin of $\R^2$ belongs to $\widetilde{W}_{(0,0)}$. Denote this vertex by $\v_{(0,0)}\in\widetilde{W}_{(0,0)}$. Since $\G$ is connected, there exists a path $\gamma$ of minimal length in $\G$ joining $\v_{(0,0)}$ with the vertex $\v_{(1,0)}\in\widetilde{W}_{(1,0)}$ given by $\v_{(1,0)}:=\v_{(0,0)}+\vec{v}_1$, with $\vec{v}_1$ as in Definition \ref{def:Z2perD}. By Definition \ref{def:Z2perD}, for every $k_1\in\Z$, $\gamma+k_1\vec{v}_1$ is a path of minimal length in $\G$ joining $\v_{(k_1,0)}:=\v_{(0,0)}+k_1\vec{v}_1\in\widetilde{W}_{(k_1,0)}$ and $\v_{(k_1+1,0)}:=\v_{(0,0)}+(k_1+1)\vec{v}_1\in\widetilde{W}_{(k_1+1,0)}$. Let $\Gamma_0$ be the union of all such paths over $k_1\in\N$. Then $\Gamma_0$ is connected, it starts at $\v_{(0,0)}$ and it follows that
			\[
			\int_{\Gamma_0}|v_\eps|^p\,dx\geq\kappa_\eps^p\int_0^{+\infty} e^{-\eps p x}\,dx=\f{\kappa_\eps^p}{\eps p}.
			\]
			Note that, since the path $\gamma$ joining $\v_{(0,0)}$ and $\v_{(1,0)}$ is of finite length, there exists $k\in\N$ such that $\gamma+k\vec{v}_2$, with $\vec{v}_2$ as in Definition \ref{def:Z2perD}, does not intersect $\gamma$. Let $\overline{k}\in\N$ be the smallest positive integer with this property. Now, for every $k_2\in\N\setminus\{0\}$, consider the path $\Gamma_{k_2}:=\Gamma_0+k_2\overline{k}\vec{v}_2$. By construction, we have that $\Gamma_{k_2}\cap\Gamma_{k_2'}=\emptyset$, for every $k_2\neq k_2'$. Furthermore, since $\Gamma_{k_2}$ starts at the vertex $\v_{(0,k_2)}:=\v_{(0,0)}+k_2\overline{k}\vec{v}_2$, then $v_\eps(\v_{(0,k_2)})=\kappa_\eps e^{-\eps k_2 R}$, with $R:=\overline{k}(|(\vec{v}_{2})_x|+|(\vec{v}_{2})_y|)$, where we denoted by $(\vec{v}_{2})_x,(\vec{v}_{2})_y$ the components of $\vec{v}_2$ with respect to the standard basis of $\R^2$. Thus, for every $k_2\in\N\setminus\{0\}$,
			\[
			\int_{\Gamma_{k_2}}|v_\eps|^p\,dx\geq\kappa_\eps^pe^{-\eps pk_2R}\int_0^{+\infty}e^{-\eps p x}\,dx=\f{\kappa_\eps^pe^{-\eps pk_2R}}{\eps p}
			\]
			and, given that the paths $\Gamma_{k_2}$ are pairwise disjoint, there results
			\begin{equation*}
			\int_\G|v_\eps|^p\,dx\geq\sum_{k_2=0}^{+\infty}\int_{\Gamma_{k_2}}|v_\eps|^p\,dx\geq\sum_{k_2=0}^{+\infty}\f{\kappa_\eps^pe^{-\eps pk_2R}}{\eps p}=\f{\kappa_\eps^p}{\eps p}\f{e^{\eps p R}}{e^{\eps pR}-1}.
			\end{equation*}
			As a consequence, \eqref{kappa eps} yields
			\begin{equation}
			\label{veps p}
			\int_{\G}|v_\eps|^p\,dx\geq\left(\f{\eps\mu}2\f{1-e^{-2\eps}}{1+e^{-2\eps}}\right)^{\f p2}\f{e^{\eps p R}}{\eps p(e^{\eps pR}-1)}\sim C\mu^{\f p2}\eps^{p-2}\qquad\text{as}\quad\eps\to0,
			\end{equation}
			and thus, combining \eqref{veps'} and \eqref{veps p},
			\[
			E_p(v_\eps,\G)<0\qquad\text{as}\quad\eps\to0,\qquad\forall p\in(2,4).
			\]
			Note that this is not enough to conclude that $\EE_{p,\G}(\mu)<0$ for every $p\in(2,4)$ and $\mu>0$, since $\|v_\eps\|_{L^2(\G)}^2<\|u_\eps\|_{L^2(\Q)}^2=\mu$. However, if one fixes $\eps$ sufficiently small so that $E_p(v_\eps,\G)<0$, then, setting $w:=\sqrt{\f{\mu}{\|v_\eps\|_{L^2(\G)}^2}}v_\eps$, there results that $w\in\Hmu(\G)$ and, arguing as in \eqref{E_norm},
			\[
			E_p(w,\G)<\f{\mu}{\|v_\eps\|_{L^2(\G)}^2}E_p(v_\eps,\G)<0,
			\]
			which concludes the proof.
		\end{proof} 
		\begin{remark}
			\label{rem:level_Z2per}
			We have seen in Section \ref{subsec:proofNLS} that the level estimate \eqref{EG<EQ} (as well as \eqref{muG<muQ}) is a by--product of the argument in the proof of Theorem \ref{THM:ex_com&Z} for compactly defected grids and grids with $\Z$--periodic defects. On the contrary, nothing similar can be immediately established for grids with $\Z^2$--periodic defects. As mentioned in Section \ref{sec:openNLS}, it is actually an open problem whether such an estimate holds in this case. However, one can exhibit grids with $\Z^2$--periodic defects satisfying $\EE_{p,\G}(\mu)<\EE_{p,\Q}(\mu)$, for every $p\in(2,6)$ and $\mu\neq\mu_{p,\G}$ for which $\EE_{p,\G}(\mu)$ is attained. An easy example is given by the grid with edges of length two (Figure \ref{fig-z2comez1}). Clearly, one can think of such a grid as a defected grid $\G$ with $\Z^2$--periodic defects, as it can be obtained by removing from $\Q$ the set
			\[
			\bigcup_{(k_1,k_2)\in\Z^2}\Big([2k_1,2(k_1+1)]\times\{2k_2+1\}\Big)\cup\Big(\{2k_1+1\}\times[2k_2,2(k_2+1)]\Big).
			\] 
			Conversely, note that the grid with edges of length one $\Q$ can be interpreted as the union of two grids with edges of length two, say $\widetilde{\G}$, $\widehat{\G}$, in such a way that $\widetilde{\G}\cap\widehat{\G}$  is a countable number of vertices of $\Q$. Therefore, one can write
			\[
			E_p(u,\Q)=E_p(u,\widetilde{\G})+E_p(u,\widehat{\G}),\qquad\forall u\in\Hmu(\Q),
			\]
			entailing
			\[
			\min\left\{\f{E_p(u,\widetilde{\G})}{\int_{\widetilde{\G}}|u|^2\,dx},\f{E_p(u,\widehat{\G})}{\int_{\widehat{\G}}|u|^2\,dx}\right\}\leq\f{E_p(u,\Q)}\mu.
			\]
			Indeed, if on the contrary we had
			\[
			\f{E_p(u,\widetilde{\G})}{\int_{\widetilde{\G}}|u|^2\,dx}>\f{E_p(u,\Q)}\mu\qquad\text{and}\qquad\f{E_p(u,\widehat{\G})}{\int_{\widehat{\G}}|u|^2\,dx}>\f{E_p(u,\Q)}\mu,
			\]
			then
			\[
			E_p(u,\Q)=E_p(u,\widetilde{\G})+E_p(u,\widehat{\G})>\f{\int_{\widetilde{\G}}|u|^2\,dx}\mu E_p(u,\Q)+\f{\int_{\widehat{\G}}|u|^2\,dx}\mu E_p(u,\Q)=E_p(u,\Q),
			\]
			which is a contradiction. Assume for instance that $\f{E_p(u,\widetilde{\G})}{\int_{\widetilde{\G}}|u|^2\,dx}\leq\f{E_p(u,\Q)}\mu$ and, for the sake of simplicity, denote by $\G$ the grid $\widetilde{\G}$. Hence, we can set $v:=\kappa u_{\mid\G}$, with $\kappa:=\sqrt{\mu/\int_{\G}|u|^2\,dx}$, so that $v\in\Hmu(\G)$ and, as $\kappa>1$,
			\[
			\begin{split}
			E_p(v,\G)=\f{\kappa^2}2\int_{\G}|u'|^2\,dx-\f{\kappa^p}p\int_{\G}|u|^p\,dx<&\,\kappa^2E_p(u,\G)\\
			=&\,\f\mu{\int_{\G}|u|^2\,dx}E_p(u,\G)\leq E_p(u,\Q).
			\end{split}
			\]
			This immediately shows that $\EE_{p,\G}(\mu)<\EE_{p,\Q}(\mu)$ whenever a ground state with mass $\mu$ exists on $\Q$ and, arguing as in Section \ref{subsec:proofNLS}, it shows that this is true for every $p\in(2,6)$ and $\mu\neq\mu_{p,\G}$ for which $\EE_{p,\G}(\mu)$ is attained. 
		\end{remark}
		
		\subsection{Defected grids without periodic structures.}
		\label{subsec:nonex}
		
		Here we prove Theorem \ref{thm:noground}. The proof consists of detecting two suitable counterexamples, one for each claim of the statement. Accordingly, we divide the proof in two parts.
		
		\begin{proof}[Proof of Theorem \ref{thm:noground}: item (i)]
			We need to exhibit a defected grid that satisfies Definition \ref{def:ZperD}(ii), but not Definition \ref{def:ZperD}(i), and that does not admit any ground state. Before stating the technical details of our construction, let us give a heuristic explanation of it. 
			
			The proof is by contradiction, as we assume that a ground state exists and we use it to construct a new function with a strictly lower energy. To this end, we exploit the general property in Proposition \ref{prop:E>0}, ensuring that any ground state realizes strictly positive energy on every edge out of a compact set.
			
			Let $u\in\Hmu(\G)$ be a ground state on a defected grid $\G$ and let $\overline{K}$ be the compact set out of which the energy of $u$ is positive on every edge. Suppose for a moment that the grid we are considering shares the following property: somewhere in the grid, it is possible to find a copy of $\overline{K}$ (that is, there exists $\overline{K}'\subset\G$ which corresponds to $\overline{K}$ up to a suitable translation in $\R^2$) surrounded by an arbitrary number of removed edges. On such a grid, one could then imagine to define a new function $v$ by translating $u$ on $\G$ so to make the restriction of $u$ to $\overline{K}$ to be moved to $\overline{K}'$, and get rid of the restriction of $u$ on the edges around $\overline{K}$ that correspond to removed edges around $\overline{K}'$. Roughly, $v$ is obtained by moving $u$ somewhere in the grid where some of the edges on which such a translation would realize positive energy are missing. 
			
			The main difficulty in making rigorous the above formal argument stems from the fact that Proposition \ref{prop:E>0} does not give any a priori information on how the compact set $\overline{K}$ can be. Therefore, we need to construct a grid in which every compact subset intersecting the defects is repeated infinitely many times in the graph, each time surrounded by an increasing number of defected edges. 
			
			\medskip
			For the sake of clarity, the proof is divided in two steps. The precise construction of a grid with this sort of self--similar structure is given in Step 1, whereas Step 2 deals with the construction of the function $v$.
			
			\emph{Step 1: construction of $\G$.} As by definition $\G:=\Q\setminus\D$, with $\D:=\cup_{k\in I}\D_k$ and $(\D_k)_{k\in I}$ the family of all the defects of $\G$, it is sufficient to describe $\D$. Preliminarily, we say that $\D$ is the union of infinitely many vertical edges of $\Q$ with ordinates between $0$ and $1$, i.e
			\begin{equation}
			\label{eq:defset}
			\D\subset\bigcup_{k\in\Z}\{k\}\times[0,1]\subset\Q.
			\end{equation}  
			As all the defects belong to $\bigcup_{k\in\Z}\{k\}\times[0,1]$, it is clear that $\G$ satisfies Definition \ref{def:ZperD}(ii). We describe $\D$ in a symbolic way. In particular, we define a suitable map $F:\Z\to\{0,1\}$ and set $\D:=F^{-1}(0)\times[0,1]$, in the sense that an edge $\{k\}\times[0,1]$ of $\Q$ belongs to $\D$ if and only if $F(k)=0$. Like this, we can identify $\D$ with a suitable sequence of 0 and 1. For instance, the sequence $1000101$ corresponds to a subset of $\bigcup_{k\in\Z}\{k\}\times[0,1]\subset\Q$ of seven consecutive edges, where the second, the third, the fourth and the sixth ones belong to $\D$ (whereas the first, the fifth and the seventh ones belong to $\G$).
			
			To define $F$, let us first start by introducing several binary sequences that we use as building blocks in our construction. For every $n\geq2$, let $\sigma_n$ be the sequence given by $n$ repetitions of the block 010, i.e.
			\[
			\sigma_n:=\underbrace{010\dots010}_{n\text{ times}}.
			\]
			Then, let
			\[
			\begin{split}
			\Bq_1:=&\sigma_1 111\sigma_1,\\
			\Bq_2:=&\Bq_1\sigma_2\Bq_1\sigma_2\Bq_1,
			\end{split}
			\]
			and, for every $n\geq3$ and $1\leq k\leq 2^{n-2}$, let
			\[
			\Bq_{2^{n-2}+k}=\Bq_k\sigma_n\Bq_{k-1}\sigma_n\dots\sigma_n\Bq_1\sigma_n\Bq_{2^{n-2}}\sigma_n\Bq_1\sigma_n\dots\sigma_n\Bq_{k-1}\sigma_n\Bq_k.
			\]
			Note that $\left(\Bq_i\right)_{i\in\N}$ forms an encapsulated sequence of blocks, in the sense that $\Bq_{i+1}$ is obtained by adding both on the left and on the right of $\Bq_i$ a term $\Bq_k\sigma_{k'}$, for suitable $k,k'$ depending on $i$. Furthermore, given $i\in\N$, for every $n$ so that $2^{n-2}>i+1$, by construction we have that 
			\[
			\Bq_{2^{n-2}+i+1}=\Bq_{i+1}\sigma_n\Bq_i\sigma_n\dots\sigma_n\Bq_i\sigma_n\Bq_{i+1}.
			\]
			This shows that
			\begin{equation}
			\label{eq-noperiod}
			\forall i,\,N\in\N\setminus\{0\},\:\exists j\in\N\setminus\{0\}\quad\text{such that}\:\Bq_j\:\text{contains the block}\:\sigma_N\Bq_i\sigma_N.
			\end{equation}		
			Note also that, denoting by $|\Bq_n|$ the number of digits in the block $\Bq_n$, then $\f{|\Bq_n|-1}2\in\N$ for every $n$, as $\Bq_n$ is by construction the union of an integer number of blocks of three digits each.
			
			Now, we can define $F:\Z\to\{0,1\}$. Roughly speaking, we see any block $\Bq_n$ as a juxtaposition of images of elements of $\Z$ through the function $F$. More precisely, if we denote by $F\mid_a^b$ the sequence of digits given by $F(a)F(a+1)\dots F(b-1)F(b)$, with $a,b\in\Z$, $a<b$, then
			\[
			F\mid_{-\f{|\Bq_n|-1}2}^{\f{|\Bq_n|-1}2}:=\Bq_n,\qquad\forall n\in\N.
			\]
			A first consequence of the definition is that every defect of $\G$ contains either one or two edges. Indeed, by \eqref{eq:defset}, any defect of $\G$ must be of the form
			\[
			\{a,a+1,\dots,b-1,b\}\times[0,1],\qquad a,b\in\Z, \qquad a<b,
			\]
			with $F(a-1)=F(b+1)=1$ and $F(z)=0$ for every $a\leq z\leq b$. Since $F\mid_{a-1}^{b+1}$ is a subsequence of some blocks $\Bq_i$ and, by construction, the only sequences with a given number of 0 enclosed between two 1 are 101 and 1001, there is no defect of $\G$ with more than two edges. 
			An immediate consequence is that the defects of $\G$ are uniformly bounded, so that $\G$ supports \eqref{eq:s2d} by Theorem \ref{THM:s2d_bound}. Furthermore, \eqref{eq-noperiod} guarantees that $\G$ violates Definition \ref{def:ZperD}(i).
			
			\emph{Step 2: $\G$ does not admit any ground state of mass $\mu$, for every $p\in(2,6)$ and every $\mu>0$.} Fix $p\in(2,6)$ and $\mu>0$ and assume by contradiction that there exists $u\in\Hmu(\G)$ such that $E_p(u,\G)=\EE_{p,\G}(\mu)$. By \eqref{eq-sign}, it is not restrictive to take $u>0$ on $\G$. By Proposition \ref{prop:E>0}, there exists a compact subset $K\subset\G$ such that $E_p(u,e)>0$ for every edge $e\in\G\setminus K$. Set
			\[
			A:=K\cap\partial\D\cap\left(\bigcup_{k\in\Z}\left(\{k\}\times[0,1]\right)\right),
			\]     
			with $\partial\D:=\cup_{k\in I}\partial\D_k$ (note that, possibly enlarging $K$, one can assume without loss of generality, that $A\neq\emptyset$). By construction, there exist $k_1,\dots,k_{\overline{N}}\in\Z$ such that $A=\{k_1,\dots,k_{\overline{N}}\}\times[0,1]$. Furthemore, letting $\overline{n}$ be the smallest natural number such that $\{k_1,\dots,k_{\overline{N}}\}\subset\left[-\f{|\Bq_{\overline{n}}|-1}2,\f{|\Bq_{\overline{n}}|-1}2\right]$, we fix a compact set $\overline{K}\subset\G$ such that $\overline{K}\supseteq K$ and such that
			\begin{equation}
			\label{eq:AA}
			\overline{A}:=\overline{K}\cap\bigcup_{k\in\Z}\left(\{k\}\times[0,1]\right)=\left(\bigcup_{k=-\f{|\Bq_{\overline{n}}|-1}2}^{\f{|\Bq_{\overline{n}}|-1}2}\{k\}\times[0,1]\right)\cap (F^{-1}(1)\times[0,1]).
			\end{equation}
			One can see that $E_p(u,e)>0$ for every edge $e\in\left(\G\setminus\overline{K}\right)\cap\bigcup_{k\in\Z}\left(\{k\}\times[0,1]\right)$. Also, by definition of $\D$, there exist infinitely many $n\in\N$ such that $n>\f{|\Bq_{\overline{n}}-1|}2$, $\{n-1,n,n+1\}\times[0,1]\subset\G\setminus\overline{K}$ and $\{-n-1,-n,-n+1\}\times[0,1]\subset\G\setminus\overline{K}$. Indeed, the previous sets are two triples of edges, with middle edges denoted by $\{n\}\times[0,1]$ and $\{-n\}\times[0,1]$, which are consecutive in $\G$. Since each of these triplets corresponds to a sequence 111 contained in a repetition of the block $\Bq_1$, combining \eqref{eq-noperiod} (with $i=1$) with the compactness of $\overline{K}$, one obtains the claim. Denote then by $\Lambda$ the set of all natural numbers $n$ for which this holds, and set 
			\[
			B:=\bigcup_{n\in\Lambda}\{-n-1,-n+1,n-1,n+1\}\times[0,1].
			\]
			According to the definition of $\Lambda$,  $B$ contains the first and the third edge of each triplet of consecutive edges in $\left(\bigcup_{k\in\Z}\{k\}\times[0,1]\right)\cap\left(\G\setminus\overline{A}\right)$. In addition, since $B\cap\overline{K}=\emptyset$, we have that
			\begin{equation}
			\label{EB>0}
			E_p(u,B)>0.
			\end{equation}
			To conclude the proof, we now exploit the previous construction to build a function $v\in H^1(\G)$ such that 
			\[
			\|v\|_{L^2(\G)}^2<\mu\qquad\text{and}\qquad E_p(v,\G)<E_p(u,\G).
			\]
			By \eqref{eq:infnonpos}, this entails $E_p(v,\G)<0$ and therefore, arguing as in the final part of the proof of Theorem \ref{THM:ex_com&Z} for grids with $\Z^2$--periodic defects, one can find another function $w\in\Hmu(\G)$ such that $E_p(w,\G)<E_p(u,\G)=\EE_{p,\G}(\mu)$, which is a contradiction.
			
			To exhibit such a function $v$ we proceed as follows. For every $\eps>0$, set $u_\eps:=(u-\eps)_+$. Clearly, $(u_\eps)_{\eps}\subset H^1(\G)$, each $u_\eps$ has compact support and $u_\eps\to u$ in $H^1(\G)$ as $\eps\to0$. Furthermore, as $u>0$,
			\begin{equation}
			\label{ueps_Abar}
			\text{supp}(u_\eps)\cap\overline{A}\neq\emptyset,
			\end{equation}
			provided $\eps$ is sufficiently small. Set then
			\[
			\begin{split}
			\underline{k}_\eps:=&\min\{k\in\Z\,:\,u_\eps\not\equiv0\text{ on }\{k\}\times[0,1]\},\\
			\overline{k}_\eps:=&\max\{k\in\Z\,:\,u_\eps\not\equiv0\text{ on }\{k\}\times[0,1]\}
			\end{split}
			\]
			and
			\[
			C_\eps:=\left(\{k\in\Z\,:\,\underline{k}_\eps\leq k\leq\overline{k}_\eps\}\times[0,1]\right)\cap\G.
			\]
			Since $\text{supp}(u_\eps)$ is compact, $\underline{k}_\eps,\overline{k}_\eps$ are well--defined and $\overline{A}\subset C_\eps$ by \eqref{ueps_Abar}. On the one hand, let
			\[
			B_\eps:=B\cap C_\eps.
			\]
			As $B_{\eps}\uparrow B\neq\emptyset$ (in the sense of inclusions), $B_\eps\neq\emptyset$ for $\eps$ small. Furthermore, since
			\begin{equation}
			\label{eq-convener}
			E_p(u_\eps,B_\eps)=E_p(u_\eps,B)\qquad\text{and}\qquad\lim_{\eps\to0}E_p(u_\eps,B)= E_p(u,B),
			\end{equation}			
			\eqref{EB>0} yields 
			\begin{equation}
			\label{EBeps>0}
			E_p(u_\eps,B_\eps)>0\qquad\text{for}\:\eps\:\text{small enough}.
			\end{equation}
			On the other hand, let $N_\eps:=\overline{k}_\eps-\underline{k}_\eps$ and recall that, by \eqref{eq-noperiod} and the definition of $F$, there exist $a_\eps,b_\eps\in\Z$ such that $F\mid_{a_\eps}^{b_\eps}=\sigma_{N_\eps}\Bq_{\overline{n}}\sigma_{N_\eps}$. As a consequence, we can take $k_\eps^1,\,k_\eps^2\in\Z$, with $a_\eps<k_\eps^1<k_\eps^2<b_\eps$, such that $F\mid_{k_\eps^1}^{k_\eps^2}=\Bq_{\overline{n}}$ and define
			\[
			\overline{A}_\eps:=\G\cap\left(\{k\in\Z\,:\,k_\eps^1\leq k\leq k_\eps^2\}\times [0,1]\right).
			\]
			Since by construction both $\overline{A}$ and $\overline{A}_\eps$ corresponds through $F$ to the block $\Bq_{\overline{n}}$, then $\overline{A}_\eps$ is nothing but a copy of $\overline{A}$ that presents both on its right and on its left the structure represented by the block $\sigma_{N_\eps}$. Finally, denote by
			\begin{equation}
			\label{eq:deps}
			d_\eps:=\left|k_\eps^2-\f{|\Bq_n|-1}2\right|
			\end{equation}			
			the distance in $\G$ between the edge at the right end of $\overline{A}$ and the edge at the right end of $\overline{A}_\eps$.
			
			Now, let $\widetilde{u}_\eps:=u_{\eps\mid\G\setminus B_\eps}$. By \eqref{EBeps>0}, as soon as $\eps$ is sufficiently small, we have
			\begin{equation}
			\label{eq_ueps_tilde}
			\begin{split}
			\|\widetilde{u}_\eps\|_{L^2(\G\setminus B_\eps)}^2=&\|u_\eps\|_{L^2(\G)}^2-\|u_\eps\|_{L^2(B_\eps)}^2<\mu\\
			E_p(\widetilde{u}_\eps,\G\setminus B_\eps)=&E_p(u_\eps,\G)-E_p(u_\eps,B_\eps)<E_p(u_\eps,\G)\,.
			\end{split}
			\end{equation}
			Note that the unique difference between $\widetilde{u}_\eps$ and $u_\eps$ is that $\widetilde{u}_\eps$ is not defined on both the left and right edges of each triplet of consecutive edges in $C_\eps\setminus\overline{A}$. This means that, in the definition of $\widetilde{u}_\eps$, each triplet of consecutive edges in $C_\eps$ has been replaced by a unique edge (the middle one) enclosed between two ``removed edges''. Exploiting the symbolic representation, it corresponds to replace each subset of $C_\eps$ mapped by $F$ to 111 with a subset mapped by $F$ to 010. As a consequence, the set $C_\eps\setminus\left(\overline{A}\cup B_\eps\right)$ corresponds through the map $F$ to two disjoint sequences of consecutive blocks 010, say $\sigma_{n_\eps^1}, \sigma_{n_\eps^2}$, for some $n_\eps^1,\,n_\eps^2\in\N$. The sequence $\sigma_{n_\eps^1}$ corresponds to the subset of $C_\eps\setminus\left(\overline{A}\cup B_\eps\right)$ on the left of $\overline{A}$, whereas $\sigma_{n_\eps^2}$ describes the one on the right of $\overline{A}$. Since the total number of edges in $C_\eps$ is smaller than $N_\eps$, there results $n_\eps^1<N_\eps$, $n_\eps^2<N_\eps$. 
			
			As a last step, let $v_\eps$ be the translation on $\G$ of $\widetilde{u}_\eps$ by the vector $(d_\eps,0)$. In order to see that it is well defined, one can argue as follows. By definition of $d_\eps$, the restriction of $\widetilde{u}_\eps$ to $\overline{A}$ is moved by this translation to the set $\overline{A}_\eps$. Moreover, recall that both on the left and on the right of $\overline{A}_\eps$, the structure is described through $F$ by the sequence $\sigma_{N_\eps}$. On this subset of $\G$, $v_\eps$ is given by the translation of the restriction of $\widetilde{u}_\eps$ to $C_\eps\setminus\left(\overline{A}\cup B_\eps\right)$. The portion of this set on the left (resp. right) of $\overline{A}$ corresponds through $F$ to $\sigma_{n_\eps^1}$ (resp. $\sigma_{n_\eps^2}$). Since $n_\eps^1<N_\eps$ (resp. $n_\eps^2<N_\eps$), the sequence $\sigma_{N_\eps}$ can be seen as $\sigma_{N_\eps-n_\eps^1}\sigma_{n_\eps^1}$, given by the $N_\eps-n_\eps^1$ blocks 010 of $\sigma_{N_\eps-n_\eps^1}$ followed by the remaining $n_\eps^1$ ones of $\sigma_{n_\eps^1}$ (and the same for $\sigma_{n_\eps^2}\sigma_{N_\eps-n_\eps^2}$). Hence, the subset of $\G$ corresponding to the sequence $\sigma_{N_\eps}$ on the left of $\overline{A}_\eps$ contains a copy of the subset of $C_\eps\setminus\left(\overline{A}\cup B_\eps\right)$ on the left of $\overline{A}$, and the same is true on the right. Since the translation of $\widetilde{u}_\eps$ on $\G\setminus\bigcup_{k\in\Z}\left(\{k\}\times[0,1]\right)$ is straightforward, we have $v_\eps\in H^1(\G)$ and, by \eqref{eq_ueps_tilde},
			\[
			\begin{split}
			\|v_\eps\|_{L^2(\G)}^2=&\|\widetilde{u}_\eps\|_{L^2(\G\setminus B_\eps)}^2<\mu\\[.2cm]
			E_p(v_\eps,\G)=&E_p(\widetilde{u}_\eps,\G\setminus B_\eps)=E_p(u_\eps,\G)-E_p(u_\eps,B_\eps).
			\end{split}
			\]
			Thus, combining \eqref{eq-convener} with \eqref{EB>0} and the fact that $\lim_{\eps\to0}E_p(u_\eps,\G)= E_p(u,\G)$, there results that, whenever $\eps$ is sufficiently small, $\|v_\eps\|_{L^2(\G)}^2<\mu$ and $E_p(v_\eps,\G)<E_p(u,\G)$, which completes the proof.
		\end{proof}
		
		Once proved the former part of Theorem \ref{thm:noground}, the latter follows almost immediately, as the structure of the suitable counterexample strongly relies on that of the previous one.
		
		\begin{proof}[Proof of Theorem \ref{thm:noground}: item (ii)]
			Here it is necessary to exhibit a defected grid that satisfies Definition \ref{def:ZperD}(i), but neither Definition \ref{def:ZperD}(ii) nor Definition \ref{def:Z2perD}, and that does not admit any ground state.
			
			To this aim, consider the set $\D$ defined in Step 1 of the proof of Theorem \ref{thm:noground}(i) and rename it $\D^0$. Then, for every $\ell\in\Z\setminus\{0\}$, let $\D^\ell$ be the translation of $\D^0$  by the vector $\ell(0,2)$ and define
			\[
			\widetilde{\D}:=\bigcup_{\ell\in\Z}\D^\ell\qquad\text{and}\qquad\widetilde{\G}:=\Q\setminus\widetilde{\D}.
			\]
			By construction, $\widetilde{\G}$ fulfills Definition \ref{def:ZperD}(i) with $\vec{v}=(0,2)$, but not Definition \ref{def:ZperD}(ii), as every $\D^\ell$ is unbounded in the direction of $\vec{v}^\bot$. On the other hand, again by construction $\widetilde{\G}$ cannot fulfill Definition \ref{def:Z2perD} too, as every $\D^\ell$ is not periodic in the direction of $\vec{v}^\bot$.
			
			It is then left to prove that $\widetilde{\G}$ does not support any ground state. This is completely analogous to the previos case, if one suitably chooses the compact set $\overline{K}\subset\widetilde{\G}$ such that $E_p(u,e)>0$ for every edge $e\in\widetilde{\G}\setminus\overline{K}$. More precisely, denoting by $\overline{A}^0$ the set defined in \eqref{eq:AA} and, for every $\ell\in\Z\setminus\{0\}$, by $\overline{A}^\ell$ the translation of $\overline{A}^0$  by the vector $\ell(0,2)$, it is sufficient to take $\overline{K}$ so that
			\[
			\overline{K}\cap\bigcup_{\ell\in\Z\cap[\ell_1,\ell_2]}\bigcup_{k\in\Z}\left(\{k\}\times[2\ell,2\ell+1]\right)=\bigcup_{\ell\in\Z\cap[\ell_1,\ell_2]}\overline{A}^\ell,
			\]
			where
			\begin{gather*}
			\ell_1:=\min\left\{\ell\in\Z:\overline{K}\cap\partial\D_\ell\cap\bigg(\bigcup_{k\in\Z}\left(\{k\}\times[2\ell,2\ell+1]\right)\bigg)\neq\emptyset\right\},\\[.2cm]
			\ell_2:=\max\left\{\ell\in\Z:\overline{K}\cap\partial\D_\ell\cap\bigg(\bigcup_{k\in\Z}\left(\{k\}\times[2\ell,2\ell+1]\right)\bigg)\neq\emptyset\right\}.
			\end{gather*}
			Indeed, if one repeats for any of these sets $\overline{A}_\ell$ the same construction in the Step 2 of the proof of Theorem \ref{thm:noground}(i) starting from the set $\overline{A}$, then assuming the existence of a ground states yields the same contradiction. The unique proviso is that, when one defines $v_\eps$, it is necessary to translate by a vector $(d_\eps^*,0)$, where $d_\eps^*:=\max_{\ell\in\Z\cap[\ell_1,\ell_2]}d_\eps^\ell$ and $d_\eps^\ell$ is defined as in \eqref{eq:deps} for every $\overline{A}^\ell$.
		\end{proof}
		 


\section*{Appendix}

We give here the proof of Lemma \ref{lem_dDconn}, which states the connectedness of the boundary of any defect in a defected grid $\G$.

\begin{proof}[Proof of Lemma \ref{lem_dDconn}]
	Preliminarily, note that, if $\D$ consists of a single edge $e$, then $\partial\D=(\Sq_e'\cup\Sq_e'')\setminus\{e\}$ is clearly connected. If on the contrary $\D$ contains at least two edges, let $\v,\w$ be two vertices in $\partial\D$ and let us prove that they are connected by a path in $\partial\D$.
	
	First, whenever they belong to the same edge of $\partial\D$, there clearly exists a path in $\partial\D$ connecting them. Assume then that they do not belong to the same edge of $\partial\D$. On the one hand, if they belong to the same cell $\Sq_0$ of $\Q$, then one easily sees that there exists a path $\gamma\subset\Sq_0$ which connects them. On the other hand, if they do not belong to the same cell of $\Q$, then, by Definition \ref{def:D and dD}, there exists nevertheless a finite sequence of cells $\left(\mathcal{S}_j\right)_{j=0}^N\subset\Q$ such that $\v\in\mathcal{S}_0$, $\w\in\mathcal{S}_N$, and, for every $j=0,\,\dots,\,N-1$, there exists an edge $e_j\in\D$ for which $\mathcal{S}_j\cap\mathcal{S}_{j+1}=\{e_j\}$. Hence, also here there exists a path $\gamma\subset\bigcup_{j=0}^N\mathcal{S}_j$ joining $\v$ and $\w$. Note that it is not restrictive to assume that $\gamma$ is simple.
	
	By construction, $\gamma$ contains edges of $\partial\D$ and $\D$ only. Therefore, there is a sequence of vertices $\v_1,\v_2,\dots,\v_M\in\gamma\cap\partial\D$ (with possibly $\v_1=\v$ or $\v_M=\w$) such that, denoting by $\gamma_{x,y}$ the portion of $\gamma$ from $x$ to $y$,
	\begin{itemize}
		\item[(i)] $\gamma_{\v,\v_1}\subset\partial\D$ and $\gamma_{\v_M,\w}\subset\partial\D$;
		\item[(ii)] for every odd $j\in\{1,\dots,M-1\}$, $\gamma_{\v_j,\v_{j+1}}\setminus\{\v_j,\v_{j+1}\}\subset\D$;
		\item[(iii)] for every even $j\in\{1,\dots,M-1\}$, $\gamma_{\v_j,\v_{j+1}}\subset\partial\D$.
	\end{itemize}
	We can thus think of $\gamma$ as the union of $M+1$ simple paths, each of which contains either edges of $\partial\D$ only or edges of $\D$ only, and such that any path that belongs to $\partial\D$, up to the last one, is followed by a path that belongs to $\D$. Note that, with a little abuse of notation, we denote $\gamma_{\v,\v_1}$ (resp. $\gamma_{\v_M,\w}$) as a path even if $\v_1=\v$ (resp. $\v_M=\w$). Clearly, in this case, such path plays no role and can be neglected in the following arguments.
	
	Let $d(\gamma)$ be the number of these paths $\gamma_{\v_j,\v_{j+1}}$ that belong to $\D$. If $d(\gamma)=0$ the proof if complete. If, on the contrary, $d(\gamma)\neq0$, then it is sufficient to prove that there exists another simple path $\widetilde{\gamma}\subset\partial\D\cup\D$ starting at $\v$ and ending at $\w$ such that $d(\widetilde{\gamma})=d(\gamma)-1$. Indeed, if this property is true, then, by an iterative procedure, one obtains that there exists a path in $\partial\D$ connecting $\v$ and $\w$. 
	
	We prove the claim by induction. Let us start by the inductive step. Suppose that, for a given $n\in\N\setminus\{0,1\}$, if $\gamma'\subset\partial\D\cup\D$ is a simple path joining two vertices of $\partial\D$ such that $d(\gamma')=n$, then there exists another simple path $\gamma''\subset\partial\D\cup\D$, joining the same vertices of $\gamma'$, such that $d(\gamma'')=n-1$. Now, let $\gamma$ be a simple path in $\partial\D\cup\D$, joining two vertices of $\partial\D$, such that $d(\gamma)=n+1$. Then $\gamma$ can be seen as the union of two simple paths $\gamma_n,\gamma_1$, with $d(\gamma_n)=n$ and $d(\gamma_1)=1$, such that by construction
	\begin{itemize}
		\item[--] the starting vertex of $\gamma_1$ coincides with the ending vertex of $\gamma_n$;
		\item[--] the extremal vertices of $\gamma_n$ belong to $\partial\D$.
	\end{itemize}
	Hence, by the inductive assumption, it is possible to replace $\gamma_n$ with a new path $\gamma_{n-1}\subset\partial\D\cup\D$, whose extremal vertices coincide with those of $\gamma_n$, such that $d(\gamma_{n-1})=n-1$. Therefore, $\gamma_{n-1}\cup\gamma_1$ is a simple path, joining the same vertices of $\gamma$, with $d(\gamma_{n-1}\cup\gamma_1)=n$.
	
	It is then left to prove that, if $\gamma\subset\partial\D\cup\D$ is a simple path joining two vertices of $\partial\D$ such that $d(\gamma)=1$, then there exists another simple path $\gamma''\subset\partial\D\cup\D$, joining the same vertices of $\gamma$, such that $d(\gamma'')=0$. Therefore, let $\gamma\subset\partial\D\cup\D$ connect $\v,\w\in\partial\D$, and let $\v_1,\v_2$ be the two vertices in $\gamma$ such that $\gamma_{\v,\v_1}\subset\partial\D$, $\gamma_{\v_1,\v_2}\subset\D$ and $\gamma_{\v_2,\w}\in\partial\D$. Since $\gamma\not\subset\G$ and $\G$ is connected, there is another simple path $\gamma'\subset\G$ joining $\v$ and $\w$. If $\gamma'\subset\partial\D$, then one sets $\gamma''=\gamma'$ and the proof is complete.
	
	Assume on the contrary that $\left(\G\setminus\partial\D\right)\cap\gamma'\neq\emptyset$. By construction, there exists a simple closed curve $\Gamma\subseteq\gamma\cup\gamma'$ such that $\gamma_{\v_1,\v_2}\subset\Gamma$ (note that $\v_1,\v_2\in\Gamma$). Since $\v$ is connected to $\v_1$ in $\partial\D$ along $\gamma_{\v,\v_1}$, whereas $\w$ is connected to $\v_2$ in $\partial\D$ along $\gamma_{\v_2,\w}$, in order to conclude it is sufficient to connect $\v_1$ and $\v_2$ along a new path contained in $\partial\D$. 
	
	To exhibit such a path, we argue as follows. If $\Gamma\cap\gamma'\subset\partial\D$, then again the proof is complete. Hence, suppose $\left(\G\setminus\partial\D\right)\cap\left(\Gamma\cap\gamma'\right)\neq\emptyset$. Since $\Gamma$ is a simple closed curve in $\Q$, it divides $\Q$ in two regions, one of which contains finitely many edges. Let $\Omega\subset\Q$ denote this finite region bounded by $\Gamma$ and set $\overline{\Omega}:=\Omega\cup\Gamma$. As $\Gamma\cap\gamma'$ is not contained in $\partial\D$, for every edge $e\in\left(\Gamma\cap\gamma'\right)\setminus\partial\D$, either the cell $\mathcal{S}_e'$ or the cell $\mathcal{S}_e''$ belongs to $\overline{\Omega}\cap\left(\Q\setminus\D\right)$. Let $\Omega_1$ be the union of all the cells in $\overline{\Omega}$ with at least one edge in $\left(\Gamma\cap\gamma'\right)\setminus\partial\D$. By construction, $\Omega_1$ is given by union of finitely many disjoint bounded subsets of $\overline{\Omega}\setminus\D$, each enclosed by a simple closed curve in $\overline{\Omega}$. Let $\partial\Omega_1\subset\overline{\Omega}$ denote the union of these simple closed curves. Note that, by definition of $\Omega_1$, $\Gamma\setminus\left(\partial\D\cup\D\right)\subset\partial\Omega_1$ and $\left(\Gamma\cap\partial\D\right)\cup\left(\partial\Omega_1\cap\Omega\right)$ is connected. Furthermore, $\Omega_1\cap\partial\D\subset\partial\Omega_1$. Indeed, for every edge $e\in\Omega_1\cap\partial\D$, at least one between $\mathcal{S}_e'$ and $\mathcal{S}_e''$ is not entirely contained in $\Omega_1$. This is a direct consequence of Definition \ref{def:D and dD}, as if $e\in\partial\D$, then at least one between $\mathcal{S}_e'$ and $\mathcal{S}_e''$ contains one edge of $\D$, whilst $\Omega_1\cap\D=\emptyset$. As a consequence $e\not\in\Omega_1\setminus\partial\Omega_1$.
	
	If $\left(\partial\Omega_1\cap\Omega\right)\setminus\partial\D=\emptyset$, then $\left(\Gamma\cap\partial\D\right)\cup\left(\partial\Omega_1\cap\Omega\right)\subset\partial\D$ and, thus, we proved the claim. If on the contrary $\left(\partial\Omega_1\cap\Omega\right)\setminus\partial\D\neq\emptyset$, then let $\Omega_1^2$ be the union of all the cells in $\Omega$ with at least one edge in $\left(\partial\Omega_1\cap\Omega\right)\setminus\partial\D$, and set $\Omega_2:=\Omega_1\cup\Omega_1^2$. Therefore, $\Omega_2$ is given again by the union of finitely many disjoint bounded subsets of $\overline{\Omega}\setminus\D$, each enclosed by a simple closed curve in $\overline{\Omega}$. Letting $\partial\Omega_2$ denote the union of such simple closed curves, we have that $\partial\Omega_1\cap\Gamma\subseteq\partial\Omega_2\cap\Gamma$, $\Omega_2\cap\partial\D\subset\partial\Omega_2$ and $\left(\Gamma\cap\partial\D\right)\cup\left(\partial\Omega_2\cap\Omega\right)$ is connected. Since $\Omega$ contains finitely many edges of $\Q$, possibly iterating the previous construction $K$ times, for some suitable $K\in\N$, we end up with a subset $\Omega_K\subset\overline{\Omega}$ with the following properties:
	\begin{itemize}
		\item[$(a)$] $\Omega_K$ is the union of finitely many disjoint bounded subsets of $\overline{\Omega}$, each enclosed by a simple closed curves;
		\item[$(b)$] letting $\partial\Omega_K$ be the union of all the simple closed curves enclosing $\Omega_K$, then $\Gamma\setminus\left(\partial\D\cup\D\right)\subset \partial\Omega_K$ and $\left(\Gamma\cap\partial\D\right)\cup\left(\partial\Omega_K\cap\Omega\right)$ is connected;
		\item[$(c)$] $\left(\partial\Omega_K\cap\Omega\right)\setminus\partial\D=\emptyset$.
	\end{itemize}
	Then, arguing as before, $\left(\Gamma\cap\partial\D\right)\cup\left(\partial\Omega_K\cap\Omega\right)$ is a subset of $\partial\D$ containing both $\v_1$ and $\v_2$, which concludes the proof.
\end{proof}




\begin{thebibliography}{99999999}

    \bibitem{ABZ12}
    Ablowitz M.J., Curtis C.W., Zhu Y.:
    On tight--binding approximations in optical lattices. Stud. Appl. Math. {\bf 129}, no. 4, 362--388 (2012)
    
    \bibitem{AD18}
    Adami R., Dovetta S.:
    One--dimensional versions of three--dimensional system: ground states for the NLS on the spatial grid.
    Rend. Mat. Appl. {\bf 39}, no. 7, 181--194 (2018)
    
    \bibitem{ADR19}
    Adami R., Dovetta S., Ruighi A.: 
    Quantum graphs and dimensional crossover: the honeycomb.
    Commun. Appl. Ind. Math. {\bf 10}, no. 1, 109--122 (2019)
    
	\bibitem{ADST19}
	Adami R., Dovetta S., Serra E., Tilli P.:
	Dimensional crossover with a continuum of critical exponents for NLS on doubly periodic metric graphs.
	Anal. PDE {\bf 12}, no. 6, 1597--1612  (2019)
	
	\bibitem{AST15}
	Adami R., Serra E., Tilli P.: NLS ground states on graphs.
	Calc. Var. PDE {\bf 54}, no. 1, 743--761 (2015)
	
	\bibitem{AST17}
	Adami R., Serra E.,Tilli  P.: Nonlinear dynamics on branched structures and networks.
	Riv. Mat. Univ. Parma (N.S.) {\bf 8}, no. 1, 109--159  (2017)
	
	\bibitem{AST19}
	Adami R., Serra E., Tilli P.,
	Multiple positive bound states for the subcritical NLS equation on metric graphs.
	Calc. Var. PDE {\bf 58}, no. 1, 5 (2019)
	
	\bibitem{BB13}
	Band R., Berkolaiko G.: Universality of the Momentum Band Density of Periodic Networks. Phys. Rev. Letters {\bf111}, 130404 (2013)
	
	\bibitem{BE18}
	Barber B., Erde J.: Isoperimetry in Integer Lattices. Disc. Math. {\bf 7}, 16 pp. (2018)
	
	\bibitem{BP}
	Benjamini I., Papasoglu P.: Growth and isoperimetric profile of planar graphs. Proc. Am. Math. Soc. {\bf139}, no. 11, 4105--4111 (2011)
	
	\bibitem{BDL20}
	Besse C., Duboscq R., Le Coz S.: Gradient Flow Approach to the Calculation of Ground States on Nonlinear Quantum Graphs. arXiv:2006.04404 [math.AP] (2020)
	
	\bibitem{BDL21}
	Besse C., Duboscq R., Le Coz S.: Numerical Simulations on Nonlinear Quantum Graphs with the GraFiDi Library. arXiv:2103.09650 [math.AP] (2021)
	
	\bibitem{BKKM}
	Berkolaiko G., Kennedy J.B., Kurasov P., Mugnolo D.:
	Surgery principles for the spectral analysis of quantum graphs.
	Trans. Amer. Math. Soc. {\bf372}, 5153--5197 (2019)
	
	\bibitem{BK13}
	Berkolaiko G., Kuchment P.:
	Introduction to quantum graphs.
	Mathematical Surveys and Monographs {\bf 186}, American Mathematical Society, Providence, RI (2013)
	
	\bibitem{BMP19}
	Berkolaiko G., Marzuola J.L., Pelinovsky D.E.:
	Edge-localized states on quantum graphs in the limit of large mass.
    Ann. Inst. H. Poincaré C, An. non lin. {\bf 38}, 5, 1295--1335 (2021) 
    
    \bibitem{BLS}
    Berkolaiko G., Latushkin Y., Sukhtaiev S.:
    Limits of quantum graph operators with shrinking edges. Adv. Math. {\bf352}, 632--669 (2019)
	
	\bibitem{BK20}
	Berkolaiko G., Kha M.: Degenerate band edges in periodic quantum graphs. Letters Math. Phys. {\bf110}, 2965--2982 (2020)
	
	\bibitem{B94}
	Bezrukov S.L.:
	Isoperimetric problems in discrete spaces.
	Extremal problems for finite sets, 59--91, Bolyai Soc. Math. Stud. {\bf 3}, J\'anos Bolyai Math. Soc., Budapest, 1994
	
	\bibitem{BH97}
	Bobkov  S.G., Houdré C.: Some connections between isoperimetric and Sobolev--type inequalities. Mem. Amer. Math. Soc. {\bf129}, no. 616, viii+111 (1997) 
	
	\bibitem{BL91}
	Bollob\'{a}s B., Leader I.: Edge--isoperimetric inequalities in the grid. Combinatorica {\bf11}, no. 4, 299--314 (1991)
	
	\bibitem{BL91bis}
	Bollob\'{a}s B., Leader I.: Compressions and Isoperimetric inequalities. J. Comb. Theory A {\bf56}, 47--62 (1991)
	
	
	\bibitem{BCT19}
	Borrelli W., Carlone R., Tentarelli L.:
	Nonlinear Dirac equation on graphs with localized nonlinearities: bound states and nonrelativistic limit.
	SIAM J. Math. Anal. {\bf 51}, no. 2, 1046--1081 (2019)
	
	\bibitem{BCT19bis}
	Borrelli W., Carlone R., Tentarelli L.:
	On the nonlinear Dirac equation on noncompact metric graphs.
	J. Differential Equations {\bf278}, 326--357 (2021)
	
	\bibitem{BL83}
	Brezis H., Lieb E.H.:
	A relation between pointwise convergence of functions and convergence of functionals.
	Proc. Amer. Math. Soc. {\bf 88}, no. 3, 486--490 (1983)
	
	\bibitem{BSTU06}
	Busch K., Schneider G., Tkeshelashvili L., Uecker H.:
	Justification of the nonlinear Schr\"odinger equation in spatially periodic media.
	Z. Angew. Math. Phys. {\bf 57}, no. 6, 905--939 (2006)
	
	\bibitem{C03} Cazenave T.:
	Semilinear Schr\"odinger Equations.
	Courant Lecture Notes {\bf 10}, American Mathematical Society, Providence, RI, 2003
	
	\bibitem{DPS09}
	Dohnal T., Pelinovsky D.E., Schneider G.:
	Coupled--mode equations and gap solitons in a two--dimensional nonlinear elliptic problem with a separable periodic potential.
	J. Nonlinear Sci. {\bf 19}, no. 2, 95--131 (2009)
	
	\bibitem{D19}
	Dovetta S.:
	Mass--constrained ground states of the stationary NLSE on periodic metric graphs.
	Nonlin. Diff. Eq. Appl. {\bf 26}, no. 5, Paper No. 30, 30 pp. (2019)
	
	\bibitem{DST20}
	Dovetta S., Serra E., Tilli P.:
	NLS ground states on metric trees: existence results and open questions. J. London Math. Society {\bf102}, no. 3, 1223--1240 (2020)

	\bibitem{DST20bis}
	Dovetta S., Serra E., Tilli P.: Uniqueness and non--uniqueness of prescribed mass NLS ground states on metric graphs. Adv. Math. {\bf374}, 107352 (2020)
	
	\bibitem{DT20}
	Dovetta S., Tentarelli L.:
	$L^2$--critical NLS on noncompact metric graphs with localized nonlinearity: topological and metric features.
	Calc. Var. PDE {\bf 58}, no. 3, Paper No. 108, 26 pp. (2019)
	
	\bibitem{EFK}
	Ekholm T., Frank R.L., Kovarik H.: Eigenvalue estimates for Schr\"odinger operators on metric trees. Adv. Math. {\bf226}, no. 6, 5165--5197 (2011)
	
	\bibitem{ET}
	Exner P., Turek O.: Periodic quantum graphs from the Bethe--Sommerfeld perspective. J. Physics A: Math. Th. {\bf50}, 455201 (2017)
	
	\bibitem{F69}
	Federer H.:
	Geometric Measure Theory.
	Die Grundlehren der mathematischen Wissenschaften, Band {\bf 153}, Springer-Verlag New York Inc., New York (1969)
	
	\bibitem{FF60}
	Federer H., Fleming W.H.:
	Normal and Integral Currents.
	Ann. of Math. {\bf 72}, no. 2, 458--520 (1960)
	
	\bibitem{FMN}
	Fijav\u{z} M.K., Mugnolo D., Nicaise S.:
	Linear hyperbolic systems on networks: well--posedness and qualitative properties. ESAIM:COCV {\bf27}, 7 (2021)
		
	\bibitem{FK}
	Filonov N., Kachkovskiy I.: On the structure of band edges of 2--dimensional periodic elliptic operators.
	Acta Math. {\bf221}, 59--80 (2018)
	
	\bibitem{GSU21}
	Gilg S., Schneider G., Uecker H.: Nonlinear dynamics of modulated waves on graphene like quantum graphs. Preprint (2021)
	
	
	\bibitem{HK00}
	Haj\l asz P., Koskela P.: Sobolev met Poincaré.
	Mem. Amer. Math. Soc. {\bf145}, no. 688, 1--101 (2000)
	
	\bibitem{HH21}
	Hermon J., Hutchcroft T.:
	Supercritical percolation on nonamenable graphs: isoperimetry, analyticity, and exponential decay of the cluster size distribution.
	Invent. Math. {\bf224}, 445--486 (2021)
	
	\bibitem{H19}
	Hofmann M.:
	An existence theory for nonlinear equations on metric graphs via energy methods.
    arXiv:1909.07856 [math.AP] (2019)
    
    \bibitem{KL18}
    Keevash P., Long E.: A stability result for the cube edge isoperimetric inequality. J. Comb Theory A {\bf155}, 360--375 (2018)
    
    \bibitem{KN}
    Kostenko A., Nicolussi N.: Spectral estimates for infinite quantum graphs. Calc. Var. PDE {\bf58}, 15 (2019)
    
    \bibitem{KS}
    Kostrykin V., Schrader R.: Kirchhoff's rule for quantum wires. J. Phys.
    A: Math. Gen. {\bf32}, 595--630 (1999)

	\bibitem{K16}
	Kuchment P.: An overview of periodic elliptic operators. Bull. Amer. Math. Society {\bf53}, no. 3, 343--414  (2016)
	
    \bibitem{KP07}
    Kuchment P., Post O.:
    On the spectra of carbon nano--structures.
    Comm. Math. Phys. {\bf 275}, no. 3, 805--826 (2007)
	
	\bibitem{LL01}
	Lieb E.H., Loss M., 
	\emph{Analysis. Second edition},
	Graduate Studies in Mathematics {\bf 14}, American Mathematical Society, Providence, RI, 2001.
	
	\bibitem{MR04}
	Mathieu P., Remy E.: Isoperimetry and heat kernel decay on percolation clusters. Ann. of Prob. {\bf32},  no. 1A, 100--128 (2004)
	
	\bibitem{Mug}
	Mugnolo M.: Semigroup methods for evolution equations on networks. Understanding Complex Systems, Springer--Verlag, Berlin 2014
	
	\bibitem{Nic20}
	Nicolussi N.: Strong Isoperimetric Inequality for Tessellating Quantum Graphs. In: Atay F., Kurasov P., Mugnolo D. (eds) Discr. Cont. Models in the Theory of Networks. Op. Theory: Adv. and Appl. {\bf281}, Birkh\"auser, Cham. (2020)
	
	\bibitem{N14}
	Noja D.:
	Nonlinear Schr\"odinger equation on graphs: recent results and open problems.
	Philos. Trans. R. Soc. Lond. Ser. A Math. Phys. Eng. Sci. {\bf 372}, no. 2007, art. num. 20130002, 20 pp. (2014)
	
	\bibitem{NP20}
	Noja D., Pelinovsky D.E.:
	Standing waves of the quintic NLS equation on the tadpole graph.
    Calc. Var. PDE {\bf59}, no. 5, 173 (2020)
	
	\bibitem{O78}
	Osserman R.:
	The isoperimetric inequality.
	Bull. Amer. Math. Soc. {\bf 84}, no. 6, 1182--1238 (1978)
	
	\bibitem{P18}
	Pankov A.:
	Nonlinear Schr\"odinger equations on periodic metric graphs. 
	Discrete Contin. Dyn. Syst. {\bf 38}, no. 2, 697--714 (2018)
	
	\bibitem{PS10}
	Pelinovsky D.E., Schneider G.:
	Bounds on the tight-binding approximation for the Gross--Pitaevskii equation with a periodic potential.
	J. Differential Equations {\bf 248}, no. 4, 837--849 (2010)
	
	\bibitem{PS16}
	Pelinovsky D.E., Schneider G.:
	Bifurcations of standing localized waves on periodic graphs.
	Ann. Henri Poincar\'e {\bf 18}, no. 4, 1185--1211 (2017)
	
	\bibitem{P08}
	Pete G.: A note on percolation on $\Z^d$: isoperimetric profile via exponential cluster repulsion. Elect. Comm. in Probab. {\bf13}, 377--392 (2008)
	
	\bibitem{PSV}
	Pierotti D., Soave N., Verzini G.:
	Local minimizers in absence of ground states for the critical NLS energy on metric graphs. Proc. Royal Soc. Edinb. Section A: Math., to appear. doi.org/10.1017/prm.2020.36
	
	\bibitem{P12}
	Post O.:
	Spectral analysis on graph--like spaces.
	Lecture Notes in Mathematics {\bf 2039}, Springer, Heidelberg (2012)
	
	\bibitem{RS}
	Ruedenberg K., Scherr C. W.: Free--Electron Network Model for Conjugated Systems. I.
	Theory. J. Chem. Phys. {\bf21}, no. 9, 1565--1581 (1953)
	
	\bibitem{SBMK18}
	Sabirov K.K., Babajanov D.B., Matrasulov D.U., Kevrekidis P.G.:
	Dynamics of Dirac solitons in networks.
	J. Phys. A {\bf 51}, no. 43, art. num. 435203, 13 pp. (2018)
	
	\bibitem{T16}
	Tentarelli L.: NLS ground states on metric graphs with localized nonlinearities.
	J. Math. Anal. Appl. {\bf 433}, no. 1, 291--304 (2016)
	
	
	\bibitem{WW77}
	Wang D.L., Wang P.: Discrete isoperimetric problems. SIAM J. Appl. Math. {\bf32}, no. 4, 860--870 (1977)
	
\end{thebibliography}
\end{document}